\documentclass[reqno]{amsart}

\usepackage{xfrac}

\usepackage{hyperref,todonotes,comment,multicol}
\usepackage[color,matrix,arrow]{xy}
\usepackage{graphics,graphicx,amssymb}
\usepackage{pagecolor,lipsum}
\usepackage{latexsym}
\usepackage{mathrsfs}
\xyoption{curve}
\usepackage{hyperref}
\hypersetup{ 
colorlinks,
citecolor=blue,
filecolor=blue,
linkcolor=blue,
urlcolor=blue
}

\newtheorem{lemma}{Lemma}[section]
\newtheorem{proposition}[lemma]{Proposition}
\newtheorem{theorem}[lemma]{Theorem}
\newtheorem{corollary}[lemma]{Corollary}
\newtheorem{question}[lemma]{Question}

\newtheorem*{theoremA}{Theorem}

\theoremstyle{definition}
\newtheorem{example}[lemma]{Example}
\newtheorem{definition}[lemma]{Definition}
\newtheorem{remark}[lemma]{Remark}

\newtheorem{hypotheses}[lemma]{Hypotheses}

\newcommand{\mfk}[1]{\mathfrak{#1}}

\newcommand{\msf}[1]{\mathsf{#1}}
\newcommand{\msc}[1]{\mathscr{#1}}
\newcommand{\mbf}[1]{\mathbf{#1}}
\newcommand{\opn}[1]{\operatorname{#1}}
\newcommand{\ot}{\otimes}

\DeclareMathOperator{\Hom}{Hom}
\DeclareMathOperator{\End}{End}

\DeclareMathOperator{\rep}{rep}
\DeclareMathOperator{\Rep}{Rep}

\DeclareMathOperator{\Fun}{Fun}
\DeclareMathOperator{\res}{res}
\DeclareMathOperator{\SL}{SL}
\DeclareMathOperator{\QCoh}{QCoh}

\newcommand{\dG}{\check{G}}
\newcommand{\uFun}{\underline{\operatorname{Fun}}}

\renewcommand{\1}{\mathbf{1}}
\renewcommand{\O}{\mathscr{O}}

\renewcommand{\hat}{\widehat}
\renewcommand{\binom}[2]{{\Small\left[\begin{matrix}\ #1\ \\ #2 \end{matrix} \right]}}

\bibliographystyle{plain}

\newcommand{\nocontentsline}[3]{}
\newcommand{\tocless}[2]{\bgroup\let\addcontentsline=\nocontentsline#1{#2}\egroup}

\definecolor{page_color}{HTML}{000000}
\definecolor{text_color}{HTML}{F0EAD6}

\makeindex

\title[]{Quantum Frobenius and modularity for quantum groups at arbitrary roots of $1$}
\date{November 22, 2023}

\author{Cris Negron}
\address{Department of Mathematics, University of Southern California, Los Angeles, CA 90007}
\email{cnegron@usc.edu}

\begin{document}

\maketitle

\begin{abstract}
We consider quantum group representations $\Rep(G_q)$ for a semisimple algebraic group $G$ at a complex root of unity $q$.  Here we allow $q$ to be of any order.  We first show that the Tannakian center in $\Rep(G_q)$ is calculated via a twisting of Lusztig's quantum Frobenius functor $\Rep(\dG)\to \Rep(G_q)$, where $\dG$ is a dual group to $G$.  We then consider the associated fiber category $\opn{Vect}\ot_{\Rep(\dG)}\Rep(G_q)$ over $B\dG$, and show that this fiber is a finite, integral braided tensor category.  Furthermore, when $G$ is simply-connected and $q$ is of even order, the fiber in question is shown to be a modular tensor category.  Finally, we exhibit a finite-dimensional quasitriangular quasi-Hopf algebra (aka, small quantum group) whose representations recover the tensor category $\opn{Vect}\ot_{\Rep(\dG)}\Rep(G_q)$, and we describe the representation theory of this algebra in detail.  At particular pairings of $G$ and $q$, our quasi-Hopf algebra is identified with Lusztig's original finite-dimensional Hopf algebra from the 90's.  This work completes the author's project from \cite{negron21}.
\end{abstract}

\tocless\section{Introduction}

Let $k$ be an algebraically closed field of characteristic $0$, $G$ be a semisimple algebraic group over $k$, and $q$ be an arbitrary root of unity in $k$.  We consider the category $\Rep(G_q)$ of quantum group representations for $G$ at $q$.  Modulo some toral details, this is just the category of integrable representations for Lusztig's divided power quantum enveloping algebra $U_q$ \cite{lusztig90,lusztig90II,lusztig93}.  Here we take $\Rep(G_q)$ along with its natural braided tensor structure \cite{drinfeld89,lusztig93}.
\par

As one sees from various works over the past decades, the braided tensor category $\Rep(G_q)$ has proved fantastically useful in studies of both mathematics and mathematical physics.  In the 90's, quantum group representations appeared in analyses of low dimensional TQFTs and $2$-dimensional rational conformal field theories.  In such settings one traditionally encountered the category of so-called \emph{semisimplified} quantum group representations.  However in recent studies of logarithmic CFTs and ``derived" TQFTs the category $\Rep(G_q)$ itself, or the associated category of small quantum group representations, plays a key role. See for example \cite{feigintipunin,gainutdinovetal06,gukovetal21,gannonnegron,creutzigdimoftegarnergeer,gaiottomooreneitzkeyan}, and related works.
\par

Now, in considering these physical studies one observes a certain dissonance between what we ``understand" about quantum groups from a purely algebraic perspective, and the kinds of phenomena which we expect from--and which are necessitated by--the physical perspective.  While we avoid an in depth elaboration of the physics literature, we record some of the main mathematical takeaways below.  Let us proceed:

\begin{enumerate}
\item[({\it i})] At even order $q$ there is some difference between purely algebraic constructions of the small quantum group, as a Hopf subalgebra in the divided power algebra $U_q$, and categorical constructions of the small quantum group as the ``kernel" of a quantum Frobenius functor.  This point was made very clear in type $A_1$ through works of Gainutdinov-Semikhatov-Tipunin-Feigin \cite{gainutdinovetal06} and Kondo-Saito \cite{kondosaito11}, though the situation in higher ranks is even more striking.  (See also \cite{gannonnegron,creutziglentnerrupert2,sugimoto21}.)

\item[({\it ii})] These differences in the constructions of the small quantum group are equivalently expressed as differences in the behaviors of restriction, from $\Rep(G_q)$ to the category of small quantum group representations.

\item[({\it iii})] In returning to the odd order case, one finds similar inconsistent behaviors for quantum group representations which have, somehow, not been accounted for in the literature up to this point.
\end{enumerate}

To highlight one specific issue, the Hopf subalgebra approach produces a tensor category of representations which is non-braidable in general, while physically we expect the category of small quantum group representations to be a \emph{modular} tensor category whose braiding is compatible with the braiding on $\Rep(G_q)$, in some way.  One also expects physically that small quantum group representations deform naturally over local systems \cite{gaiotto19,creutzigdimoftegarnergeer,feiginlentner}, and that such deformations provide a centrally corrected version of the category of De Concini-Kac modules, although addressing this latter point is outside of the domain of the present work.
\par

Prompted by the aforementioned physical data, and related points (\emph{i})--(\emph{iii}), we provide in this text an analysis of the category of (big) quantum group representations, and a \emph{construction} of a category of small quantum group representations, which is completely uniform across all pairings of $G$ and $q$, is transparent and algebraic in nature, and which fits the current demands from conformal and topological field theory.  Most notably, in all physically relevant settings, our category of small quantum group representations is naturally modular.\footnote{For comparisons with the authors' previous work \cite{negron21}, and that of Gainutdinov-Lentner-Ohrmann \cite{gainutdinovlentnerohrmann}, see Sections \ref{sect:related}, \ref{sect:comp}, \ref{sect:glo}.}

\subsection{A uniform model for small quantum group representations} 

In this text we study the following model for the small quantum group: We define the small quantum group, somewhat ambiguously, as the algebraic object which represents the reduction of $\Rep(G_q)$ along its Tannakian center,
\begin{equation}\label{eq:OG_fiber}
\Rep(\text{small quantum group}):=\text{The categorical fiber }\opn{Vect}\ot_{\opn{Tann}_q}\Rep(G_q).
\end{equation}
Here
\[
\opn{Tann}_q:=\ \text{The Tannakina center in}\ \Rep(G_q)
\]
is the maximal (M\"uger) central, Tannakian subcategory in $\Rep(G_q)$, and the subsequent fiber is obtained as the pushout for the diagram
\[
\xymatrix{
\opn{Tann}_q\ar[rr]^{\opn{incl}}\ar[d]_{\begin{array}{c}\text{\SMALL (unique) symm}\\ \text{\SMALL fiber fun}\end{array}} & & \Rep(G_q)\ar@{-->}[d]\\
\opn{Vect}\ar@{-->}[rr] & & \opn{Vect}\ot_{\opn{Tann}_q}\Rep(G_q)
}
\]
(see Proposition \ref{prop:bc}).  We also recall that a Tannakian category is, by definition if one likes, a symmetric tensor category $\msc{E}$ which admits a symmetric equivalence $\msc{E}\cong\Rep(H)$ for some algebraic group $H$.  To explain in words, the category of small quantum group representations is the maximally non-degenerate tensor quotient of the category $\Rep(G_q)$, and it is uniquely determined by this property.
\par

An interpretation \emph{like} \eqref{eq:OG_fiber} was first suggested in work of Arkhipov and Gaitsgory \cite{arkhipovgaitsgory03}, and seems to be a preferred expression for the category of small quantum group representations from the perspective of geometric Langlands.  See for example Gaitsgory's text \cite[\S\ 6.4.2]{gaitsgory21} where a small quantum category is defined as the fiber of $\Rep(G_q)$ over its M\"uger (rather than Tannakian) center.\footnote{Fibering over the M\"uger vs.\ Tannakian centers are different operations in general, and the resulting categories will be algebras over different operads.  This point is implicit in the discussions of Section \ref{sect:comp} below.}  So our text is, with some intent, in conversation with geometric Langlands in this regard.
\par

In addressing the expression \eqref{eq:OG_fiber}, there are two things which one needs to understand.  First, one needs to understand the Tannakian center in $\Rep(G_q)$.  Second, one needs to understand how the reduction operation $\opn{Vect}\ot_{\opn{Tann}_q}-$ behaves when it is applied to $\Rep(G_q)$.
\par

In this introduction we first describe the basic structure of the fiber category \eqref{eq:OG_fiber}.  We then return to a discussion of the Tannakian center below. In Section \ref{sect:tann_intro} we explain, in particular, how the Tannakian center in $\Rep(G_q)$ is calculated via a toral twisting of Lusztig's quantum Frobenius functor.
\par

Under absolutely no restrictions on $G$ and $q$, we prove the following.

\begin{theoremA}[\ref{thm:1}]
Let $G$ be an arbitrary semisimple algebraic group and $q$ be any root of unity.  The fiber category $\opn{Vect}\ot_{\opn{Tann}_q}\Rep(G_q)$ is a finite integral braided tensor category of Frobenius-Perron dimension
\begin{equation}\label{eq:161}
\opn{FPdim}\big(\opn{Vect}\ot_{\opn{Tann}_q}\Rep(G_q)\big)=\#(G,q)\cdot (\prod_{\gamma\in \Phi^+}l_\gamma)^2,
\end{equation}
where $\#(G,q)$ is a positive integer which is explicitly calculable in examples.  This fiber is furthermore identified with the category of representations for a finite-dimensional quasitriangular quasi-Hopf algebra $u_q$ which has vector space dimension \eqref{eq:161}.
\end{theoremA}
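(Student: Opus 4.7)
The plan is to combine the identification of $\opn{Tann}_q$ with a form of $\Rep(\dG)$ established earlier in the paper with the standard de-equivariantization machinery for central Tannakian subcategories, and then to descend the Hopf and $R$-matrix structures on $U_q$ in order to exhibit the quasi-Hopf algebra $u_q$ explicitly.

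First I would rewrite the fiber via modules over an algebra.  Using the Tannakian identification $\opn{Tann}_q\simeq \Rep(\dG)$, one gets
\[
\opn{Vect}\ot_{\opn{Tann}_q}\Rep(G_q)\ \simeq\ \opn{mod}_{\O(\dG)}\!\big(\Rep(G_q)\big),
\]
the category of modules over the commutative algebra $\O(\dG)\in\opn{Tann}_q$, by unwinding the universal property of the $2$-categorical pushout (Proposition \ref{prop:bc}): the functor $\Rep(\dG)\to\opn{Vect}$ is represented by $\O(\dG)$, and base change along a representable functor is computed by modules over the representing algebra.  Since $\O(\dG)$ sits in the M\"uger center of $\Rep(G_q)$, the standard theory of central commutative algebras in braided tensor categories equips $\opn{mod}_{\O(\dG)}(\Rep(G_q))$ with a braided tensor structure, which gives the braided structure on the fiber.

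Next I would construct $u_q$ as a quotient of $U_q$ and identify its representations with the fiber.  A PBW-type analysis shows that $U_q$ is locally free over the central subalgebra $\O(\dG)\subset U_q$ of rank $\#(G,q)\cdot (\prod_{\gamma\in\Phi^+}l_\gamma)^2$, with the toral factor coming from the Cartan piece and the $l_\gamma^2$ factors coming from the divided-power nilpotent generators.  Taking the fiber at the identity of $\dG$, I set $u_q:=U_q\ot_{\O(\dG)}k$; this is a finite-dimensional algebra of the required dimension.  Pulling $U_q$-modules back to $u_q$ and tracking $\O(\dG)$-actions identifies $\Rep(u_q)$ with $\opn{mod}_{\O(\dG)}(\Rep(G_q))$, and hence with the fiber.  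The coproduct and $R$-matrix on $U_q$ descend, up to a Drinfeld associator dictated by the toral twist relating $\opn{Tann}_q$ to $\Rep(\dG)$, to give $u_q$ its quasi-Hopf and quasitriangular structure; integrality then follows by matching Frobenius-Perron dimensions against ordinary vector-space dimensions of $u_q$-modules.

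The main obstacle lies in pinning down the correct associator for $u_q$.  In even-order cases, and more subtly in the mixed-order situations addressed by the uniform framework of this paper, this associator is genuinely nontrivial, and its precise form is dictated by the toral twist of Lusztig's Frobenius encoded in the Tannakian identification from the earlier sections.  Once that cocycle data is in hand, the remaining assertions (the braided tensor structure, finiteness and dimension, and integrality) follow from fairly standard categorical and PBW arguments.
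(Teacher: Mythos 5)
Your first paragraph is essentially correct and matches the paper's starting point: the universal property from Proposition \ref{prop:bc} together with Proposition \ref{prop:mods} does realize the fiber as $\O(\dG)$-modules internal to $\Rep(G_q)$, and M\"uger-centrality of $\O(\dG)$ is what makes the module category braided. But the second paragraph, where the finite-dimensional algebra $u_q$ and the dimension formula are supposed to come from, contains a structural error that the rest of the argument cannot survive.

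You assert that $\O(\dG)$ is a central subalgebra of $U_q$ over which $U_q$ is locally free of finite rank, and you define $u_q := U_q \ot_{\O(\dG)} k$. That picture is correct for the De Concini--Kac form of the quantum group, where there genuinely is a large central subalgebra $Z_0$ and the small quantum group is the fiber over the identity. It is \emph{not} correct for Lusztig's divided-power form $U_q$, which is the form underlying $\Rep(G_q)$ throughout this paper. Quantum Frobenius here is a surjection of algebras $\dot{U}_q \to \dot{U}^\ast_\varepsilon$ (restriction of modules), not an inclusion $\O(\dG)\hookrightarrow U_q$; dually, $\O(\dG)$ embeds into the quantum function coalgebra $\O(G_q)$ and lives as a commutative algebra \emph{object} in $\Rep(G_q)$, not as a subalgebra of $U_q$. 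Moreover the divided-power algebra $U_q$, with its full character group of grouplikes $K_\lambda$ and its divided powers $E_\alpha^{(n)}$ for all $n$, is not module-finite over any commutative central subalgebra, so the ``local freeness of rank $\#(G,q)\cdot(\prod_\gamma l_\gamma)^2$'' claim is not available; the relative tensor product $U_q\ot_{\O(\dG)}k$ does not parse as written. Without this, you get neither the concrete finite-dimensional algebra nor the dimension count.

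The paper takes a different route precisely to avoid this problem. It first base-changes only partway: restricting along $\Rep(\dG)\to\Rep(\Sigma)$ for a finite toral subgroup $\Sigma\subset\dG$ gives $\Rep(\Sigma)\ot_{\Rep(\dG)}\Rep(G_q)\simeq\Rep(u_{q,\kappa})$ where $u_{q,\kappa}$ is an honest finite-dimensional Hopf algebra with a PBW basis (Proposition \ref{prop:int_basechange}, relying on faithful exactness of induction and Takeuchi's theorem via Proposition \ref{prop:braided_bc}). Finiteness, rigidity, and the dimension formula are then obtained from the finite Tannakian de-equivariantization $\opn{Vect}\ot_{\Rep(\Sigma)}\Rep(u_{q,\kappa})$ using Lemma \ref{lem:de_ftc} and the formula $\opn{FPdim}(\opn{Vect}\ot_{\Rep(\Sigma)}\Rep(u_{q,\kappa}))=\opn{FPdim}(\Rep(u_{q,\kappa}))/|\Sigma|$, and the quasi-Hopf algebra $u_q$ is produced as the $k\Sigma$-coinvariant coideal subalgebra of a Drinfeld twist $u^\psi_{q,\kappa}$ by appealing to the theorem of Angiono--Galindo--Pereira (Theorem \ref{thm:2244}). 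Your last paragraph is right that a nontrivial associator governed by the toral twist $\kappa$ must appear, but without the intermediate algebra $u_{q,\kappa}$ (or an equivalent device) there is no finite-dimensional object on which to place that associator, and the proof does not go through.
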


At advantageous odd order parameters, the algebra $u_q$ is just Lusztig's original finite-dimensional Hopf algebra from \cite{lusztig90,lusztig90II}, plus-or-minus some grouplikes, and at advantageous even order parameters the algebra $u_q$ recovers Arkhipov and Gaitsgory's finite-dimensional algebra from \cite{arkhipovgaitsgory03}, again plus-or-minus some grouplikes.  In the general setting $u_q$ is obtained as a non-unique renormalization of Lusztig's finite-dimensional algebra.  A precise construction of $u_q$, and a detailed description of its representation theory, can be found in Sections \ref{sect:smallquantumgroup} and \ref{sect:smallrep}.

\begin{remark}
Comparisons with earlier quasi-Hopf constructions from Gainutdinov, Lentner, and Ohrmann are provided in Section \ref{sect:glo}.
\end{remark}
\par

\begin{remark}
In the expression \eqref{eq:161} the integers $l_\gamma$ are the orders of specific roots of unity $q^{(\gamma,\gamma)}$ associated to each $\gamma\in \Phi^+$.
\end{remark}

In the simply-connected setting we can be even more precise in our description of the fiber \eqref{eq:OG_fiber}.  In the statement below by a \emph{modular} tensor category we mean a braided tensor category which is ribbon and has trivial M\"uger center.

\begin{theoremA}[\ref{thm:2}]
Consider a simply-connected semisimple algebraic group $G$ and a root of unity $q$.  Suppose that $2r$ divides the order of $q$, where $r$ is the lacing number for $G$.  Then the fiber category $\opn{Vect}\ot_{\opn{Tann}_q}\Rep(G_q)$ is a finite, integral, modular tensor category of Frobenius-Perron dimension
\[
\opn{FPdim}\big(\opn{Vect}\ot_{\opn{Tann}_q}\Rep(G_q)\big)=|Z(G)|\cdot (\prod_{\alpha\in\Delta}l_\alpha)\cdot (\prod_{\gamma\in \Phi^+}l_\gamma)^2.
\]
Furthermore, there is a ribbon equivalence between the category $\opn{Vect}\ot_{\opn{Tann}_q}\Rep(G_q)$ and the category of representations for a factorizable ribbon quasi-Hopf algebra $u_q$.
\end{theoremA}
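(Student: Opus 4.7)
The plan is to bootstrap from Theorem \ref{thm:1}, which already supplies the finite integral braided structure on the fiber and its identification with $\Rep(u_q)$ for a quasitriangular quasi-Hopf algebra $u_q$. Under the extra hypotheses that $G$ is simply-connected and $2r\mid \opn{ord}(q)$, three further claims must be established: (a) the specific Frobenius-Perron dimension formula, (b) modularity of the fiber, and (c) factorizability together with a ribbon element on $u_q$. The key conceptual point is that $2r\mid\opn{ord}(q)$ is exactly the hypothesis which makes $\opn{Tann}_q$ exhaust the entire M\"uger center of $\Rep(G_q)$ and which supplies a global ribbon element on $\Rep(G_q)$.

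For the Frobenius-Perron dimension, I would apply the explicit description of $\opn{Tann}_q$ from Section \ref{sect:tann_intro}, namely as a toral twist of the quantum Frobenius image $\Rep(\dG)\to \Rep(G_q)$. When $G$ is simply-connected this toral enlargement is controlled cleanly by the orders $l_\alpha$ attached to the simple roots together with the center $Z(G)$, so that the universal integer $\#(G,q)$ appearing in Theorem \ref{thm:1} specializes to $|Z(G)|\cdot \prod_{\alpha\in\Delta}l_\alpha$; multiplying by the universal factor $(\prod_{\gamma\in\Phi^+}l_\gamma)^2$ then yields the claimed dimension.

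For modularity, I would compare the M\"uger center of $\Rep(G_q)$ with $\opn{Tann}_q$. Since the fiber is by construction a de-equivariantization killing $\opn{Tann}_q$, its M\"uger center is trivial if and only if the M\"uger and Tannakian centers of $\Rep(G_q)$ already coincide. \emph{A priori} the M\"uger center is only super-Tannakian, but the assumption $2r\mid\opn{ord}(q)$ forces the squared braiding to be trivial on every transparent object, eliminating any potential fermionic line and ensuring the M\"uger center is strictly Tannakian. A ribbon structure on the fiber then descends from the standard ``$K_{2\rho}^{-1}v$" ribbon element on $\Rep(G_q)$, whose existence requires precisely that $2\rho$ lift appropriately to the quantum torus, i.e.\ that $2r\mid\opn{ord}(q)$.

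Finally, factorizability of $u_q$ is equivalent to non-degeneracy of the braiding on $\Rep(u_q)$, which is the modularity just established; the ribbon element on $u_q$ is transported through the fiber construction. The main technical obstacle is the center comparison at the heart of step (b): it requires precise control on all transparent objects in $\Rep(G_q)$ and makes essential use of both hypotheses in tandem---simply-connectedness to rule out a $\pi_1$-type discrepancy in the toral direction, and $2r\mid\opn{ord}(q)$ to eliminate the potential super-Tannakian discrepancy coming from the half-sum $\rho$. Once this comparison is in place, the remaining verifications amount to bookkeeping through the de-equivariantization functor.
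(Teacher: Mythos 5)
Your proposal follows the same overall strategy as the paper: bootstrap from Theorem \ref{thm:1}, specialize the lattice index $[X:X^{\opn{Tan}}]$ using simple-connectedness, identify the M\"uger and Tannakian centers via the even-order hypothesis, descend the ribbon structure from $\Rep(G_q)$, and read off factorizability from non-degeneracy of $\Rep(u_q)$. The paper does all of these in essentially the way you describe, with the center comparison and the lattice specialization both packaged in Theorem \ref{thm:sc}, which shows $lQ = X^{\opn{Tan}} = X^{\text{\rm M\"ug}}$ when $G$ is simply-connected and all $q_\alpha$ have even order.

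One imprecision worth flagging in your ribbon step: the ribbon element $\theta = u^{-1}K_{2\rho}$ on $\Rep(G_q)$ exists for \emph{every} $G$ and $q$ (the paper establishes this before any hypotheses on $q$ are imposed; see the proposition following the $R$-matrix discussion). The even-order hypothesis is not what makes the ribbon element exist. Rather, what it buys is the lattice identity $X^{\opn{Tan}}=lQ$; combined with the always-true fact that $K_{2\rho}$ is trivial on $lQ$ (since $(2\rho,\alpha)=(\alpha,\alpha)$ has $q$-pairing of order $l_\alpha$ at each simple $\alpha$), this makes the twisted Frobenius embedding $\opn{Fr}_\kappa:\Rep(\dG)\to\Rep(G_q)$ a pivotal, hence ribbon, functor, which is the precise condition needed for the ribbon structure to descend to the fiber. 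Your phrase about ``$2\rho$ lifting to the quantum torus'' gestures in the wrong direction; the substantive issue is compatibility of the pivotal element with the Tannakian sublattice, not existence of the element itself. Similarly, the observation that the squared braiding is trivial on transparent objects is tautological; what the paper actually verifies (and what you likely intend by ``no fermionic line'') is that the \emph{self-braiding} $\varepsilon(\lambda,\lambda)$ is $+1$ on the relevant sublattice, so categorical dimensions are positive and the M\"uger center is Tannakian rather than merely super-Tannakian.
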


As we demonstrate in a number of examples, the hypotheses of Theorem \ref{thm:2} are essentially optimal, as far as modularity of fiber is concerned.  The reader can see Section \ref{sect:examples}, and in particular Lemma \ref{lem:1703}, for more information in this regard.
\par

As a final point, in the adjoint setting we recover the ``usual" small quantum group when $q$ is of a generic odd order.  This result is well-known (see e.g.\ \cite{arkhipovgaitsgory03,davydovetingofnikshych18}) and might be viewed as a progenitor for Theorems \ref{thm:1} and \ref{thm:2}.

\begin{theoremA}[\cite{arkhipovgaitsgory03}]
Let $G$ be of adjoint type, and $q$ be of odd order.  Suppose additionally that $\opn{ord}(q)$ is coprime to both the lacing number and the determinant of the Cartan matrix for $G$. Then the fiber $\opn{Vect}\ot_{\opn{Tann}_q}\Rep(G_q)$ is a finite modular tensor category, and we have an equivalence of modular tensor categories
\begin{equation}\label{eq:193}
\opn{Vect}\ot_{\opn{Tann}_q}\Rep(G_q)\overset{\sim}\to \Rep(u_q)
\end{equation}
where $u_q$ is Lusztig's finite-dimensional Hopf algebra from \cite{lusztig90,lusztig90II}.
\end{theoremA}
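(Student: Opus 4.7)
The plan is to reduce the theorem to two pieces: first, identifying the Tannakian center under the stated coprimality hypotheses, and second, computing the fiber of $\Rep(G_q)$ over this center as a de-equivariantization. By the quantum Frobenius description of $\opn{Tann}_q$ recalled in Section \ref{sect:tann_intro}, the Tannakian center is computed via a toral twist of Lusztig's Frobenius functor $\Rep(\dG)\to\Rep(G_q)$. When $G$ is adjoint and $\opn{ord}(q)$ is odd and coprime to both the lacing number $r$ and $\det(\text{Cartan})$, I would first verify that this twist is trivial: the twist is indexed by characters built from the cocharacter lattice of the torus modulo $\opn{ord}(q)$ paired against the short/long root asymmetries, and under the stated coprimality all such characters vanish. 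This gives a symmetric tensor equivalence $\opn{Tann}_q\simeq\Rep(\dG)$ realized by the honest Lusztig Frobenius $F$.

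Next I would invoke the standard identification of the fiber $\opn{Vect}\ot_{\Rep(\dG)}\Rep(G_q)$ with the de-equivariantization $\Rep(G_q)_{\dG}$, i.e.\ the category of modules in $\Rep(G_q)$ over the regular algebra $F(\mathscr{O}(\dG))$. Concretely, Frobenius provides a central Hopf subalgebra $Z_{\text{Fr}}\subset U_q$ with $Z_{\text{Fr}}\cong\mathscr{O}(\dG)$, and the de-equivariantization is computed as $\Rep(U_q\ot_{Z_{\text{Fr}}}k)$, where the base change kills the Frobenius center. The classical theorem of De Concini--Kac--Lusztig, applied in the coprime odd-order setting, identifies the quotient Hopf algebra $U_q\ot_{Z_{\text{Fr}}}k$ precisely with Lusztig's finite-dimensional $u_q$. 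Faithful flatness of $U_q$ over $Z_{\text{Fr}}$ (a Hopf--Galois extension) then ensures that the de-equivariantization is equivalent to $\Rep(u_q)$ as an abelian category, with compatible monoidal structure induced from $U_q$.

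With the underlying braided tensor equivalence established, the final step is to check that the braiding on the fiber, obtained by transport from $\Rep(G_q)$, matches the quasitriangular structure on $u_q$ inherited from the universal $R$-matrix of $U_q$; this is routine because the $R$-matrix for $U_q$ in the odd-order coprime case lies in (a completion of) $u_q\ot u_q$ and descends along the quotient. Modularity of $\Rep(u_q)$ then follows from factorizability of Lusztig's small quantum group at these parameters, which is classical, or equivalently from the general non-degeneracy of the fiber established in Theorem \ref{thm:1} (which forces the M\"uger center of the fiber to be trivial).

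The main technical obstacle is the \emph{identification} step in paragraph two: checking that the quasi-Hopf algebra $u_q$ produced by the abstract fiber construction of Theorem \ref{thm:1} coincides, as a genuine Hopf algebra with its standard braided structure, with the Hopf algebra obtained by quotienting $U_q$ by the Frobenius center. Equivalently, one must verify that the associator produced by the pushout is trivializable in this regime, and that no gauge ambiguity survives. Under the coprimality hypotheses this amounts to showing that the obstruction cocycle (valued in the units of $\mathscr{O}(\dG)$-comodules) splits because the relevant torsion in the cocharacter lattice of the maximal torus is coprime to $\opn{ord}(q)$, which is precisely where the coprimality with $\det(\text{Cartan})$ and the lacing number enters. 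Once this triviality is in hand, the equivalence \eqref{eq:193} and modularity both follow without further input.
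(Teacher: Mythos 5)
Your overall plan---trivialize the twist using the coprimality hypotheses, de-equivariantize, identify the result with Lusztig's $u_q$, and check that the $R$-matrix descends---captures the right shape of the argument and is in the spirit of Arkhipov-Gaitsgory.  However, there are two concrete gaps in the way you have executed it.

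First, you assert that quantum Frobenius furnishes a central Hopf subalgebra $Z_{\text{Fr}}\subset U_q$ with $Z_{\text{Fr}}\cong\mathscr{O}(\dG)$, and identify the fiber with $\Rep(U_q\ot_{Z_{\text{Fr}}}k)$.  This is the picture for the De Concini-Kac form of the quantum enveloping algebra, but it does \emph{not} hold for Lusztig's divided power algebra $U_q$, which is the algebra used to define $\Rep(G_q)$ here.  In the divided power form, quantum Frobenius is a Hopf surjection $fr^\ast:\dot{U}_q\to\dot{U}^\ast_\varepsilon$, not an inclusion of a central Hopf subalgebra, and $\mathscr{O}(\dG)$ arises only as a commutative algebra object in $\Rep(G_q)$ via $\opn{Fr}_\kappa$.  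To make your argument run you would need to first pass between the De Concini-Kac and divided power formulations (and check that their integrable module categories agree at these parameters), which is a nontrivial step you have not supplied.  The paper sidesteps this entirely: it observes that under these hypotheses one can take $\kappa=1$, $\psi=1$ in the construction of Section \ref{sect:uqkappa}, that $X^{\opn{Tan}}=X^{\text{\rm M\"ug}}=lQ$, and then computes the grouplikes directly to see that the resulting coideal subalgebra $u_q\subseteq u_{q,\kappa}$ is exactly Lusztig's algebra generated by the $K_\alpha$, $E_\alpha$, $F_\alpha$.  The $R$-matrix observation you make at the end then gives the braided equivalence via Proposition \ref{prop:2464}, which matches the paper.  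Your ``obstruction cocycle'' paragraph is replaced in the paper by this much more elementary and explicit identification.

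Second, the citation of Theorem \ref{thm:1} for non-degeneracy is incorrect: that theorem establishes only finiteness, integrality, and the Frobenius-Perron dimension of the fiber, not triviality of its M\"uger center.  Non-degeneracy in the adjoint odd-order coprime case follows instead from the equality $X^{\opn{Tan}}=X^{\text{\rm M\"ug}}$ (Example \ref{ex:odd}), which forces the Tannakian and M\"uger centers in $\Rep(G_q)$ to coincide, and then the same de-equivariantization argument used in the proof of Theorem \ref{thm:2} applies.  Your alternative route, citing classical factorizability of Lusztig's $u_q$ at these parameters, is fine but would need a reference.
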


The calculation \eqref{eq:193} is recovered in Section \ref{sect:lusztig_intheend}, for the sake of completeness.  We now turn our attention to our mysterious friend, $\opn{Tann}_q$.

\subsection{The Tannakian center and quantum Frobenius}
\label{sect:tann_intro}

While the primary goal of this work is to describe the fiber category $\opn{Vect}\ot_{\opn{Tann}_q}\Rep(G_q)$, we are also interested in understanding the Tannakian center in $\Rep(G_q)$ and its relation to quantum Frobenius \cite{lusztig90II,lusztig93}.
\par

To recall, at any pairing of $G$ with a root of unity $q$ we have Lusztig's quantum Frobenius functor
\begin{equation}\label{eq:203}
\opn{Fr}:\Rep(G^\ast_\varepsilon)\to \Rep(G_q),
\end{equation}
where $G^\ast$ is a semisimple dual group to $G$ and $\varepsilon$ is a dual parameter to $q$.  This functor is a braided tensor embedding, and so is an equivalence onto a tensor subcategory in $\Rep(G_q)$ which is stable under taking subquotients.
\par

The category $\Rep(G^\ast_\varepsilon)$ does not have central image in $\Rep(G_q)$, except in very special cases, and it is not even a symmetric tensor category in general.  However, as we show in Proposition \ref{prop:tannakiancenter}, there is a unique minimal quotient $G^\ast\to \dG$ under which $\Rep(\dG_\varepsilon)$ becomes Tannakian and sits centrally in $\Rep(G_q)$.  Indeed, quantum Frobenius restricts to an equivalence
\[
\opn{Fr}:\Rep(\dG_\varepsilon)\overset{\sim}\to \opn{Tann}_q\subseteq \Rep(G_q)
\]
in this case.
\par

Now, despite the fact that $\Rep(\check{G}_\varepsilon)$ is symmetric, its $R$-matrix does not always vanish.  This is to say, $\Rep(\check{G}_\varepsilon)$ still appears as representations of a \emph{quantum} rather than classical group.  However, we show that one can always nullify the braiding on this category via an explicit tensor equivalence.

\begin{theoremA}[\ref{thm:Z_classical}]
There is a symmetric tensor equivalence $F_\kappa:\Rep(\dG)\overset{\sim}\to \Rep(\dG_\varepsilon)$ and subsequent tensor embedding $\opn{Fr}_{\kappa}:\Rep(\dG)\to \Rep(G_q)$.  This embedding restricts to a symmetric tensor equivalence onto the Tannakian center
\begin{equation}\label{eq:222}
\opn{Fr}_{\kappa}:\Rep(\dG)\overset{\sim}\to \opn{Tann}_q.
\end{equation}
\end{theoremA}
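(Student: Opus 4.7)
Since Proposition \ref{prop:tannakiancenter} already furnishes the symmetric equivalence $\opn{Fr}:\Rep(\dG_\varepsilon)\overset{\sim}\to\opn{Tann}_q$, my plan is to reduce the theorem to producing a symmetric tensor equivalence $F_\kappa:\Rep(\dG)\overset{\sim}\to\Rep(\dG_\varepsilon)$ and then setting $\opn{Fr}_\kappa:=\opn{Fr}\circ F_\kappa$. The functor $F_\kappa$ will be constructed as a rescaling of the identity functor between the underlying tensor categories, via an explicit toral cochain on the weight lattice of $\dG$.

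The first step is to identify the symmetric $R$-matrix on $\Rep(\dG_\varepsilon)$ inherited from Lusztig's $R$-matrix on $\Rep(G_q)$. The nilpotent summands of Lusztig's $R$-matrix act trivially on modules in the image of quantum Frobenius, so the braiding on $\Rep(\dG_\varepsilon)$ reduces to its toral component, acting on weight vectors $v_\mu\otimes w_\nu$ by a scalar $\sigma(\mu,\nu)=q^{B(\mu,\nu)}$ for a specific symmetric bilinear form $B$ on the weight lattice $\check\Lambda$ of $\dG$. Triangularity of the braiding ($R_{21}R=1$) combined with symmetry of $B$ then forces $\sigma(\mu,\nu)\in\{\pm 1\}$.

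Next I would produce a function $\kappa:\check\Lambda\to k^\times$ whose symmetric coboundary $\partial\kappa(\mu,\nu):=\kappa(\mu)\kappa(\nu)\kappa(\mu+\nu)^{-1}$ recovers $\sigma$. Concretely, one picks an integer-valued quadratic refinement $Q$ of the sign form on $\check\Lambda$ and sets $\kappa(\mu):=\zeta^{Q(\mu)}$ for a suitable $4$th root of unity $\zeta$; existence of $Q$ reduces to a standard lattice argument using that $\check\Lambda$ is free abelian of finite rank. The functor $F_\kappa$ is then defined to be the identity on objects and morphisms, equipped with the tensor structure $J_\kappa(v_\mu\otimes w_\nu):=\kappa(\mu+\nu)\kappa(\mu)^{-1}\kappa(\nu)^{-1}\cdot (v_\mu\otimes w_\nu)$.

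A direct computation then shows that $J_\kappa$ exactly cancels the toral braiding $\sigma\cdot\tau$ in $\Rep(\dG_\varepsilon)$, so that $F_\kappa$ intertwines the ordinary flip on $\Rep(\dG)$ with the $R$-matrix braiding on $\Rep(\dG_\varepsilon)$; in particular $F_\kappa$ is a symmetric tensor equivalence, and composing with $\opn{Fr}$ yields \eqref{eq:222}. The main technical obstacle will be in the first step — pinning down the toral pairing $B$ precisely, and verifying that it factors through $\check\Lambda$ rather than the weight lattice of the larger group $G^\ast$. This lattice bookkeeping is what distinguishes the present setting, particularly at even $\opn{ord}(q)$, from the classical odd-order situation, and is what forces the introduction of a nontrivial toral twist $\kappa$ in the first place.
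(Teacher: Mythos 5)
There is a genuine gap in the second and third paragraphs of your proposal, and it is a structural one, not a bookkeeping one. You propose to take $\kappa$ to be a \emph{function} on the lattice (a $1$-cochain built from a quadratic refinement $Q$) and to use the \emph{symmetric} $2$-coboundary
\[
J_\kappa(\mu,\nu) = \kappa(\mu+\nu)\,\kappa(\mu)^{-1}\kappa(\nu)^{-1}
\]
as the tensor structure for $F_\kappa$. But for a functor from $(\Rep(\dG),\text{flip})$ to $(\Rep(\dG_\varepsilon),\ c)$ with $c(v\ot w)=\sigma(\mu,\nu)\,w\ot v$, the braided-compatibility condition $J_{\nu,\mu}\circ c = F(\text{flip})\circ J_{\mu,\nu}$ forces
\[
\sigma(\mu,\nu) = J(\mu,\nu)/J(\nu,\mu),
\]
the \emph{antisymmetrization} of $J$. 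Since your $J_\kappa$ is manifestly symmetric in $(\mu,\nu)$, its antisymmetrization is identically $1$; so this can only work when $\sigma$ is trivial, which it is not (e.g.\ for simply-laced $G$ at $q=\exp(\pi i(-,-)/l)$, one finds $\varepsilon(l\alpha,l\beta)=(-1)^{(\alpha,\beta)}=-1$ for adjacent simple roots). The paper avoids this by taking $\kappa$ to be a genuinely \emph{bilinear, alternating} form on $X^{\operatorname{Tan}}$ satisfying $\kappa^2=\varepsilon$ and $\kappa(\lambda,\lambda)=1$; alternating bilinearity gives $\kappa(\nu,\mu)=\kappa(\mu,\nu)^{-1}$, so the tensor structure $J(\mu,\nu)=\kappa(\mu,\nu)$ is antisymmetric and its antisymmetrization is $\kappa^2=\varepsilon$, exactly what is needed.

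There is a second gap, which your closing paragraph slightly misidentifies as the main obstacle. You describe $F_\kappa$ as ``the identity on objects and morphisms.'' But $\Rep(\dG)$ and $\Rep(\dG_\varepsilon)$ are representation categories of \emph{different} algebras ($U(\check{\mathfrak{g}})$ vs.\ $U_\varepsilon(\check{\mathfrak{g}})$), so no identity functor exists. The paper constructs the underlying linear equivalence by rescaling the root vectors $e_\alpha\mapsto M_\alpha e_\alpha$, $f_\alpha\mapsto \varepsilon_\alpha M_\alpha f_\alpha$ with $M_\alpha(\lambda)=\kappa^{-1}(\alpha,\lambda)$, and then checking at length (the bulk of Appendix A, by cases on relative root lengths) that the rescaled operators are primitive for the twisted coproduct and satisfy the \emph{classical} Serre relations of $\check{\mathfrak{g}}$. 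That verification, not the identification of the toral pairing $B$, is the real technical content, and your sketch does not account for it. Your first paragraph (reduce to constructing $F_\kappa$ and compose with the existing Frobenius equivalence onto $\operatorname{Tann}_q$) is the same reduction the paper makes and is fine.
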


The equivalence $F_\kappa$ is constructed, in part, from a choice of bilinear form $\kappa$ on the character lattice for $\dG$, and this form gives $F_\kappa$ a non-trivial tensor structure.  Furthermore, classical Tannakian reconstruction \cite{delignemilne82} tells us that the algebraic group $\dG$ is determined, up to isomorphism, by the braided tensor category $\Rep(G_q)$.  Hence this dual group $\dG$ is an \emph{invariant} of the category of quantum group representations.

\begin{remark}
When $G$ is simply-connected and $q$ is of even order, as in Theorem \ref{thm:2}, $\dG$ is precisely the Langlands dual group to $G$.  Furthermore in this case the Tannakian center is equal to the M\"uger center in $\Rep(G_q)$.  Such an identification of centers is required for modularity of the fiber category.  See Theorem \ref{thm:sc} and Lemma \ref{lem:1703}.
\end{remark}

As an important consequence of the identification \eqref{eq:222} we obtain an algebraic action of the dual group $\dG$ on the fiber $\opn{Vect}\ot_{\opn{Tann}_q}\Rep(G_q)\cong \Rep(u_q)$. Under this action the category of big quantum group representations is recovered as the subcategory of $\dG$-equivariant objects in $\Rep(u_q)$.

\begin{theoremA}[\ref{thm:act}]
There is an algebraic action of $\dG$ on $\Rep(u_q)$ under which we have a braided tensor equivalence
\begin{equation}\label{eq:236}
\Rep(G_q)\overset{\sim}\to \Rep(u_q)^{\dG}.
\end{equation}
\end{theoremA}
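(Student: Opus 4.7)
The plan is to apply the general braided de-equivariantization/equivariantization machinery of Brugui\`eres and M\"uger to the central Tannakian embedding $\opn{Fr}_\kappa:\Rep(\dG)\hookrightarrow\Rep(G_q)$ produced by Theorem \ref{thm:Z_classical}, which is in essence faithfully flat descent internal to a braided tensor category. To set it up, I first present the fiber in algebra-internal form: the coordinate ring $\O(\dG)$, viewed as an ind-object of $\opn{Tann}_q\cong\Rep(\dG)$ under the regular representation, is a commutative central algebra in $\Rep(G_q)$, and a standard pushout computation along the lines of Proposition \ref{prop:bc} identifies
\[
\Rep(u_q)=\opn{Vect}\ot_{\opn{Tann}_q}\Rep(G_q)\overset{\sim}{\to}\O(\dG)\text{-mod}_{\Rep(G_q)},
\]
with the free-module functor $V\mapsto \O(\dG)\ot V$ serving as the quotient $\Rep(G_q)\to\Rep(u_q)$.

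Next, I construct the $\dG$-action. The group $\dG$ acts on $\O(\dG)$ by right translation, by algebra automorphisms that commute with the left-translation action defining the object $\O(\dG)\in\Rep(\dG)$. Transporting this through $\opn{Fr}_\kappa$ yields a homomorphism of algebraic groups $\dG\to\Aut_\ot\bigl(\O(\dG)\text{-mod}_{\Rep(G_q)}\bigr)$, equivalently a $\Rep(\dG)$-module category structure on $\Rep(u_q)$, which is the algebraic action in the statement.

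The third step is to identify the equivariant subcategory with $\Rep(G_q)$. Each $X\in\Rep(G_q)$ is sent to the $\O(\dG)$-module $\O(\dG)\ot X$ equipped with its canonical $\dG$-equivariance from right translation on the $\O(\dG)$-factor, yielding a braided tensor functor $\Rep(G_q)\to\Rep(u_q)^{\dG}$. A quasi-inverse sends an equivariant module $M$ to its $\dG$-invariants $M^{\dG}$. That these functors are mutually inverse is the Barr--Beck--Lurie theorem applied to the comonad $\O(\dG)\ot-$ acting on $\Rep(u_q)$, and reduces to the statement that $\O(\dG)$ is faithfully flat as an algebra object in $\Rep(G_q)$.

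The main obstacle is precisely this last step. De-equivariantization is cleanest in the fusion setting, whereas here $\Rep(G_q)$ is locally finite of infinite rank and not semisimple, and $u_q$ is only quasi-Hopf. I would handle this by working ind-categorically and applying a version of Barr--Beck--Lurie suited to exact, colimit-preserving comonadic adjunctions; faithful flatness of $\O(\dG)$ in $\opn{Tann}_q$ (as the regular representation of $\dG$) propagates to $\Rep(G_q)$ through the exact braided embedding $\opn{Fr}_\kappa$, so the comonadicity hypotheses are met. Once comonadicity is in hand, the braided refinement of \eqref{eq:236} follows formally from the centrality of the image of $\opn{Fr}_\kappa$.
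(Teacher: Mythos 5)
Your approach is essentially the paper's. Both identify the fiber with $\O(\dG)\text{-mod}_{\Rep(G_q)}$, construct the $\dG$-action by transporting the Hopf structure on $\O(\dG)$ (right translation, dually the comultiplication $\Delta:\O(\dG)\to\O(\dG)\ot\O(\dG)$) into $\Rep(G_q)$ through $\opn{Fr}_\kappa$, and recover $\Rep(G_q)$ as the equivariant subcategory by faithfully flat descent for the internal commutative Hopf algebra $\O(\dG)$; the paper packages that last step as the fundamental theorem of Hopf modules, which is what your Barr--Beck--Lurie argument for the cofree-comodule comonad amounts to here. One small terminological correction: in the convention of Section~\ref{sect:dG_action}, a rational action of $\dG$ on a presentable category is a coassociative $\QCoh(\dG)$-\emph{coaction}, i.e.\ a comodule structure over the commutative Hopf algebra $(\QCoh(\dG),\ot_{\O_{\dG}})$; this is not the same data as a $\Rep(\dG)$-module category structure, so the clause ``equivalently a $\Rep(\dG)$-module category structure on $\Rep(u_q)$'' should instead read ``equivalently a $\QCoh(\dG)$-coaction on $\Rep(u_q)$.''
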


As we recall in Section \ref{sect:dG_action}, the equivalence \eqref{eq:236}
follows by a known calculus of equivariantization de-equivariantization which was established by Arkhipov and Gaitsgory \cite{arkhipovgaitsgory03} (see also \cite{dgno10}).

\begin{remark}
Our desire to realize this $\dG$-action on the category of small quantum group representations, in type $A_n$ at general even order $q$, was the impetuous for the present paper.  Indeed, this case was not covered in \cite{negron21}, but was the specific case of interest in recent work of Creutzig, Dimofte, Garner, and Geer on ``derived" TQFTs and quantum group representations \cite{creutzigdimoftegarnergeer}.  See also the recent contribution from Gaiotto, Moore, Neitzke, and Yan \cite{gaiottomooreneitzkeyan}, and work in progress from Feigin, Gukov, and Reshetikhin \cite{fgr}.
\end{remark}

\subsection{Related works}
\label{sect:related}

This paper completes the author's project from \cite{negron21}, where Theorem \ref{thm:2} was established under some nontrivial restrictions on $q$.  Roughly speaking, the presentation of \cite{negron21} applies at around $1/4$ of all possible values for $q$.  Similar works in this vein include those of Creutzig-Gainutdinov-Runkel \cite{creutziggainutdinovrunkel20}, Gainutdinv-Lenter-Ohrmann \cite{gainutdinovlentnerohrmann}, and Creutzig-Lentner-Rupert \cite{creutziglentnerrupert2}.  A comparison with \cite{gainutdinovlentnerohrmann}, in particular, can be found in Section \ref{sect:glo}. See also Sections \ref{sect:compI} and \ref{sect:comp}.
\par

Many of the philosophical and technical tools we've employed are adapted from papers of Arkhipov and Gaitsgory \cite{arkhipovgaitsgory03}, Takeuchi \cite{takeuchi79}, Nichols and Zoeller \cite{nicholszoeller89}, and Skryabin \cite{skryabin07}.  We rely on work of Chirvasitu and Johnson-Freyd \cite{chirvasitujfreyd13} for our global framing of things via presentable categories.

\subsection{Acknowledgements}

Thanks to Henning Andersen, Tudor Dimofte, Pavel Etingof, Eric Friedlander, Dennis Gaitsgory, David Jordan, Simon Lentner, Julia Pevtsova, Julia Plavnik, and Hans Wenzl for helpful discussions on topics related to this work.  The author was supported by NSF Grant No.\ DMS-2149817, NSF CAREER Grant No.\ DMS-2239698, and Simons Collaboration Grant No.\ 999367.

\bgroup\let\addcontentsline=\nocontentsline\section*{$1\sfrac{3}{4}$.\ Structure of the paper}\egroup

The paper has three main parts,\vspace{2mm}

\begin{tabular}{|r|l}\hline
Block 1 & Background on categorical nonsense and base change\\\hline
Block 2 & Quantum groups and quantum group representations\\\hline
Block 3 & Base change formulae for quantum groups\ \ .\\\hline
\end{tabular}
\vspace{2mm}

\noindent To explain things colloquially, the results from Block 3 are obtained by simply mashing together the findings from Block 1 with those from Block 2.  Having established the desired base change results in Block 3, we conclude the text with a recollection of, and elaboration on, the $\dG$-action on the fiber category $\opn{Vect}\ot_{\Rep(\dG)}\Rep(G_q)$.
\par

We expound: The first portion of the paper consists of Sections \ref{sect:pr}--\ref{sect:basechange_results}.  Here we recall the symmetric monoidal structure on the (2-)category of presentable categories.  We follow works of Kelly \cite{kelly82} and Chirvasitu-Johnson-Freyd \cite{chirvasitujfreyd13} in this regard.  In this setting the product $\msc{A}\otimes \msc{B}$ is constructed as the unique presentable category which represents bicocontinuous functors out of $\msc{A}\times\msc{B}$, and a relative version of this construction realizes the product $\msc{A}\otimes_\msc{E}\msc{B}$ of module categories over a given tensor category $\msc{E}$.
\par

In the second portion of the paper, Sections \ref{sect:quantum_groups}--\ref{sect:base_change}, we provide a detailed accounting of the category $\Rep(G_q)$ of integrable quantum group representations, Lusztig's quantum Frobenius functor, and the $R$-matrix for $\Rep(G_q)$.  In particular, we explain in Section \ref{sect:Tan} how one employs quantum Frobenius to calculate the Tannakian center in $\Rep(G_q)$.  We also introduce an auxiliary small quantum algebra in Section \ref{sect:sqa} which plays a key technical role in our analysis, and provide an intermediate base change result for this algebra in Section \ref{sect:base_change}.
\par

Sections \ref{sect:modular}--\ref{sect:dG_action} constitute the final portion of the paper.  We first apply results from Sections \ref{sect:quantum_groups}--\ref{sect:base_change} to obtain an abstract calculation of the fiber category $\opn{Vect}\ot_{\Rep(\dG)}\Rep(G_q)$.  In particular, we show in Theorems \ref{thm:1} and \ref{thm:2} that this fiber is a finite, integral, braided tensor category, and furthermore modular in the simply-connected even order setting.  In Section \ref{sect:smallquantumgroup}, we show that the aforementioned fiber category is identified with representations for a certain finite-dimensional quasitriangular quasi-Hopf algebra $u_q$.
\par

As we explain in Section \ref{sect:smallrep}, the representation theory for the algebra $u_q$ is regulated by the usual ``quantum group dynamics", just as in the foundational works of Lusztig \cite{lusztig88}, Parshall-Wang \cite{parshallwang91}, and Andersen-Polo-Wen \cite{andersenpolowen91,andersenpolowen92} on the subject.  See Sections \ref{sect:smallrep_1} and \ref{sect:smallrep_2} in particular.
\par

Section \ref{sect:dG_action} is dedicated to a presentation of the action of the dual group  $\dG$ on the fiber $\opn{Vect}\ot_{\Rep(\dG)}\Rep(G_q)$, following \cite{arkhipovgaitsgory03}, and the subsequent calculus of equivariantization and de-equivariantization which connects the categories of big and small quantum group representations.  We explain how such phenomena occur both in concrete and abstract terms.  In Appendices \ref{sect:A} and \ref{sect:B} we cover some technical details for Theorem \ref{thm:1021} and enumerate some basic facts concerning connected group actions on presentable abelian categories.

\subsection{Frequently used symbols}
\hspace{1mm}
\begin{multicols}{2}
\begin{itemize}
\item $G$, $X$ \hfill \S\ \ref{sect:c_data}\vspace{1mm}
\item $q$, $q_\alpha$, $l_\gamma$ \hfill \S\ \ref{sect:q_data}\vspace{1mm}
\item $G^\ast$, $X^\ast$, $\varepsilon$\hfill  \S\ \ref{sect:OGdual}\vspace{1mm}
\item $X^{\text{M\"ug}}$, $X^{\opn{Tan}}$\hfill \S\ \ref{sect:623}\vspace{1mm}
\item $\dG$, $\kappa$, $\opn{Fr}_\kappa$\hfill \S\ \ref{sect:classic_Z}\vspace{1mm}
\item $\Sigma$, $\opn{rad}(q,\kappa)$\hfill \S\ \ref{sect:summary}\vspace{1mm}
\item $u_{q,\kappa}$\hfill \S\ \ref{sect:sqa2}\vspace{1mm}
\item $\overline{\opn{Fr}}_\kappa$\hfill \S\ \ref{sect:vq}\vspace{1mm}
\item $u_q$\hfill \S\ \ref{sect:uqkappa}
\end{itemize}
\end{multicols}

\setcounter{tocdepth}{1}
\tableofcontents

\section{Tensorial backgrounds}
\label{sect:pr}

Throughout $k$ is an algebraically closed field of characteristic $0$.  By a vector space, algebra, scheme, etc.\ we mean a $k$-vector space, $k$-algebra, $k$-scheme, etc.  We recall some basic notions for presentable categories, and (braided) tensor categories.

\subsection{Key}

Our standard reference for monoidal categories is \cite{egno15}.  We take the following notions for granted:
\begin{itemize}
\item Monoidal categories, braided monoidal categories, and symmetric monoidal categories \cite[Definitions 2.1.1, 8.1.1, 8.1.12]{egno15}.
\item The Drinfeld center of a monoidal category \cite[Definition 7.13.1]{egno15}, and central functors $F:\msc{E}\to \msc{A}$ from a braided monoidal category $\msc{E}$ to an arbitrary monoidal category $\msc{A}$ \cite[Definition 8.8.6]{egno15}.
\item (Bi)module categories over a monoidal category, and (bi)module category functors \cite[Definitions 7.1.2 \& 7.2.1]{egno15}.
\item Frobenius-Perron dimensions for objects and tensor categories \cite[\S\ 4.5]{egno15}.
\end{itemize}

We let $\opn{Vect}$ and $\opn{sVect}$ denote the symmetric monoidal categories of arbitrary vector spaces and super vector spaces over $k$, respectively \cite[Example 8.2.2]{egno15}.  The symbol $\ot$ denotes the product operation on a generic monoidal category $\msc{A}$, without further decorations, and we let $\ot_k$ denote the action bifunctor on a $\opn{Vect}$-module category.  The braiding on a braided monoidal category $\msc{E}$ is generally denoted by a roman $c$, $c_{V,W}:V\ot W\overset{\sim}\to W\ot V$.

\subsection{Representations and corepresentations}

All modules over an algebra $A$ are \emph{left} modules, unless otherwise specified, and all comodules over a coalgebra $C$ are \emph{right} comodules, unless otherwise specified.  By an $A$-representation we mean a (left) $A$-module $V$ which is the sum of its finite-dimensional submodules.  By a $C$-corepresentation $W$ we simply mean a (right) $C$-comodule, and we note that $W$ is automatically the sum of its finite-dimensional subcomodules \cite[Theorem 5.1.1]{montgomery93}.  Morphisms between (co)representations are arbitrary maps of (co)modules, and we let $\Rep(A)$ and $\opn{Corep}(C)$ denote the categories of $A$-representations and $C$-corepresentations respectively.
\par

When $A$ is finite-dimensional and $C=A^\ast$ we have a natural identification of categories $\Rep(A)=\opn{Corep}(C)$.  See \cite[\S\ 1.6]{montgomery93} for more details.

\subsection{Presentable categories}

We recall that a presentable category (also called a \emph{locally presentable category}) is a cocomplete category $\msc{C}$ which is $\lambda$-compactly generated for some regular cardinal $\lambda$.  For an expanded definition one can see \cite{kelly82,adamekrosicky94}, or \cite{chirvasitujfreyd13}.
\par

From a practical perspective, any cocomplete abelian category which is generated by a small collection of compact object is presentable.  This covers all categories of sheaves, representations, corepresentations, modules, etc.\ which your everyday representation theorist might encounter in practice, and in particular covers all categories which appear in this text.
\par

The following definition is standard.

\begin{definition}
A cocontinuous functor between presentable categories $\msc{M}$ and $\msc{N}$ is a functor $F:\msc{M}\to \msc{N}$ which commutes with arbitrary (small) colimits.
\end{definition}

\subsection{Presentable monoidal categories}
\label{sect:pmon}

A presentable monoidal category is a presentable category with a monoidal structure for which the product operation $\msc{A}\times \msc{A}\to \msc{A}$ commutes with small colimits in each factor.
\par

A presentable left (resp.\ right) module category over a presentable monoidal category $\msc{A}$ is a presentable category $\msc{M}$ which is equipped with an associative action map $\msc{A}\times \msc{M}\to \msc{M}$ (resp.\ $\msc{M}\times \msc{A}\to \msc{M}$) which commutes with small colimits in each factor.

\subsection{Linear categories}

For us, a presentable linear category is a presentable module category over $\opn{Vect}$.  We let $\ot_k$ denote the $\opn{Vect}$-action on a given linear category $\msc{M}$.  One can check that our notion of a presentable linear category agrees with the standard notion of a presentable $\opn{Vect}$-enriched category.

\begin{lemma}
For any linear category $\msc{M}$, the morphisms $\Hom_{\msc{M}}(X,Y)$ admit a unique $k$-vector space structure under which we have an adjunction
\[
\Hom_{\msc{M}}(V\ot_k X,Y)\cong \Hom_{\opn{Vect}}(V,\Hom_{\msc{C}}(X,Y))
\]
and under which composition becomes bilinear.  Furthermore, for any linear functor $F:\msc{M}\to \msc{N}$, i.e.\ $\opn{Vect}$-module category functor, the induced maps
\[
F:\Hom_{\msc{M}}(X,Y)\to \Hom_{\msc{N}}(FX,FY)
\]
are all maps of $k$-vector spaces.
\par

The resulting functor from the $2$-category of presentable linear categories to presentable $\opn{Vect}$-enriched categories is an equivalence.
\end{lemma}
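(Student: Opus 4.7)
The proof organizes around the observation that since $\msc{M}$ is a presentable module over the presentable monoidal category $\opn{Vect}$, for each fixed $X\in\msc{M}$ the functor
\[
(-)\ot_k X:\opn{Vect}\to\msc{M}
\]
is cocontinuous between presentable categories. The plan is to first use this to extract the required vector space structure, then propagate it to functors, and finally promote the correspondence to an equivalence of $2$-categories.

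First I would invoke the adjoint functor theorem for presentable categories to obtain a right adjoint $\underline{\Hom}_\msc{M}(X,-):\msc{M}\to\opn{Vect}$ to the action functor $(-)\ot_k X$. The counit then gives a natural isomorphism
\[
\Hom_\msc{M}(V\ot_k X,Y)\cong \Hom_{\opn{Vect}}(V,\underline{\Hom}_\msc{M}(X,Y)).
\]
Taking $V=k$ and using the unit constraint $k\ot_k X\cong X$ identifies the set $\Hom_\msc{M}(X,Y)$ with the vector space $\underline{\Hom}_\msc{M}(X,Y)$; I would declare the vector space structure on $\Hom_\msc{M}(X,Y)$ to be the transferred one. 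Uniqueness of this structure follows from the fact that for any vector space structure on $\Hom_\msc{M}(X,Y)$ making the stated adjunction hold, scalar multiplication by $\lambda\in k$ must correspond to pullback along the endomorphism $\lambda\cdot\opn{id}_k$ of the unit, so the structure is forced by the $\opn{Vect}$-action. Bilinearity of composition follows from naturality of the adjunction in both $X$ and $Y$ together with the coherence of the module structure.

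Next, for a $\opn{Vect}$-module functor $F:\msc{M}\to\msc{N}$, the structure isomorphism $F(V\ot_k X)\cong V\ot_k FX$ identifies the adjunctions for $X$ and $FX$, and consequently the induced map on $\Hom$-sets is intertwined with the linear map $\underline{\Hom}_\msc{M}(X,Y)\to\underline{\Hom}_\msc{N}(FX,FY)$ obtained from the adjoint structure. This forces $F$ to act by $k$-linear maps on morphisms.

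Finally, to promote this to an equivalence of $2$-categories, I would check essential surjectivity by producing, for any presentable $\opn{Vect}$-enriched category $\msc{C}$, a canonical $\opn{Vect}$-module structure: namely, define $V\ot_k X$ as the enriched tensor (copower) of $X$ by $V$, which exists because presentability in the enriched sense includes all conical and weighted colimits. The compatibility isomorphisms $(V\ot_k W)\ot_k X\cong V\ot_k(W\ot_k X)$ and $k\ot_k X\cong X$ follow from uniqueness of such tensors. The construction of the vector space structure above, applied to this module, recovers the original $\opn{Vect}$-enrichment, so the two $2$-functors are mutually inverse up to canonical equivalence. Fullness and faithfulness on $1$- and $2$-morphisms reduce to the linearity assertion of the previous paragraph together with the observation that a $\opn{Vect}$-enriched natural transformation automatically commutes with the action on copowers. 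The main technical subtlety I anticipate is just organizing the bookkeeping for the enriched vs.\ ordinary colimit notions in the last step; everything else is formal consequences of the adjoint functor theorem.
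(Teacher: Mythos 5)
The paper offers no argument at all for this lemma; it simply remarks that the proof is ``by abstract nonsense'' and cites Schauenburg \cite[Lemma 2.2.2]{schauenburg92}. Your proposal is a correct unpacking of exactly that abstract nonsense: appeal to the adjoint functor theorem to produce the internal $\Hom$ object $\underline{\Hom}_{\msc{M}}(X,-)$ as right adjoint to $(-)\ot_k X$, transfer the $k$-vector space structure along $\Hom_{\msc{M}}(X,Y)\cong\Hom_{\opn{Vect}}(k,\underline{\Hom}_{\msc{M}}(X,Y))$, build the enriched composition $\underline{\Hom}_{\msc{M}}(Y,Z)\ot_k\underline{\Hom}_{\msc{M}}(X,Y)\to\underline{\Hom}_{\msc{M}}(X,Z)$ from the counit, use the module-functor structure maps $F(V\ot_k X)\cong V\ot_k FX$ to make $F$ linear on $\Hom$-sets, and invert the construction by defining the $\opn{Vect}$-action on an enriched category via copowers. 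This is the same route Schauenburg takes, so there is nothing here the paper would disagree with.

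One small point worth tightening: in your uniqueness argument you observe that naturality in $V$ forces scalar multiplication by $\lambda$ to coincide with pullback along $\lambda\cdot\opn{id}_k$, but you should also note that \emph{addition} is forced by the same mechanism, using naturality along the fold map $\nabla\colon k\oplus k\to k$ together with the isomorphism $(k\oplus k)\ot_k X\cong X\sqcup X$ (which exists because $\msc{M}$ is presentable and $(-)\ot_k X$ is cocontinuous). Without this, you have only shown the $k^\times$-action is determined, not the full vector space structure. Everything else is fine.
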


The proof is by abstract nonsense, and appears in \cite[Lemma 2.2.2]{schauenburg92} for example.

\subsection{Tensor categories and their module categories}

A tensor category is a presentable abelian monoidal category $\msc{A}$ which satisfies the following:
\begin{itemize}
\item $\msc{A}$ is compactly generated, and the compact objects $\msc{A}^c$ in $\msc{A}$ form an essentially small abelian monoidal subcategory.
\item The compact and rigid objects in $\msc{A}$ agree.
\item $\msc{A}$ comes equipped with a cocontinuous central monoidal functor $\opn{unit}_{\msc{A}}:\opn{Vect}\to \msc{A}$ (so that $\msc{A}$ becomes a presentable linear category via the induced action of $\opn{Vect}$).
\item $\msc{A}^c$ is a locally finite category over $k$, in the sense that all objects are of finite length and $\Hom$ sets are finite-dimensional vector spaces.
\item The unit object $\1$ in $\msc{A}$ is simple.
\end{itemize}
The final three points just say that $\msc{A}^c$ is a tensor category in the usual sense of \cite{egno15}.
\par

A tensor functor between tensor categories is an exact, cocontinuous, monoidal functor $F:\msc{A}\to \msc{B}$ which is paired with a choice of natural isomorphism $F\opn{unit}_{\msc{A}}\cong \opn{unit}_{\msc{B}}$ of central monoidal functors from $\opn{Vect}$.  Note that any tensor functor $F$ must preserve rigid objects, via monoidality, and hence must preserve compact objects.  So we obtain an equivalence
\[
\{\text{Our tensor categories}\}\overset{\sim}\to \{\text{\cite{egno15}'s tensor categories}\},\ \ \msc{A}\mapsto \msc{A}^c,
\]
whose inverse is given by taking the Ind-category.

\begin{definition}[{\cite{etingnikshychostrik05,etingofostrik04}}]
A tensor category is called finite if it admits a compact projective generator.  A tensor category is called fusion if it is finite and semisimple.
\end{definition}

All module categories $\msc{M}$ over a given tensor category $\msc{A}$ are assumed to be presentable and abelian, and to furthermore satisfy the following (cf.\ \cite[Definition 7.3.1]{egno15}):
\begin{itemize}
\item $\msc{M}$ is compactly generated, and the compact objects $\msc{M}^c$ form an essentially small abelian subcategory in $\msc{M}$.
\item $\msc{M}^c$ is locally finite over $k$.
\item The action functor $\msc{M}\times \msc{A}\to \msc{M}$ (or $\msc{A}\times \msc{M}\to \msc{M}$) is exact in each factor.
\end{itemize}
Any module category over a tensor category $\msc{A}$ is again linear, via the central embedding $\opn{Vect}\to \msc{A}$ and subsequent action of $\opn{Vect}$, and we use this linear structure to assess local finiteness of $\msc{M}^c$.  Functors between such module categories are just cocontinuous module category functors, i.e.\ maps of presentable module categories over $\msc{A}$.
\par

We note that the action of $\msc{A}$ on $\msc{M}$ restricts to an action of $\msc{A}^c$ on $\msc{M}^c$, since all of the compact objects in $\msc{A}$ are rigid.  However, since a given module category functor $F:\msc{M}\to \msc{N}$ needn't preserve compact objects, restriction to the compacts is not a (2-)functorial operation in general.

\subsection{Special functors}
\label{sect:surj_embed}

Let $\msc{M}$ and $\msc{N}$ be presentable \emph{abelian} categories.  For an exact cocontinuous functor $F:\msc{M}\to \msc{N}$ we take $\langle F(\msc{M})\rangle$ to be the full subcategory in $\msc{N}$ with objects
\[
\{Y\in \msc{N}:Y\text{ is a subquotient of $F(X)$, for some $X$ in }\msc{M}\}.
\]
We say $F$ is surjective if $\langle F(\msc{M})\rangle=\msc{N}$.  We say $F$ an embedding if it restricts to an equivalence $F:\msc{M}\overset{\sim}\to \langle F(\msc{M})\rangle$.

To expand on the latter definition, an exact cocontinuous functor $F$ is an embedding provided it is fully faithful and for any $X$ in $\msc{M}$, and injection $Y\to F(X)$, there exists an injective map $Y'\to X$ in $\msc{M}$ which fits into a diagram
\[
\xymatrixrowsep{4mm}
\xymatrix{
Y\ar[rr]^\cong\ar[dr] & & F(Y')\ar[dl]\\
 & F(X) & .
}
\]
Via exactness we can similarly lift quotients of $F(X)$ along $F$ in this case.

\subsection{Symmetric \& Tannakain categories}
\label{sect:symmtann}

We say a tensor category $\msc{A}$ is of subexponential growth if any compact object $V$ in $\msc{A}$ satisfies $\opn{length}(V^{\ot n})\leq d^n_V$, for some fixed positive number $d_V$ which depends on $V$ and all $n\geq 0$.  In practice one can simply observe the following.

\begin{lemma}[{\cite[Lemma 9.11.3]{egno15}}]\label{lem:675}
If $\msc{A}$ admits a tensor functor $F:\msc{A}\to \msc{B}$ to a finite tensor category $\msc{B}$, then $\msc{A}$ is of subexponential growth.  If $\msc{A}$ admits a tensor functor $F:\msc{A}\to \msc{B}'$ to a tensor category $\msc{B}'$ which is of subexponential growth, then $\msc{A}$ is of subexponential growth.
\end{lemma}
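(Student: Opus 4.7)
The plan is to reduce both assertions to bounding the length of $F(V)^{\otimes n}$ inside the target category, using that the tensor functor $F$ preserves length up to a non-decreasing comparison. Throughout, take a compact $V$ in $\msc{A}$; since tensor functors are monoidal and preserve compact (equivalently rigid) objects, the object $F(V)^{\otimes n}=F(V^{\otimes n})$ is again compact in the target.

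The first key observation is that $F$ is automatically faithful. Indeed, the kernel of any exact monoidal functor is a tensor ideal, and since the unit $\1$ in $\msc{A}$ is simple, $F(\1)\cong \opn{unit}_{\msc{B}}\neq 0$ forces this ideal to vanish. Second, any exact faithful functor between locally finite abelian categories does not decrease length: given a composition series $0=W_0\subsetneq W_1\subsetneq \cdots \subsetneq W_k=W$ in $\msc{A}^c$, exactness produces a filtration of $F(W)$ by the $F(W_i)$, and faithfulness guarantees $F(W_i/W_{i-1})=F(W_i)/F(W_{i-1})$ is nonzero. Hence $\opn{length}(F(W))\geq \opn{length}(W)$. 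Applying this with $W=V^{\otimes n}$ yields
\[
\opn{length}(V^{\otimes n})\ \leq\ \opn{length}\bigl(F(V)^{\otimes n}\bigr).
\]

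For the second statement, the subexponential growth of $\msc{B}'$ directly provides a constant $d_{F(V)}$ with $\opn{length}(F(V)^{\otimes n})\leq d_{F(V)}^{\, n}$, so one simply takes $d_V:=d_{F(V)}$.

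For the first statement, one applies Frobenius-Perron theory in the finite tensor category $\msc{B}$: every simple object in $\msc{B}$ has $\opn{FPdim}\geq 1$ and $\opn{FPdim}$ is additive on short exact sequences, so $\opn{length}(X)\leq \opn{FPdim}(X)$ for all compact $X$ in $\msc{B}$ (see \cite[\S\ 4.5]{egno15}). Multiplicativity of $\opn{FPdim}$ under tensor product then gives
\[
\opn{length}\bigl(F(V)^{\otimes n}\bigr)\ \leq\ \opn{FPdim}\bigl(F(V)^{\otimes n}\bigr)\ =\ \opn{FPdim}\bigl(F(V)\bigr)^n,
\]
so the constant $d_V:=\opn{FPdim}(F(V))$ suffices. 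The only mild technical point is the length-non-decreasing property of faithful exact functors, but this is a direct exercise with composition series as sketched above; the remainder of the argument is purely formal.
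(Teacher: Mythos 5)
Your argument is correct and takes essentially the same approach as the paper: the paper's proof simply states the constants $d_V = \opn{FPdim}(FV)$ (finite case) and $d_V = d_{FV}$ (subexponential-growth case), and you have supplied precisely the standard justifications (faithfulness of tensor functors, length-non-decrease under faithful exact functors, and $\opn{length}\le\opn{FPdim}$ in a finite tensor category) that make those choices work.
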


\begin{proof}
In the first case take $d_V=\opn{FPdim}(FV)$.  In the second case take $d_V=d_{FV}$.
\end{proof}

Via Lemma \ref{lem:675} one sees immediately that all tensor categories considered in this text are of subexponential growth.  The following is Deligne's theorem.

\begin{theorem}[\cite{deligne02}]
Suppose $\msc{E}$ is a symmetric tensor category.  Then $\msc{E}$ admits a symmetric tensor functor $F:\msc{E}\to \opn{sVect}$ if and only if $\msc{E}$ is of subexponential growth.  Furthermore, in this case the functor $F$ is uniquely determined up to an isomorphism of tensor functors.
\end{theorem}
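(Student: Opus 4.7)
The plan splits into the (easy) forward direction, the (hard) reverse direction due to Deligne, and uniqueness.

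For the implication that existence of $F$ forces subexponential growth, I would appeal directly to Lemma~\ref{lem:675}. The category $\opn{sVect}$ is itself of subexponential growth: every compact super vector space $V$ satisfies $\opn{length}(V^{\ot n})\leq\dim(V)^n$. A symmetric tensor functor $F:\msc{E}\to\opn{sVect}$ then transports this bound back to $\msc{E}$ via the lemma, so every compact object of $\msc{E}$ obeys the required inequality.

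For the converse, I would follow Deligne's strategy from \cite{deligne02}. First I would reduce to the essentially small symmetric tensor category $\msc{E}^c$ of compact objects, since any symmetric fiber functor on $\msc{E}^c$ extends uniquely and cocontinuously to $\msc{E}$ via Ind-completion. Next, I would form the semisimplification $\overline{\msc{E}}$ by quotienting out the tensor ideal of negligible morphisms; this quotient inherits subexponential growth and becomes semisimple, and the categorical dimensions of simple objects of $\overline{\msc{E}}$ are forced to be integers by the growth bound together with positivity of Frobenius-Perron dimensions.

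The crux of the argument, and the principal obstacle, is to show that for every compact $X\in\msc{E}^c$ there exists a Young diagram $\lambda$ with $\mbb{S}_\lambda(X)=0$, where $\mbb{S}_\lambda$ denotes the Schur functor indexed by $\lambda$. Here subexponential growth is genuinely used: it bounds the multiplicities with which simple $S_n$-modules appear in the $S_n$-equivariant decomposition of $X^{\ot n}$, and a delicate character-theoretic analysis of these multiplicities through representation theory of the symmetric groups forces some Schur functor to vanish on $X$. From such systematic Schur vanishings, super-Tannakian reconstruction produces an affine super-group scheme $(G,\epsilon)$ with $\epsilon\in G(k)$ of order dividing $2$, together with a symmetric tensor equivalence $\msc{E}^c\simeq \Rep(G,\epsilon)$. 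The desired fiber functor $F$ is then the composition of this equivalence with the forgetful functor $\Rep(G,\epsilon)\to\opn{sVect}$.

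Uniqueness follows from classical super-Tannakian reconstruction once existence is secured: any two symmetric fiber functors $F_1,F_2:\msc{E}\to\opn{sVect}$ become isomorphic after base-change along a $\underline{\Aut}^\ot(F_1)$-torsor over $\Spec(k)$. Since $k$ is algebraically closed of characteristic $0$, every affine super-group scheme torsor over $\Spec(k)$ is trivial, and hence $F_1\cong F_2$ as symmetric tensor functors.
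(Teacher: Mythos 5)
The paper does not prove this theorem; it states it as a citation to Deligne \cite{deligne02} and relies on the external result.  So there is no internal argument to measure you against.  Your sketch is a reasonable high-level outline of Deligne's proof: the forward direction via Lemma~\ref{lem:675} is exactly the right (and essentially trivial) argument, the technical heart of the converse is indeed showing that subexponential growth forces every compact object to be annihilated by some Schur functor $\mbb{S}_\lambda$, and the uniqueness claim does reduce to triviality of torsors under affine (super) group schemes over the algebraically closed field $k$.

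Two points in the outline of the converse are imprecise, and the second one is a genuine logical gap as written.  First, Deligne's argument does not pass through the semisimplification $\overline{\msc{E}}$ or invoke Frobenius--Perron dimensions; FP dimensions are not defined for a general non-finite tensor category, and in any case the integrality of categorical dimensions in Deligne's proof is a \emph{consequence} of the Schur vanishing, not an input to it.  Second, and more seriously, you cannot appeal to super-Tannakian reconstruction ``from the Schur vanishings'' to produce $(G,\epsilon)$ and the equivalence $\msc{E}^c\simeq\Rep(G,\epsilon)$: reconstruction requires a fiber functor as input, and constructing the fiber functor is precisely what is to be shown.  What Deligne actually does after establishing Schur finiteness is build the super fiber functor directly (by producing a suitable commutative ind-algebra in $\msc{E}$ and taking modules over it); reconstruction and the equivalence with $\Rep(G,\epsilon)$ come only afterwards.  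As stated, your sketch uses the conclusion to prove itself at that step.
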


We note that Deligne's theorem fails when the base field is allowed to be of finite characteristic.  So, it is very important that we work in characteristic $0$ here.

\begin{definition}
A symmetric tensor category $\msc{E}$ is called Tannakian if it admits a symmetric tensor functor to $\opn{Vect}$.
\end{definition}

As established in works of Saavedra Rivano, Deligne, and Milne \cite{saavedra72,delignemilne82}, any Tannakian category $\msc{E}$ is recovered as the representation category $\Rep(H)$ for an associated affine algebraic group $H$.  This algebraic group is, in particular, the automorphism group of any choice of a symmetric fiber functor for $\msc{E}$.

\begin{corollary}\label{cor:E_tann}
Suppose $\msc{E}$ is a symmetric tensor category of subexponential growth.  Then $\msc{E}$ admits a unique maximal Tannakian subcategory $\msc{E}_{\opn{Tan}}$.  This subcategory is identified with the kernel of any symmetric fiber functor $F:\msc{E}\to \opn{sVect}$.
\end{corollary}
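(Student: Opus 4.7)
The plan is to construct $\msc{E}_{\opn{Tan}}$ as the kernel of a fiber functor to super vector spaces, and then exploit the uniqueness clause in Deligne's theorem to establish both maximality and independence of the choice of fiber functor.

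First, since $\msc{E}$ has subexponential growth, Deligne's theorem provides a symmetric tensor functor $F:\msc{E}\to\opn{sVect}$. Define
\[
\msc{E}_{\opn{Tan}}:=\{X\in \msc{E}:F(X)\text{ is concentrated in even degree}\},
\]
and observe that this is a tensor subcategory of $\msc{E}$: closure under subquotients follows from exactness of $F$ and the fact that any sub or quotient of an even super vector space is even, while closure under tensor products and duals is immediate from the tensor structure on $F$ and the super sign rule. By construction $F$ restricts to a symmetric tensor functor $F|_{\msc{E}_{\opn{Tan}}}:\msc{E}_{\opn{Tan}}\to \opn{Vect}$, so $\msc{E}_{\opn{Tan}}$ is Tannakian.

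Next I would establish maximality. Let $\msc{E}'\subseteq \msc{E}$ be any Tannakian subcategory, with symmetric fiber functor $F':\msc{E}'\to \opn{Vect}$. Post-composing with the inclusion $\opn{Vect}\hookrightarrow \opn{sVect}$ and comparing with the restriction $F|_{\msc{E}'}$ yields two symmetric tensor functors $\msc{E}'\to\opn{sVect}$. Since $\msc{E}'$ inherits subexponential growth from $\msc{E}$ (via Lemma \ref{lem:675} applied to the inclusion, noting $\msc{E}$ itself has subexponential growth by hypothesis), the uniqueness clause of Deligne's theorem forces these two functors to be isomorphic as symmetric tensor functors. In particular $F|_{\msc{E}'}$ factors through $\opn{Vect}\subseteq \opn{sVect}$, whence $\msc{E}'\subseteq \msc{E}_{\opn{Tan}}$.

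For independence of the choice of $F$, suppose $F_1,F_2:\msc{E}\to\opn{sVect}$ are two symmetric fiber functors. By Deligne's uniqueness they are isomorphic as symmetric tensor functors, so their kernels (in the sense used above) agree. Thus $\msc{E}_{\opn{Tan}}$ depends only on $\msc{E}$, and the kernel description of $\msc{E}_{\opn{Tan}}$ holds for any symmetric fiber functor to $\opn{sVect}$.

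The only subtlety, and hence the main thing to verify with care, is the application of Deligne's uniqueness clause to the subcategory $\msc{E}'$ in the maximality step: one must confirm that the two functors $\msc{E}'\to\opn{sVect}$ being compared are really both symmetric tensor functors (not merely tensor functors) and that subexponential growth is inherited by $\msc{E}'$ in the sense needed to invoke the theorem. Both points are straightforward, but they are the load-bearing ingredients of the argument.
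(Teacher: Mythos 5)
Your proposal is correct and takes essentially the same route as the paper's proof: define $\msc{E}_{\opn{Tan}}$ as the even-degree kernel of a Deligne fiber functor $F:\msc{E}\to\opn{sVect}$, then use the uniqueness clause of Deligne's theorem to show that for any Tannakian $\msc{E}'$ the restriction $F|_{\msc{E}'}$ must agree with the $\opn{Vect}$-valued fiber functor, forcing $\msc{E}'\subseteq\ker(F)$. You are slightly more explicit than the paper in checking that $\ker(F)$ is itself Tannakian and in spelling out independence from the choice of $F$, but the load-bearing step is identical.
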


By the kernel of $F$ we simply mean the preimage of $\opn{Vect} \subseteq \opn{sVect}$ along $F$.

\begin{proof}
Let $\msc{E}'$ be any Tannakian subcategory in $\msc{E}$.  Then any symmetric fiber functor $F:\msc{E}\to \opn{sVect}$ restricts to a fiber functor for $\msc{E}'$.  Hence $\msc{E}'$ lies in the kernel of $F$, by uniqueness of the fiber functor for $\msc{E}'$.  It follows that all Tannakian subcategories in $\msc{E}$ lie in the kernel of $F$.  One sees that $\opn{ker}(F)$ itself is Tannakian, so that the kernel of $F$ is the maximal Tannakian subcategory in $\msc{E}$.  In particular, such a maximal subcategory exists.
\end{proof}

\subsection{M\"uger centers \& Tannakian centers}

Recall that the M\"uger center in a braided tensor category $\msc{A}$ is, by definition, the subcategory of all objects $V$ in $\msc{A}$ which have trivial square braiding,
\[
c^2_{V,-}:=c_{-,V}c_{V,-}=id_{V\ot-}.
\]
Following standard practice we let $Z_2(\msc{A})$ denote the M\"uger center in $\msc{A}$, and note that $Z_2(\msc{A})$ is a symmetric tensor subcategory in $\msc{A}$.\footnote{The notation $Z_3(\msc{A})$ is relatively common as well, to emphasize that the output category is $E_3$ rather than $E_2$.}

\begin{definition}\label{def:Ztann}
Let $\msc{A}$ be a braided tensor category which is of subexponential growth.  The Tannakian center $Z_{\opn{Tan}}(\msc{A})$ in $\msc{A}$ is the maximal Tannakian subcategory in the M\"uger center $Z_2(\msc{A})$.
\end{definition}

\section{Exact sequences and pointed module categories}

We discuss a generalization of Bruguieres and Natale's notion \cite{bruguieresnatale11} of exact sequences for tensor categories.  The model for such a generalized exact sequence is a reduction sequence
\[
\msc{E}\to \msc{A}\to \opn{Vect}\ot_{\msc{E}}\msc{A},
\]
where $\msc{E}\to \msc{A}$ is an embedding of tensor categories and $\opn{Vect}\ot_{\msc{E}}\msc{A}$ inherits a module category structure over $\msc{A}$, but not a tensor structure in general.  (See Section \ref{sect:module_prod}.)

\subsection{Comodule algebras and module coalgebras}

Before beginning our general discussion, we first recall a basic source of examples.
\par

Given a Hopf algebra $A$, a right $A$-comodule algebra is an algebra $B$ which is equipped with an $A$-comodule structure for which the coaction map $\rho:B\to B\ot A$ is a map of algebras.  Similarly, a right $A$-module coalgebra is a coalgebra $C$ which is equipped with a right $A$-module structure for which the action map $\opn{act}:C\ot A\to C$ is a map of coalgebras.  Given an $A$-comodule algebra $B$, and an $A$-module coalgebra $C$, the $A$-(co)actions provide the categories
\[
\opn{Rep}(B)\ \ \text{and}\ \ \opn{Corep}(C)
\]
with right module category structures over $\Rep(A)$ and $\opn{Corep}(A)$, respectively.

\subsection{Pointed module categories}

\begin{definition}
A module category $\msc{M}$ over a tensor category $\msc{A}$ is called pointed if $\msc{M}$ comes equipped with a distinguished simple object $\1_\msc{M}\in \msc{M}$.
\end{definition}

Given any pointed module category over $\msc{A}$, the distinguished simple object $\1_{\msc{M}}$ specifies, and is specified by, a uniquely associated map of module categories
\[
u_{\msc{M}}:\msc{A}\to \msc{M}
\]
which sends $\1_{\msc{A}}$ to $\mbf{1}_\msc{M}$.

\begin{example}
Let $A$ be a Hopf algebra and $B\subseteq A$ be a right $A$-comodule subalgebra.  So, the comultiplication for $A$ restricts to a coaction $\rho:B\to B\ot A$ for $B$.  This coaction gives $\Rep(B)$ the structure of a right $\Rep(A)$-module category, and restriction of the counit $\epsilon:B\to k$ identifies a trivial representation for $B$.  This trivial representation gives $\Rep(B)$ the structure of a pointed $\Rep(A)$-module category.
\end{example}

\begin{example}
Let $A$ be a Hopf algebra and $A\to C$ be a map of right $A$-module coalgebras.  Then $\opn{Corep}(C)$ is naturally a right $\opn{Corep}(A)$-module category and the unit map for $A$ provides $C$ with a trivial corepresentation $k\to C$.  This trivial corepresentation gives $\opn{Corep}(C)$ the structure of a pointed $\opn{Corep}(A)$-module category.
\end{example}

\subsection{Normality for pointed module categories}

Let $\msc{A}$ be a tensor category and $\msc{M}$ be a pointed module category over $\msc{A}$.
\par

We define the kernel of the structure map $u_{\msc{M}}:\msc{A}\to\msc{M}$ to be the full subcategory of objects $X$ in $\msc{A}$ with $u_{\msc{M}}(X)\cong \1_{\msc{M}}^{\oplus I}$, for some indexing set $I$.  We note that the unit object in $\msc{M}$ specifies a linear embedding $\opn{Vect}\to \msc{M}$, and the kernel of $u_{\msc{M}}$ is alternatively identified as the pullback
\begin{equation}\label{eq:ker}
\xymatrix{
\opn{ker}(u_{\msc{M}})\ar@{-->}[rr]\ar@{-->}[d] & & \msc{A}\ar[d]\\
\opn{Vect}\ar[rr] & & \msc{M}.
}
\end{equation}
For any $M$ in $\msc{M}$ we have the largest trivial subobject in $\msc{M}$,
\[
\opn{triv}(M)=\opn{im}\left(\bigoplus_{f\in \Hom_{\msc{M}}(\1,M)}\1\to M\right).
\]

\begin{definition}[{cf.\ \cite{bruguieresnatale11}}]\label{def:normal}
For a pointed module category $\msc{M}$ over a tensor category $\msc{A}$, the structure map $u_{\msc{M}}:\msc{A}\to \msc{M}$ is said to be normal, or $\msc{M}$ is said to be normal, if the following hold:
\begin{enumerate}
\item[(a)] The kernel $\opn{ker}(u_{\msc{M}})$ is a tensor subcategory in $\msc{A}$.
\item[(b)] For any $X$ in $\msc{A}$ there is a subobject $X'\subseteq X$ with $u_{\msc{M}}(X')=\opn{triv}(u_{\msc{M}}(X))$.
\end{enumerate}
\end{definition}

\begin{example}\label{ex:529}
Let $B\subseteq A$ be a right coideal subalgebra.  Suppose that the two reductions $k\ot_BA$ and $A\ot_Bk$ are equal.  In this case $C=k\ot_BA$ is a Hopf quotient of $A$, and the kernel of the restriction functor $\Rep(A)\to \Rep(B)$ is identified with the embedded subcategory $\Rep(C)\subseteq \Rep(A)$.  For any $A$-representation $V$ the $B$-invariants $V^B$ are identified with the maximal $C$-submodule in $V$, and we see that $\Rep(B)$ is normal over $\Rep(A)$.
\end{example}

This example dualizes in the obvious way.

\subsection{Mixed exact sequences}

\begin{definition}\label{def:531}
A mixed exact sequence of tensor/module categories is a pairing of a tensor functor $F:\msc{E}\to \msc{A}$, and a pointed $\msc{A}$-module category $\msc{M}$, such that
\begin{enumerate}
\item[(a)] The structure map $u_{\msc{M}}:\msc{A}\to \msc{M}$ is normal.
\item[(b)] $u_{\msc{M}}$ is surjective.
\item[(c)] $F$ is an equivalence onto the kernel of $u_{\msc{M}}$.
\item[(d)] The right adjoint $\opn{ind}:\msc{M}\to \msc{A}$ to the structure map $u_{\msc{M}}$ is faithfully exact.
\end{enumerate}
\end{definition}

Note that condition (c) requires $F$ to be an embedding, in the sense of Section \ref{sect:surj_embed}.  One might express a mixed exact sequence compactly via its constituent functors
\[
\msc{E}\overset{F}\to \msc{A}\overset{u}\to \msc{M}.
\]

\begin{example}\label{ex:556}
Suppose that $A$, $B$, and $C$ are as in Example \ref{ex:529}, and that $A$ is finite-dimensional.  Then we have the mixed possibly-exact sequence
\begin{equation}\label{eq:587}
\Rep(C)\to \Rep(A)\to \Rep(B),
\end{equation}
and Skryabin's faithful flatness theorem \cite[Theorem 6.1]{skryabin07} tells us that the induction functor $\opn{ind}:\Rep(B)\to \Rep(A)$ is faithfully exact.  So the above sequence \eqref{eq:587} is in fact mixed exact.
\end{example}

As with Example \ref{ex:529}, Example \ref{ex:556} dualizes via corepresentations.

\begin{example}
Suppose $\msc{E}\to \msc{A}\to \msc{B}$ is an exact sequence of finite tensor categories in the sense of Bruguieres and Natale \cite{bruguieresnatale11}.  If we give $\msc{B}$ its natural pointed module category structure over $\msc{A}$, with $\1_{\msc{B}}$ equal to the unit in $\msc{B}$, this sequence is mixed exact in the sense of Definition \ref{def:531}.  Here faithful exactness of induction $\opn{ind}:\msc{B}\to \msc{A}$ follows from the categorical freeness result of Etingof and Ostrik \cite[Theorem 2.5]{etingofostrik04}.
\end{example}

\subsection{Fiber functors for module categories}

In addressing mixed exact sequences in practice, the following notion proves useful.

\begin{definition}
Given a pointed module category $\msc{M}$ over a tensor category $\msc{A}$, a fiber functor for $\msc{M}$ is the specification of a fiber functor for $\msc{A}$ along with an exact map of pointed module categories $f:\msc{M}\to \opn{Vect}$.
\end{definition}

This situation occurs, for example, when $\msc{A}$ is the category of corepresentations for a Hopf algebra $A$ and $\msc{M}$ is the category of corepresentations for a quotient module coalgebra $A\to C$.  In this case the fiber functors are simply the forgetful functors to $\opn{Vect}$.

\section{Functor categories and relative tensor products}

We recall how one ``does algebra" in the 2-category of presentable categories.  All of the results in this section are known, and can be deduced from Chirvasitu and Johnson-Fryed \cite{chirvasitujfreyd13} for example.  More robust forms of the results herein can be found in works of Lurie \cite{lurie09,lurieha}.
\par

The materials from this section, as well as Sections \ref{sect:tann_reconstructor} and \ref{sect:basechange_results}, form the categorical foundations for our analysis of quantum group representations which follows.

\subsection{Functor categories}

\begin{definition}
For presentable categories $\msc{A}$ and $\msc{B}$ we let $\uFun(\msc{A},\msc{B})$ denote the category of cocontinuous functors from $\msc{A}$ to $\msc{B}$, with all natural tranformations.  For left (or right) presentable $\msc{E}$-module categories $\msc{M}$ and $\msc{N}$, let $\uFun_{\msc{E}}(\msc{M},\msc{N})$ denote the category of cocontinuous $\msc{E}$-module functors between $\msc{M}$ and $\msc{N}$ with corresponding natural transformations.
\par

We let $\Fun(\msc{A},\msc{B})$ and $\Fun_{\msc{E}}(\msc{M},\msc{N})$ denote the groupoids of functors and natural isomorphisms in $\uFun(\msc{A},\msc{B})$ and $\uFun_{\msc{E}}(\msc{M},\msc{N})$, respectively.
\end{definition}

Note that each functor category $\uFun_{\msc{E}}(\msc{M},\msc{N})$ is cocomplete, with colimits calculated pointwise via colimits in $\msc{N}$.  The category of plain cocontinuous functors $\uFun(\msc{M},\msc{N})$ is furthermore presentable \cite[Corollary 2.2.5]{chirvasitujfreyd13}.
\par

When $\msc{M}$ is a presentable $\msc{E}$-module category, and $\msc{B}$ is an arbitrary presentable category, the functor category $\uFun(\msc{M},\msc{B})$ inherits a natural $\msc{E}$-action via the action on $\msc{M}$.  This action realizes $\uFun(\msc{M},\msc{B})$ as a presentable $\msc{E}$-module category.

\subsection{Bilinear functors and the absolute product}

We let $\underline{\opn{Bilin}}(\msc{A}_1\times \msc{A}_2, \msc{B})$ denote the category of functors from the cartesian product which are cocontinuous in each variable, and let $\opn{Bilin}(\msc{A}_1\times\msc{A}_2,\msc{B})$ denote the corresponding groupoid.
\par

For a right $\msc{E}$-module category $\msc{M}$, a left $\msc{E}$-module category $\msc{N}$, and an arbitrary presentable category $\msc{B}$, an $\msc{E}$-bilinear functor is a functor
\[
F:\msc{M}\times \msc{N}\to \msc{B}
\]
which is cocontinuous in each variable and comes equipped with natural isomorphisms $F(M,V\ot N)\cong F(M\ot V,N)$ at each $M$ in $\msc{M}$, $N$ in $\msc{N}$, and $V$ in $\msc{E}$.  We require these natural isomorphisms to be associative and unital in the expected ways.
\par

We let $\underline{\opn{Bilin}}_{\msc{E}}(\msc{M}\times \msc{N},\msc{B})$ denote the category of $\msc{E}$-bilinear functors and natural transformations.  These natural transformations are assumed to commute with the structure maps $F(-,-\ot -)\cong F(-\ot -,-)$.  As usual, $\opn{Bilin}_{\msc{E}}(\msc{M}\times \msc{N},\msc{B})$ denotes the groupoid of $\msc{E}$-bilinear functors with natural isomorphisms.
\par

We have the following representability result of Chirvasity and Johnson-Freyd (cf.\ \cite[\S\ 4.8.1]{lurieha}).

\begin{theorem}[{\cite[Corollary 2.2.5]{chirvasitujfreyd13}}]\label{thm:123}
For presentable categories $\msc{A}_1$ and $\msc{A}_2$, there is a presentable category $\msc{A}_1\otimes \msc{A}_2$ which admits a universal bilinear functor $\opn{univ}(=\opn{univ}_{\msc{A}_1,\msc{A}_2}):\msc{A}_1\times \msc{A}_2\to \msc{A}_1\ot\msc{A}_2$.  This is to say, restricting along $\opn{univ}$ provides an equivalence of categories
\[
\opn{univ}^\ast:\uFun(\msc{A}_1\ot\msc{A}_2,\msc{B})\overset{\sim}\to \underline{\opn{Bilin}}(\msc{A}_1\times \msc{A}_2,\msc{B})
\]
at arbitrary presentable $\msc{B}$.
\end{theorem}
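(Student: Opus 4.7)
The plan is to exploit the small-sketch presentation of presentable categories. I fix a regular cardinal $\lambda$ for which both $\msc{A}_1$ and $\msc{A}_2$ are $\lambda$-presentable, so that each admits an equivalence $\msc{A}_i \simeq \opn{Ind}_\lambda(\msc{A}_i^{\lambda})$, where $\msc{A}_i^{\lambda}$ denotes the essentially small subcategory of $\lambda$-compact objects. Under this identification, cocontinuous functors out of $\msc{A}_i$ correspond, via restriction, to functors on $\msc{A}_i^{\lambda}$ preserving whatever $\lambda$-small colimits exist there, with inverse given by left Kan extension along the Yoneda embedding.

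I would then construct $\msc{A}_1 \ot \msc{A}_2$ as the full subcategory of the presheaf category on $\msc{A}_1^{\lambda} \times \msc{A}_2^{\lambda}$ consisting of those bifunctors $(\msc{A}_1^{\lambda})^{op} \times (\msc{A}_2^{\lambda})^{op} \to \opn{Set}$ that are continuous in each variable separately (i.e., carry $\lambda$-small colimits in each slot to limits in $\opn{Set}$). This subcategory is cut out by orthogonality against a small set of morphisms in the presheaf category, hence is a reflective localization of a presheaf topos and so is itself locally presentable. The universal bilinear functor sends a pair of compact objects $(X,Y)$ to the bi-representable
\[
\opn{univ}(X,Y) \, : \, (X',Y') \longmapsto \Hom_{\msc{A}_1}(X',X) \times \Hom_{\msc{A}_2}(Y',Y),
\]
and is extended to arbitrary pairs by taking $\lambda$-filtered colimits in each slot.

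To check the universal property, I would start with a bilinear $F: \msc{A}_1 \times \msc{A}_2 \to \msc{B}$ into a presentable target. Its restriction $F_0$ to $\msc{A}_1^{\lambda} \times \msc{A}_2^{\lambda}$ preserves $\lambda$-small colimits in each variable, and the standard density argument extends $F_0$ uniquely to a cocontinuous $\widetilde{F}: \msc{A}_1 \ot \msc{A}_2 \to \msc{B}$ by presenting each object of $\msc{A}_1 \ot \msc{A}_2$ as a $\lambda$-filtered colimit of bi-representables. One verifies $\widetilde{F} \circ \opn{univ} \cong F$ on compacts by construction, and on arbitrary pairs by the separate cocontinuity of both sides. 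Naturality and uniqueness of the extension then upgrade $\opn{univ}^{\ast}$ to an equivalence of functor categories.

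The main obstacle I anticipate is careful bookkeeping of cardinal parameters --- verifying that the ``continuous in each variable'' subcategory is indeed reflective with a small-orthogonality presentation, and that left Kan extension interacts correctly with bilinear (as opposed to merely linear) data so that the extension $\widetilde{F}$ is genuinely cocontinuous rather than merely $\lambda$-cocontinuous. A more streamlined alternative would be to invoke the symmetric monoidal bicategory structure on presentable categories with cocontinuous functors established in \cite{chirvasitujfreyd13}, and simply identify $\msc{A}_1 \ot \msc{A}_2$ with the tensor product there --- in which case the stated universal property becomes essentially the defining one.
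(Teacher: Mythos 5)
The paper does not prove this theorem; it cites \cite[Corollary 2.2.5]{chirvasitujfreyd13} and moves on. Your direct construction is, however, essentially the argument in that reference (which in turn follows Kelly), so the comparison below is really between your sketch and theirs.

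The outline is sound and the architecture is right: build $\msc{A}_1\ot\msc{A}_2$ as separately-continuous presheaves on $\msc{A}_1^\lambda\times\msc{A}_2^\lambda$, observe it is a small-orthogonality localization of a presheaf category and hence presentable, send compacts to bi-representables, and extend by density. This is precisely what Chirvasitu--Johnson-Freyd do. Two points that deserve more than a flag, though, since they are the actual content of the cited corollary. First, ``a regular cardinal $\lambda$ for which both $\msc{A}_i$ are $\lambda$-presentable'' is not enough; you also need $\msc{A}_i^\lambda$ to be closed in $\msc{A}_i$ under $\lambda$-small colimits, so that ``separately continuous on $\msc{A}_i^\lambda$'' even makes sense and so that $\lambda$-cocontinuous functors from $\msc{A}_i^\lambda$ Kan-extend to genuinely cocontinuous ones on $\msc{A}_i=\opn{Ind}_\lambda(\msc{A}_i^\lambda)$. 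Such $\lambda$ exist but form a proper cofinal class; one must either fix one and check the construction is $\lambda$-independent up to equivalence, or phrase everything invariantly. Second, cocontinuity (not just $\lambda$-cocontinuity) of the extension $\widetilde{F}$ and of $\opn{univ}$ itself requires the standard but nontrivial lemma that a functor preserving $\lambda$-filtered colimits and restricting to a $\lambda$-small-colimit-preserving functor on $\lambda$-compacts is cocontinuous; you gesture at this but it is exactly where the bookkeeping lives. Your ``streamlined alternative''---invoking the monoidal bicategory structure from \cite{chirvasitujfreyd13} directly---is what the paper actually does and is circular as a proof of the theorem, but is the right move in context since the theorem is being imported, not established.
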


We note that $\opn{univ}^\ast$ induces an equivalence of groupoids
\[
\opn{univ}^\ast:\Fun(\msc{A}_1\ot\msc{A}_2,\msc{B})\overset{\sim}\to \opn{Bilin}(\msc{A}_1\times \msc{A}_2,\msc{B})
\]
as well, and also that the product category $\msc{A}_1\ot\msc{A}_2$ is uniquely determined up to equivalence.  We have the expected adjunctions
\[
\uFun(\msc{A}_1\ot\msc{A}_2,\msc{B})\cong\underline{\opn{Bilin}}(\msc{A}_1\times \msc{A}_2,-)\cong \uFun(\msc{A}_1,\uFun(\msc{A}_2,-))
\]
\[
F\mapsto (X\mapsto F(X,-))
\]
and
\[
\uFun(\msc{A}_1\ot\msc{A}_2,\msc{B})\cong\underline{\opn{Bilin}}(\msc{A}_1\times\msc{A}_2,-)\cong \uFun(\msc{A}_2,\uFun(\msc{A}_1,-))
\]
\[
F'\mapsto (Y\mapsto F'(-,Y))
\]
(cf.\ \cite[\S\ 6.5]{kelly82}).
\par

One can employ the above adjunctions to find that the products $(\msc{A}_1\ot\msc{A}_{12})\ot\msc{A}_2$ and $\msc{A}_1\ot(\msc{A}_{12}\ot\msc{A}_2)$ both represent a $2$-functor of multilinear maps from $\msc{A}_1\times\msc{A}_{12}\times\msc{A}_2$.  From this one deduces an associativity equivalence for the product $\ot$ which is unique up to unique natural isomorphism.  The symmetry for the product $\times$ furthermore provides a symmetry on $\ot$, so that the $2$-category of presentable categories with cocontinuous functors becomes symmetric monoidal under $\ot$ \cite[Corollary 2.2.5]{chirvasitujfreyd13}.  The unit for this product is the monoidal category $\opn{Set}$.
\par

In terms of the symmetric monoidal structure $\ot$, a presentable monoidal category $\msc{A}$ (Section \ref{sect:pmon}) is a presentable category equipped with an associative product operation $\msc{A}\ot \msc{A}\to \msc{A}$ and unit map $\opn{Set}\to \msc{A}$.  A left (resp.\ right) module category over $\msc{A}$ is a presentable category $\msc{M}$ with an associative and unital action $\msc{A}\ot \msc{M}\to \msc{M}$ (resp.\ $\msc{M}\ot \msc{A}\to \msc{M}$).

\subsection{Relative products and relative functor categories}

The following result is known, and can be deduced from Theorem \ref{thm:123} in conjunction with cocompleteness of the $2$-category of presentable categories \cite[Proposition 2.1.11]{chirvasitujfreyd13}.  See for example \cite[Definition 3.14]{benzvibrochierjordan18}.

\begin{proposition}[\cite{benzvibrochierjordan18}]
Let $\msc{M}$ and $\msc{N}$ be right and left $\msc{E}$-module categories, respectively.  There is a presentable category $\msc{M}\ot_{\msc{E}}\msc{N}$ which admits a universal $\msc{E}$-bilinear functor $\opn{univ}^{\msc{E}}_{\msc{M},\msc{N}}:\msc{M}\times\msc{N}\to \msc{M}\ot_{\msc{E}}\msc{N}$.
\end{proposition}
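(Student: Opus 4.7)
\medskip

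\noindent\textbf{Proposal.} The plan is to construct $\msc{M}\ot_{\msc{E}}\msc{N}$ as a (reflexive) coequalizer, in the $2$-category of presentable categories, of the two natural cocontinuous functors
\[
a_{\msc{M}}\ot 1,\ 1\ot a_{\msc{N}}:\ \msc{M}\ot\msc{E}\ot\msc{N}\rightrightarrows \msc{M}\ot\msc{N},
\]
where $a_{\msc{M}}:\msc{M}\ot\msc{E}\to \msc{M}$ and $a_{\msc{N}}:\msc{E}\ot\msc{N}\to\msc{N}$ are the right and left actions, respectively. The two absolute products $\msc{M}\ot\msc{N}$ and $\msc{M}\ot\msc{E}\ot\msc{N}$ are provided by Theorem~\ref{thm:123} (applied once and twice, invoking associativity of $\ot$), and the coequalizer exists by cocompleteness of the $2$-category of presentable categories \cite[Proposition 2.1.11]{chirvasitujfreyd13}. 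I define $\msc{M}\ot_{\msc{E}}\msc{N}$ to be this coequalizer, and take $\opn{univ}^{\msc{E}}_{\msc{M},\msc{N}}$ to be the composite $\msc{M}\times\msc{N}\overset{\opn{univ}}{\to} \msc{M}\ot\msc{N}\to \msc{M}\ot_{\msc{E}}\msc{N}$.

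\medskip

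\noindent\textbf{Universal property.} For any presentable $\msc{B}$, I want to identify cocontinuous functors $\msc{M}\ot_{\msc{E}}\msc{N}\to \msc{B}$ with $\msc{E}$-bilinear functors $\msc{M}\times\msc{N}\to \msc{B}$. By the defining universal property of the coequalizer, the former category is identified with the full subcategory of $\uFun(\msc{M}\ot\msc{N},\msc{B})$ on functors $H$ with $H\circ(a_{\msc{M}}\ot 1)\cong H\circ(1\ot a_{\msc{N}})$ (carrying the coherent $2$-cell data of the coequalizer). Applying Theorem~\ref{thm:123} twice to translate this back into the bilinear picture, these are exactly the bicocontinuous functors $F:\msc{M}\times\msc{N}\to\msc{B}$ equipped with natural isomorphisms $F(M\ot V,N)\cong F(M,V\ot N)$, compatibly with the action structure. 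Associativity and unitality of these structure $2$-cells follow because the degenerate simplicial pieces $\msc{M}\ot\msc{E}^{\ot k}\ot \msc{N}$ for $k\geq 2$ map into the coequalizer through the $1$-skeleton, forcing the expected coherences on $F$.

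\medskip

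\noindent\textbf{Main obstacle and resolution.} The delicate point is that an honest $\msc{E}$-bilinear functor (Definition~\ref{def:531}-style, with associative and unital structure isomorphisms) is a priori richer data than mere coequalization of two arrows; in full generality the correct universal object is the geometric realization of the bar simplicial object $\msc{M}\ot\msc{E}^{\ot \bullet}\ot\msc{N}$. However, because the $\msc{E}$-actions are strictly unital, the simplicial object in question is split by degeneracies, and its geometric realization is computed by the reflexive coequalizer of its $d_0,d_1$ face maps. This is the content I need, and it is built into the framework of \cite{chirvasitujfreyd13}. If one prefers a more hands-on argument, the unit $\1_{\msc{E}}$ provides a section $s:\msc{M}\ot\msc{N}\to \msc{M}\ot\msc{E}\ot\msc{N}$ of both $a_{\msc{M}}\ot 1$ and $1\ot a_{\msc{N}}$, which one uses to produce the associativity and unit isomorphisms on $F$ from the coequalizing $2$-cell.

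\medskip

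\noindent\textbf{Uniqueness.} Uniqueness of $\msc{M}\ot_{\msc{E}}\msc{N}$ up to equivalence is automatic from the universal property, and the construction is $2$-functorial in $\msc{M}$ and $\msc{N}$ via functoriality of the absolute product $\ot$ and of colimits. This last remark will be useful downstream, since base change $\opn{Vect}\ot_{\opn{Tann}_q}\Rep(G_q)$ is the instance of this construction driving the main results of the paper.
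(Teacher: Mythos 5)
The paper itself offers no proof here: it merely notes the result follows from Theorem~\ref{thm:123} and cocompleteness of the $2$-category of presentable categories, deferring to the cited references. Your overall strategy — realize $\msc{M}\ot_{\msc{E}}\msc{N}$ as a colimit of a diagram built from the absolute products, then unwind the universal property of the colimit against Theorem~\ref{thm:123} — is exactly the approach underlying those references, so at the level of plan you are aligned with the paper.

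However, two points in your ``Main obstacle and resolution'' paragraph do not hold up. First, the claim that ``because the $\msc{E}$-actions are strictly unital, the simplicial object in question is split by degeneracies'' confuses two distinct notions. The unit of $\msc{E}$ gives the ordinary degeneracy maps $s_i$, which make $\msc{M}\ot\msc{E}^{\ot\bullet}\ot\msc{N}$ a \emph{reflexive} simplicial object — i.e., the pair $d_0,d_1\colon\msc{M}\ot\msc{E}\ot\msc{N}\rightrightarrows\msc{M}\ot\msc{N}$ has a common section — but it is not a \emph{split} simplicial object. A split simplicial object carries an extra degeneracy $s_{-1}$ and its geometric realization would be the constant value, which would collapse the relative tensor product back to $\msc{M}\ot\msc{N}$; the bar construction has no such extra degeneracy in general. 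Second, in the $2$-category of presentable categories the colimit of a simplicial object is a codescent object, which genuinely involves the faces $\msc{M}\ot\msc{E}\ot\msc{E}\ot\msc{N}\to\msc{M}\ot\msc{E}\ot\msc{N}$ and not merely the two maps into $\msc{M}\ot\msc{N}$. It is precisely the $2$-simplex data that produces the associativity coherence on an $\msc{E}$-bilinear functor. Your assertion that ``the degenerate simplicial pieces for $k\geq 2$ map into the coequalizer through the $1$-skeleton, forcing the expected coherences'' is not an argument — the coherence constraint is exactly what a plain (even reflexive) $2$-coequalizer fails to encode. The correct statement is that $\msc{M}\ot_{\msc{E}}\msc{N}$ is the codescent object of the (reflexive) truncated bar diagram, whose universal property is a functor out of $\msc{M}\ot\msc{N}$ together with a balancing $2$-cell satisfying the associativity constraint from the $2$-simplices; this is exactly what matches $\underline{\opn{Bilin}}_{\msc{E}}(\msc{M}\times\msc{N},-)$. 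Either replace the coequalizer with the codescent object, or invoke a theorem (e.g.\ from Lack's work on codescent) under which the two coincide in the relevant setting, rather than the incorrect splitness claim.
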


Following \cite{etingofnikshychostrik10,douglasspsnyder19,benzvibrochierjordan18}, we refer to the product $\ot_{\msc{E}}$ as the relative tensor product for module categories over $\msc{E}$.  We now employ the absolute product $\ot$ to verify that the relative functor categories $\underline{\Fun}_{\msc{E}}(\msc{M},\msc{N})$ are presentable.

\begin{proposition}
For presentable $\msc{E}$-module categories $\msc{M}$ and $\msc{N}$, the functor category $\uFun_{\msc{E}}(\msc{M},\msc{N})$ is presentable.
\end{proposition}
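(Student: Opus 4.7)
The plan is to realize $\uFun_{\msc{E}}(\msc{M},\msc{N})$ as a pseudo-limit in the $2$-category $\opn{Pr}$ of presentable categories with cocontinuous functors, and then to invoke closure of $\opn{Pr}$ under such limits (\cite[Proposition 2.1.11]{chirvasitujfreyd13}). The base category $\uFun(\msc{M},\msc{N})$ is already known to be presentable by \cite[Corollary 2.2.5]{chirvasitujfreyd13}, recalled just above, and so is $\uFun(\msc{M}\ot\msc{E},\msc{N})$ by applying the same result to the presentable category $\msc{M}\ot\msc{E}$ of Theorem \ref{thm:123}.

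First I would unpack the data. An object of $\uFun_{\msc{E}}(\msc{M},\msc{N})$ is a pair $(F,\alpha)$ with $F:\msc{M}\to\msc{N}$ cocontinuous and $\alpha$ a natural isomorphism between the two cocontinuous functors
$$\Phi_1(F)=F\circ\opn{act}_{\msc{M}},\qquad \Phi_2(F)=\opn{act}_{\msc{N}}\circ(F\ot\opn{id}_{\msc{E}})$$
from $\msc{M}\ot\msc{E}$ to $\msc{N}$, subject to the usual associativity and unit coherences. The assignments $F\mapsto \Phi_i(F)$ are themselves cocontinuous in $F$ (composition and tensor product of cocontinuous functors being cocontinuous), so they define a parallel pair of $1$-morphisms
$$\Phi_1,\Phi_2:\uFun(\msc{M},\msc{N})\rightrightarrows\uFun(\msc{M}\ot\msc{E},\msc{N})$$
in $\opn{Pr}$. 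A morphism $(F,\alpha)\to(G,\beta)$ in $\uFun_{\msc{E}}(\msc{M},\msc{N})$ is, by definition, a natural transformation $\eta:F\to G$ satisfying $\beta\cdot\Phi_1(\eta)=\Phi_2(\eta)\cdot\alpha$.

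Next I would assemble the pseudo-limit. The iso-inserter $\opn{IsoIns}(\Phi_1,\Phi_2)$, whose objects are pairs $(F,\alpha:\Phi_1(F)\overset{\sim}\to\Phi_2(F))$ and whose morphisms are precisely the $\eta$'s satisfying the intertwining displayed above, is a standard pseudo-limit in $\opn{Pr}$ and is therefore presentable, with cocontinuous forgetful functor to $\uFun(\msc{M},\msc{N})$. The associativity and unit coherence diagrams for $(F,\alpha)$ each amount to an equality between two parallel $2$-cells on $\opn{IsoIns}(\Phi_1,\Phi_2)$ constructed from $\alpha$ and the associator/unit constraints for the $\msc{E}$-actions on $\msc{M}$ and $\msc{N}$. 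Passing to the two corresponding equifiers, themselves pseudo-limits in $\opn{Pr}$, carves out the full subcategory $\uFun_{\msc{E}}(\msc{M},\msc{N})$, which is thereby presentable.

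The main obstacle I anticipate is essentially bookkeeping: writing the coherence axioms in a sufficiently pedantic form that they match the equifier prescription on the nose, and confirming that colimits in $\uFun_{\msc{E}}(\msc{M},\msc{N})$ are computed pointwise in $\msc{N}$ along the forgetful tower. Both points follow once one observes that every intermediate category in the construction inherits its colimits from $\uFun(\msc{M},\msc{N})$ via cocontinuous inclusions, so no genuine analysis beyond the abstract closure statement is required.
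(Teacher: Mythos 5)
Your proof is correct and is essentially the same as the paper's: both express $\uFun_{\msc{E}}(\msc{M},\msc{N})$ as a pseudo-limit in $\opn{Pr}$ built from the presentable functor categories $\uFun(\msc{E}^{\ot m}\ot\msc{M},\msc{N})$ (presentable by \cite[Corollary 2.2.5]{chirvasitujfreyd13}) and then invoke closure of $\opn{Pr}$ under limits \cite[Proposition 2.1.11]{chirvasitujfreyd13}. The only difference is in how the pseudo-limit is packaged: the paper writes down a cosimplicial-style (descent) diagram directly, while you decompose the same limit into an iso-inserter followed by equifiers -- a standard PIE-limit presentation that spells out the coherence conditions a bit more explicitly but carries no additional mathematical content.
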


\begin{proof}
We can realize $\uFun_{\msc{E}}(\msc{M},\msc{N})$ as the limit of a diagram of categories
\[
\begin{tikzpicture}
\node at (-3.2,0) {$\uFun(\msc{M},\msc{N})$};
\node at (0,0) {$\uFun(\msc{E}\ot\msc{M},\msc{N})$};
\node at (4.02,0) {$\uFun(\msc{E}\ot\msc{E}\ot\msc{M},\msc{N})$};
\draw[->] (-2.15,.15) -- (-1.4,.15);
\draw[<-] (-2.15,0) -- (-1.4,0);
\draw[->] (-2.15,-.15) -- (-1.4,-.15);
\draw[->] (1.4,.3) -- (2.2,.3);
\draw[<-] (1.4,.15) -- (2.2,.15);
\draw[->] (1.4,0) -- (2.2,0);
\draw[<-] (1.4,-.15) -- (2.2,-.15);
\draw[->] (1.4,-.3) -- (2.2,-.3);
\end{tikzpicture}
\]
where all of the arrows are provided by action maps or the unit map $\opn{Set}\to \msc{E}$.  Since the categories $\uFun(\msc{E}^{\ot m}\ot\msc{M},\msc{N})$ are presentable by \cite[Corollary 2.2.5]{chirvasitujfreyd13}, and the 2-category of presentable categories is closed under limits in the ambient category of all locally small categories \cite[Proposition 2.1.11]{chirvasitujfreyd13}, we see that $\uFun_{\msc{E}}(\msc{M},\msc{N})$ is presentable.
\end{proof}

As with plain bilinear functors, we have the adjunctions
\[
\uFun_{\msc{E}}(\msc{M},\Fun(\msc{N},-))\cong \underline{\opn{Bilin}}_{\msc{E}}(\msc{M}\times \msc{N},-)\cong \uFun_{\msc{E}}(\msc{N},\uFun(\msc{M},-)).
\]

\subsection{The monoidal $2$-category of bimodule categories}
\label{sect:module_prod}

Consider $(\msc{E}_i,\msc{E}_j)$-bimodule categories $\msc{M}_{ij}$.  The actions maps for $\msc{M}_{ij}$ induce a bimodule structure on the absolute product
\[
\msc{E}_0\ot (\msc{M}_{01}\ot \msc{M}_{12})\ot \msc{E}_2\to \msc{M}_{01}\ot \msc{M}_{12}
\]
which is unique up to unique isomorphism.  This bimodule structure then descend to bimodule structures on the relative product $\msc{M}_{01}\ot_{\msc{E}_1}\msc{M}_{12}$ \cite[Proposition 3.13]{greenough10}.
\par

We now observe an associativity equivalence which fits into a $2$-diagram
\[
\xymatrix{
& \msc{M}_{01}\times \msc{M}_{12}\times \msc{M}_{23}\ar[dl]\ar[dr]\\
\msc{M}_{01}\ot_{\msc{E}_1} (\msc{M}_{12}\ot_{\msc{E}_2} \msc{M}_{23})\ar[rr]^{\sim} & & (\msc{M}_{01}\ot_{\msc{E}_1}\msc{M}_{12})\ot_{\msc{E}_2}\msc{M}_{23},
}
\]
and is uniquely determined up to unique natural isomorphism.  This equivalence furthermore admits a natural $(\msc{E}_0,\msc{E}_3)$-bimodule structure \cite[Proposition 3.13]{greenough10}.  In this way we obtain a ``relative monoidal structure" on the category of presentable bimodule categories.  Or, more precisely, we have a monoidal $2$-category of presentable monoidal categories whose morphisms are given by presentable bimodules \cite{johnsonscheimbauer17,brochierjordansnyder}.

\subsection{Products of tensor categories}

As one finds in \cite[Theorem 6.2]{greenough10}, for example, the product $\msc{A}\ot_{\msc{E}}\msc{B}$ of presentable monoidal categories over a fixed braided monoidal category $\msc{E}$ inherits a natural monoidal structure.  In order to give a more efficient presentation however, we ``go up a dimension" and describe this monoidal structure in the braided context.

\begin{definition}
Given a presentable symmetric monoidal category $\msc{E}$, a braided monoidal category over $\msc{E}$ is a presentable braided monoidal category $\msc{A}$ equipped with a M\"uger central monoidal functor $u_{\msc{A}}:\msc{E}\to \msc{A}$.  A map of braided monoidal categories over $\msc{E}$ is a $2$-diagram of cocontinuous braided monoidal functors
\[
\xymatrix{
 & \msc{E}\ar[dl]_{u_{\msc{A}}}\ar[dr]^{u_{\msc{A}}} \\
\msc{A}\ar[rr]^{F} & & \msc{B}.
}
\]
A natural isomorphism between braided monoidal functors $F,\ F':\msc{A}\to \msc{B}$ over $\msc{E}$ is simply an isomorphism of monoidal functors $\xi:F\overset{\sim}\to F'$.
\end{definition}

\begin{remark}
We note that any isomorphisms between such functors is automatically compatible with the braidings on $\msc{A}$ and $\msc{B}$.  Furthermore, any isomorphism between functors fits into a uniquely associated tetrahedron over $\msc{E}$ whose sides are provided by $u_\msc{A}$, $u_\msc{B}$, $F:\msc{A}\to \msc{B}$, $F':\msc{A}\to \msc{B}$, and $id_{\msc{B}}$.
\end{remark}

For braided monoidal categories $\msc{A}$ and $\msc{B}$ over $\msc{E}$, the product $\msc{A}\ot_{\msc{E}}\msc{B}$ inherits a unique braided monoidal structure under which:
\begin{enumerate}
\item[(a)] The structure maps $f:\msc{A}\to \msc{A}\ot_{\msc{E}}\msc{B}$ and $f':\msc{B}\mapsto \msc{A}\ot_{\msc{E}}\msc{B}$ are maps of braided monoidal categories.
\item[(b)] The images of $\msc{A}$ and $\msc{B}$ are mutually (M\"uger) centralizing in $\msc{A}\ot_{\msc{E}}\msc{B}$.
\item[(c)] The isomorphism $f|_\msc{E}\cong f'|_\msc{E}$ provided by the balanced structure is an isomorphism of monoidal functors.
\end{enumerate}
We note that these conditions imply that $\msc{E}$ sits (M\"uger) centrally in $\msc{A}\ot_{\msc{E}}\msc{B}$.  In somewhat more explicit terms, the monoidal structure on $\msc{A}\ot_{\msc{E}}\msc{B}$ is provided by the unique cocontinuous functor which completes the $2$-diagram
\[
\xymatrix{
\msc{A}\times \msc{B}\times \msc{A}\times \msc{B}\ar[d]\ar[r] & \msc{A}^2\times \msc{B}^2\ar[r] & \msc{A}\times \msc{B}\ar[d]\\
(\msc{A}\ot_{\msc{E}}\msc{B})\ot(\msc{A}\ot_{\msc{E}}\msc{B})\ar[rr] & & \msc{A}\ot_{\msc{E}}\msc{B}.
}
\]

For $\msc{A}$ and $\msc{B}$ as above, the braided monoidal category $\msc{A}\ot_\msc{E}\msc{B}$ additionally represents a $2$-functor
\[
\opn{Bilin}^{\opn{E}_2}_{\msc{E}}(\msc{A}\times \msc{B},-),
\]
whose value on a given braided monoidal category $\msc{C}$ over $\msc{E}$ is the groupoid of pairs
\begin{equation}\label{eq:658}
\opn{Bilin}^{\opn{E}_2}_{\msc{E}}(\msc{A}\times \msc{B},\msc{C})=\left\{
\begin{array}{c}
\text{Pairs of braided monoidal functors }f:\msc{A}\to \msc{C}\ \text{and}\\
f':\msc{B}\to \msc{C}\ \text{over $\msc{E}$ for which $f(\msc{A})$ centralizes $f'(\msc{B})$ in $\msc{C}$}
\end{array}\right\}.
\end{equation}
We note that the $\msc{E}$-structures here specifies an isomorphism of monoidal functors $f|_{\msc{E}}\cong f'|_{\msc{E}}$, for any such pair of $f$ and $f'$.  Morphisms between such pairs are independent pairs of natural isomorphisms.
\par

Given a symmetric monoidal functor $\tau:\msc{E}\to \msc{K}$, and a braided monoidal category $\msc{C}$ over $\msc{K}$, we consider $\msc{C}$ as a category over $\msc{E}$ by restricting along $\tau$.  Below we let $\Fun^{\opn{E}_2}_{\msc{E}}(\msc{A},\msc{B})$ denote the groupoid of braided monoidal functors over a given symmetric base $\msc{E}$.

\begin{proposition}\label{prop:bc}
Let $\msc{E}\to \msc{K}$ be a symmetric tensor functor, and $\msc{A}$ be a braided tensor category over $\msc{E}$.  Then the braided monoidal category $\msc{K}\ot_{\msc{E}}\msc{A}$ represents the $2$-functor
\[
\Fun_{\msc{E}}^{\opn{E}_2}(\msc{A},-):\{\text{\rm braided monoidal cats over }\msc{K}\}\to \opn{Groupoid}.
\]
\par

Furthermore, when the image of $\msc{E}$ generates $\msc{K}$ under colimits, $\msc{K}\ot_{\msc{E}}\msc{A}$ completes the pushout diagram
\[
\xymatrix{
\msc{E}\ar[r]\ar[d] & \msc{A}\ar@{-->}[d]\\
\msc{K}\ar@{-->}[r] & \msc{K}\ot_{\msc{E}}\msc{A}
}
\]
in the $2$-category of presentable braided monoidal categories.
\end{proposition}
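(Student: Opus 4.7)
The plan is to deduce both statements directly from the universal property \eqref{eq:658} of the relative tensor product $\msc{K}\ot_\msc{E}\msc{A}$, which identifies braided monoidal functors $\msc{K}\ot_\msc{E}\msc{A}\to \msc{C}$ over $\msc{E}$ with pairs of braided monoidal functors $(g:\msc{K}\to\msc{C},\ h:\msc{A}\to\msc{C})$ over $\msc{E}$ whose images mutually M\"uger-centralize in $\msc{C}$. For the first (representability) claim I specialize to targets $\msc{C}$ that are themselves braided monoidal categories over $\msc{K}$ and restrict to functors $\msc{K}\ot_\msc{E}\msc{A}\to\msc{C}$ lying over $\msc{K}$. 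The over-$\msc{K}$ condition forces the first component $g$ of the universal pair to agree, up to the unique natural isomorphism provided by the over-$\msc{K}$ structure, with the structure functor $u_\msc{C}:\msc{K}\to\msc{C}$. Since $u_\msc{C}(\msc{K})$ is M\"uger central in all of $\msc{C}$ by definition of a braided monoidal category over $\msc{K}$, the centralizing condition on $(g,h)$ becomes automatic, and the pair collapses to the single datum of a braided monoidal functor $h:\msc{A}\to\msc{C}$ over $\msc{E}$, where the over-$\msc{E}$ structure on $\msc{C}$ is the one induced from $u_\msc{C}\circ\tau$.

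For the pushout claim I argue that, under the hypothesis that $\tau:\msc{E}\to\msc{K}$ generates $\msc{K}$ under small colimits, the M\"uger-centralizing condition in \eqref{eq:658} is automatically satisfied for every pair $(g:\msc{K}\to\msc{D},\ h:\msc{A}\to\msc{D})$ of braided monoidal functors over $\msc{E}$ into an arbitrary presentable braided monoidal $\msc{D}$. Once this is established, the universal property of $\msc{K}\ot_\msc{E}\msc{A}$ identifies the groupoid of braided monoidal functors $\msc{K}\ot_\msc{E}\msc{A}\to\msc{D}$ with the groupoid of all pairs $(g,h)$ over $\msc{E}$, which is by definition the value at $\msc{D}$ of the $2$-functor corepresented by the pushout of $\msc{K}\leftarrow\msc{E}\to\msc{A}$ in presentable braided monoidal categories. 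The required centralizing statement rests on the observation that, for fixed $Y\in h(\msc{A})$, the full subcategory of $X\in\msc{D}$ satisfying $c_{Y,X}\circ c_{X,Y}=\opn{id}_{X\ot Y}$ is stable under small colimits in $X$: the condition is an equalizer of two cocontinuous functors in the first slot, by naturality of the braiding and cocontinuity of $-\ot Y$ (Section \ref{sect:pmon}). Thus the M\"uger centralizer $Z_\msc{D}(h(\msc{A}))$ is a cocomplete braided monoidal subcategory of $\msc{D}$. Since $u_\msc{A}:\msc{E}\to\msc{A}$ is M\"uger central in $\msc{A}$, the braided monoidal composition $h\circ u_\msc{A}$ has image in $Z_\msc{D}(h(\msc{A}))$; the over-$\msc{E}$ isomorphism $g|_\msc{E}\cong h\circ u_\msc{A}$ then places $g(\msc{E})$ inside $Z_\msc{D}(h(\msc{A}))$, and the cocontinuity of $g$ together with the generation hypothesis extends this to $g(\msc{K})\subseteq Z_\msc{D}(h(\msc{A}))$, as required.

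The main obstacle I anticipate is making the closure-under-colimits step for the M\"uger centralizer fully rigorous in the presentable setting, i.e.\ interpreting the pointwise equality $c_{Y,X}\circ c_{X,Y}=\opn{id}_{X\ot Y}$ as an equalizer diagram of cocontinuous endofunctors of $\msc{D}$ in the variable $X$, and confirming that this equalizer is stable under the colimits used to build up $g(\msc{K})$ from $g(\msc{E})$. Once this point is settled, the rest of the argument is formal bookkeeping with the universal $2$-isomorphisms attached to the relative tensor product and the tetrahedral compatibility data between the various over-$\msc{E}$ and over-$\msc{K}$ structures.
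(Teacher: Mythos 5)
Your proof takes essentially the same route as the paper's.  For the representability claim you both observe that, when $\msc{C}$ is over $\msc{K}$, the M\"uger-centralizing condition from \eqref{eq:658} is automatic because $u_\msc{C}(\msc{K})$ is already central, so the pair $(g,h)$ collapses to the datum of $h:\msc{A}\to\msc{C}$ alone.  For the pushout you both reduce to showing that the generation hypothesis forces $g(\msc{K})$ to centralize $h(\msc{A})$.  Your genuine contribution is to \emph{prove} this centralization step, which the paper merely asserts: the M\"uger centralizer $Z_\msc{D}(h(\msc{A}))$ is closed under colimits because the two composites in $c_{Y,-}c_{-,Y}$ vs.\ $\opn{id}_{-\ot Y}$ are natural transformations between cocontinuous endofunctors, so equality passes to colimits; cocontinuity of $g$ then propagates $g(\msc{E})\subseteq Z_\msc{D}(h(\msc{A}))$ to $g(\msc{K})\subseteq Z_\msc{D}(h(\msc{A}))$.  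This is the correct supporting argument.

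One subtlety you gloss over in the pushout step: the identification \eqref{eq:658} is only asserted for targets $\msc{C}$ that are themselves braided monoidal categories \emph{over} $\msc{E}$, i.e.\ with $\msc{E}$ sitting M\"uger-centrally in $\msc{C}$.  A cocone target $\msc{D}$ for the pushout is an arbitrary presentable braided monoidal category, and the image of $\msc{E}$ need not be M\"uger central in $\msc{D}$, so \eqref{eq:658} does not literally apply.  The paper's proof sidesteps this by observing that the pair $(g,h)$ at least determines an $\msc{E}$-bilinear functor out of $\msc{K}\times\msc{A}$ in the ordinary module-category sense, producing a cocontinuous functor out of $\msc{K}\ot_\msc{E}\msc{A}$, and then noting that the centralizing property of the images upgrades this functor to a braided monoidal one—using the characterization of the braided monoidal structure on $\msc{K}\ot_\msc{E}\msc{A}$ given in (a)--(c) just before \eqref{eq:658}, not \eqref{eq:658} itself.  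You should re-route the final bookkeeping through this weaker universal property to avoid tacitly assuming $\msc{D}$ lies over $\msc{E}$.
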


\begin{proof}
For a braided monoidal category $\msc{C}$ over $\msc{K}$, the information of a braided monoidal functor $f:\msc{A}\to \msc{C}$ over $\msc{E}$ specifies an object $(u,f)$ in $\opn{Bilin}^\ot_{\msc{E}}(\msc{K}\times \msc{A},\msc{C})$ for which the map $u:\msc{K}\to \msc{C}$ is the unit map.  This gives a fully faithful embedding
\[
\Fun^{\opn{E}_2}_{\msc{E}}(\msc{A},\msc{C})\to \opn{Bilin}^{\opn{E}_2}_{\msc{E}}(\msc{K}\times \msc{A},\msc{C})\cong\Fun^{\opn{E}_2}(\msc{K}\ot_{\msc{E}}\msc{A},\msc{C}).
\]
We note that the forgetful functor $\Fun^{\opn{E}_2}_{\msc{K}}(\msc{K}\otimes_{\msc{E}}\msc{A},\msc{C})\to \Fun^{\opn{E}_2}(\msc{K}\ot_{\msc{E}}\msc{A},\msc{C})$ is fully faithful to see that the above embedding is an equivalence onto the full subcategory $\Fun^{\opn{E}_2}_{\msc{K}}(\msc{K}\otimes_{\msc{E}}\msc{A},\msc{C})$ of braided maps over $\msc{K}$.  Hence $\msc{K}\ot_{\msc{E}}\msc{A}$ represents the functor $\Fun^{\opn{E}_2}_{\msc{E}}(\msc{A},-)$.
\par

For the claim about the pushout, if we have a $2$-diagram
\begin{equation}\label{eq:717}
\xymatrix{
\msc{E}\ar[r]\ar[d] & \msc{A}\ar[d]^f\\
\msc{K}\ar[r]_u & \msc{D}
}
\end{equation}
then the fact that the image of $\msc{E}$ generates $\msc{K}$, and lies centrally in $\msc{A}$, forces $u(\msc{K})$ to centralize $f(\msc{A})$ in $\msc{D}$.  Though the image of $\msc{E}$ in $\msc{D}$ needn't be central, and hence $\msc{D}$ needn't be a braided monoidal category over $\msc{E}$, the pair $(u,f)$ still defines an $\msc{E}$-bilinear functor from $\msc{K}\times \msc{A}$.  We then obtain a uniquely associated braided monoidal functor from $\msc{K}\ot_{\msc{E}}\msc{A}$ which splits the above diagram.
\end{proof}

The point of Proposition \ref{prop:bc} is simply to say that a given map from the product
\[
F:\msc{K}\ot_{\msc{E}}\msc{A}\to \msc{C}
\]
admits a braided monoidal structure whenever the constituent maps $f:\msc{A}\to \msc{C}$ and $u:\msc{K}\to \msc{C}$ are braided monoidal in a compatible way.  The realization of $\msc{K}\ot_{\msc{E}}\msc{A}$ as the representing object for the $2$-functor $\Fun^{\opn{E}_2}_{\msc{E}}(\msc{A},-)$, in particular, characterizes the operation $\msc{K}\ot_{\msc{E}}-$ as categorical base change along the map $\msc{E}\to \msc{K}$ (cf. \cite[Theorem 4.5.3.1]{lurieha}).

\section{Hopf structures and Tannakian reconstruction}
\label{sect:tann_reconstructor}

In this section we recall the basic principles of Tannakian reconstruction.  Through an application of Tannakian reconstruction one can translate more-or-less freely between categories and algebras.  Or, more immediately, between categories and coalgebras.

\subsection{Tannakian reconstruction}

We consider the category $\opn{AbCat}_k/\opn{Vect}$ (resp.\ $\opn{Tens}_k/\opn{Vect}$) of $k$-linear, locally finite, compactly generated, presentable abelian categories (resp.\ tensor categories) which are fibered over $\opn{Vect}$.  These are categories which come equipped with an exact and cocontinuous linear (resp.\ tensor) functor to $\opn{Vect}$, which furthermore preserves compact objects.   A morphism in $\opn{AbCat}_k/\opn{Vect}$ (resp.\ $\opn{Tens}_k/\opn{Vect}$) is a strictly commuting diagram of exact cocontinuous linear functors (resp.\ tensor functors)
\begin{equation}\label{eq:632}
\xymatrix{
\msc{A}\ar[rr]\ar[dr]_{\eta_{\msc{A}}} & & \msc{B}\ar[dl]^{\eta_{\msc{B}}}\\
 & \opn{Vect} & .
}
\end{equation}
There are no $2$-morphisms in these categories other than the identity, so that they are proper categories rather than $2$-categories.
\par

Tannakian reconstruction, or Tannaka duality, or Tannaka-Krein duality makes the following fundamental assertion.

\begin{theorem}[{\cite[\S 5]{takeuchi77}, \cite[Theorem 2.2.8, Corollary 2.3.8]{schauenburg92}}]
The functors
\[
\opn{Corep}:\opn{Coalg}_k\to \opn{AbCat}_k/\opn{Vect}\ \ \text{and}\ \ \opn{Corep}:\opn{HopfAlg}_k\to \opn{Tens}_k/\opn{Vect}
\]
are equivalence of categories.
\end{theorem}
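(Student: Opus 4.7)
The plan is to produce an explicit inverse to $\opn{Corep}$ via a coend reconstruction, and to verify these functors are mutually inverse equivalences. For any $(\msc{A},\eta_{\msc{A}})$ in $\opn{AbCat}_k/\opn{Vect}$, I define the coalgebra
\[
C(\msc{A}) := \int^{X \in \msc{A}^c} \eta_{\msc{A}}(X)^{\ast} \ot \eta_{\msc{A}}(X),
\]
taken as the coend of the bifunctor $(X,Y) \mapsto \eta_{\msc{A}}(X)^{\ast} \ot \eta_{\msc{A}}(Y)$ on the essentially small subcategory $\msc{A}^c$ of compact objects. The space $C(\msc{A})$ acquires a coalgebra structure whose comultiplication is induced, on each generating block $\eta_{\msc{A}}(X)^{\ast} \ot \eta_{\msc{A}}(X)$, by insertion of the coevaluation $k \to \eta_{\msc{A}}(X) \ot \eta_{\msc{A}}(X)^{\ast}$ between the two tensor factors, and whose counit is evaluation. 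This construction extends in the obvious way to morphisms in $\opn{AbCat}_k/\opn{Vect}$, yielding a functor $\Psi : \opn{AbCat}_k/\opn{Vect} \to \opn{Coalg}_k$.

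I would then argue that $\eta_{\msc{A}}$ lifts canonically to a functor $\widetilde{\eta}_{\msc{A}} : \msc{A} \to \opn{Corep}(C(\msc{A}))$, sending each $X$ in $\msc{A}^c$ to the comodule $\eta_{\msc{A}}(X)$ with coaction provided by the structure map $\eta_{\msc{A}}(X) \to \eta_{\msc{A}}(X) \ot C(\msc{A})$ into the coend, and extending to all of $\msc{A}$ via Ind-completion. The central reconstruction claim is that $\widetilde{\eta}_{\msc{A}}$ is an equivalence over $\opn{Vect}$. Faithfulness is immediate from faithfulness of $\eta_{\msc{A}}$; essential surjectivity follows because every $C(\msc{A})$-comodule is a filtered colimit of finite-dimensional subcomodules, each of which arises from a compact object of $\msc{A}$ by the defining coend property; and fullness reduces to the computation that $C(\msc{A})$-colinear maps $\eta_{\msc{A}}(X) \to \eta_{\msc{A}}(Y)$ coincide with $\Hom_{\msc{A}}(X,Y)$, which follows directly from the universal property of the coend. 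In the opposite direction, the identity $\Psi(\opn{Corep}(C)) \cong C$ asserts that $C$ is the union of its finite-dimensional subcoalgebras of the form $V^{\ast} \ot V$, as $V$ ranges over finite-dimensional $C$-comodules; this is essentially the fundamental theorem of coalgebras.

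For the Hopf upgrade, a monoidal structure on $\msc{A}$ together with a monoidal structure on $\eta_{\msc{A}}$ produces canonical isomorphisms $\eta_{\msc{A}}(X \ot Y) \cong \eta_{\msc{A}}(X) \ot \eta_{\msc{A}}(Y)$, and these induce, via the coend, a product $C(\msc{A}) \ot C(\msc{A}) \to C(\msc{A})$ making $C(\msc{A})$ into a bialgebra compatibly with the comultiplication. Rigidity of compact objects, together with the identification $\eta_{\msc{A}}(X^{\ast}) \cong \eta_{\msc{A}}(X)^{\ast}$ forced by monoidality, then supplies the antipode. The main obstacle I anticipate is purely technical: one must verify that the coend exists in $\opn{Vect}$ without set-theoretic pathology, for which essential smallness of $\msc{A}^c$ and local finiteness of $\Hom$-spaces are crucial, and one must check that the Ind-extension of the compact-level equivalence yields an equivalence of presentable abelian categories fibered over $\opn{Vect}$, functorial in morphisms of the form \eqref{eq:632}. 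The remaining verifications are routine naturality checks, for which I would simply appeal to the detailed treatments by Takeuchi and Schauenburg in the cited references rather than reproduce the bookkeeping.
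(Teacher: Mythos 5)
Your approach is essentially the same as the paper's: the paper recalls the inverse to $\opn{Corep}$ as the $\opn{Coend}$ functor, presented there as the continuous dual $\End_k(\eta)^\ast$ of the topological algebra of natural endomorphisms, which is just another description of the coend $\int^{X\in\msc{A}^c}\eta(X)^\ast\ot\eta(X)$ you write down. Both you and the paper then defer the verification details to Takeuchi and Schauenburg, so the proposals match in substance and in level of detail. One caveat worth flagging: the claim that fullness of $\tilde\eta_{\msc{A}}$ ``follows directly from the universal property of the coend'' is an overstatement. The universal property identifies $C(\msc{A})^\ast$ with natural endomorphisms of $\eta$, but recovering $\Hom_{\msc{A}}(X,Y)$ from $C(\msc{A})$-colinear maps $\eta(X)\to\eta(Y)$ is a genuinely nontrivial density-type argument, and is precisely where exactness and faithfulness of $\eta$ and local finiteness of $\msc{A}^c$ do real work (this is the content of \cite[Lemma 2.2.13]{schauenburg92}, the same lemma the paper invokes immediately afterward to characterize embeddings and surjections). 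Since you defer to those references anyway this is not a fatal gap, but it should not be described as an immediate consequence of the coend's defining property.
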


The inverse to the $\opn{Corep}$ functor is the $\opn{Coend}$ functor,
\[
\msc{A}\mapsto \opn{Coend}_k(\eta:\msc{A}\to \opn{Vect}).
\]
Explicitly, we first have the $\End$ functor, which takes $\msc{A}$ to the topological algebra
\[
\opn{End}_k(\eta:\msc{A}\to \opn{Vect})=\left\{\begin{array}{c}
\text{The algebra of natural linear endomorphisms}\\
\text{$a$ of }\eta,\ a_V:\eta(V)\to \eta(V)\ \text{for all $V$ in }\msc{A}
\end{array}\right\}.
\]
This algebra acts naturally on $\eta(V)$ for each $V$ in $\msc{A}$, and the closed ideals in $\End_k(\eta)$ are the annihilators of $\eta(V)$ for compact $V$.  The coend for $\eta$ is obtained as the continuous dual
\[
\opn{Coend}_k(\eta)=\End_k(\eta)^\ast
\]
\[
=\{\alpha:\opn{End}_k(\eta)\to k\ \text{such that }\alpha \text{ vanishes on a closed ideal $I$ in }\opn{End}_k(\eta)\}
\]
\cite[\S\ 1.10]{egno15}.  We refer to $\opn{Coend}_k(\eta)$ as the representing coalgebra for $\msc{A}$.
\par

One should recall our notions of surjective functors and embeddings, from Section \ref{sect:surj_embed}.

\begin{theorem}[{\cite[Lemmas 2.2.12, 2.2.13]{schauenburg92}}]
A functor $F:\msc{A}\to \msc{B}$ between objects in $\opn{AbCat}_k/\opn{Vect}$ is surjective if and only the corresponding map of representing coalgebras $f:A\to B$ is surjective.  Similarly, $F$ is an embedding if and only if $f$ is injective.
\end{theorem}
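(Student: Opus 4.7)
My plan is to reduce both statements to assertions about matrix coefficient subcoalgebras, and then to exploit the well-known correspondence between finite-dimensional comodules over a coalgebra $C$ and the subcoalgebra of $C$ they generate. To set things up, recall that for any compact $V$ in $\msc{A}$ we have the finite-dimensional matrix coefficient coalgebra
\[
C(V)\ :=\ \opn{im}\bigl(\End_k(V)^\ast \to A\bigr)\ \subseteq\ A,
\]
and $A=\bigcup_{V}C(V)$ over compact $V$. The map $f:A\to B$ induced by $F$ is, by construction, the transpose of the restriction $\End_k(\eta_{\msc{B}})\to\End_k(\eta_{\msc{A}})$, and unravelling this one finds the basic identity $f(C(V))=C(F(V))$. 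Consequently $f(A)=\bigcup_V C(F(V))$.

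For the surjectivity equivalence I would argue as follows. If $W$ is a subquotient of $F(V)$ in $\msc{B}$ then clearly $C(W)\subseteq C(F(V))$, so if $F$ is surjective each $C(W)$ lies in $f(A)$ and, by taking unions over $W$ compact, $f$ is surjective. Conversely, suppose $f$ is surjective and fix a compact $W\in\msc{B}$. Finite-dimensionality of $C(W)$ and the directed union $f(A)=\bigcup_V C(F(V))$ give $C(W)\subseteq C(F(V))$ for some compact $V$. Then $W$, as a $B$-comodule, factors through the subcoalgebra $C(F(V))$, and the standard fact that every finite-dimensional comodule over a coalgebra $D$ is a subquotient of $D^{\oplus n}$ (applied to $D=C(F(V))$, which is itself a subquotient of $F(V)^{\oplus \dim F(V)}$ as a right comodule over itself) shows $W$ is a subquotient of $F(V)^{\oplus n}$ for some $n$. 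Hence $F$ is surjective.

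For the embedding equivalence, if $f$ is injective then the essential image $\langle F(\msc{A})\rangle$, which by the previous paragraph is exactly the subcategory of $B$-comodules with matrix coefficients in $f(A)$, is canonically identified with $\opn{Corep}(f(A))\cong \opn{Corep}(A)=\msc{A}$. Under this identification $F$ becomes the identity, so it is an equivalence onto $\langle F(\msc{A})\rangle$. Conversely, if $F$ is an embedding then the composition $\msc{A}\overset{\sim}{\to}\langle F(\msc{A})\rangle\hookrightarrow\msc{B}$ factors through an object of $\opn{AbCat}_k/\opn{Vect}$ whose representing coalgebra is the subcoalgebra $f(A)\subseteq B$. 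Applying $\opn{Coend}_k$ to this factorization and invoking functoriality of the equivalence $\opn{Corep}$ yields that the induced map $A\to f(A)$ is an isomorphism, i.e.\ $f$ is injective.

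\textbf{Main obstacle.} The only nontrivial step is the ``converse" half of the surjectivity statement, where one must produce a genuine subquotient relation from the inclusion of matrix coefficient coalgebras. Everything reduces to the elementary fact that finite-dimensional comodules are subquotients of finite direct sums of their coalgebra of coefficients; once this is in hand, both equivalences drop out of basic Tannakian formalism. The injectivity/embedding half is essentially formal once one knows that subcategories cut out by subcoalgebras are precisely the replete subcategories closed under subquotients.
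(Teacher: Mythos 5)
Your proof is correct and follows the same standard matrix-coefficient-coalgebra approach that Schauenburg uses in the cited Lemmas 2.2.12 and 2.2.13; the paper itself does not reprove the statement, it only references Schauenburg. The one step worth stating carefully—the identity $f(C(V))=C(F(V))$—does hold: since $\eta_{\msc A}(V)=\eta_{\msc B}(F(V))$ strictly, the map $\End_k(\eta_{\msc B})\to\End_k(\eta_{\msc A})$ descends to an injection $\End_k(\eta_{\msc B})/\operatorname{Ann}(F(V))\hookrightarrow\End_k(\eta_{\msc A})/\operatorname{Ann}(V)$ whose transpose is exactly $f|_{C(V)}:C(V)\twoheadrightarrow C(F(V))$, and the remaining routine points (directedness of $\{C(F(V))\}_V$ via $V\mapsto V\oplus V'$, stability of $\langle F(\msc A)\rangle$ under subquotients of subquotients, and identification of $\operatorname{Coend}$ of a subquotient-closed full subcategory with the union of the coefficient coalgebras of its compact objects) are easily supplied.
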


\subsection{Reconstruction and normality}

For a pointed $\msc{A}$-module category $\msc{M}$ equipped with a fiber functor
\[
\{\ \msc{A}\to\opn{Vect},\ \msc{M}\to \opn{Vect}\ \},
\]
one reconstructs $\msc{A}$ as corepresentations for a Hopf algebra $A$, and $\msc{M}$ as corepresentations for an $A$-module coalgebra $C$ which is equipped with a map of module coalgebras $\pi:A\to C$.  To elaborate, the product $C\ot A$ lives as an object in $\opn{Corep}(C)\cong \msc{M}$ and the $A$-action on $C$ is extracted from the $C$-coaction on $C\ot A$ via the composite
\[
C\ot A\to (C\ot A)\ot C\overset{\epsilon\ot id_C}\to C.
\]

\begin{lemma}[{cf.\ \cite[Lemma 7.4]{negron}}]\label{lem:1025}
Let $A$ be a Hopf algebra and $\pi:A\to C$ be a map of right $A$-module coalgebras.  The corestriction functor $F:\opn{Corep}(A)\to \opn{Corep}(C)$ is normal, in the sense of Definiton \ref{def:normal}, if and only if the right coinvariants $A^C$ are a Hopf subalgebra in $A$.  Furthermore, in this case $\opn{ker}(F)=\opn{Corep}(A^C)$, and the left and right coinvariants in $A$ agree $A^C={^CA}$.
\end{lemma}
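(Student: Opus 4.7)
Let $B:=A^C$. The core of the proof is the explicit characterization of $\ker F$ as the full subcategory of comodules $V\in\opn{Corep}(A)$ whose coaction satisfies $\rho_V(V)\subseteq V\otimes B$. One direction uses the identity $\pi(b)=\epsilon(b)\bar{1}$ for $b\in B$, obtained by applying $\epsilon\otimes id$ to the defining relation for $B$; the other applies coassociativity to $V\in\ker F$ to derive $(id\otimes (id\otimes\pi)\Delta-id\otimes id\otimes \bar{1})\rho_V=0$ and concludes via flatness of $\otimes_k$. Applied to the right regular comodule $A$ itself, the same computation yields the automatic inclusion $\Delta(B)\subseteq A\otimes B$, so $B$ is always a left coideal.

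For the backward direction, if $B$ is a Hopf subalgebra, the characterization identifies $\ker F$ with the image of the natural embedding $\opn{Corep}(B)\hookrightarrow\opn{Corep}(A)$. This image is visibly a tensor subcategory, giving normality condition (a) and the ``furthermore'' identification $\ker F=\opn{Corep}(B)$ for free. For condition (b), given $X\in\opn{Corep}(A)$ and $v\in X^C$, write $\rho_X(v)=\sum_{\alpha\in I_B}v^{(\alpha)}\otimes e_\alpha$ in a basis of $B$, extended to a basis $\{e_\mu\}_{\mu\in I}$ of $A$. Coassociativity $(\rho_X\otimes id)\rho_X=(id\otimes\Delta)\rho_X$ together with $\Delta(B)\subseteq B\otimes B$ forces, by comparing coefficients against the complementary basis vectors $e_\mu$ with $\mu\notin I_B$, that each $\rho_X(v^{(\alpha)})\in X\otimes B$; thus every $v^{(\alpha)}\in X^C$ and $X^C$ is $A$-stable.

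For the forward direction, assume $F$ is normal. Under the characterization of $\ker F$, condition (a) translates tensor-closure, unit-closure, and closure under left/right duals directly into $B\cdot B\subseteq B$, $1\in B$, and $S(B)=S^{-1}(B)=B$. Condition (b) applied to $X=A$ in its right regular comodule structure identifies $B=A^C$ with an $A$-subcomodule of $A$, giving $\Delta(B)\subseteq B\otimes A$. Intersecting with the automatic $\Delta(B)\subseteq A\otimes B$ via a basis argument yields $\Delta(B)\subseteq B\otimes B$, so $B$ is a subbialgebra closed under the antipode, i.e., a Hopf subalgebra.

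Finally, for $A^C={}^CA$ in the normal case, the inclusion $B\subseteq{}^CA$ is immediate: for $b\in B$, $\Delta(b)\in B\otimes B$ and $\pi|_B=\epsilon|_B\cdot\bar{1}$ give $(\pi\otimes id)\Delta(b)=\sum\epsilon(b_{(1)})\bar{1}\otimes b_{(2)}=\bar{1}\otimes b$. The reverse inclusion ${}^CA\subseteq B$ is the main obstacle. The strategy is as follows: since $\pi$ is right $A$-linear with $\pi(B^+)=0$, the right ideal $B^+A$ lies in $\ker\pi$, so $\pi$ factors as $A\twoheadrightarrow A/B^+A\twoheadrightarrow C$, and one invokes the established normality to verify $\ker\pi=B^+A$, i.e., $C\cong A/B^+A$ as right $A$-module coalgebras. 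A direct Sweedler computation using the Hopf subalgebra structure of $B$ and the antipode identity then identifies the left coinvariants ${}^{A/B^+A}A$ with $B$. Establishing the equality $\ker\pi=B^+A$ is expected to be the hardest step.
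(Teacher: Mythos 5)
Your argument for the main equivalence and the kernel identification follows the paper's approach. The key device is the identification $\opn{ker}(F)=\{V:\rho_V(V)\subseteq V\otimes A^C\}$, which the paper uses implicitly and you make explicit via the same coassociativity-and-linear-independence mechanism. Your basis-coefficient computation for normality condition (b) is equivalent to the paper's observation that $(\bar{\rho}\otimes 1)\rho(V^C)\subseteq (V\otimes k)\otimes A^C$. One small imprecision is worth flagging: the subalgebra structure $B\cdot B\subseteq B$, $1\in B$ on $B=A^C$ is automatic---it follows from right $A$-linearity of $\pi$ alone, with no normality hypothesis (this is exactly the paper's ``$A^C$ is a left $A$-comodule algebra'' remark)---so condition (a) is only responsible for antipode stability, while condition (b) applied to the regular corepresentation supplies $\Delta(B)\subseteq B\otimes A$. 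You attribute the subalgebra structure to (a), but the conclusion and the decomposition of the argument are otherwise the same as the paper's, so the forward and backward directions are sound.

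The genuine gap, which you acknowledge, is the final claim $A^C={}^CA$. The inclusion $B\subseteq{}^CA$ you establish correctly. For the reverse inclusion your proposed route is to prove $\ker\pi=B^+A$ and then identify ${}^{A/B^+A}A$ with $B$, but neither step is carried out. The first is the serious one: surjectivity of $A/B^+A\to C$ is immediate, but injectivity is where one typically invokes a faithful flatness result in the style of Takeuchi or Nichols--Zoeller, and it is not clear that such an input is available at the level of generality of this lemma without further argument. The paper itself does not prove this last claim; it cites \cite[Lemma 7.4]{negron}. So your proposal terminates at the same point the paper does, but where the paper delegates to a verified external result you leave an unverified core step. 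As written, the last sentence of the lemma remains unestablished; to complete the proof you would need to either supply the missing flatness-type input or give a direct Sweedler computation proving $\ker\pi=B^+A$ and ${}^{A/B^+A}A\subseteq B$ without it.
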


\begin{proof}
Suppose first that $A^C$ is a Hopf subalgebra in $A$.  For an $A$-comodule $V$ with coaction $\rho:V\to V\ot A$, let $\bar{\rho}=(1\ot \pi)\rho$ denote the corresponding $C$-coaction.  For any $A$-corepresentation $V$ the $C$-coinvariants $V^C$ have restricted coaction
\[
\rho|_{V^C}:V^C\to V\ot A
\]
landing in $V\ot A^C$.  Since $A^C$ is a Hopf subalgebra in $A$, coassociativity gives
\[
(\rho_V\ot 1)\rho|_{V^C}:V^C\to (V\ot A^C)\ot A^C
\]
and thus $(\bar{\rho}\ot 1)\rho$ has image in $(V\ot k)\ot A^C$.  It follows that $\rho|_{V^C}$ has image in $V^C\ot A^C$, and hence that $V^C$ is an $A$-subcomodule in $V$ which lies in the subcategory $\opn{Corep}(A^C)\subseteq \opn{Corep}(A)$.  We therefore see that $F:\opn{Corep}(A)\to \opn{Corep}(C)$ is normal with kernel $\opn{Corep}(A^C)$.
\par

Conversely, if $F$ is normal then $A^C\subseteq A$ is an $A$-subcorepresentation.  But we have, generally, that $A^C$ is a left $A$-comodule algebra,
\[
\Delta|_{A^C}:A^C\to A\ot A^C.
\]
So the fact that $A^C$ is a subcomodule in the regular corepresentation implies that $A^C\subseteq A$ is a sub-bialgebra.  By the argument given above we also have $\opn{ker}(F)=\opn{Corep}(A^C)$.  Since this subcategory is a tensor subcategory by hypothesis, $A^C$ is a Hopf subalgebra in $A$.  Finally, agreement of left and right coinvariants follows by \cite[Lemma 7.4]{negron}.
\end{proof}

\section{Practical base change results}
\label{sect:basechange_results}

Via an application of Tannakian reconstruction, a number of classical faithful flatness results for Hopf extensions can be reframed as base change calculations for categories of (co)representations.  We enumerate a number of useful instances in this section.

\subsection{Tensoring with categories of modules}

We have the following result of Douglas, Schommer-Pries, and Snyder.

\begin{proposition}[{\cite{douglasspsnyder19,lurieha}}]\label{prop:mods}
Let $\msc{E}$ be a tensor category, $R$ be an algebra in $\msc{E}$, and $R\text{\rm-mod}_{\msc{E}}$ be the category of left $R$-modules in $\msc{E}$.  We consider $R\text{\rm-mod}_{\msc{E}}$ as a right $\msc{E}$-module category.  Then for any left $\msc{E}$-module category $\msc{M}$ the $\msc{E}$-bilinear map
\[
\opn{univ}:R\text{\rm-mod}_{\msc{E}}\times \msc{M}\to R\text{\rm-mod}_{\msc{M}},\ \ \opn{univ}(X,M):=X\ot M,
\]
induces an equivalence from the corresponding product over $\msc{E}$
\[
(R\text{\rm-mod}_\msc{E})\ot_{\msc{E}}\msc{M}\overset{\sim}\to R\text{\rm-mod}_{\msc{M}}.
\]
\end{proposition}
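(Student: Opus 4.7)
The plan is to verify the universal property of the relative tensor product directly. Write $\Phi: R\text{-mod}_{\msc{E}} \times \msc{M} \to R\text{-mod}_{\msc{M}}$ for the functor $(X,M) \mapsto X \ot M$ with left $R$-action on $X \ot M$ inherited from the action on $X$. This $\Phi$ is cocontinuous in each variable, and the associativity constraints $(X \ot V) \ot M \cong X \ot (V \ot M)$ coming from the actions of $\msc{E}$ on itself and on $\msc{M}$ equip $\Phi$ with an evident $\msc{E}$-balanced structure. By the representability of $\ot_{\msc{E}}$ we obtain a cocontinuous comparison $\bar\Phi: (R\text{-mod}_{\msc{E}}) \ot_{\msc{E}} \msc{M} \to R\text{-mod}_{\msc{M}}$, and the task is to show $\bar\Phi$ is an equivalence. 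I would do so by showing $\bar\Phi^{\ast}$ is an equivalence on the presheaves $\uFun(-, \msc{B})$ for each presentable $\msc{B}$, and appealing to Yoneda.

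To evaluate $\uFun(R\text{-mod}_{\msc{M}}, \msc{B})$ I would use monadicity of the free/forgetful adjunction $R \ot -: \msc{M} \rightleftarrows R\text{-mod}_{\msc{M}}: U$. The functor $U$ is cocontinuous and conservative, and in the presentable setting the reflexive coequalizers demanded by Beck's theorem exist automatically. Hence $R\text{-mod}_{\msc{M}}$ is identified with the Eilenberg--Moore category of the monad $T = R \ot -$ on $\msc{M}$, and cocontinuous functors out of it correspond to pairs $(G,\alpha)$ where $G: \msc{M} \to \msc{B}$ is cocontinuous and $\alpha: G T \to G$ is a natural transformation making $G$ a $T$-module in the appropriate sense. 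On the other side, combining the universal property of the relative product with the tensor-hom adjunction gives
\[
\uFun\bigl((R\text{-mod}_{\msc{E}}) \ot_{\msc{E}} \msc{M}, \msc{B}\bigr) \simeq \uFun_{\msc{E}}\bigl(R\text{-mod}_{\msc{E}}, \uFun(\msc{M}, \msc{B})\bigr),
\]
where $\uFun(\msc{M}, \msc{B})$ carries the right $\msc{E}$-action $(G \cdot V)(M) := G(V \ot M)$.

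To match the two sides I would invoke the $\msc{E}$-equivariant form of monadicity: $R\text{-mod}_{\msc{E}}$ is the Eilenberg--Moore category of $T = R \ot -$ on $\msc{E}$, and this identification persists in the 2-category of presentable right $\msc{E}$-module categories, so that $R\text{-mod}_{\msc{E}}$ represents the 2-functor $\msc{K} \mapsto R\text{-mod}_{\msc{K}}$ on such categories. Applied to $\msc{K} = \uFun(\msc{M}, \msc{B})$ this yields
\[
\uFun_{\msc{E}}\bigl(R\text{-mod}_{\msc{E}}, \uFun(\msc{M},\msc{B})\bigr) \simeq R\text{-mod}_{\uFun(\msc{M},\msc{B})},
\]
and unwinding, a right $R$-module structure in $\uFun(\msc{M},\msc{B})$ is precisely a natural map $G \cdot R = G(R \ot -) \to G$ satisfying the module axioms, i.e.\ the data of a pair $(G,\alpha)$ as above. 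Tracing through shows the resulting chain of equivalences is induced by $\bar\Phi^{\ast}$, so Yoneda delivers the claim.

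The main obstacle is the $\msc{E}$-equivariant monadicity step, namely the identification $\uFun_{\msc{E}}(R\text{-mod}_{\msc{E}}, \msc{K}) \simeq R\text{-mod}_{\msc{K}}$ for presentable right $\msc{E}$-module categories $\msc{K}$. This demands that the Eilenberg--Moore construction be functorial and representable in the 2-category of presentable right $\msc{E}$-module categories, rather than merely in presentable categories; once available, everything else is a formal manipulation of the defining universal properties of $\ot_{\msc{E}}$ and $\uFun$. This delicate representability statement is precisely the infrastructure developed in \cite{lurieha}, which is why the result is attributed to \cite{douglasspsnyder19,lurieha} rather than being proved by hand.
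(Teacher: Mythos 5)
Your argument is correct, but it takes a genuinely different route from the paper's. The paper follows \cite[proof of Theorem 3.3(2)]{douglasspsnyder19} directly: given an $\msc{E}$-bilinear cocontinuous functor $T: R\text{-mod}_{\msc{E}} \times \msc{M} \to \msc{C}$, it \emph{constructs} the factoring functor $t: R\text{-mod}_{\msc{M}} \to \msc{C}$ explicitly via the reflexive coequalizer $t(Y) = \opn{colim}\bigl(T(R, R\ot Y) \rightrightarrows T(R, Y)\bigr)$, i.e.\ by a hands-on bar presentation of the object $Y$, and then checks functoriality and the universal property by direct manipulation. You instead argue by Yoneda, identifying $\uFun\bigl((R\text{-mod}_{\msc{E}}) \ot_{\msc{E}} \msc{M}, \msc{B}\bigr)$ and $\uFun(R\text{-mod}_{\msc{M}}, \msc{B})$ through monadicity of $T = R\ot-$ and the colimit description of the Eilenberg--Moore category in $\opn{Pr}^L$. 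Both proofs ultimately rest on the same bar-resolution fact (that every $R$-module in $\msc{M}$ is a reflexive coequalizer of free modules, and that cocontinuous functors therefore commute with this presentation); the difference is that the paper uses this \emph{covariantly}, to build the functor $t$ on the nose, whereas you use it \emph{contravariantly}, to describe $\uFun(R\text{-mod}_{(-)}, \msc{B})$. Your route is cleaner and more structurally transparent, but it front-loads the serious content into the $\msc{E}$-equivariant representability statement $\uFun_{\msc{E}}(R\text{-mod}_{\msc{E}}, \msc{K}) \simeq R\text{-mod}_{\msc{K}}$, which you correctly flag as the delicate point and outsource to \cite{lurieha}. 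The paper's explicit construction avoids having to establish that infrastructure and is closer to the style of \cite{douglasspsnyder19}, which is why it is the proof the paper records. If you wanted your version to be self-contained, the missing work would be verifying that the Eilenberg--Moore construction is representable in the $2$-category of presentable right $\msc{E}$-module categories, not just in $\opn{Pr}^L$; this is exactly the step the paper's coequalizer formula sidesteps.
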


The proof is exactly as in \cite[proof of Theorem 3.3(2)]{douglasspsnyder19}.  For some details, if we are given an appropriately cocontinuous $\msc{E}$-bilinear functor
\[
T:R\text{-mod}_{\msc{E}}\times \msc{M}\to \msc{C}
\]
the corresponding functor $t:R\text{-mod}_{\msc{M}}\to \msc{C}$ is determined by the formula
\[
\xymatrixcolsep{13mm}
t(Y):=\opn{colim}\left(
\xymatrix{
T(R,R\ot Y)\ar@<2pt>[r]^(.55){\opn{act}'_R}\ar@<-2pt>[r]_(.55){\opn{act}_Y} & T(R,Y)
}
\right),
\]
where $Y$ is any given $R$-module in $\msc{M}$ and $\opn{act}_\star$ (resp.\ $\opn{act}'_\star$) denotes the left (resp.\ right) action map on $Y$ (resp.\ $R$).

\begin{remark}
Proposition \ref{prop:mods} can alternatively be deduced from \cite[Theorem 4.8.4.6]{lurieha} and the embedding $\opn{Nerve}:\opn{Pr}\to \opn{Pr}_\infty$.  This approach is exceedingly inefficient however.
\end{remark}

\subsection{Base change for finite representation categories}

The following is a straightforward application of results from Takeuchi \cite{takeuchi79} and Nichols-Zoeller \cite{nicholszoeller89}.

\begin{lemma}\label{lem:fin_basechange}
\begin{enumerate}
\item Suppose $A$ is a finite-dimensional Hopf algebra and $B\subseteq A$ is a right coideal subalgebra.  Suppose also that the restriction functor $F:\Rep(A)\to \Rep(B)$ is normal, and take $C=k\ot_B A$.  Then $C$ is a Hopf algebra quotient of $A$, $\opn{ker}(F)=\Rep(C)$, and the restriction functor induces an equivalence of right $\Rep(A)$-module categories
\[
\opn{Vect}\ot_{\Rep(C)}\Rep(A)\overset{\sim}\to \Rep(B).
\]
\item Suppose $A$ is a finite-dimensional Hopf algebra and $A\to C$ is a quotient right $A$-module coalgebra.  Suppose also that the corestriction functor $F:\opn{Corep}(A)\to \opn{Corep}(C)$ is normal.  Then $B=A^{C}$ is a Hopf subalgebra in $A$, $\opn{ker}(F)=\opn{Corep}(B)$, and the restriction functor induces an equivalence of right $\opn{Corep}(A)$-module categories
\[
\opn{Vect}\ot_{\opn{Corep}(B)}\opn{Corep}(A)\overset{\sim}\to \opn{Corep}(C).
\]
\end{enumerate}
\end{lemma}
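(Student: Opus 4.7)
The two parts are linearly dual. Passing to $A \mapsto A^*$ interchanges right coideal subalgebras with right module coalgebra quotients and, under the identification $\Rep(A) \simeq \opn{Corep}(A^*)$, intertwines the restriction in (1) with the corestriction in (2); in particular normality is preserved. A direct Sweedler-level check verifies that the Hopf subalgebra $(A^*)^{B^*} \subset A^*$ produced by applying part (2) to the dual is precisely $C^* = (A/AB^+)^*$, confirming that $C$ acquires a Hopf algebra structure and matching the kernel identification. I therefore focus on part (2) and deduce (1) by dualization.

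For part (2), Lemma~\ref{lem:1025} gives directly that $B = A^C = {}^C\!A$ is a Hopf subalgebra and $\ker(F) = \opn{Corep}(B) \subseteq \opn{Corep}(A)$. Moreover, the coinvariance condition $(\mathrm{id} \otimes \pi)\Delta(b) = b \otimes 1_C$ for $b \in B$ forces $\pi(b) = \epsilon(b)\,1_C$, so $B^+ \subseteq \ker(\pi)$ and the composite $B \hookrightarrow A \xrightarrow{\pi} C$ factors through $\epsilon$. This factorization turns $F$ into a $\opn{Corep}(B)$-balanced functor from $\opn{Vect} \times \opn{Corep}(A)$, where $\opn{Vect}$ is regarded as a $\opn{Corep}(B)$-module via its fiber functor. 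The universal property of the relative tensor product then produces the canonical comparison
\[
T\colon \opn{Vect} \otimes_{\opn{Corep}(B)} \opn{Corep}(A) \longrightarrow \opn{Corep}(C).
\]

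To prove $T$ is an equivalence I would apply Tannakian reconstruction from Section~\ref{sect:tann_reconstructor}. Both sides are presentable abelian $k$-linear categories fibered over $\opn{Vect}$, so it suffices to check that $T$ induces an isomorphism of representing coalgebras. By the universal property of the relative tensor product, natural endomorphisms of the induced fiber functor on $\opn{Vect} \otimes_{\opn{Corep}(B)} \opn{Corep}(A)$ correspond to $\opn{Corep}(B)$-linear natural endomorphisms of the underlying fiber functor $\eta\colon \opn{Corep}(A) \to \opn{Vect}$. Since $\End(\eta) = A^*$ topologically, a direct Sweedler computation (specializing to $U \in \opn{Corep}(B)$ and $W = A$) shows that the $\opn{Corep}(B)$-linearity condition on $\alpha \in A^*$ is exactly $\alpha(ba) = \epsilon(b)\alpha(a)$ for all $b \in B$, $a \in A$, i.e.\ vanishing on $B^+A$. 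Thus the representing coalgebra on the left is $A/B^+A$, and $T$ is identified with the canonical coalgebra surjection $A/B^+A \twoheadrightarrow C$ dual to the inclusion $C^* \hookrightarrow (A/B^+A)^*$.

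The main obstacle is upgrading this surjection to an isomorphism, i.e.\ establishing $\ker(\pi) = B^+A$. A dimension count suffices: Nichols--Zoeller freeness \cite{nicholszoeller89} of $A$ over the Hopf subalgebra $B$ gives $\dim(A/B^+A) = \dim A/\dim B$, while the dual form of Skryabin's faithful-flatness theorem \cite{skryabin07} (as invoked in Example~\ref{ex:556}) combined with the identification $\ker(F) = \opn{Corep}(B)$ yields $\dim C = \dim A/\dim B$. The two dimensions match, forcing equality, and Tannakian reconstruction then promotes the isomorphism of representing coalgebras to the desired equivalence of $\opn{Corep}(A)$-module categories.
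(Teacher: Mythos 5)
Your reduction of (1) to (2) by duality, and your use of Lemma~\ref{lem:1025} to extract $B=A^C$, the Hopf subalgebra structure, and $\ker(F)=\opn{Corep}(B)$, agree with the paper. From there your strategy diverges: you compute the representing coalgebra of the relative tensor product via Tannakian reconstruction and then try to close the gap with a dimension count. This is a reasonable instinct, but it leaves two real holes.

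First, applying Tannakian reconstruction to $\opn{Vect}\otimes_{\opn{Corep}(B)}\opn{Corep}(A)$ presupposes that this relative tensor product lies in $\opn{AbCat}_k/\opn{Vect}$, i.e.\ that it is abelian, locally finite, compactly generated, and carries an exact cocontinuous fiber functor. None of this is automatic for an abstract $\otimes_{\msc{E}}$-product, and your proposal treats it as given. The paper supplies precisely this missing input via Proposition~\ref{prop:mods} together with the fundamental theorem of Hopf modules, which identify $\opn{Vect}\otimes_{\opn{Corep}(B)}\opn{Corep}(A)$ with the concrete abelian category $B\text{-mod}_{\opn{Corep}(A)}$. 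Once one has that explicit model, your Tannakian detour is unnecessary — and without it, the detour isn't justified. Relatedly, the step ``natural endomorphisms of the induced fiber functor on the product correspond to $\opn{Corep}(B)$-linear endomorphisms of $\eta$'' is a claim about $\End$ of a fiber functor on a category you have only described by a universal property; the universal property controls cocontinuous functors \emph{out} of the product, and extracting the representing coalgebra from that needs an argument, not an assertion.

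Second, the dimension count is the crux and is not established. Nichols--Zoeller does give $\dim(A/B^+A)=\dim A/\dim B$, so you correctly get a surjection $A/B^+A\twoheadrightarrow C$ and the inequality $\dim C\le \dim A/\dim B$. But the reverse inequality is exactly the hard direction, and ``Skryabin's faithful-flatness theorem combined with $\ker(F)=\opn{Corep}(B)$'' does not yield it without further argument. What one actually needs is a structure theorem for the extension $B\subseteq A$ relative to $C$ — that $A/B^+A\cong C$, equivalently that $\operatorname{can}\colon A\otimes_B A\to A\otimes C$ is bijective — and this is precisely the content of Takeuchi's Theorem~1 that the paper invokes directly. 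So the dimension count, once made rigorous, collapses back onto the paper's key tool rather than replacing it. In short: the paper's route (explicit model via Proposition~\ref{prop:mods}, then Takeuchi applied to $B\text{-mod}_{\opn{Corep}(A)}\overset{\sim}{\to}\opn{Corep}(C)$) is both shorter and self-contained, whereas your route needs both the same structure theorem and an additional argument that the abstract relative tensor product is Tannakian-representable.
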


\begin{proof}
Statement (1) is deduced from statement (2) via duality.  So we need only establish (2).
\par

(2) The fact that $B=A^C$ is a Hopf algebra is covered in Lemma \ref{lem:1025}, and by Nichols-Zoeller \cite{nicholszoeller89} $A$ is a free $B$-module on the left and right.  It follows by Takeuchi \cite[Theorem 1]{takeuchi79} that the functor
\begin{equation}\label{eq:1075}
k\ot_B-: B\text{-mod}_{\opn{Corep}(A)}\overset{\sim}\to \opn{Corep}(C)
\end{equation}
is a linear equivalence.  For any $B$-module $M$ and $A$-corepresentation $V$, the natural identification
\[
(k\ot_B M)\ot V\overset{\sim}\to k\ot_B(M\ot V)
\]
realizes this equivalence as an equivalence of right $\opn{Corep}(A)$-module categories.
\par

Now, the fundamental theorem of Hopf modules provides an equivalence of right $\opn{Corep}(B)$-module categories $\opn{Vect}\cong B\text{-mod}_{\opn{Corep}(B)}$, so that we have an identification of $\opn{Corep}(A)$-module categories
\[
\opn{Vect}\ot_{\opn{Corep}(B)}\opn{Corep}(A)=B\text{-mod}_{\opn{Corep}(A)}
\]
via Proposition \ref{prop:mods}.  Hence the equivalence \eqref{eq:1075} provides the desired calculation of the fiber, $\opn{Vect}\ot_{\opn{Corep}(B)}\opn{Corep}(A)\overset{\sim}\to \opn{Corep}(C)$.
\end{proof}

\subsection{Results for infinite categories}

\begin{theorem}[{Takeuchi's theorem \cite{takeuchi79}}]\label{thm:takeuchi}
Suppose $\msc{E}\to \msc{A}\to \msc{M}$ is a mixed exact sequence of tensor/module categories, in the sense of Definition \ref{def:531}.  Suppose additionally that $\msc{M}$ admits a fiber functor.  Then the induced map
\[
\opn{Vect}\ot_{\msc{E}}\msc{A}\to \msc{M}
\]
is an equivalence of right $\msc{A}$-module categories.
\end{theorem}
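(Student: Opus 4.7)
The plan is to apply Tannakian reconstruction to convert the statement into a base change problem for Hopf algebras, then invoke the classical equivalence of Takeuchi \cite{takeuchi79}.

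First, by definition a fiber functor for $\msc{M}$ includes a fiber functor for $\msc{A}$, so Tannakian reconstruction gives $\msc{A} \cong \opn{Corep}(A)$ for a Hopf algebra $A$ and $\msc{M} \cong \opn{Corep}(C)$ for an $A$-module coalgebra $C$, together with a surjective coalgebra map $\pi: A \to C$ extracted from the counit on $\1_{\msc{M}}$. Normality of $u_{\msc{M}}$ (condition (a) of Definition \ref{def:531}), combined with Lemma \ref{lem:1025}, gives that $B := A^C = {}^C\!A$ is a Hopf subalgebra of $A$ with $\opn{ker}(u_{\msc{M}}) = \opn{Corep}(B)$; condition (c) then identifies $\msc{E}$ with $\opn{Corep}(B)$ via the Tannakian embedding.

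Second, I would translate the faithful exactness of induction $\opn{ind}: \msc{M} \to \msc{A}$ (condition (d)) into faithful coflatness of $A$ as a right $C$-comodule, equivalently into faithful flatness of $A$ as a left $B$-module via the Hopf structure. Granted this, Takeuchi's original equivalence \cite{takeuchi79} applies without a finite-dimensionality hypothesis and produces a right $\opn{Corep}(A)$-linear equivalence
\[
k \otimes_B - \,:\, B\text{-mod}_{\opn{Corep}(A)} \overset{\sim}\to \opn{Corep}(C).
\]

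Finally, the fundamental theorem of Hopf modules gives $\opn{Vect} \cong B\text{-mod}_{\opn{Corep}(B)}$ as right $\opn{Corep}(B)$-module categories, and Proposition \ref{prop:mods} applied with $\msc{E} = \opn{Corep}(B)$ and $R = B$ acting on $\opn{Corep}(A)$ yields the identification
\[
\opn{Vect} \otimes_{\opn{Corep}(B)} \opn{Corep}(A) \cong B\text{-mod}_{\opn{Corep}(A)}.
\]
Composing with Takeuchi's equivalence delivers the desired equivalence $\opn{Vect} \otimes_{\msc{E}} \msc{A} \overset{\sim}\to \msc{M}$ of right $\msc{A}$-module categories. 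The main obstacle lies in the second step: since Nichols-Zoeller (used in the finite-dimensional Lemma \ref{lem:fin_basechange}) is no longer available, one must extract faithful flatness of $A$ over $B$ directly from the categorical faithful exactness hypothesis on $\opn{ind}$, relying on the general theory of faithfully flat Hopf algebra extensions rather than any finite-dimensional freeness input.
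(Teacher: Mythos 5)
Your proposal follows essentially the same route as the paper's proof: Tannakian reconstruction to get a sequence $k \to B \to A \to C \to k$, Lemma \ref{lem:1025} to identify $B=A^C$ as the Hopf subalgebra realizing $\msc{E}$, translation of the abstract faithful exactness of $\opn{ind}$ into a (co)flatness condition, Takeuchi's Theorem 2, and Proposition \ref{prop:mods} to rewrite the relative product as a category of modules. The one place where you are more cautious than you need to be is the ``main obstacle'' you flag at the end. Once one cites the identification $\opn{ind} = -\,\square_C\, A$ of the right adjoint with the cotensor functor (as in Doi \cite{doi81}), there is nothing to extract: exactness of $\opn{ind}$ \emph{is} the definition of $A$ being coflat as a right $C$-comodule, and faithfulness of that exact functor \emph{is} the definition of faithful coflatness. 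That is precisely the hypothesis appearing in Takeuchi's Theorem 2, so no detour through ``faithful flatness of $A$ over $B$'' is required and no additional theory of faithfully flat Hopf extensions needs to be invoked. In other words, condition (d) in the definition of a mixed exact sequence is engineered to plug directly into Takeuchi, which is exactly why the finite-dimensional Nichols--Zoeller input can be dropped without anything replacing it.
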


\begin{proof}
First note that the diagram \eqref{eq:ker}, with $\opn{ker}(u_\msc{M})=\msc{E}$ in this case, provides an $\msc{E}$-bilinear functor from the cartesian product $\opn{Vect}\times\msc{A}$ and subsequent map from the relative product $\opn{Vect}\ot_\msc{E}\msc{A}$.  We are claiming that this map $\opn{Vect}\ot_\msc{E}\msc{A}\to\msc{M}$ is an equivalence.
\par

The existence of a fiber functor for $\msc{M}$ allows us to reconstruct the sequence $\msc{E}\to \msc{A}\to \msc{M}$ from a corresponding sequence
\[
k\to \O\to A\to C\to k,
\]
via corepresentations.  Here $\O$ and $A$ are Hopf algebras, $\O\to A$ is a Hopf inclusion, and $A\to C$ is a surjection of right $A$-module coalgebras.  Furthermore, Lemma \ref{lem:1025} provides an identification of $\O$ with the coinvariant subalgebra $\O=A^C$ in this case.
\par

Under such an identification, the right adjoint $\opn{ind}:\msc{M}\to \msc{A}$ is given explicitly by the cotensor $\opn{ind}=-\square_{C}A$ \cite[Proposition 6]{doi81}.  Exactness of induction implies that $A$ is coflat over $C$, and faithfulness implies that $A$ is furthermore faithfully coflat over $C$.  Hence Takeuchi \cite[Theorem 2]{takeuchi79} tells us that the module category functor
\[
k\ot_{\O}-:\opn{Vect}\ot_{\msc{E}}\msc{A}=\O\text{-mod}_{\opn{Corep}(A)}\to \opn{Corep}(C)=\msc{M}
\]
is an equivalence.
\end{proof}

\begin{remark}
We expect that Theorem \ref{thm:takeuchi} still holds in the absence of a fiber functor for $\msc{M}$.
\end{remark}

\subsection{Result for braided tensor categories}

We consider a diagram of braided tensor functors
\begin{equation}\label{eq:potdig}
\xymatrix{
\msc{E}\ar[rr]^{F}\ar[d]_{G'} & & \msc{A}\ar[d]^G\\
\msc{K}\ar[rr]^{F'} & & \msc{C},
}
\end{equation}
where $F$ and $F'$ are M\"uger central tensor embeddings, and $G$ and $G'$ are surjective.
\par

We provide sufficient conditions under which such a diagram induces an equivalence of braided monoidal categories
\[
\msc{K}\ot_{\msc{E}}\msc{A}\overset{\sim}\to \msc{C}.
\]

\begin{proposition}\label{prop:braided_bc}
Suppose we have a diagram as in \eqref{eq:potdig}, and that $\msc{K}$ is a fusion category.  Suppose additionally that:
\begin{enumerate}
\item[(a)] The right adjoint $\opn{ind}_{\msc{C}}^{\msc{A}}:\msc{C}\to \msc{A}$ to $G$ is faithfully exact.
\item[(b)] The natural map $F\opn{ind}_{\msc{K}}^{\msc{E}}(\1)\to \opn{ind}_{\msc{C}}^{\msc{A}}(\1)$ is an isomorphism.
\item[(c)] $\msc{C}$ admits a fiber functor.
\end{enumerate}
Then the induced map $\msc{K}\ot_{\msc{E}}\msc{A}\overset{\sim}\to \msc{C}$ is an equivalence of braided monoidal categories.
\end{proposition}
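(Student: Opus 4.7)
The plan is to produce the comparison map from the universal property of the relative tensor product, realize both sides as categories of modules over a common algebra in $\msc{A}$ via Proposition \ref{prop:mods}, and then verify the resulting equivalence is the comparison map.

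The comparison functor $\Phi:\msc{K}\ot_\msc{E}\msc{A}\to \msc{C}$ is produced by Proposition \ref{prop:bc}: since $F'$ is M\"uger central, its image $F'(\msc{K})$ centralizes $G(\msc{A})$ in $\msc{C}$, so the pair $(F',G)$ defines an object in the groupoid \eqref{eq:658} and thus factors uniquely through the relative product as a braided monoidal functor over $\msc{E}$. It remains to check that the underlying linear functor is an equivalence; the braided monoidal structure is then automatic. I would set $R:=\opn{ind}^\msc{K}_\msc{E}(\1_\msc{K})$ and $S:=\opn{ind}^\msc{C}_\msc{A}(\1_\msc{C})$, each equipped with its natural algebra structure coming from the lax monoidality of right adjoints to tensor functors, so that hypothesis (b) becomes $F(R)\cong S$ as algebras in $\msc{A}$.

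Next I would apply Beck's monadicity theorem twice. For $G:\msc{A}\to\msc{C}$: the projection formula (available since compacts in $\msc{A}$ are rigid) identifies the monad $\opn{ind}^\msc{C}_\msc{A}\circ G$ with the endofunctor $(-)\ot S$, and condition (a) supplies the conservativity and exactness needed to verify the Barr--Beck hypotheses, producing an equivalence $\msc{C}\overset\sim\to S\text{-mod}_\msc{A}$ of right $\msc{A}$-module categories. An analogous argument for $G':\msc{E}\to\msc{K}$ should give $\msc{K}\overset\sim\to R\text{-mod}_\msc{E}$: here one uses that $F'(\msc{K})\subseteq Z_2(\msc{C})$ is symmetric, so the fiber functor on $\msc{C}$ from (c) restricts to a symmetric fiber functor on $\msc{K}$, realizing $\msc{K}\cong \Rep(H)$ for a finite algebraic group $H$ by Deligne's theorem, which in turn forces $\opn{ind}^\msc{K}_\msc{E}$ to be faithfully exact via standard faithful-flatness results for affine group quotients. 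Combining these identifications with Proposition \ref{prop:mods}, one obtains
\[
\msc{K}\ot_\msc{E}\msc{A}\overset\sim\to R\text{-mod}_\msc{E}\ot_\msc{E}\msc{A}\overset\sim\to F(R)\text{-mod}_\msc{A}=S\text{-mod}_\msc{A}\overset\sim\to \msc{C},
\]
where the middle step uses hypothesis (b).

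Finally, I would verify that this composite agrees with $\Phi$. Since $\Phi$ is characterized by its restrictions $\Phi|_\msc{K}=F'$ and $\Phi|_\msc{A}=G$, it suffices to trace generators $X\in\msc{K}$ and $M\in\msc{A}$ through the chain of equivalences; they map respectively to $F'(X)$ and $G(M)$ under $\msc{C}\cong S\text{-mod}_\msc{A}$, as these correspond to $\opn{ind}^\msc{C}_\msc{A}\circ F'(X)\cong X\ot S$ (via (b) and the projection formula) and $\opn{ind}^\msc{C}_\msc{A}\circ G(M)\cong M\ot S$ (via the projection formula for $G$). The main technical hurdle I expect is the identification $\msc{K}\overset\sim\to R\text{-mod}_\msc{E}$, i.e., monadicity of $G'$ together with faithful exactness of $\opn{ind}^\msc{K}_\msc{E}$; the fusion hypothesis on $\msc{K}$, coupled with the symmetry inherited from the M\"uger central embedding $F'$, is indispensable in this step, since without it one has no access to Deligne-type descent for the quotient $\msc{E}\to\msc{K}$.
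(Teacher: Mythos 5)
Your strategy parallels the paper's: Proposition \ref{prop:bc} gives the braided comparison map, both sides are realized as module categories over a common algebra object, hypothesis (b) identifies the two algebras, and Proposition \ref{prop:mods} closes the loop. The genuine difference is the descent mechanism. The paper invokes hypothesis (c) to translate the entire square into Hopf algebra maps via Tannakian reconstruction, identifies the relevant algebra as a coinvariant subalgebra, and then applies Takeuchi's theorem \cite[Theorem 2]{takeuchi79} twice to get $\O\text{-mod}_\msc{E}\simeq\msc{K}$ and $\O\text{-mod}_\msc{A}\simeq\msc{C}$; you propose Barr--Beck monadicity plus the projection formula in its place. That alternative is workable --- both are incarnations of faithfully flat descent, and yours has the small advantage of making explicit the compatibility of the resulting composite with the canonical comparison map, which the paper leaves implicit.

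There is, however, a gap on the $\msc{K}$-side. To establish faithful exactness of $\opn{ind}^{\msc{E}}_\msc{K}$, you argue that the fiber functor on $\msc{C}$ restricts to a \emph{symmetric} fiber functor on $\msc{K}$ and that Deligne's theorem then gives $\msc{K}\cong\Rep(H)$ for a finite group, from which faithful flatness follows ``via standard results for affine group quotients.'' Each step has a problem: a plain tensor functor $\msc{C}\to\opn{Vect}$ restricted to a symmetric subcategory need not itself be braided, so the restricted functor is not obviously a symmetric fiber functor; Deligne's theorem produces a fiber functor to $\opn{sVect}$, not a realization by a finite group; and, most seriously, $\msc{E}$ has not been identified with representations of any group at all, so there is no ``group quotient'' to which faithful-flatness results could be applied. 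None of this machinery is needed. Since $\msc{K}$ is fusion (hence semisimple), \emph{any} additive functor out of $\msc{K}$ is exact, so $\opn{ind}^{\msc{E}}_\msc{K}$ is exact; and since $G'$ is surjective, every simple in $\msc{K}$ is a direct summand of some $G'(V)$, forcing $\opn{ind}^{\msc{E}}_\msc{K}(L)\neq 0$ for each simple $L$ and hence faithfulness. Note in particular that the symmetry of $\msc{K}$, which you call indispensable, plays no role here at all --- only semisimplicity and surjectivity of $G'$ are used. Replace the Deligne detour with this two-line argument and the rest of your proof goes through.
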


\begin{proof}
We recall that the induced functor $\msc{K}\ot_{\msc{E}}\msc{A}\overset{\sim}\to \msc{C}$ is naturally braided monoidal, by Proposition \ref{prop:bc}.  So we need only prove that it is an equivalence of plain categories.
\par

Supposing $\msc{C}$ admits a fiber functor, the diagram \eqref{eq:potdig} is reproduced (via corepresentations) from a corresponding diagram of Hopf algebra maps
\[
\xymatrix{
E\ar[rr]^{f}\ar[d]_{g'} & & A\ar[d]^g\\
K\ar[rr]^{f'} & & C.
}
\]
Here $f$ and $f'$ are injective and $g$ and $g'$ are surjective, by hypothesis.
\par

We note that the induction functor $\opn{ind}^{\msc{E}}_{\msc{K}}$ is both faithful and exact, since $\msc{K}$ is semisimple and $G'$ is surjective.  Exactness follows from the fact that any additive functor from a semisimple category is exact.  Faithfulness follows from the fact that, in this case, each simple in $\msc{K}$ necessarily appears as a summand in $G(V)$ for some $V$ in $\msc{E}$.
\par

We recall that the value at the unit $\opn{ind}_{\msc{K}}^{\msc{E}}(\1)={^KE}$ is naturally an algebra object in $\msc{E}$.  The algebra structure on ${^KE}$ is the expected one, i.e.\ the one inherited from $E$, and is abstractly obtained as the adjoint to the counit maps $G'\opn{ind}_{\msc{K}}^{\msc{E}}(\1)\ot G'\opn{ind}_{\msc{K}}^{\msc{E}}(\1)\to \1\ot\1=\1$.
Let's call this algebra $\O={^KE}$.
\par

Given faithful exactness of $\opn{ind}_{\msc{K}}^{\msc{E}}$, Takeuchi's theorem \cite[Theorem 2]{takeuchi79} provides an equivalence of $\msc{E}$-module categories
\[
(\1\ot_{G'\O}-)\circ G':\O\text{-mod}_\msc{E}\overset{\sim}\to \msc{K},
\]
and from (b) we understand that the inclusion
\[
\O={^KE}=\opn{ind}_{\msc{K}}^{\msc{E}}(\1)\to {^CA}=\opn{ind}_{\msc{C}}^{\msc{A}}(\1)
\]
is an equality.  We recall finally that the induction functor $\opn{ind}_{\msc{C}}^{\msc{A}}$ is faithfully exact, by hypothesis, and apply Takeuchi's theorem \cite[Theorem 2]{takeuchi79} again to see that the natural map
\[
(\1\ot_{G\O}-)\circ G:\msc{K}\ot_\msc{E}\msc{A}= \O\text{-mod}_\msc{A}\overset{\sim}\to \msc{C}.
\]
is an equivalence of $\msc{A}$-module categories.
\end{proof}

\begin{remark}
We expect that conditions (a) and (b) in Proposition \ref{prop:braided_bc} are necessary, at least when $\msc{K}$ is fusion.  Condition (c) should be superfluous.
\end{remark}

\subsection{Preservation of finiteness and rigidity}

The information relayed above helps us to understand the base change $\msc{K}\ot_{\msc{E}}\msc{A}$ as a linear monoidal category.  We recall one basic result which concerns rigidity of the base change.  The following appears in \cite{dgno10}.  See in particular \cite[Proof of Theorem 4.18]{dgno10}.

\begin{lemma}[\cite{dgno10}]\label{lem:de_ftc}
Let $\msc{A}$ be a finite tensor category, and $\msc{E}\to \msc{A}$ be a central embedding from a finite Tannakian category $\msc{E}$.  Then the fiber $\opn{Vect}\ot_{\msc{E}}\msc{A}$ is a finite tensor category.  In particular, it is rigid.
\end{lemma}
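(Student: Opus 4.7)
The plan is to identify the fiber with a de-equivariantization of $\msc{A}$ and then check the finite tensor category axioms directly for this reinterpretation. First, by Deligne's theorem the finite Tannakian $\msc{E}$ is equivalent to $\Rep(H)$ for a finite group $H$ over $k$, and the central embedding equips $\msc{A}$ with a module category structure over $\Rep(H)$. Taking $\O(H)$ to be the coordinate algebra of $H$, viewed as a finite-dimensional commutative algebra in $\Rep(H)$ under translation, the fundamental theorem of Hopf modules supplies an equivalence $\opn{Vect}\simeq \O(H)\text{-mod}_{\Rep(H)}$ of $\Rep(H)$-module categories. Proposition \ref{prop:mods} then yields
\[
\opn{Vect}\ot_{\msc{E}}\msc{A}\ \simeq\ \O(H)\text{-mod}_{\msc{A}}.
\]
Commutativity of $\O(H)$ together with centrality of its image in $\msc{A}$ endow the target with a monoidal product $-\otimes_{\O(H)}-$ with unit $\O(H)$, and by Proposition \ref{prop:bc} this agrees with the canonical monoidal structure on the pushout.

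I would next verify the remaining axioms for $\O(H)\text{-mod}_{\msc{A}}$ to be a finite tensor category. The forgetful functor $U:\O(H)\text{-mod}_{\msc{A}}\to \msc{A}$ is cocontinuous, since $\O(H)$ is finite-dimensional, so its left adjoint $\O(H)\otimes(-)$ sends a compact projective generator $P$ of $\msc{A}$ to a compact projective generator $\O(H)\otimes P$; $U$ also preserves compactness, so compact objects of $\O(H)\text{-mod}_{\msc{A}}$ are compact in $\msc{A}$ and hence of finite length with finite-dimensional $\Hom$-spaces, whence local finiteness. For simplicity of the unit $\O(H)$, I would argue that any subobject pulls back, by the embedding property of $\msc{E}\to \msc{A}$, to an $H$-invariant ideal of $\O(H)$ in $\Rep(H)$, and transitivity of the left translation action of $H$ on itself forces such an ideal to be $0$ or $\O(H)$.

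The hard part is rigidity. For this I plan to exploit that in characteristic $0$ with $|H|<\infty$ the algebra $\O(H)$ is semisimple by Maschke, so every object of $\O(H)\text{-mod}_{\msc{A}}$ is projective over $\O(H)$. Any compact $M$ is then realized as a direct summand of a free module $\O(H)\otimes X$ with $X$ compact in $\msc{A}$, and free modules are visibly rigid in $(\O(H)\text{-mod}_{\msc{A}},\otimes_{\O(H)})$: the dual of $\O(H)\otimes X$ is $\O(H)\otimes X^{\vee}$, with evaluation and coevaluation inherited from $X$ in $\msc{A}$ under the canonical isomorphism $(\O(H)\otimes X^{\vee})\otimes_{\O(H)}(\O(H)\otimes X)\simeq \O(H)\otimes X^{\vee}\otimes X$. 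Rigidity then passes to summands, so every compact of $\O(H)\text{-mod}_{\msc{A}}$ is rigid. The technical crux of this plan is the semisimplicity reduction, which is what allows one to descend from free modules to arbitrary compacts; it genuinely uses both the finiteness of $H$ and the characteristic hypothesis, while the remaining checks are formal.
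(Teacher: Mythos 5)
Your argument is correct and reaches the same conclusion, but it is more self-contained than the paper's: where the paper simply cites \cite[Lemma 4.6 (i),(iii)]{dgno10} for the facts that the structure functor $F$ has an exact right adjoint and that every compact object of the fiber is a summand of some $F(X')$, you re-derive these by working directly with the explicit model $\O(H)\text{-mod}_{\msc{A}}$ and using separability of $\O(H)$ to exhibit every compact as a retract of a free module. Both proofs then finish identically: free modules (i.e.\ objects of the form $F(X')$) are rigid because $F$ is monoidal, rigidity passes to summands, and $F$ applied to a compact projective generator of $\msc{A}$ gives a compact projective generator of the fiber.

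Two small points worth fixing. First, the citation of Maschke is slightly misplaced: Maschke is about the group algebra $kH$, whereas the algebra you need to control is the function algebra $\O(H)$; its semisimplicity (indeed separability) as a $k$-algebra is immediate since it is a finite product of copies of $k$. The role of the characteristic-$0$ hypothesis is actually located one step earlier: it ensures the finite Tannakian category $\msc{E}$ is $\Rep(H)$ for a genuine finite \emph{group} $H$ rather than a possibly non-\'etale finite group scheme, and it is for those that $\O(H)$ can fail to be separable. Second, the inference from ``$\O(H)$ is a semisimple $k$-algebra'' to ``every object of $\O(H)\text{-mod}_{\msc{A}}$ is a retract of a free module'' needs a word: what you really want is that $\O(H)$ is a \emph{separable} algebra internal to $\msc{A}$, which holds because the separability idempotent $\sum_{h\in H}\delta_h\ot\delta_h\in\O(H)\ot\O(H)$ is $H$-invariant, hence a morphism $\1\to\O(H)\ot\O(H)$ in $\msc{E}$ that transports along the central embedding into $\msc{A}$. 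With that said, the separability idempotent does the job and your rigidity argument is sound.
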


\begin{proof}
In this case $\msc{E}=\opn{Rep}(\Sigma)$ for a finite group $\Sigma$.  So we are in the setting of \cite[\S\ 4.2]{dgno10}.
\par

Under the structure map $F:\msc{A}\to \opn{Vect}\ot_{\msc{E}}\msc{A}$, every compact object $X$ in $\opn{Vect}\ot_{\msc{E}}\msc{A}$ appears as a summand of the image $F(X')$ of a rigid object in $\msc{A}$ \cite[Lemma 4.6 (iii)]{dgno10}.  Since the subcategory of rigid objects in a linear monoidal category is closed under taking summands \cite[Ex 9.10.4]{egno15}, and since tensor functor preserve rigid objects \cite[Ex 2.10.6]{egno15}, we conclude that all compact objects in $\opn{Vect}\ot_{\msc{E}}\msc{A}$ are rigid.
\par

The fact that $F$ has an exact right adjoint \cite[Lemma 4.6 (i)]{dgno10} implies that $F$ preserves projective objects, and the fact that all objects in $\opn{Vect}\ot_{\msc{E}}\msc{A}$ appear as a summand of some $F(X')$ implies that the image $F(P)$ of a compact projective generator $P$ in $\msc{A}$ provides a compact projective generator for $\opn{Vect}\ot\msc{A}$.  Since $F(P)$ has finite length it follows that $\opn{Vect}\ot_{\msc{E}}\msc{A}$ has finitely many simples as well, and is hence a finite tensor category.
\end{proof}

\section{Quantum group representations}
\label{sect:quantum_groups}

We now change gears from our study of generic nonsense for module categories to a rather focused study of quantum groups, and quantum group representations.  As outlined in the introduction, our primary goal is to provide a completely uniform construction of the small quantum group via a fibering of the big representation category $\Rep(G_q)$ over its Tannakian center.  We begin our analysis by recalling the definition of the category $\Rep(G_q)$ itself, along with its natural ribbon tensor structure.

As before, $k$ is an algebraically closed field of characteristic $0$.  We follow the presentation of \cite[\S\ 3]{negron} specifically and \cite{lusztig93} generally.

\subsection{Data for an algebraic group}
\label{sect:c_data}

We fix a semisimple algebraic group $G$ over $k$, along with a choice of maximal torus $T$ in $G$.  Throughout this work $\mfk{g}$ denotes the Lie algebra for $G$ and $\msc{W}$ denotes the Weyl group for $G$.  We call $G$ almost-simple if its Lie algebra is simple, and we recall that any semisimple algebraic group decomposes uniquely into almost-simple factors $G=G_1\times \dots \times G_t$.  We define the \emph{lacing number} for $G$ to be the least common multiple of the lacing numbers for its almost-simple factors.
\par

We let $X=\Hom_{\opn{GrpSch}_k}(T,\mathbb{G}_m)$ denote the character lattice for $G$, $\Phi\subseteq X$ denote the root system in $X$, and $Q$ denote the root lattice $Q=\mathbb{Z}\cdot \Phi$.  We consider the unique normalization of the Killing form
\[
(-,-):X\times X\to \mathbb{Q}
\]
so that $(\alpha,\alpha)=2$ at any short root in $\Phi$, and record the relative root lengths $d_\gamma=(\gamma,\gamma)/2$.  We now obtain the following expression of the weight lattice
\[
P=\{\lambda\in \mathbb{Q}\ot_{\mathbb{Z}}X:(\lambda,\gamma)\in d_\gamma\cdot \mathbb{Z}\ \text{at each }\gamma\in \Phi\}.
\]
\par

We fix a base $\Delta$ of simple roots in $\Phi$ and subsequent splitting of $\Phi$ into its positive and negative halves $\Phi=\Phi^+\amalg\Phi^-$.  Dominant weights in $X$ are now defined as
\[
X^+=\{\lambda\in X:(\lambda,\alpha)\geq 0\ \text{at all }\alpha\in \Delta\}.
\]

\subsection{Data for a quantum group}
\label{sect:q_data}

Consider a semisimple algebraic group $G$ with fixed data as above.  

\begin{definition}
A quantum paramater for $G$ is a symmetric bilinear form on the weight lattice $q:P\times P \to k^{\times}$ which is invariant under the action of the Weyl group, and which satisfies $q(\lambda,\mu)=1$ at all weights $\lambda,\mu\in P$ with $(\lambda,\mu)=0$.
\end{definition}

\begin{hypotheses}
All quantum parameters considered in this text are assumed to be torsion, i.e.\ to take values in the roots of unity $\opn{tors}(k^\times)$.
\end{hypotheses}

For any root $\gamma\in \Phi$ we define
\[
l_\gamma:= \opn{ord}(q^2(\gamma,-))=\opn{ord}(q(\gamma,\gamma)),
\]
and note that each $l_\gamma$ is a (well-defined) positive integer since our form $q$ is torsion.  
\par

For each simple root $\alpha$ we have the corresponding fundamental weight $\omega_{\alpha}\in P$, and we define the \emph{scalar parameters} for $G$ at $q$ as
\[
q_{\alpha}:= q(\alpha,\omega_{\alpha}),\ \ \alpha\in \Delta.
\]
These scalars extend to a unique $\msc{W}$-invariant function on $\Phi$ from which we define scalar parameters $q_\gamma$ at arbitrary $\gamma\in \Phi$.  We note that, at any root $\gamma$, we have $l_\gamma=\opn{ord}(q^2_\gamma)$.

\begin{remark}
For each almost-simple factor $H\subseteq G$ we have the uniquely associated root of unity 
\[
q_H:= q_{\alpha}\ \text{for any short simple root }\alpha\in\Delta_H.
\]
These $q_H$ are the parameters which one employs in traditional presentations of the quantum group, as in \cite{lusztig90,lusztig90II,lusztig93}.  Our form-forward framing is adapted from \cite{gaitsgory21}.
\end{remark}

\subsection{Big quantum group representations}

We have the generic quantum enveloping algebra $U^{\opn{gen}}_v=U^{\opn{gen}}_v(\mfk{g})$ over the fraction field $\mathbb{Q}(v_\alpha:\alpha\in \Delta)$ of the polynomial ring
\[
\mathcal{O}=\mathbb{Z}[v_\alpha:\alpha\in \Delta]/\big(v_\beta=v_\alpha^{d_\beta/d_\alpha}\ \text{whenever}\ (\alpha,\beta)\neq 0\ \text{and}\ d_\beta\geq d_\alpha\big),
\]
and Lusztig's integral Hopf subalgebra $U_v$ over $\mathcal{O}$. The algebra $U_v$ has generators $E_{\alpha}^{(n)}$, $F_{\alpha}^{(n)}$ and $K_{\alpha}$, where $\alpha$ runs over all simple roots, $n$ runs over all nonnegative integers, and the divided powers satisfy
\[
E_\alpha^n=[n]_{v_\alpha}!\cdot E^{(n)}_\alpha\ \ \text{and}\ \ F_\alpha^n=[n]_{v_\alpha}!\cdot F^{(n)}_\alpha
\]
at all $n$ \cite{lusztig90II,lusztig93}.  We also consider the redundant toral generators
\[
\binom{K_\alpha;0}{l_\alpha}:=\frac{K_\alpha-K_\alpha^{-1}}{v_\alpha-v_\alpha^{-1}}=[E_\alpha, F_\alpha].
\]
We let $U^+_v$ and $U^-_v$ denote the subalgebras in $U_v$ generated by the elements $E_{\alpha}^{(n)}$ and $F_{\alpha}^{(n)}$, respectively.
\par

From the above quantum group data we define the usual quantum enveloping algebra $U_q=U_q(\mfk{g})$ by specializing Lusztig's integral algebra along the evaluation map $ev_q:\mathcal{O}\to k$, $v_\alpha\mapsto q_\alpha$, and we have the associated tensor category of quantum group representations
\[
\Rep(G_q)= \text{Integrable, character graded $U_q$-modules}.
\]
\par

To elaborate, an object $V$ in $\Rep(G_q)$ is a vector space which is graded by the character lattice $V=\oplus_{\lambda \in X} V_{\lambda}$ and which comes equipped with linear endomorphisms for each of the generators $E_{\alpha}^{(n)}$, $F_{\alpha}^{(n)}$, $K_{\alpha}$ in $U_q$.  The homogeneous subspaces $V_\lambda$ are eigenspaces for the actions of the toral generators in $U_q$, and the operators $E^{(n)}_\alpha$ and $F^{(n)}_\alpha$ shift degrees on $V$ by $n\cdot\alpha$ and $-n\cdot\alpha$ respectively.
\par

The toral elements act specifically via the eigenvalues
\[
K_\alpha\cdot v= q(\alpha,\lambda)\!\ v\ \ \text{and}\ \ \binom{K_\alpha;0}{l_\alpha}\cdot v=\binom{\langle \alpha,\lambda\rangle}{l_\alpha}_{q_\alpha} v,\ \ \text{for }v\in V_\lambda,
\]
and the endomorphisms $E_\alpha^{(n)},F_\alpha^{(n)}:V\to V$ satisfy the defining relations for the quantum enveloping algebra $U_q$ \cite[Theorem 6.6]{lusztig90II}.  Finally, any $G_q$-representation is required to be the sum of its finite-dimensional subrepresentations.
\par

Our category of quantum group representations is the same as the category of unital, integrable modules for the modified quantum algebra $\dot{U}_q$ from \cite[Ch 31]{lusztig93}, $\Rep(G_q)=\Rep(\dot{U}_q)$.

\subsection{Bases}

The remainder of the section is dedicated to a presentation of some of the finer details for $U_q$ and its associated category of quantum group representations.  We begin by recalling Lusztig's bases for the positive and negative subalgebras in $U_q$.

Having fixed an enumeration of the simple roots $\Delta=\{\alpha_1,\dots,\alpha_r\}$, and a reduced expression for the longest element in the Weyl group $w_0=s_{i_t}\dots s_{i_1}$, we obtain an enumeration of the positive roots as
\[
\Phi^+=\{\gamma_1,\dots,\gamma_t\},\ \ \gamma_j=s_{i_t}\dots s_{i_{j+1}}(\alpha_j).
\]
We recall that the simple reflections $s_i$ in $\msc{W}$ lift to braid group operators $T_i:U_q\to U_q$, as in \cite[Theorem 3.1]{lusztig90II}, and we have the corresponding root vectors
\[
E_{\gamma_j}=T_{i_t}\dots T_{i_{j+1}}(E_j)\ \in\ U_q^+,\ \ F_{\gamma_j}=T_{i_t}\dots T_{i_{j+1}}(T_j)\ \in\ U_q^-.
\]
Via ordered monomial in the above vectors we obtain bases
\[
\{E_{\gamma_1}^{(m_1)}\dots E_{\gamma_t}^{(m_t)}:m:\Phi^+\to \mathbb{Z}_{\geq 0}\}\ \ \text{and}\ \ \{F_{\gamma_1}^{(m_1)}\dots F_{\gamma_t}^{(m_t)}:m:\Phi^+\to \mathbb{Z}_{\geq 0}\}
\]
for $U_q^+$ and $U^-_q$ respectively \cite[Proposition 41.1.4]{lusztig93} \cite[Proposition 4.2]{lusztig90II}.

\subsection{Lusztig's Hopf pairing}

In \cite[Proposition 1.2.3]{lusztig93} Lusztig defines a non-degenerate Hopf pairing \begin{equation}\label{eq:pairing}
\langle-,-\rangle:U^+_v\ot U^-_v\to \mathbb{Q}(v_\alpha:\alpha\in\Delta).
\end{equation}

\begin{lemma}[{\cite[Lemma 1.4.4]{lusztig93}}]
For the generating root vectors $E^{(n)}_\alpha\in U^+_v$ and $F^{(n)}_\beta\in U^-_v$, the above pairing satisfies
\begin{equation}\label{eq:638}
\langle E_\alpha^{(n)},F_\beta^{(m)}\rangle=\delta_{n,m}\delta_{\alpha,\beta}\left(v_\alpha^{n(n+1)/2}(v_\alpha-v_{\alpha}^{-1})^{-n}([n]_{v_\alpha}!)^{-1}\right).
\end{equation}
\end{lemma}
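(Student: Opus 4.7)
The formula is proved by induction on $n$, combined with a weight-homogeneity argument to eliminate the off-diagonal cases. Both algebras $U_v^{\pm}$ are graded by the positive (resp.\ negative) root cone, with $E_\alpha^{(n)}$ of weight $n\alpha$ and $F_\beta^{(m)}$ of weight $-m\beta$. Lusztig's Hopf pairing respects this grading, so $\langle E_\alpha^{(n)}, F_\beta^{(m)}\rangle$ vanishes whenever $n\alpha\neq m\beta$. For nonzero exponents this forces $\alpha=\beta$ and $n=m$, yielding the $\delta_{n,m}\delta_{\alpha,\beta}$ factor in \eqref{eq:638}. The case $n=m=0$ is automatic since $E_\alpha^{(0)}=F_\alpha^{(0)}=1$ and $\langle 1,1\rangle=1$ matches the empty-product convention.

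For the diagonal case $\alpha=\beta$, $n=m$, the base case $n=1$ is exactly Lusztig's normalization $\langle E_\alpha,F_\alpha\rangle = v_\alpha(v_\alpha-v_\alpha^{-1})^{-1}$, which agrees with the right-hand side of \eqref{eq:638} when $n=1$. For the inductive step, I would exploit the Hopf pairing axiom
\[
\langle E_\alpha^n, F_\alpha\cdot F_\alpha^{n-1}\rangle = \langle \Delta(E_\alpha^n),\, F_\alpha\otimes F_\alpha^{n-1}\rangle
\]
together with the quantum binomial expansion
\[
\Delta(E_\alpha^n) = \sum_{k=0}^{n} v_\alpha^{k(n-k)}\tbinom{n}{k}_{v_\alpha}\, E_\alpha^{n-k}K_\alpha^{k}\otimes E_\alpha^{k},
\]
which follows by induction from $\Delta(E_\alpha)=E_\alpha\otimes 1 + K_\alpha\otimes E_\alpha$ and the $v_\alpha^2$-commutation of $K_\alpha$ with $E_\alpha$. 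By the weight-grading argument, applied this time to each tensor factor, only the summand with $k=n-1$ pairs nontrivially with $F_\alpha\otimes F_\alpha^{n-1}$. This produces the recursion
\[
\langle E_\alpha^n, F_\alpha^n\rangle = v_\alpha^{\,n-1}[n]_{v_\alpha}\cdot \langle E_\alpha K_\alpha, F_\alpha\rangle\cdot \langle E_\alpha^{n-1}, F_\alpha^{n-1}\rangle,
\]
where $\langle E_\alpha K_\alpha, F_\alpha\rangle$ is computed from the dual axiom and the prescribed pairing of toral elements with $F_\alpha$, giving an overall factor of $\langle E_\alpha,F_\alpha\rangle = v_\alpha(v_\alpha-v_\alpha^{-1})^{-1}$.

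Iterating this recursion from $n$ down to $1$ collects the product $\prod_{j=1}^{n} v_\alpha^{\,j-1}[j]_{v_\alpha}\cdot (v_\alpha-v_\alpha^{-1})^{-1}\cdot v_\alpha$, and telescoping the exponents of $v_\alpha$ produces the total $v_\alpha^{n(n+1)/2}$ together with $[n]_{v_\alpha}!\cdot (v_\alpha-v_\alpha^{-1})^{-n}$. Finally, dividing both sides by $[n]_{v_\alpha}!^{2}$, via $E_\alpha^{(n)} = E_\alpha^n/[n]_{v_\alpha}!$ and the analogous identity for $F_\alpha^{(n)}$, converts $\langle E_\alpha^n, F_\alpha^n\rangle$ into $\langle E_\alpha^{(n)}, F_\alpha^{(n)}\rangle$ and yields the claimed formula. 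The principal bookkeeping obstacle is matching the accumulated exponents of $v_\alpha$ — those coming from the coefficient $v_\alpha^{k(n-k)}$ in $\Delta(E_\alpha^n)$, those from moving $K_\alpha^{n-1}$ past $E_\alpha$-type elements when evaluating $\langle E_\alpha K_\alpha, F_\alpha\rangle$, and the $v_\alpha$ coming from the base-case normalization — to the triangular exponent $n(n+1)/2$; everything else is formal consequences of the Hopf pairing axioms.
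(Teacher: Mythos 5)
Your homogeneity argument for the off-diagonal cases is exactly what the paper does: the pairing is degree~$0$ with respect to the root-lattice gradings, so $\langle E_\alpha^{(n)},F_\beta^{(m)}\rangle$ can only be nonzero when $n\alpha = m\beta$, forcing $\alpha=\beta$, $n=m$. Where you diverge is that the paper stops there and simply cites Lusztig \cite[Lemma 1.4.4]{lusztig93} for the diagonal value, whereas you re-derive it. Your inductive scheme is essentially Lusztig's own proof of 1.4.4, but transported from the $\mathbf{f}$-level twisted comultiplication $r$ (which has no $K$-factors) to the full $U_q$-level coproduct $\Delta$; this buys self-containedness at the price of extra toral bookkeeping. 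Two small imprecisions in that bookkeeping: the surviving $k=n-1$ term of your $\Delta(E_\alpha^n)$ expansion produces $\langle E_\alpha K_\alpha^{n-1},F_\alpha\rangle$, not $\langle E_\alpha K_\alpha,F_\alpha\rangle$, and you assert without verification that this toral-twisted pairing collapses to $\langle E_\alpha,F_\alpha\rangle$. That collapse is true, but it hinges on the chosen conventions for $\Delta(F_\alpha)$ and for the pairing of grouplikes, which you don't pin down; under a different (also common) convention a nontrivial factor $\langle K_\alpha^{n-1},K_\alpha^{-1}\rangle = v_\alpha^{\pm 2(n-1)}$ would appear and would need to be absorbed elsewhere. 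So the plan is sound and the resulting numerology can be made to match (indeed, for $n=2$ it does), but the step ``$\langle E_\alpha K_\alpha^{n-1},F_\alpha\rangle = \langle E_\alpha,F_\alpha\rangle$'' should be spelled out rather than asserted. Lusztig's version with $r$ avoids this issue entirely since no $K$'s ever appear, which is presumably why the paper prefers to cite it.
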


\begin{proof}
The form $\langle-,-\rangle$ is homogenous of degree $0$ with respect to the root lattice gradings on $U^\pm_v$.  Hence the above pairing vanishes whenever $\alpha$ and $\beta$ are distinct, and also when $n$ and $m$ are distinct.  So we reduce to the case where $\beta=\alpha$ and $m=n$, at which point the expression \eqref{eq:638} is covered in \cite[Lemma 1.4.4]{lusztig93}.
\end{proof}

\begin{lemma}\label{lem:647}
For functions $n,m:\Phi^+\to \mathbb{Z}_{\geq 0}$,
\[
\langle E_{\gamma_1}^{(n_1)}\dots E_{\gamma_t}^{(n_t)},\ F_{\gamma_1}^{(m_1)}\dots F_{\gamma_t}^{(m_t)}\rangle =\delta_{n,m}\prod_{\gamma\in \Phi^+}\left(v_\gamma^{n(n+1)/2}(v_\gamma-v_\gamma^{-1})^{-n}([n]_{v_\gamma}!)^{-1}\right).
\]
\end{lemma}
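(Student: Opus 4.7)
The plan is to combine Hopf-multiplicativity of the pairing with Lusztig's braid-operator invariance to reduce everything to the previous lemma's evaluation on simple roots. First I would note that $\langle-,-\rangle$ is homogeneous of degree zero with respect to the root-lattice grading (in which $E_\alpha^{(n)}$ has degree $n\alpha$ and $F_\alpha^{(n)}$ has degree $-n\alpha$), so the pairing vanishes unless the total weights agree, $\sum n_i\gamma_i = \sum m_i\gamma_i$. This handles many multi-index mismatches; the remaining task is to rule out distinct multi-indices giving the same total weight and to compute the diagonal.

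For orthogonality of distinct PBW multi-indices, I would apply the iterated-coproduct identity
\[
\langle E_{\gamma_1}^{(n_1)}\cdots E_{\gamma_t}^{(n_t)},\, F_{\gamma_1}^{(m_1)}\cdots F_{\gamma_t}^{(m_t)}\rangle = \langle E_{\gamma_1}^{(n_1)}\otimes\cdots\otimes E_{\gamma_t}^{(n_t)},\, \Delta^{(t-1)}(F_{\gamma_1}^{(m_1)}\cdots F_{\gamma_t}^{(m_t)})\rangle.
\]
The root-lattice grading forces the $j$th tensor slot of $\Delta^{(t-1)}$ to land in degree $-n_j\gamma_j$. Convexity of the PBW ordering (via the Levendorskii-Soibelman commutation relations among the root vectors $F_\gamma$) then ensures that the only component of $\Delta^{(t-1)}$ meeting this degree constraint in every slot is the strictly diagonal one, in which the $j$th slot is supplied by $F_{\gamma_j}^{(m_j)}$ itself. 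This pins down $m_j = n_j$ for all $j$ and reduces the pairing to a single product $\prod_j \langle E_{\gamma_j}^{(n_j)}, F_{\gamma_j}^{(n_j)}\rangle$.

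On the diagonal I would invoke Lusztig's braid-invariance for the Hopf pairing: writing $E_{\gamma_j} = T_{i_t}\cdots T_{i_{j+1}}(E_{\alpha_{i_j}})$ and analogously for $F_{\gamma_j}$, the operators $T_i$ act as isometries of $\langle-,-\rangle$ on the appropriate graded subalgebras (\cite{lusztig93}, Ch.\ 38). Hence
\[
\prod_{j=1}^t \langle E_{\gamma_j}^{(n_j)}, F_{\gamma_j}^{(n_j)}\rangle \;=\; \prod_{j=1}^t \langle E_{\alpha_{i_j}}^{(n_j)}, F_{\alpha_{i_j}}^{(n_j)}\rangle,
\]
and each factor is evaluated by the previous lemma, noting that $v_{\gamma_j} = v_{\alpha_{i_j}}$ by the $\msc{W}$-invariance of the scalar parameters. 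Reindexing the product over $\gamma\in\Phi^+$ yields the stated formula.

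The main obstacle is the bookkeeping for the iterated coproduct on a non-simple PBW monomial: the coproduct of $F_{\gamma_j}^{(n)}$ contains many terms beyond the ``primitive" piece $F_{\gamma_j}^{(n)}\otimes K_{\gamma_j}^{-n} + \cdots + 1\otimes F_{\gamma_j}^{(n)}$, built from other root vectors $F_{\gamma_k}$ with $k$ lying in a restricted range dictated by convexity. One must verify that none of the resulting cross terms contribute after pairing with the ordered monomial in the $E_{\gamma_i}$'s. This is precisely what convexity of the PBW ordering guarantees, but it requires a careful induction on the length of the Weyl-group word $s_{i_t}\cdots s_{i_1}$, which is the most delicate part of the argument.
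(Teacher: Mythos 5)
Your proposal is, at bottom, a re-derivation of the computation that the paper simply cites: Lusztig's \cite[Proposition 38.2.3]{lusztig93} already records the orthogonality of the PBW monomials and the product formula on the diagonal, and the paper's proof is just a pointer to that result. So the \emph{content} is the same; the difference is that you are sketching the guts of Lusztig's computation rather than quoting it.

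Two caveats about the sketch itself. First, the convexity/coproduct step is subtler than ``the only component meeting the degree constraint is the diagonal one.'' Even in $\mathfrak{sl}_3$ with the convex order $\alpha_1 < \alpha_1+\alpha_2 < \alpha_2$, the coproduct $\Delta(F_{\alpha_1+\alpha_2})$ contains a genuine cross term of the shape $F_{\alpha_2}\!\cdot(\text{toral})\otimes F_{\alpha_1}$; what saves the orthogonality is that this cross term is ``anti-aligned'' ($F_{\gamma_k}$ on the left, $F_{\gamma_i}$ on the right, with $k>j>i$), so it never matches the slot degrees coming from an \emph{ordered} PBW monomial in the $E$'s. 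Convexity by itself only tells you which roots can occur, not on which side of the tensor they land; one needs the sharper coideal statement $\Delta(F_{\gamma_j}) - F_{\gamma_j}\ot 1 - K_{\gamma_j}^{-1}\ot F_{\gamma_j}\in U^-_{>j}\ot U^-_{<j}$ (or its variant in one's conventions), and this is precisely what the induction on the reduced word for $w_0$ establishes. You flag the induction as ``the most delicate part,'' which is fair -- it \emph{is} the proof of Lusztig's Proposition 38.2.3.

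Second, the braid-invariance claim ``the operators $T_i$ act as isometries of $\langle-,-\rangle$ on the appropriate graded subalgebras'' needs to be stated with more care: $T_i$ does not preserve $U^+$ or $U^-$ (e.g.\ $T_i(E_i)=-F_iK_i$), so the invariance is only available after restricting to the intersections $U^\pm\cap T_i^{-1}(U^\pm)$, and one must check along the chain $T_{i_t}\cdots T_{i_{j+1}}$ that the relevant elements stay in those subalgebras. This is true, but it's not an off-the-shelf isometry statement. Given that both subtleties are exactly what Lusztig's argument handles, the clean move -- and what the paper does -- is to cite Proposition 38.2.3 directly rather than re-prove it.
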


\begin{proof}
Follows from \cite[Proposition 38.2.3]{lusztig93}, applied to the elements $L(h,c,n,1)$.  (See also \cite[Proposition 4.1.3]{lusztig93}.)
\end{proof}

\subsection{The $R$-matrix}
\label{sect:R}

From the pairing \eqref{eq:pairing} Lusztig produces an $R$-matrix for the category of quantum group representation $\Rep(G_q)$ \cite[Theorem 4.1.2, Ch. 32]{lusztig93}.  This $R$-matrix then provides a braiding on the category of quantum group representations, which is given as
\[
c_{V,W}:V\ot W\overset{\cong}\to W\ot V,\ \ c_{V,W}(v\ot w):=R_{21}(w\ot v).
\]
This $R$-matrix has the specific form
\begin{equation}\label{eq:R}
R=(\sum_{n:\Phi^+\to \mathbb{Z}_{\geq 0}}\opn{coeff}(n,q)\cdot E_{\gamma_1}^{(n_1)}\dots E_{\gamma_t}^{(n_t)}\ot F_{\gamma_1}^{(n_1)}\dots F_{\gamma_t}^{(n_t)})\cdot \Omega,
\end{equation}
where $\Omega$ acts on a product of representation $V\ot W$ according to the formula $\Omega(v\ot w)=q^{-1}(\deg v,\deg w) v\ot w$ \cite[Theorem 4.1.2, Proposition 38.2.3]{lusztig93}.
\par

The coefficients $\opn{coeff}(n,q)$ in the expression \eqref{eq:R} are polynomials in $q$ which are determined by the pairings from Lemma \ref{lem:647}, and they appear explicitly as
\[
\opn{coeff}(n,q)=\hspace{9.7cm}
\]
\[
(-1)^{(\sum_{\gamma}n_\gamma \opn{ht}(\gamma))}q(\sum_{\gamma\in \Phi^+} n_\gamma \gamma, \sum_{\alpha\in\Delta}\omega_{\alpha})\prod_{\gamma\in \Phi^+}\left(q_{\gamma}^{-n_\gamma(n_\gamma+1)/2}(q_{\gamma}-q_{\gamma}^{-1})^{n_\gamma}[n_\gamma]_{q_{\gamma}}!\right)
\]
\cite[Theorem 4.1.2]{lusztig93}.
\par

We isolate the factor $[n_\gamma]_{q_\gamma}!$ in the above expression to see that a given coefficient $\opn{coeff}(n,q)$ vanishes whenever $n_\gamma\geq l_\gamma$ at any root $\gamma$.  It follows that the infinite sum \eqref{eq:R} has only finitely many nonzero terms.

\subsection{The $R$-matrix and the completed enveloping algebra}

The element $R$ does not exist as an element in the (second tensor power) of the quantum enveloping algebra.  However, it can be located in the natural completion of the modified algebra
\[
\hat{U}_q:=\varprojlim_V \dot{U}_q/\opn{Ann}(V),\ \ V\ \text{finite-dimensional in }\Rep(G_q),
\]
\[
=\opn{End}_k(\ forget:\Rep(G_q)\to \opn{Vect}\ ).
\]
\par

Via its expression as the above limit, $\hat{U}_q$ is naturally a topological Hopf algebra. It is furthermore topologically generated by the standard root vectors $E_{\alpha}$ and $F_{\alpha}$, along with the characters on the torus, where each character $\xi:X\to k^{\times}$ lives in $\hat{U}_q$ as the convergent series $\xi=\sum_{\lambda \in X}\xi(\lambda)\cdot 1_{\lambda}$.  Our analysis from \ref{sect:R} implies the following.

\begin{lemma}\label{lem:R_small}
The $R$-matrix for $\Rep(G_q)$ lies in the second tensor power of the subalgebra $A_R$ in $\hat{U}_q$ which is generated by the root vectors $E_\gamma$ and $F_\gamma$ with $l_\gamma>0$, and the characters $\xi:X\to k^\times$ which vanish on the finite index sublattice $\opn{rad}(q)\subseteq X$.
\end{lemma}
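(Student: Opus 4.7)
The plan is to analyze the two factors in the factorization $R = R_0 \cdot \Omega$ from \eqref{eq:R} separately, where $R_0$ denotes the sum of products of divided power root vectors and $\Omega$ is the toral factor.

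First consider $R_0$. As already highlighted preceding the lemma, the factor $[n_\gamma]_{q_\gamma}!$ appearing in $\opn{coeff}(n,q)$ vanishes as soon as $n_\gamma \geq l_\gamma$, since $l_\gamma = \opn{ord}(q_\gamma^2)$. Hence the infinite sum defining $R_0$ collapses to a \emph{finite} one, supported on multi-indices with $n_\gamma < l_\gamma$ at each positive root $\gamma$. In particular, in the degenerate case $l_\gamma = 1$ the factor $(q_\gamma - q_\gamma^{-1})^{n_\gamma}$ additionally forces $n_\gamma = 0$, so root vectors attached to such roots never contribute with nonzero coefficient. Consequently $R_0$ lies tautologically in the second tensor power of the subalgebra of $\hat{U}_q$ generated by the root vectors $E_\gamma$, $F_\gamma$ with $l_\gamma > 0$.

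Next turn to $\Omega$. This element acts on a pair of homogeneous vectors by the scalar $q^{-1}(\deg v, \deg w)$. The first substantive point is that $\opn{rad}(q) \subseteq X$ has finite index: the map $\lambda \mapsto q(\lambda, -)$ sends $X$ into the torsion part of $\Hom(X, k^\times)$, and since $X$ is finitely generated the image is a finitely generated torsion abelian group, hence finite. Thus $q$ descends to a pairing $\bar{q}: (X/\opn{rad}(q))^2 \to k^\times$ on a finite abelian quotient. Writing $\mathbf{1}_{\bar\mu} \in \hat{U}_q$ for the idempotent that projects onto the union of weight spaces indexed by a coset $\bar\mu$, the action of $\Omega$ on homogeneous vectors identifies
\[
\Omega = \sum_{\bar\lambda,\, \bar\mu\, \in\, X/\opn{rad}(q)} \bar{q}(\bar\lambda, \bar\mu)^{-1}\, \mathbf{1}_{\bar\lambda} \ot \mathbf{1}_{\bar\mu}
\]
as a finite sum in $\hat{U}_q \ot \hat{U}_q$. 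Each idempotent $\mathbf{1}_{\bar\mu}$ then expands via character orthogonality on the finite group $X/\opn{rad}(q)$ as a linear combination of characters $\xi: X \to k^\times$ that factor through $X/\opn{rad}(q)$, equivalently, that vanish on $\opn{rad}(q)$.

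Combining these two analyses gives $R = R_0 \cdot \Omega \in A_R \ot A_R$, as claimed. The bulk of the argument is simply the explicit formula \eqref{eq:R} together with the finite-index property of $\opn{rad}(q)$; there is no serious obstacle beyond these two inputs, and the only bookkeeping point worth flagging is the expansion of the toral idempotents $\mathbf{1}_{\bar\mu}$ in terms of characters via orthogonality on the finite abelian group $X/\opn{rad}(q)$.
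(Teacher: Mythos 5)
Your proof is correct and is essentially the argument the paper gestures at without writing out (the cited ``analysis from \ref{sect:R}'' only records the vanishing of $\opn{coeff}(n,q)$ for $n_\gamma\geq l_\gamma$ and the fact that characters $\xi$ sit in $\hat{U}_q$ as convergent series). The remaining bookkeeping---expanding the toral factor $\Omega$ as a finite sum $\sum_{\bar\lambda,\bar\mu}\bar q(\bar\lambda,\bar\mu)^{-1}\mathbf{1}_{\bar\lambda}\ot\mathbf{1}_{\bar\mu}$ and then resolving each coset idempotent $\mathbf{1}_{\bar\mu}$ into characters vanishing on $\opn{rad}(q)$ via orthogonality on the finite group $X/\opn{rad}(q)$---you supply cleanly, and your note on the $(q_\gamma-q_\gamma^{-1})^{n_\gamma}$ factor actually establishes the sharper version of the claim with $l_\gamma>1$, which is almost certainly what the paper's vacuous condition $l_\gamma>0$ was meant to read.
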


\subsection{The ribbon structure}

For $\rho=\frac{1}{2}\sum_{\gamma\in\Phi^+}\gamma$, the associated character $K_{2\rho}$ gives $\Rep(G_q)$ the structure of a pivotal tensor category.  This pivotal structure produces a corresponding twist $\theta= u^{-1}K_{2\rho}$, where $u$ is the Drinfeld morphism \cite[\S\ 8.9]{egno15}.  Hence $\Rep(G_q)$ is naturally a balanced tensor category.

\begin{proposition}
The twist $\theta$, defined as above, satisfies $\theta^\ast_V=\theta_{V^\ast}$ at all finite-dimensional $G_q$-representations $V$.  This is to say, the twist $\theta$ endows $\Rep(G_q)$ with the structure of a ribbon tensor category.
\end{proposition}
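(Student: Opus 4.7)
The plan is to translate the ribbon compatibility $\theta^{\ast}_V = \theta_{V^{\ast}}$ into a Hopf-algebraic identity and verify it in the completed enveloping algebra $\hat{U}_q$ from Lemma \ref{lem:R_small}. First, I would observe that $\theta$ is realized as a central element of $\hat{U}_q$: it is central because both $u$ and $K_{2\rho}$ implement $S^{2}$ by conjugation, so their ratio commutes with all of $\hat{U}_q$. Under the standard correspondence between central elements of $\hat{U}_q$ and natural endomorphisms of the identity on $\opn{Rep}(G_q)$, and using that the action of $z \in \hat{U}_q$ on $V^{\ast}$ is the transpose of the action of $S(z)$ on $V$, the condition $\theta^{\ast}_V = \theta_{V^{\ast}}$ on every finite-dimensional $V$ becomes the single identity
\[
S(\theta) = \theta
\]
in $\hat{U}_q$.

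Second, I would unpack this identity. Since $K_{2\rho}$ is grouplike, $S(\theta) = K_{-2\rho} S(u)^{-1}$. Using the general fact $S^{2}(u) = u$, so that $K_{2\rho} u^{-1} K_{-2\rho} = u^{-1}$, the identity $S(\theta) = \theta$ simplifies to the equality
\[
u \cdot S(u) = K_{4\rho}
\]
in $\hat{U}_q$. The left-hand side $uS(u)$ is the standard central element associated to any quasitriangular Hopf algebra, and $K_{4\rho}$ is central on integrable representations since $4\rho$ lies in the root lattice.

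Third, since both sides of $uS(u) = K_{4\rho}$ are central, it suffices to verify the identity on a highest-weight vector $v_{\lambda}$ of each simple module $V(\lambda)$, where both elements act by scalars. Here the explicit $R$-matrix formula \eqref{eq:R} simplifies drastically: each monomial $E^{(n_1)}_{\gamma_1} \cdots E^{(n_t)}_{\gamma_t}$ with some $n_j > 0$ annihilates $v_{\lambda}$, so only the diagonal contribution $\Omega$ survives in the computation of $u \cdot v_{\lambda}$, and a parallel reduction via the $F$-factors of $R_{21}$ applies to $S(u) \cdot v_{\lambda}$. From this the scalars can be read off directly, and they combine to $q(4\rho, \lambda)$, which is precisely the scalar by which $K_{4\rho}$ acts on $v_{\lambda}$.

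The main technical obstacle is the last scalar calculation: the sign conventions, the $\rho$-shifts coming from the expression $u = m(S \ot 1)(R_{21})$ for the Drinfeld morphism, and the interaction with the diagonal factor $\Omega$ all require careful bookkeeping. The reduction to a one-vector computation via centrality is the crucial structural simplification; once it is in place, matching the scalars is routine but somewhat delicate.
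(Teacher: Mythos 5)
Your reduction of the ribbon condition to the single identity $S(\theta)=\theta$ in $\hat{U}_q$ is correct and matches the route the paper takes (the paper then simply cites \cite[\S\ 1]{sawin06} for $S(\theta)=\theta$ and \cite[Proposition 8.11.2]{egno15} for the ribbon consequence). However, the unpacked form of the identity in your second step is wrong, and the error propagates into a false centrality claim that undermines the rest of the argument. From $S(\theta)=\theta$ with $\theta=u^{-1}K_{2\rho}$, using that $u$ commutes with $K_{2\rho}$ (both implement $S^2$ by conjugation), one gets $K_{-2\rho}S(u)^{-1}=K_{2\rho}u^{-1}$, hence $S(u)=uK_{-4\rho}$, equivalently $u\,S(u)^{-1}=K_{4\rho}$, \emph{not} $uS(u)=K_{4\rho}$. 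This is not a cosmetic distinction: $uS(u)$ is indeed the standard central element of a quasitriangular Hopf algebra, but $K_{4\rho}$ is not central. In $\hat{U}_q$ one has $K_\mu E_\alpha K_{-\mu}=q(\mu,\alpha)E_\alpha$, so $K_{4\rho}$ is central only when $q(4\rho,\alpha)=1$ for all roots, a condition on $q$ that fails generically even though $4\rho\in Q$ (membership in the root lattice says nothing about centrality). For $\mathfrak{sl}_2$ one sees directly that $K_{4\rho}$ acts on the weight-$m$ space of a simple by $q^{2m}$, which varies across the module, so the asserted equation $uS(u)=K_{4\rho}$ equates a central element to a manifestly non-central one.

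Consequently the third step cannot be executed as written: the reduction ``both sides are central, so check on a highest weight vector'' fails because the corrected identity $u\,S(u)^{-1}=K_{4\rho}$ equates two elements that are \emph{not} central (each implements $S^4$ by conjugation). The idea can be salvaged — since both $u\,S(u)^{-1}$ and $K_{4\rho}$ implement $S^4$, the quotient $K_{-4\rho}\,u\,S(u)^{-1}$ \emph{is} central and can be shown to be $1$ by a scalar computation on highest weight vectors — but your proposal does not take this extra step and instead relies on centrality claims that are false as stated, so the final ``routine but delicate'' bookkeeping would not close.
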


The result follows from the fact that the associated element $\theta\in \hat{U}_q$ satisfies $S(\theta)=\theta$ \cite[\S\ 1]{sawin06}, so that $\hat{U}_q$ is a ribbon Hopf algebra \cite[Proposition 8.11.2]{egno15}.

\subsection{Useful restrictions on $q$}

When considering examples it is helpful to institute \emph{some} restrictions on the parameter $q$, and hence on the nature of the $R$-matrix for $\Rep(G_q)$.

\begin{definition}\label{def:max_nondegen}
We call a quantum parameter $q$ maximally non-degenerate if its radical $\opn{rad}(q)$, i.e.\ the lattice of all elements $\lambda$ in $P$ with $q(\lambda,-)= 1$, lies in the root sublattice $Q\subseteq P$.
\end{definition}

After choosing an embedding $\bar{\mathbb{Q}}\subseteq k$ if necessary, the prototypical example of a maximally non-degenerate parameter $q$ is one of the form
\begin{equation}\label{eq:1375}
q=\exp\left(\frac{\pi i (-,-)}{l}\right):P\times P\to k^\times\ \ \text{or}\ \ \exp\left(\frac{2\pi i (-,-)}{l}\right):P\times P\to k^\times,
\end{equation}
where $l$ is an integer with all $d_\alpha\mid l$.  Indeed, one sees directly in this case that any element $\lambda\in P$ which is in the radical of $q$ necessarily satisfies $(\lambda,\omega_\alpha)\in d_\alpha\mathbb{Z}$ at all simple $\alpha$, and hence lies in the root lattice.  So the quantum parameters \eqref{eq:1375} are in fact maximally non-degenerate, in the above sense.
\par

In general, one can simply think of a maximally non-degenerate quantum parameter $q$ as one which ``acts like" a direct exponentiation of the Killing form as in \eqref{eq:1375}.

\begin{remark}
In the simply-laced case the divisibility condition $d_\alpha\mid l$ is vacuous, so that $l$ can be arbitrary in the expressions \eqref{eq:1375}.
\end{remark}

\section{Quasi-classical representations and quantum Frobenius}

We recall Lusztig's quantum Frobenius functor
\[
\opn{Fr}:\Rep(G^\ast_{\varepsilon})\to \Rep(G_q)
\]
and its relationship to the subcategory of quasi-classical representations in $\Rep(G_q)$.

\subsection{Quasi-classical representations}

Following \cite{lusztig93}, we take
\[
X^\ast=\{\lambda\in X:q^2(\lambda,\alpha)=1\ \text{at all simple }\alpha\}.
\]
In the simply-connected case $P^\ast=\mathbb{Z}\cdot \{l_\alpha\omega_\alpha:\alpha\in \Delta\}$, where each $\omega_\alpha$ is the fundamental weight associated to $\alpha$, and we recover $X^\ast$ as the intersection $X^\ast=X\cap P^\ast$.  We also consider the ``$l$-dualized" root lattice
\[
lQ=\mathbb{Z}\cdot\{l_\alpha \alpha:\alpha\in \Delta\}.
\]
One can check that $lQ\subseteq X^\ast$.

The following is covered in \cite[Proposition 35.3.2]{lusztig93} (see also \cite[Proposition 4.3]{negron}).

\begin{lemma}[{\cite[Proposition 35.3.2]{lusztig93}}]\label{lem:674}
A simple representation $L(\lambda)$ is annihilated by all simple root vectors $E_\alpha$ and $F_\alpha$ with $l_\alpha>1$ if and only if $\lambda\in X^\ast$.  Furthermore, in this case $L(\lambda)$ is graded by the sublattice $X^\ast$ in $X$.
\end{lemma}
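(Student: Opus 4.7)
The plan is to reduce to the rank-one case by restricting along the divided power subalgebras $U_{q_\alpha}(\mfk{sl}_2) \subseteq U_q$ generated by $E_\alpha^{(n)}, F_\alpha^{(n)}$ together with the toral generators attached to a simple root $\alpha$, and then to invoke quantum Frobenius for the converse direction. The $X^\ast$-grading will drop out of the Frobenius pullback description at the end.

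For the forward direction, I would fix a simple $\alpha$ with $l_\alpha>1$ and restrict $L(\lambda)$ along $U_{q_\alpha}(\mfk{sl}_2) \hookrightarrow U_q$. The highest weight vector $v_\lambda$ generates a highest weight $U_{q_\alpha}(\mfk{sl}_2)$-submodule on which the identity $F_\alpha\cdot F_\alpha^{(k)} v_\lambda = [k+1]_{q_\alpha} F_\alpha^{(k+1)} v_\lambda$ holds. Vanishing of $F_\alpha$ on $L(\lambda)$ forces $F_\alpha^{(k+1)} v_\lambda = 0$ whenever $l_\alpha \nmid (k+1)$, and Lusztig's description of simple $U_{q_\alpha}(\mfk{sl}_2)$-modules at a root of unity (the rank-one analogue of Steinberg's theorem) allows this only when $l_\alpha$ divides the highest weight $\langle \alpha^\vee,\lambda\rangle$. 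Since $q^2(\alpha,\lambda) = q_\alpha^{2\langle\alpha^\vee,\lambda\rangle}$ and $l_\alpha = \opn{ord}(q_\alpha^2)$, this divisibility at every simple $\alpha$ (with the case $l_\alpha = 1$ being vacuous) is exactly the condition $\lambda \in X^\ast$.

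For the reverse direction, assume $\lambda \in X^\ast$ and realize $L(\lambda)$ as the quantum Frobenius pullback $\opn{Fr}^\ast L^\ast_\varepsilon(\lambda^\ast)$ of the simple $G^\ast_\varepsilon$-representation whose highest weight corresponds to $\lambda$ under the identification of $X^\ast$ with the character lattice of $G^\ast$. Since the algebra map underlying $\opn{Fr}$ sends every non-divided $E_\alpha, F_\alpha$ with $l_\alpha>1$ to zero, these operators act as zero on the pullback. The pullback is a simple $U_q$-module of highest weight $\lambda$, hence isomorphic to $L(\lambda)$. For the $X^\ast$-grading: on a Frobenius pullback, only the divided powers $E_\alpha^{(k)}, F_\alpha^{(k)}$ with $l_\alpha \mid k$ (and the unrestricted generators at those $\alpha$ with $l_\alpha = 1$) act nontrivially, so weights of $L(\lambda)$ all lie in $\lambda + lQ$, and both $\lambda$ and $lQ$ sit inside $X^\ast$.

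The main obstacle is the reverse direction, which depends on the quantum Frobenius functor that the paper develops formally only in later sections. A fully self-contained proof would require independently constructing a simple $U_q$-module of highest weight $\lambda \in X^\ast$ on which the non-divided $E_\alpha, F_\alpha$ (for $l_\alpha>1$) vanish -- essentially reconstructing the relevant piece of Lusztig's Frobenius map by hand before citing it as a named functor.
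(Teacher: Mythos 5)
The paper supplies no proof of its own here: Lemma \ref{lem:674} is simply cited to Lusztig's Proposition 35.3.2 (and to the companion paper \cite{negron}). Your reconstruction is, in essence, the argument underlying that citation---rank-one reduction for the forward implication, quantum Frobenius pullback for the converse, with the $X^\ast$-grading following from the fact that the divided powers $E_\alpha^{(l_\alpha)}$, $F_\alpha^{(l_\alpha)}$ shift weights by elements of $lQ\subseteq X^\ast$.

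One step in your forward direction is phrased a bit loosely. The $\alpha$-string $\operatorname{span}\{F_\alpha^{(k)}v_\lambda\}$ generated by the highest weight vector is a cyclic highest-weight module for the rank-one divided-power algebra, but it is not \emph{a priori} the rank-one simple, so one cannot directly cite the classification of rank-one simples. What you actually need is the single fact that $F_\alpha^{(m)}v_\lambda\neq 0$ in $L(\lambda)$ for $m=\langle\alpha^\vee,\lambda\rangle$; this follows from integrability (e.g.\ via the braid operator $T_\alpha$, which carries $v_\lambda$ to a nonzero multiple of $F_\alpha^{(m)}v_\lambda$, or from $\msc{W}$-invariance of the character of $L(\lambda)$). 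Combined with your observation that $F_\alpha\cdot L(\lambda)=0$ forces $F_\alpha^{(j)}v_\lambda=0$ whenever $l_\alpha\nmid j$, this yields $l_\alpha\mid m$, i.e.\ $\lambda\in X^\ast$. The circularity concern you raise at the end is not really an obstacle: the paper treats this lemma as an input from Lusztig, where the rank-one analysis and the Frobenius map are developed hand in hand, and nothing in the present text's construction of $\opn{Fr}$ depends on Lemma \ref{lem:674}.
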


We now consider the full subcategory of quasi-classical representations $\msc{E}_q$ in $\Rep(G_q)$.

\begin{lemma}\label{lem:696}
Let $\msc{E}_q$ denote the full subcategory in $\Rep(G_q)$ consisting of all representations which are annihilated by the simple root vectors $E_{\alpha}$ and $F_{\alpha}$ with $l_{\alpha}>1$.
\begin{enumerate}
\item All representations in $\msc{E}_q$ are graded by the sublattice $X^{\ast}$ in $X$.
\item A quantum group representation $V$ lies in $\msc{E}_q$ if and only if $V$ is annihilated by \emph{all} of the root vectors $E_\gamma$ and $F_\gamma$ for $\gamma\in \Phi^+$ with $l_\gamma>1$.
\item A representation $V$ lies in $\msc{E}_q$ if and only if, for each $W$ in $\Rep(G_q)$, the squared braidings appear as
\begin{equation}\label{eq:793}
c^2_{V,W}:V\ot W\to V\ot W,\ \ v\ot w\mapsto q^{-2}(\opn{deg}(w),\opn{deg}(v)) v\ot w
\end{equation}
and
\[
c^2_{W,V}:W\ot V\to W\ot V,\ \ w\ot v\mapsto q^{-2}(\opn{deg}(w),\opn{deg}(v)) w\ot v.
\]
\item A representation $V$ lies in $\msc{E}_q$ if and only if, for each $W$ in $\Rep(G_q)$, the squared braidings $c^2_{V,W}$ and $c^2_{W,V}$ are both homogenous endomorphisms with respect to the $X\times X$-gradings on $V\ot W$ and $W\ot V$.
\end{enumerate}
\end{lemma}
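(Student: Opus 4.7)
The plan is to prove the four conditions equivalent by closing the cycle $(2) \Rightarrow (3) \Rightarrow (4) \Rightarrow (2)$, and to derive $(1)$ and the easy direction of $(2)$ directly from Lemma~\ref{lem:674} and the braid-group action on integrable representations.

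For $(1)$, I would observe that the defining condition of $\msc{E}_q$ (annihilation by certain operators) is preserved under subquotients, so every Jordan--H\"older factor of a finite-dimensional subrepresentation of $V \in \msc{E}_q$ is itself in $\msc{E}_q$; by Lemma~\ref{lem:674} each such factor has highest weight in $X^\ast$ and is graded by $X^\ast$, and passing to the colimit gives the claim. For the nontrivial direction of $(2)$, I would write $\gamma = w(\alpha)$ for some $w \in \msc{W}$ and simple $\alpha \in \Delta$; Weyl invariance of the form gives $(\gamma,\gamma)=(\alpha,\alpha)$ and hence $l_\gamma = l_\alpha > 1$. The root vector $E_\gamma = T_w(E_\alpha)$ is obtained via the braid operator $T_w$, which acts as a linear automorphism on any integrable representation, and the compatibility $T_w(x)\cdot v = T_w(x\cdot T_w^{-1}v)$ propagates the vanishing $E_\alpha\cdot V = 0$ to $E_\gamma\cdot V = 0$. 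The same argument handles $F_\gamma$.

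For $(2) \Rightarrow (3)$, I would use the explicit formula~\eqref{eq:R}. Its coefficients $\opn{coeff}(n,q)$ vanish unless $n_\gamma < l_\gamma$ for every $\gamma$, so on their support each $[n_\gamma]_{q_\gamma}!$ is invertible and $E_\gamma^{(n_\gamma)} = E_\gamma^{n_\gamma}/[n_\gamma]_{q_\gamma}!$ as operators on representations. Combined with $(2)$, this forces $E^{(n)}\cdot V = 0$ for every $n \neq 0$ in the sum, and symmetrically $F^{(m)}\cdot V = 0$. Hence $R$ acts on $V \ot W$ through its toral part $\Omega$ alone, and $R_{21}$ does the same. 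Since $c^2_{V,W}$ is given by the action of $R_{21}R$, it reduces to $\Omega^2$, which is diagonal multiplication by the scalar $q^{-2}(\deg v,\deg w)$; an identical calculation handles $c^2_{W,V}$. The implication $(3) \Rightarrow (4)$ is immediate from diagonality.

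The main obstacle will be $(4) \Rightarrow (2)$. Fix $\gamma \in \Phi^+$ with $l_\gamma > 1$ and choose $W = L(\mu)$ for a dominant $\mu$ lying sufficiently deep in the Weyl chamber that the lowest-weight vector $w^- \in W$ has the property that the vectors $E^{(m)} w^-$, indexed by $m$ in the finite support of the $R$-matrix, are nonzero, linearly independent, and occupy pairwise distinct weight spaces of $W$. Since the divided powers $F^{(n)}$ with $n \neq 0$ in this support vanish on $w^-$, the action $R(v \ot w^-)$ collapses to the toral scalar $q^{-1}(\deg v,\deg w^-)\, v \ot w^-$, and applying $R_{21}$ yields
\[
c^2_{V,W}(v \ot w^-) = q^{-2}(\deg v,\deg w^-) \sum_m \opn{coeff}(m,q)\, F^{(m)} v \ot E^{(m)} w^-.
\]
Homogeneity of $c^2_{V,W}$ forces each $m \neq 0$ contribution to vanish, and the weight-space separation forces the factors $F^{(m)} v$ to vanish individually. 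Specializing to $m$ supported at the single root $\gamma$ gives $F_\gamma v = 0$, and the mirror argument using the highest-weight vector of $W$ combined with homogeneity of $c^2_{W,V}$ yields $E_\gamma v = 0$. The delicate step is verifying existence of an appropriately generic $\mu$, which requires quantum PBW--type input ensuring that divided-power monomials $E^{(m)}$ act faithfully on the lowest weight vector of a sufficiently large $L(\mu)$; this is the main technical point to be addressed in the full proof.
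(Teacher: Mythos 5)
Your cycle structure $(2)\Rightarrow(3)\Rightarrow(4)\Rightarrow(2)$ is logically sound, and your self-contained arguments for $(1)$, for the nontrivial direction of $(2)$ via the braid-group operators $T_w$, and for $(2)\Rightarrow(3)$ via the finite support of the $R$-matrix coefficients all hold up (the paper simply cites its companion \cite{negron} for $(1)$ and $(2)$, and disposes of the easy direction of $(3)$ in one line; your versions are more self-contained and correct). The problem is the implication $(4)\Rightarrow(2)$.

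The weight-space separation you invoke --- that the vectors $E^{(m)}w^-$, as $m$ ranges over the support of the $R$-matrix, occupy pairwise distinct weight spaces --- is false once the rank is at least $2$. Two distinct functions $m,m':\Phi^+\to\mathbb{Z}_{\geq 0}$ can have $\sum_\gamma m_\gamma\gamma=\sum_\gamma m'_\gamma\gamma$ (take $m$ supported on $\alpha_1$ and $\alpha_2$ with multiplicity $1$ each and $m'$ supported on $\alpha_1+\alpha_2$). So you cannot separate the summands of $c^2_{V,W}(v\ot w^-)$ by weight, and the step ``the factors $F^{(m)}v$ vanish individually'' does not follow from homogeneity alone. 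You also never supply the asserted genericity of $\mu$ and flag it yourself as an open point, so the argument is incomplete even on its own terms.

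The paper's treatment of the converse is much lighter and sidesteps both issues. If $V\notin\msc{E}_q$, choose a \emph{simple} $\alpha$ with $l_\alpha>1$ and a homogeneous $v$ with (say) $E_\alpha v\neq 0$. Take $W=L(\mu)$ with a highest-weight vector $w$ satisfying $F_\alpha w\neq 0$; any $\mu$ with $(\mu,\alpha)>0$ works, so no genericity is needed. Because $w$ is a highest-weight vector the first application of $R_{21}$ only produces the toral scalar, and in the second application the unique contribution to the bidegree $(\mu-\alpha,\lambda+\alpha)$ is the single term $-(q_\alpha-q_\alpha^{-1})F_\alpha w\ot E_\alpha v$ --- uniqueness holds precisely because $\alpha$ is simple, so no other $n$ in the $R$-matrix sum has $\sum_\gamma n_\gamma\gamma=\alpha$. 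Its nonvanishing already contradicts both $(3)$ and $(4)$. Note this only re-derives annihilation by the \emph{simple} root vectors, but that is all you need: you have already established $(2)$ directly via the braid-group argument, so there is no reason to extract annihilation by the non-simple $E_\gamma,F_\gamma$ from the braiding formula, which is exactly what forced you into the weight-separation difficulty.
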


\begin{proof}
Statements (1) and (2) are covered in \cite[Lemmas 4.4, 4.5]{negron}.  For (3), one sees directly from the expression of the $R$-matrix that
\[
c_{V,W}^2=\sum_{\lambda,\mu\in X} q^{-2}(\lambda,\mu) 1_{\lambda}\ot 1_{\mu}\ \ \text{whenever $V$ or $W$ is in }\msc{E}_q,
\]
and hence verifies the formulas \eqref{eq:793}.  So we need only deal with the converse claim.  Suppose $V$ is \emph{not} in $\msc{E}_q$.  We can find a homogeneous vector $v\in V$ which is not annihilated by some $E_\alpha$, or $F_\alpha$, with $l_\alpha>1$.  We suppose arbitrarily that $v$ is not annihilated by such a positive root vector $E_\alpha$.

Take a simple representation $W=L(\mu)$ with highest weight vector $w$ for which $F_\alpha\cdot w\neq 0$.  Then
\[
\begin{array}{ll}
c_{W,V}^2(w,v)&=q^{-1}(\lambda,\mu)c_{V,W}(v,w)\\
&=q^{-2}(\lambda,\mu)w\ot v- (q_\alpha- q_\alpha^{-1})F_{\alpha}w\ot E_{\alpha}v+ \text{terms of other bidegrees}.
\end{array}
\]
Since the linear term $-(q_\alpha- q_\alpha^{-1})F_{\alpha}w\ot E_{\alpha}v$ is nonvanishing in this case, we have
\[
c_{W,V}^2(w,v)\neq q^{-2}(\opn{deg}(w),\opn{deg}(v)) w\ot v.
\]
The argument in the case where $v$ is not annihilated by some $F_\alpha$ is similar.
\par

Statement (4) was already established in the argument for (3).
\end{proof}

\subsection{Lusztig's dual group $G^\ast_\varepsilon$}
\label{sect:OGdual}

In \cite{lusztig90,lusztig90II,lusztig93} Lusztig considers a dual group $G^\ast$ to $G$ which is specified by the following data: The weight and root lattices for $G^\ast$ are $P^\ast$ and $lQ$ respectively, and the character lattice is $X^\ast$.  The simple roots are
\[
\alpha^\ast:=l_\alpha \alpha\ \in\ lQ
\]
and the Cartan integers are given by
\[
\langle \alpha^\ast,\beta^\ast\rangle_l := \frac{l_\alpha}{l_\beta} \frac{2 (\alpha,\beta)}{(\beta,\beta)}
\]
\cite[\S\ 2.2.5]{lusztig93}.  The normalized Killing form on $P^\ast$ is provided by a rescaling of the original form $(-,-)$ on $P$ \cite[\S\ 4.4]{negron}, and for each simple root $\alpha^\ast$ the corresponding fundamental weight $\omega_{\alpha^\ast}$ in $P^\ast$ is
$l_{\alpha}\omega_{\alpha}$.
\par

We note that the almost-simple factors in the dual group $G^\ast$ are in natural bijection with the almost-simple factors in $G$, and that each factor in $G^\ast$ is either of the same Dynkin type as its corresponding factor in $G$, or of Langlands dual type.  See \cite[Lemma 4.7]{negron}.
\par

Consider the dual parameter $\varepsilon$ on $G^\ast$ provided by restricting $q$ to $P^\ast$,
\[
\varepsilon:=q|_{P^\ast\times P^\ast}:P^\ast\times P^\ast\to k^{\times}.
\]
This form has corresponding scalar parameters
\[
\varepsilon_\alpha= \varepsilon(\alpha^\ast, \omega_{\alpha^\ast})= q_\alpha^{l_\alpha^2}=\pm 1.
\]
Hence $\varepsilon$ is a quasi-classical parameter for $G^\ast$, just in the sense that all of its scalar parameters take values $\pm 1$, and the corresponding quantum group $G^\ast_\varepsilon$ has semisimple representation theory \cite[Proposition 33.2.3]{lusztig93}.  Furthermore, since all of the $l_{\alpha^\ast}$ are $1$ in this case, the braiding on $\Rep(G^\ast_\varepsilon)$ appears as the semisimple operator
\[
c_{V,W}:V\ot W\to W\ot V,\ \ c_{V,W}(v,w)=\varepsilon^{-1}(\deg v,\deg w) w\ot v.
\]

\subsection{Original quantum Frobenius}

Let $\dot{U}^\ast_{\varepsilon}$ denote the modified quantum enveloping algebra for $G^\ast$ at $\varepsilon$, and let
\[
e^{(m)}_\alpha 1_\mu,\ f^{(m)}_\alpha 1_\mu\ \in \dot{U}^\ast_\varepsilon
\]
denote the standard generators, with $\mu\in X^\ast$.  By \cite[Theorem 35.1.9, \S\ 35.5.2]{lusztig93} (see also \cite{lentner16}) there is a surjective map of (non-unital) algebras
\[
fr^\ast:\dot{U}_q\to \dot{U}^\ast_\varepsilon
\]
which sends $E^{(n)}_\alpha 1_\lambda$ to $e^{(n/l_\alpha)}_\alpha 1_\lambda$ and  $F^{(n)}_\alpha 1_\lambda$ to $f^{(n/l_\alpha)}_\alpha 1_\lambda$ whenever $\lambda\in X^\ast$ and $l_\alpha\mid n$, and otherwise sends these vectors to $0$.
\par

The map $fr^\ast$ is furthermore compatible with the Hopf structures on $\dot{U}_q$ and $\dot{U}^\ast_\varepsilon$, in the sense that restricting along $fr^\ast$ provides an embedding of tensor categories
\[
\opn{Fr}:=\res_{fr^\ast}:\Rep(G^\ast_{\varepsilon})\to \Rep(G_q)
\]
with tensor compatibility $\opn{Fr}(V)\ot \opn{Fr}(W)\to \opn{Fr}(V\ot W)$ given by the identity.

The following is deducible from the original work \cite{lusztig93}.

\begin{proposition}[\cite{lusztig93}]\label{prop:og_frob}
The quantum Frobenius functor $\opn{Fr}:\Rep(G^\ast_\varepsilon)\to \Rep(G_q)$ restricts to a braided tensor equivalence onto the subcategory of quasi-classical representations in $\Rep(G_q)$,
\[
\opn{Fr}:\Rep(G^\ast_{\varepsilon})\overset{\sim}\to \msc{E}_q.
\]
\end{proposition}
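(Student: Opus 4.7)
The plan is to verify three things: that the image of $\opn{Fr}$ lies in $\msc{E}_q$, that $\opn{Fr}$ is an equivalence onto $\msc{E}_q$ as plain categories, and that it intertwines the braidings. Since the tensor structure on $\opn{Fr}$ is the identity, no separate monoidal compatibility needs checking.

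First I would show $\opn{Fr}(\Rep(G^\ast_\varepsilon)) \subseteq \msc{E}_q$. By the formula for $fr^\ast$, at any simple $\alpha$ with $l_\alpha > 1$ one has $l_\alpha \nmid 1$, so $fr^\ast(E_\alpha) = fr^\ast(F_\alpha) = 0$. Thus $E_\alpha$ and $F_\alpha$ act as zero on any $\opn{Fr}(V)$, which satisfies the defining condition of $\msc{E}_q$. Full faithfulness of $\opn{Fr}$ is automatic from surjectivity of $fr^\ast$: a $k$-linear map between $\dot U^\ast_\varepsilon$-modules is $\dot U^\ast_\varepsilon$-linear if and only if it is $\dot U_q$-linear after restriction along $fr^\ast$.

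For essential surjectivity onto $\msc{E}_q$, given $V \in \msc{E}_q$ I would show that the $\dot U_q$-action factors through $fr^\ast$, i.e.\ that $\ker(fr^\ast)$ annihilates $V$. This kernel is generated, as a two-sided ideal of the nonunital algebra $\dot U_q$, by the idempotents $1_\lambda$ with $\lambda \notin X^\ast$ together with the divided powers $E^{(n)}_\alpha 1_\lambda$ and $F^{(n)}_\alpha 1_\lambda$ with $l_\alpha \nmid n$. The first family kills $V$ since $V$ is $X^\ast$-graded (Lemma \ref{lem:696}(1)). For the divided powers, the base case $0 < n < l_\alpha$ with $l_\alpha > 1$ follows from $E^{(n)}_\alpha = E^n_\alpha/[n]_{q_\alpha}!$, valid since $[n]_{q_\alpha}! \neq 0$, together with $E_\alpha|_V = 0$. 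For general $n = m l_\alpha + r$ with $0 < r < l_\alpha$, I would apply the divided-power product rule
\[ E^{(r)}_\alpha \cdot E^{(m l_\alpha)}_\alpha \ =\ \left[\begin{smallmatrix} n \\ r \end{smallmatrix}\right]_{q_\alpha} E^{(n)}_\alpha, \]
noting that the left side annihilates $V$ by the base case while the quantum Lucas identity at a primitive $l_\alpha$-th root of unity yields $\left[\begin{smallmatrix} n \\ r \end{smallmatrix}\right]_{q_\alpha} = 1 \neq 0$, forcing $E^{(n)}_\alpha 1_\lambda$ to annihilate $V$. The argument for $F^{(n)}_\alpha$ is symmetric.

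For the braiding, I would use the $R$-matrix formula from Section \ref{sect:R}, $R = \bigl(\sum_n \opn{coeff}(n,q)\, E^{(n_1)}_{\gamma_1}\cdots \otimes F^{(n_1)}_{\gamma_1}\cdots\bigr)\Omega$. On $\opn{Fr}(V) \otimes \opn{Fr}(W)$, every summand with some $n_\gamma > 0$ vanishes: either $l_\gamma > 1$, in which case $E_\gamma$ and $F_\gamma$ annihilate by Lemma \ref{lem:696}(2); or $l_\gamma = 1$, so $q_\gamma = q_\gamma^{-1}$ and the factor $(q_\gamma - q_\gamma^{-1})^{n_\gamma}$ in $\opn{coeff}(n,q)$ kills the term. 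Hence $R$ reduces to $\Omega$ on $\opn{Fr}(V) \otimes \opn{Fr}(W)$, producing the diagonal scalar braiding $v \otimes w \mapsto q^{-1}(\deg v, \deg w)\, w \otimes v$, which coincides with the $\varepsilon$-braiding on $\Rep(G^\ast_\varepsilon)$ via the identity $q|_{X^\ast \times X^\ast} = \varepsilon$.

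The principal obstacle is the essential-surjectivity step, specifically showing that $E^{(n)}_\alpha|_V = 0$ whenever $l_\alpha \nmid n$. The vanishing of $E_\alpha$ on $V$ is not enough on its own, since the divided powers $E^{(k l_\alpha)}_\alpha$ typically act nontrivially on objects of $\msc{E}_q$ even though $E_\alpha$ does not (this is precisely the content of quantum Frobenius). The quantum-binomial calculation above, and hence the full force of Lusztig's divided-power calculus at roots of unity, is what makes the argument go through.
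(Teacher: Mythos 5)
Your handling of the braiding coincides with the paper's: read off from the $R$-matrix formula that every summand with some $n_\gamma>0$ dies (either $E_\gamma, F_\gamma$ kill $\opn{Fr}(V)\ot\opn{Fr}(W)$ when $l_\gamma>1$, or the factor $(q_\gamma-q_\gamma^{-1})^{n_\gamma}$ vanishes when $l_\gamma=1$), so $R$ collapses to $\Omega$, and then note $\varepsilon=q|_{P^\ast\times P^\ast}$. This is exactly the paper's argument.

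Where you diverge is the plain tensor equivalence $\opn{Fr}:\Rep(G^\ast_\varepsilon)\overset{\sim}\to\msc{E}_q$. The paper simply cites \cite[Theorem 4.9]{negron} for this; you attempt it from scratch. Your inclusion-of-image step, full-faithfulness step, and divided-power bootstrap (using $E^{(r)}_\alpha E^{(ml_\alpha)}_\alpha=\binom{n}{r}_{q_\alpha}E^{(n)}_\alpha$ with nonvanishing Gaussian binomial to propagate $E^{(n)}_\alpha|_V=0$ to all $n$ with $l_\alpha\nmid n$) are all correct and genuinely illuminating. But the essential-surjectivity step has a real gap: you assert without proof that $\ker(fr^\ast)$ is generated as a two-sided ideal by the idempotents $1_\lambda$, $\lambda\notin X^\ast$, together with the $E^{(n)}_\alpha 1_\lambda$, $F^{(n)}_\alpha 1_\lambda$ with $l_\alpha\nmid n$. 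This is far from obvious. Lusztig's Theorem 35.1.9 gives surjectivity of $fr^\ast$ and its behavior on canonical basis elements, but the ideal-generation claim for the kernel is essentially equivalent to the statement you are trying to prove: one needs to know that, modulo these generators, $\dot U_q$ surjects onto $\dot U^\ast_\varepsilon$ with no extra relations -- e.g.\ that divided powers $E^{(n)}_\gamma$ of non-simple root vectors with $l_\gamma\nmid n$ lie in that ideal, and that the graded dimensions match. Establishing this requires comparing PBW/canonical bases for $\dot U_q$ and $\dot U^\ast_\varepsilon$, which is the substantive content hidden behind the paper's citation. So the proposal is a legitimate alternative route, but as written the kernel-generation claim is an unjustified assumption that carries the whole weight of essential surjectivity; you would need to either prove it (via Lusztig Ch.\ 35 basis arguments) or replace it with a direct construction of the $\dot U^\ast_\varepsilon$-action on $V\in\msc{E}_q$ together with a verification of the defining relations.
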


\begin{proof}
The fact that $\opn{Fr}$ restricts to a tensor equivalence onto $\msc{E}_q$ is covered in \cite[Theorem 4.9]{negron}.  As for the braiding, one sees directly from the expression of the $R$-matrix for $\Rep(G_q)$ that the braiding on $\msc{E}_q$ is instituted by the bilinear form
\[
c_{V,W}(v,w)=q^{-1}(\deg v,\deg w)\cdot w\ot v.
\]
The identification with the braiding on $\Rep(G^\ast_\varepsilon)$ now follows from the fact that $\varepsilon$ is defined by restricting $q$ to $P^\ast$.
\end{proof}

\section{The Tannakian center in $\Rep(G_q)$}
\label{sect:Tan}

We calculate the Tannakian center in $\Rep(G_q)$ via a twisting of Lusztig's original quantum Frobenius functor.

\subsection{The Tannakian sublattice}
\label{sect:623}

Recall our lattices $lQ\subseteq X^\ast\subseteq P^\ast$ for the dual group $G^\ast$ (Section \ref{sect:OGdual}) and consider the refinements
\[
X^{\text{M\"ug}}:=\{\lambda \in X^\ast:q^2(\lambda,\mu)=1\ \text{at all }\mu\in X\}\vspace{1mm}
\]
and
\[
X^{\opn{Tan}}:=\{\lambda \in X^\ast:q^2(\lambda,\mu)=1\ \text{at all }\mu\in X,\ \text{and}\ q(\lambda,\lambda)=1\}.
\]
By definition, we have a sequence of inclusions $X^{\opn{Tan}}\subseteq X^{\text{\rm M\"ug}}\subseteq X^{\ast}$.  We are particularly interested in $X^{\opn{Tan}}$, as it eventually provides the character lattice for the Tannakian center in $\Rep(G_q)$.

\begin{lemma}\label{lem:Tan_lattice}
The subsets $X^{\opn{Tan}}$ and $X^{\text{\rm M\"ug}}$ are additive subgroups in $X^\ast$, and there is a sequence of inclusions
\[
lQ\ \subseteq\ X^{\opn{Tan}}\ \subseteq\ X^{\text{\rm M\"ug}}\ \subseteq\ X^\ast.
\]
\end{lemma}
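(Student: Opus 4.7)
The plan is to verify each claim directly from the definitions, exploiting the bilinearity and symmetry of $q$ (and hence of $q^2$) together with the definitions $l_\alpha = \operatorname{ord}(q^2(\alpha,-)) = \operatorname{ord}(q(\alpha,\alpha))$.

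First I would handle $X^{\mathrm{M\ddot{u}g}}$. Closure under addition and inverses is immediate from the bilinearity of $q^2$: if $\lambda_1,\lambda_2\in X^{\mathrm{M\ddot{u}g}}$, then $q^2(\lambda_1+\lambda_2,\mu) = q^2(\lambda_1,\mu)\cdot q^2(\lambda_2,\mu) = 1$ for every $\mu\in X$, and similarly $q^2(-\lambda_1,\mu)=q^2(\lambda_1,\mu)^{-1}=1$. One also needs closure in $X^\ast$, but this is automatic since taking $\mu$ to range only over simple roots shows $X^{\mathrm{M\ddot{u}g}}\subseteq X^\ast$, giving the inclusion $X^{\mathrm{M\ddot{u}g}}\subseteq X^\ast$ along the way.

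Next I would handle $X^{\operatorname{Tan}}$. The first defining condition is subgroup-closed by the argument just given. For the second condition, given $\lambda,\mu\in X^{\operatorname{Tan}}$, expand using bilinearity and the symmetry of $q$:
\[
q(\lambda+\mu,\lambda+\mu) = q(\lambda,\lambda)\cdot q(\mu,\mu)\cdot q(\lambda,\mu)\cdot q(\mu,\lambda) = 1\cdot 1\cdot q^2(\lambda,\mu) = 1,
\]
where the last equality uses that $\mu\in X^{\mathrm{M\ddot{u}g}}$ and $\lambda\in X\subseteq X$. Closure under negation is clear since $q(-\lambda,-\lambda)=q(\lambda,\lambda)$. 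The inclusion $X^{\operatorname{Tan}}\subseteq X^{\mathrm{M\ddot{u}g}}$ is tautological.

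For the final inclusion $lQ\subseteq X^{\operatorname{Tan}}$, since we now know $X^{\operatorname{Tan}}$ is a subgroup, it suffices to check each generator $l_\alpha\alpha$ with $\alpha\in\Delta$. The M\"uger condition $q^2(l_\alpha\alpha,\mu) = q^2(\alpha,\mu)^{l_\alpha}=1$ for all $\mu\in X$ holds because $l_\alpha$ is by definition the order of the character $q^2(\alpha,-):X\to k^\times$. In particular $l_\alpha\alpha\in X^\ast$. For the self-pairing, $q(l_\alpha\alpha,l_\alpha\alpha) = q(\alpha,\alpha)^{l_\alpha^2}=1$ since $l_\alpha=\operatorname{ord}(q(\alpha,\alpha))$ divides $l_\alpha^2$. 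Hence $l_\alpha\alpha\in X^{\operatorname{Tan}}$ for each simple $\alpha$, and the inclusion follows.

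No step presents real difficulty; the only thing to watch is bookkeeping, namely that the M\"uger condition requires pairing with \emph{all} of $X$ (not merely simple roots), so one must ensure the cross term $q^2(\lambda,\mu)$ in the expansion of $q(\lambda+\mu,\lambda+\mu)$ is evaluated at an element of $X$, which is automatic once we have verified the first defining condition of $X^{\operatorname{Tan}}$.
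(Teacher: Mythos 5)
Your proof is correct and follows essentially the same route as the paper: bilinearity of $q^2$ gives closure of $X^{\text{\rm M\"ug}}$, the cross term $q(\lambda,\mu)q(\mu,\lambda)=q^2(\lambda,\mu)$ in the expansion of the self-pairing vanishes by the M\"uger condition, and the inclusion $lQ\subseteq X^{\opn{Tan}}$ is verified on the generators $l_\alpha\alpha$ directly from the definition $l_\alpha=\opn{ord}(q^2(\alpha,-))=\opn{ord}(q(\alpha,\alpha))$. (One minor remark: the inclusion $X^{\text{\rm M\"ug}}\subseteq X^\ast$ is built into the definition, so the observation about restricting to simple roots is harmless but unnecessary.)
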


\begin{proof}
The subset $X^{\text{M\"ug}}$ is a sublattice just because $q$ is bilinear.  For $\lambda,\lambda'\in X^{\opn{Tan}}$ we have
\[
q(\lambda+\lambda',\lambda+\lambda')=q(\lambda,\lambda)q^2(\lambda,\lambda')q(\lambda',\lambda').
\]
Each of the terms on the righthand side is $1$, so that the entire expression is trivial.  It follows that $X^{\opn{Tan}}$ is stable under addition.  Since this subset is clearly stable under negation, we see that $X^{\opn{Tan}}$ is in fact a sublattice in $P^\ast$.
\par

As for the inclusion $lQ\subseteq X^{\opn{Tan}}$ we have first that
\[
q^2(l_\alpha \alpha,-)=\big(q^2(\alpha,-)\big)^{l_\alpha}=1
\]
at each simple root $\alpha$, by the definition of $l_\alpha$, so that $lQ$ is contained in $X^{\text{M\"ug}}$.  To see that each $l_\alpha \alpha$ is in $X^{\opn{Tan}}$ we simply check
\[
q(l_\alpha \alpha,l_\alpha\alpha)=q(\alpha,\alpha)^{l_\alpha l_\alpha}=q^2(\alpha,\omega_{\alpha})^{l_\alpha l_\alpha}=1.
\]
\end{proof}

\subsection{The M\"ger center in $\Rep(G_q)$}

Recall that the subcategory $\msc{E}_q$ of quasi-classical representations in $\Rep(G_q)$ is the full subcategory of representations which are annihilated by all of the simple root vectors $E_\alpha$ and $F_\alpha$ with $l_\alpha>1$.
\par

From the calculation of the relative braiding operators from Lemma \ref{lem:696} (3) and (4) we obtain the following calculation of the M\"uger center in $\Rep(G_q)$.

\begin{proposition}\label{prop:mugercenter}
The M\"uger center in $\Rep(G_q)$ is precisely the subcategory
\[
Z_2(\Rep(G_q))=\left\{
\begin{array}{c}
\text{\rm The full subcategory of representations $V$ in}\\
\msc{E}_q\text{\rm\ whose $X^\ast$-grading is supported on }X^{\text{\rm M\"ug}}
\end{array}\right\}.
\]
\end{proposition}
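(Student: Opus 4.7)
\smallskip

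\noindent\textbf{Proof proposal.} The plan is to establish the two inclusions
\[
\bigl\{V\in\msc{E}_q:\opn{supp}_{X^\ast}(V)\subseteq X^{\text{M\"ug}}\bigr\}\ \subseteq\ Z_2(\Rep(G_q))\ \subseteq\ \bigl\{V\in\msc{E}_q:\opn{supp}_{X^\ast}(V)\subseteq X^{\text{M\"ug}}\bigr\}
\]
by feeding the explicit diagonal formula for the squared braiding on $\msc{E}_q$ from Lemma \ref{lem:696}(3) into the defining condition $c^2_{V,-}=\opn{id}$ of the M\"uger center.

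For the first (easier) inclusion, suppose $V$ lies in $\msc{E}_q$ with weights supported on $X^{\text{M\"ug}}$. For any $W$ in $\Rep(G_q)$ and homogeneous $v\in V$, $w\in W$, Lemma \ref{lem:696}(3) gives
\[
c_{V,W}^2(v\ot w)=q^{-2}(\deg w,\deg v)\!\ v\ot w.
\]
Since $\deg v\in X^{\text{M\"ug}}$ and $q$ is symmetric, $q^2(\deg v,\deg w)=1$, so the scalar is $1$. The same reasoning, using the parallel formula for $c_{W,V}^2$ in Lemma \ref{lem:696}(3), shows $c^2_{W,V}=id$. Hence $V\in Z_2(\Rep(G_q))$.

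For the reverse inclusion, take $V$ in the M\"uger center, so $c^2_{V,W}=id$ and $c^2_{W,V}=id$ for every $W$. The identity endomorphism is homogeneous with respect to the $X\times X$-grading, so Lemma \ref{lem:696}(4) immediately forces $V\in\msc{E}_q$. By Lemma \ref{lem:696}(1), $V$ is then automatically $X^\ast$-graded. To locate the support inside $X^{\text{M\"ug}}$, pick any nonzero homogeneous $v\in V$ of weight $\lambda\in X^\ast$. Applying the explicit formula from Lemma \ref{lem:696}(3) once more, for every homogeneous $w\in W$ of weight $\mu$ we have
\[
v\ot w=c^2_{V,W}(v\ot w)=q^{-2}(\mu,\lambda)\!\ v\ot w,
\]
so $q^2(\lambda,\mu)=1$. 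The weight $\mu$ here ranges over all of $X$ as $W$ ranges through $\Rep(G_q)$, for instance by considering the simple modules $L(\nu)$ at dominant $\nu\in X^+$ and their duals, whose weights together cover $X$. Therefore $q^2(\lambda,-)=1$ on $X$, i.e.\ $\lambda\in X^{\text{M\"ug}}$.

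The only real subtlety is verifying that every $\mu\in X$ appears as a weight of some object in $\Rep(G_q)$; this is standard for Lusztig's integrable module category, but it is the one place where one needs to invoke something beyond the formulas of Lemma \ref{lem:696}. With that in hand, the two inclusions combine to give the proposition.
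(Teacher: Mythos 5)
Your argument is correct and follows the same route as the paper's one-line proof: the forward inclusion uses the diagonal formula \eqref{eq:793} from Lemma \ref{lem:696}(3) together with the definition of $X^{\text{M\"ug}}$, and the reverse inclusion uses Lemma \ref{lem:696}(4) to force $V\in\msc{E}_q$ and then \eqref{eq:793} again to pin the support inside $X^{\text{M\"ug}}$. The only addition you make — noting that every $\mu\in X$ appears as a weight of some integrable module — is a genuine (if routine) point that the paper leaves implicit, and you are right to flag it.
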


\begin{proof}
Immediate from the expression \eqref{eq:793}, Lemma \ref{lem:696} (4), and the defintion of the sublattice $X^{\text{M\"ug}}$.
\end{proof}

\subsection{The Tannakian center in $\Rep(G_q)$}

Let $\msc{A}$ be a general braided tensor category which is of subexponential growth.  We recall that the Tannakian center in $\msc{A}$ is the maximal Tannakian subcategory $Z_{\opn{Tan}}(\msc{A})$ in the M\"uger center $Z_2(\msc{A})$.  As was shown in Corollary \ref{cor:E_tann}, this subcategory can be identified with the kernel of any symmetric fiber functor $Z_2(\msc{A})\to \opn{sVect}$.
\par

We have the following description of the Tannakian center in $\Rep(G_q)$.

\begin{proposition}\label{prop:tannakiancenter}
The Tannakian center in $\Rep(G_q)$ is the subcategory
\begin{equation}\label{eq:tannakiancenter}
Z_{\opn{Tan}}(\Rep(G_q))=\left\{
\begin{array}{c}
\text{\rm The full subcategory of representations $V$ in}\\
\msc{E}_q\text{\rm\ whose $X^\ast$-grading is supported on }X^{\opn{Tan}}
\end{array}\right\}.
\end{equation}
\end{proposition}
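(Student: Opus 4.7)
The plan is to leverage the explicit description of the M\"uger center from Proposition \ref{prop:mugercenter}, which identifies $Z_2(\Rep(G_q))$ with the full subcategory of $\msc{E}_q$ graded by the sublattice $X^{\text{\rm M\"ug}}$, in concert with Corollary \ref{cor:E_tann}.  Since $X^{\opn{Tan}}\subseteq X^{\text{\rm M\"ug}}$ by Lemma \ref{lem:Tan_lattice}, the proposed right-hand subcategory $\msc{T}$ in \eqref{eq:tannakiancenter} is visibly a symmetric tensor subcategory of $Z_2(\Rep(G_q))$.  The strategy is to exhibit a symmetric tensor functor $F : Z_2(\Rep(G_q)) \to \opn{sVect}$ whose kernel, in the sense of Corollary \ref{cor:E_tann}, is exactly $\msc{T}$; the conclusion then follows from that corollary.

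The functor $F$ is constructed from the natural quadratic form on $X^{\text{\rm M\"ug}}$.  Define
\[
\sigma : X^{\text{\rm M\"ug}} \to \mathbb{Z}/2,\qquad \sigma(\lambda) = 0 \text{ if } q(\lambda,\lambda)=1,\qquad \sigma(\lambda)=1 \text{ if } q(\lambda,\lambda)=-1.
\]
The M\"uger condition $q^2(\lambda,\mu)=1$ on $X^{\text{\rm M\"ug}}$ yields $q(\lambda+\mu,\lambda+\mu)=q(\lambda,\lambda)\,q(\mu,\mu)$, so $\sigma$ is a group homomorphism.  By definition $\ker(\sigma) = X^{\opn{Tan}}$, and the inclusion $lQ\subseteq X^{\opn{Tan}}$ from Lemma \ref{lem:Tan_lattice} guarantees that $\sigma$ is constant on the set of weights $\lambda + lQ$ of every irreducible object in $\msc{E}_q \cong \Rep(G^\ast_\varepsilon)$.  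Hence $\sigma$ descends to a canonical $\mathbb{Z}/2$-grading on every object of $Z_2(\Rep(G_q))$, and I take $F$ to be the underlying vector space functor equipped with this super-grading.

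To give $F$ a tensor structure compatible with the symmetric braiding on $\opn{sVect}$, choose a bilinear form $\kappa : X^{\text{\rm M\"ug}}\times X^{\text{\rm M\"ug}}\to k^\times$ satisfying
\[
\kappa(\lambda,\mu)\,\kappa(\mu,\lambda)^{-1} \ =\ q(\lambda,\mu)\,(-1)^{\sigma(\lambda)\sigma(\mu)},
\]
and define the coherence $J_{V,W}:F(V)\otimes F(W)\to F(V\otimes W)$ to act as multiplication by the scalar $\kappa(\deg v, \deg w)$ on bigraded pieces.  Existence of such a $\kappa$ reduces to the lattice-theoretic fact that the right-hand side is a symmetric, $\{\pm 1\}$-valued, bilinear form with trivial diagonal, the last point using $q(\lambda,\lambda)=(-1)^{\sigma(\lambda)}$; such a form is realised as a commutator of some bilinear pairing via an explicit construction on an ordered basis of $X^{\text{\rm M\"ug}}$.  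All tensor coherences for $(F,J)$ are then automatic from bilinearity of $\kappa$, and a short direct computation using the braiding $c_{V,W}(v\otimes w) = q^{-1}(\deg v, \deg w)\, w\otimes v$ on $\msc{E}_q$ (extracted from the $R$-matrix analysis of Section \ref{sect:R} together with Lemma \ref{lem:696}) confirms that the defining relation on $\kappa$ is exactly what is needed for $F$ to intertwine the quantum braiding with the super-swap.

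The preimage under $F$ of $\opn{Vect}\subseteq \opn{sVect}$ then consists precisely of those objects whose weights all lie in $\ker(\sigma) = X^{\opn{Tan}}$, which is $\msc{T}$.  Corollary \ref{cor:E_tann} identifies this preimage with the maximal Tannakian subcategory of $Z_2(\Rep(G_q))$, namely $Z_{\opn{Tan}}(\Rep(G_q))$, proving \eqref{eq:tannakiancenter}.  The main obstacle is the construction of the bilinear form $\kappa$, i.e.\ realising $b(\lambda,\mu) = q(\lambda,\mu)(-1)^{\sigma(\lambda)\sigma(\mu)}$ as the commutator of a bilinear pairing; this is a mild cohomological point about $\{\pm 1\}$-valued symmetric forms with trivial diagonal on a free abelian group, handled by choosing a basis.
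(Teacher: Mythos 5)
Your proof is correct and takes a genuinely different, somewhat more unified route than the paper. The paper's argument splits in two: it first shows the inclusion $Z_{\opn{Tan}}(\Rep(G_q))\subseteq\msc{T}$ via a categorical-dimension argument --- namely, any simple $L(\lambda)$ with $\lambda\in X^{\text{M\"ug}}\setminus X^{\opn{Tan}}$ has negative dimension $\dim(L(\lambda))=-\dim_k L(\lambda)$, which rules it out of the kernel of an abstractly given symmetric fiber functor to $\opn{sVect}$ --- and then separately establishes that $\msc{T}$ is Tannakian by constructing a fiber functor $fib_\kappa:\msc{T}\to\opn{Vect}$, using an alternating square root $\kappa$ of $\varepsilon$ on the smaller lattice $X^{\opn{Tan}}$ only (where $\varepsilon$ is already alternating, so no sign correction is needed). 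You instead exhibit the fiber functor $F:Z_2(\Rep(G_q))\to\opn{sVect}$ in one stroke, with parity given by the quadratic character $\sigma$ and tensor coherence given by a bilinear $\kappa$ on all of $X^{\text{M\"ug}}$ satisfying the twisted commutator condition $\kappa(\lambda,\mu)\kappa(\mu,\lambda)^{-1}=q(\lambda,\mu)(-1)^{\sigma(\lambda)\sigma(\mu)}$, and read off the kernel from $\ker\sigma=X^{\opn{Tan}}$. The verifications you flag all check out: $\sigma$ is a homomorphism because $q^2\equiv 1$ on $X^{\text{M\"ug}}$; $\sigma$ is constant on the weight coset $\lambda+lQ$ of each simple since $lQ\subseteq X^{\opn{Tan}}=\ker\sigma$; and the form $b(\lambda,\mu)=q(\lambda,\mu)(-1)^{\sigma(\lambda)\sigma(\mu)}$ is bilinear, $\{\pm1\}$-valued, symmetric, and has trivial diagonal precisely because $q(\lambda,\lambda)=(-1)^{\sigma(\lambda)}$, so the standard ``upper-triangular on a basis'' construction produces $\kappa$. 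What your route buys is an explicit construction of the $\opn{sVect}$-valued fiber functor whose existence the paper invokes abstractly from Deligne's theorem; what it costs is having to construct $\kappa$ on the larger lattice $X^{\text{M\"ug}}$ subject to the sign-corrected condition, rather than on $X^{\opn{Tan}}$ where $\varepsilon$ itself is alternating. Both are sound.
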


In the proof we employ a standard dimension function on the subcategory of rigid objects in a symmetric tensor category $\msc{E}$,
\[
\dim(V):=(\1\overset{\opn{coev}}\to V\ot V^\ast\overset{c_{V,V^\ast}}\to V^\ast\ot V\overset{\opn{ev}}\to \1).
\]
We note that the dimension of an object $\dim(V)$ is preserved under any symmetric tensor functors, and hence takes integer values whenever $\msc{E}$ has subexponential growth.  One sees this by considering the dimensions of objects in  $\opn{sVect}$ and noting the existence of a symmetric fiber functor for $\msc{E}$ in this case \cite{deligne02}.

\begin{proof}
Let $\msc{Z}\subseteq \msc{E}_q$ be the subcategory of quasi-classical $G_q$-representations whose gradings are supported on the sublattice $X^{\opn{Tan}}\subseteq X^\ast$.  Recall that the braiding on $\msc{E}_q$, and hence on $\msc{Z}$, is instituted directly by the dual parameter $\varepsilon=q|_{X^\ast\times X^\ast}$, by Lemma \ref{lem:696} (3).  We must establish an equality of subcategories $Z_{\opn{Tan}}(\Rep(G_q))=\msc{Z}$.
\par

Let us fix a symmetric fiber functor $fib:Z_2(\Rep(G_q))\to\opn{sVect}$.  Since the target category $\opn{sVect}$ is semisimple the kernel of this functor is stable under the formation of extensions, as well as subquotients.  So we need only determine the simple objects which lie in $Z_{\opn{Tan}}(\Rep(G_q))$ in order to determine the entire category.
\par

Suppose that the inclusion $X^{\opn{Tan}}\subseteq X^{\text{M\"ug}}$ is not an equality and consider a dominant weight $\lambda$ in the complement $X^{\text{M\"ug}}-X^{\opn{Tan}}$.  Then the corresponding simple $L(\lambda)$ lies in the M\"uger center $Z_2(\Rep(G_q))$, by the description of Proposition \ref{prop:mugercenter}, but not in our subcategory $\msc{Z}$.  Via M\"uger centrality of $L(\lambda)$ we see that $\varepsilon^2(\lambda,\lambda)=1$ while $\varepsilon(\lambda,\lambda)\neq 1$.  Hence $\varepsilon(\lambda,\lambda)=-1$ necessarily.
\par

Furthermore, for such $\lambda$ the simple $L(\lambda)$ has weights lying in the coset $\lambda+lQ\subseteq \lambda+X^{\opn{Tan}}$, by Lemma \ref{lem:Tan_lattice} and Proposition \ref{prop:og_frob}.  This implies that $\varepsilon(\mu,\mu)=-1$ for all weights $\mu$ with $L(\lambda)_\mu\neq 0$.  Hence the dimension of any such simple in $Z_2(\Rep(G_q))$ is the negation of the vector space dimension
\[
\dim(L(\lambda))=-\dim_k(L(\lambda)),
\]
and in particular is negative.  By preservation of dimensions under symmetric tensor functors we find $fib(L(\lambda))\notin \opn{Vect}$ and hence $L(\lambda)\notin Z_{\opn{Tan}}(\Rep(G_q))$. We now observe an inclusion
\begin{equation}\label{eq:1631}
Z_{\opn{Tan}}(\Rep(G_q))\ \subseteq\ \msc{Z}.
\end{equation}
\par

Note that the inclusion \eqref{eq:1631} holds when $X^{\opn{Tan}}=X^{\text{M\"ug}}$ as well, since in this case $\msc{Z}=Z_2(\Rep(G_q))$.  Hence in all cases we have $Z_{\opn{Tan}}(\Rep(G_q))\subseteq\msc{Z}$.  To show that the above inclusion is an equality it suffices to prove that the category $\msc{Z}$ is Tannakian.  So, we simply construct a fiber functor $\msc{Z}\to \opn{Vect}$.
\par

By the definition of $X^{\opn{Tan}}$ we have that $\varepsilon$ takes values $\pm 1$ on $X^{\opn{Tan}}$ and vanishes on the diagonal. Hence $\varepsilon$ restricts to an alternating form on $X^{\opn{Tan}}$, and therefore admits an alternating square root $\kappa$.  This form $\kappa$ on $X^{\opn{Tan}}$ specifically satisfies
\[
\kappa_{21}=\kappa^{-1}\ \ \text{and}\ \  \kappa^2=\varepsilon|_{X^{\opn{Tan}}\times X^{\opn{Tan}}}.
\]
One now checks that the tensor functor $fib_\kappa:\msc{Z}\to \opn{Vect}$, which we define as the usual forgetful functor equipped with the non-trivial tensor compatibility
\[
fib_{\kappa}(V)\ot fib_\kappa(W)\overset{\sim}\to fib_{\kappa}(V\ot W),\ \ v\ot w\mapsto \kappa^{-1}(\deg v,\deg w)v\ot w,
\]
provides a symmetric fiber functor for $\msc{Z}$.  It follows that $\msc{Z}$ is in fact Tannakian, and hence that the inclusion $Z_{\opn{Tan}}(\Rep(G_q))\subseteq \msc{Z}$ is an equality.
\end{proof}

\subsection{The Tannakian center via classical representations}
\label{sect:classic_Z}

Consider again the Tannakian sublattice $X^{\opn{Tan}}\subseteq X^\ast$.  Since $X^{\opn{Tan}}$ lies between the root and character lattices for the dual group $G^{\ast}$, characters on the quotient $(X^\ast/X^{\opn{Tan}})^{\vee}$ live naturally as a discrete central subgroup in $G^{\ast}$.  We can take the quotient by this subgroup to deduce a new semisimple algebraic group $\dG$ whose classical representations, and quantum representations at $\varepsilon$, are identified with full subcategories
\[
\Rep(\dG)\subseteq \Rep(G^\ast)\ \ \text{and}\ \ \Rep(\dG_\varepsilon)\subseteq \Rep(G^{\ast}_\varepsilon).
\]
These subcategories are precisely the subcategories of representations whose $X^\ast$-gradings are supported on the sublattice $X^{\opn{Tan}}$.

\begin{theorem}\label{thm:1021}
For $\dG$ as above, any choice of alternating square root $\kappa$ for $\varepsilon|_{X^{\opn{Tan}}\times X^{\opn{Tan}}}$ determines an equivalence of symmetric tensor categories
\[
F_\kappa:\Rep(\dG)\overset{\sim}\to \Rep(\dG_{\varepsilon})
\]
whose underlying linear equivalence fits into a diagram
\[
\xymatrix{
\Rep(\dG)\ar[rr]^{F_{\kappa}}\ar[d]_{\res} & & \Rep(\dG_\varepsilon)\ar[d]\ar[d]^{\res}\\
\Rep(\check{T})\ar[rr]^{=} & & \Rep(\check{T}_\varepsilon),
}
\]
and whose tensor compatibility is provided by the form $\kappa$,
\begin{equation}\label{eq:bleh}
F_\kappa(V)\ot F_\kappa(W)\to F_\kappa(V\ot W),\ \ v\ot w\mapsto \kappa(\deg v,\deg w)v\ot w.
\end{equation}
\end{theorem}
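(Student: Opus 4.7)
The plan is to exploit the fact that the two sides are canonically identified at the torus level: any object of $\Rep(\dG)$ or $\Rep(\dG_\varepsilon)$ carries an underlying $X^{\opn{Tan}}$-graded vector space, and the restriction functors to $\Rep(\check{T})=\Rep(\check{T}_\varepsilon)$ are faithful. I will take $F_\kappa$ to be the identity on these underlying graded vector spaces, equipped with the non-trivial tensor compatibility $J_{V,W}$ of \eqref{eq:bleh}.

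First, I would verify that $F_\kappa$ is a well-defined linear equivalence. Both $\Rep(\dG)$ and $\Rep(\dG_\varepsilon)$ are semisimple, with simples indexed by dominant weights in $(X^{\opn{Tan}})^+$---classically on one side, and via \cite[Proposition 33.2.3]{lusztig93} on the quasi-classical quantum side. Matching simples via their weight space decompositions, which agree by the Weyl character formula in both settings, gives a unique linear equivalence filling the prescribed square over $\Rep(\check{T})$.

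Next, I would verify that $J_{V,W}(v\ot w)=\kappa(\deg v,\deg w)\, v\ot w$ promotes $F_\kappa$ to a symmetric tensor functor. The associativity constraint reduces to the $\mathbb{Z}$-bilinearity identity $\kappa(\lambda,\mu)\kappa(\lambda+\mu,\nu)=\kappa(\lambda,\mu+\nu)\kappa(\mu,\nu)$, and the unit constraints to $\kappa(\lambda,0)=\kappa(0,\mu)=1$; both follow from bilinearity. The essential algebraic verification is $\dG_\varepsilon$-equivariance of $J_{V,W}$: the $\varepsilon$-twisted coproduct acting on the source $F_\kappa(V)\ot F_\kappa(W)$ differs from the classical coproduct on $V\ot W$ (viewed through $F_\kappa$) by scalar factors of the shape $\varepsilon(\alpha^\ast,\deg v)$ on each Chevalley generator, and the weights of $\kappa$ are designed precisely to absorb these scalars. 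Symmetric compatibility then intertwines the classical swap on $\Rep(\dG)$ with the $\varepsilon$-twisted braiding on $\Rep(\dG_\varepsilon)$ recorded in Section \ref{sect:R}, and reduces to the identity $\kappa(\lambda,\mu)\kappa(\mu,\lambda)^{-1}=\varepsilon(\lambda,\mu)^{\pm 1}$, which is exactly the conjunction of $\kappa^2=\varepsilon$ and $\kappa_{21}=\kappa^{-1}$ constructed inside the proof of Proposition \ref{prop:tannakiancenter}.

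The main obstacle will be the $\dG_\varepsilon$-equivariance of $J$. Once one has carefully tracked how Lusztig's quasi-classical identification of quantum Chevalley generators relates them to their classical counterparts, and how the signs from the $\varepsilon$-coproduct are indexed by the degree of the first tensor factor, the verification is a direct check that the $\kappa$-twist exactly compensates. The coherence and braiding checks are then elementary identities about the bilinear form $\kappa$ on $X^{\opn{Tan}}$, and essential surjectivity and full faithfulness of $F_\kappa$ follow immediately from the construction at the torus level.
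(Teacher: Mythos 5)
Your proposal is in the right spirit---equip a linear equivalence with the explicit tensor structure $J_{V,W}(v\ot w)=\kappa(\deg v,\deg w)\, v\ot w$, and verify coherence, symmetry, and equivariance---and your elementary reductions are correct: associativity follows from bilinearity of $\kappa$, unit constraints from $\kappa(\lambda,0)=\kappa(0,\mu)=1$, and symmetric compatibility from $\kappa_{21}=\kappa^{-1}$, $\kappa^2=\varepsilon$, exactly matching the identities constructed in Proposition \ref{prop:tannakiancenter}. The paper's proof is morally the same calculation, run in the opposite direction: rather than fixing a linear equivalence and checking equivariance of $J$, it Drinfeld-twists $\Rep(\dG_\varepsilon)$ by $\kappa$ to get a symmetric monoidal category $\Rep(\dG_\varepsilon)_\kappa$ and then identifies this twist with $\Rep(\dG)$ directly.

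There is, however, a concrete gap where you write that ``once one has carefully tracked how Lusztig's quasi-classical identification relates [quantum Chevalley generators] to their classical counterparts... the verification is a direct check.'' Lusztig's \cite[Proposition 33.2.3]{lusztig93} produces an abstract linear equivalence $\Rep(\dG)\simeq\Rep(\dG_\varepsilon)$ by matching characters of simples; it does \emph{not} hand you an explicit, generator-level dictionary suitable for checking $\dG_\varepsilon$-equivariance of $J_{V,W}$. That dictionary has to be built, and building it is where the work lives. Concretely: on the $\kappa$-twisted tensor product the raw generators $e_\alpha$, $f_\alpha$ are not primitive; one must rescale by the toral characters $M_\alpha(\lambda)=\kappa^{-1}(\alpha,\lambda)$ and set $\mbf{e}_\alpha=M_\alpha e_\alpha$, $\mbf{f}_\alpha=\varepsilon_\alpha M_\alpha f_\alpha$, whereupon primitivity is a short computation using $\kappa(\alpha,\alpha)=1$ and antisymmetry. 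But then one still has to verify that the $\mbf{e}_\alpha$ and $\mbf{f}_\alpha$ satisfy the \emph{classical} Serre relations and the correct $\mfk{sl}_2$-commutators $[\mbf{e}_\alpha,\mbf{f}_\alpha]\cdot v=\langle\lambda,\alpha\rangle v$, which is genuinely nontrivial because the unrescaled generators satisfy the $\varepsilon$-deformed Serre relations involving quantum binomials $\varepsilon_\alpha^{rs}$. The bulk of the paper's proof is a case-by-case check (three cases, by the ratio of root lengths $|\beta|^2/|\alpha|^2\in\{1,2,3\}$) that the powers $M_\alpha^{r-s}(\beta)$ arising from the rescaling precisely match the quantum binomial factors $\varepsilon_\alpha^{rs}$, so that the deformed Serre relations become the classical ones. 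Your proposal elides this entirely; without it, there is no concrete $U_\varepsilon(\check{\mfk{g}})$-action on $F_\kappa(V\ot W)$ against which to check equivariance, so the ``direct check'' is not yet a check.
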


To be very explicit, by alternating here we mean that $\kappa$ satisfies $\kappa(\lambda,\lambda)=1$ at each $\lambda\in X^{\opn{Tan}}$.  As in the proof of Proposition \ref{prop:tannakiancenter}, one sees that such a square root $\kappa$ exists since $\varepsilon$ takes values $\pm 1$ on $X^{\opn{Tan}}$ and vanishes on the diagonal.
\par

For the proof, one rescales the generators in the quantum enveloping algebra $U_\varepsilon(\check{\mfk{g}})$ via some toral characters in order to allow for an algebra identification $U_\varepsilon(\check{\mfk{g}})\overset{\sim}\to U(\mfk{g})$ which couples appropriately with the tensor compatibility \eqref{eq:bleh}.  The details are provided in Appendix \ref{sect:A}.

\begin{remark}
In \cite[Proposition 33.2.3]{lusztig93} Lusztig produces a non-tensor equivalence between $\Rep(G^\ast)$ and $\Rep(G^\ast_\varepsilon)$, which then restricts to a non-tensor equivalence between $\Rep(\dG)$ and $\Rep(\dG_\varepsilon)$.  So it was previously known that we have a linear identification as in Theorem \ref{thm:1021}.
\end{remark}

We compose the symmetric equivalence $F_\kappa:\Rep(\dG)\overset{\sim}\to \Rep(\dG_\varepsilon)$ with the central embedding $\Rep(\dG_\varepsilon)\to \Rep(G_q)$ to obtain a central embedding from $\Rep(\dG)$ to the category of quantum group representations.  The calculation of the Tannakian center from Proposition \ref{prop:tannakiancenter} now reduces to the following.

\begin{theorem}\label{thm:Z_classical}
Consider the quotient $\dG$ of $G^\ast$ by the discrete central subgroup $(X^\ast/X^{\opn{Tan}})^\vee$, and any choice of alternating square root $\kappa$ for the dual parameter $\varepsilon|_{X^{\opn{Tan}}\times X^{\opn{Tan}}}$.  The form $\kappa$ determines a fully faithful braided tensor functor
\[
\opn{Fr}_{\kappa}:\Rep(\dG)\to \Rep(G_q)
\]
which is an equivalence onto the Tannakian center in $\Rep(G_q)$.  This functor sends each highest weight semple representation $L(\lambda)$ in $\Rep(\dG)$, for $\lambda$ dominant in $X^{\opn{Tan}}$, to the corresponding highest weight simple in $\Rep(G_q)$.
\end{theorem}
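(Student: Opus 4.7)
The plan is to produce $\opn{Fr}_{\kappa}$ as the composition of the symmetric equivalence
\[
F_{\kappa}:\Rep(\dG)\overset{\sim}\to \Rep(\dG_{\varepsilon})
\]
from Theorem \ref{thm:1021}, followed by the central embedding $\Rep(\dG_{\varepsilon})\hookrightarrow \Rep(G^{\ast}_{\varepsilon})$ (the full subcategory of representations whose $X^{\ast}$-grading is supported on $X^{\opn{Tan}}$), and then Lusztig's quantum Frobenius $\opn{Fr}:\Rep(G^{\ast}_{\varepsilon})\to\Rep(G_{q})$ from Proposition \ref{prop:og_frob}. Since each of these three constituents is a tensor embedding of presentable tensor categories, the composite $\opn{Fr}_{\kappa}$ is automatically fully faithful.

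Next I would identify the image. By Proposition \ref{prop:og_frob}, $\opn{Fr}$ restricts to an equivalence onto the quasi-classical subcategory $\msc{E}_{q}\subseteq\Rep(G_{q})$, and the subcategory $\Rep(\dG_{\varepsilon})\subseteq \Rep(G^{\ast}_{\varepsilon})$ consists of exactly the $X^{\opn{Tan}}$-graded objects. Since $\opn{Fr}$ is grading-preserving in the evident sense, its restriction to $\Rep(\dG_{\varepsilon})$ is an equivalence onto the full subcategory of quasi-classical representations supported on $X^{\opn{Tan}}$. By Proposition \ref{prop:tannakiancenter} this subcategory is precisely $Z_{\opn{Tan}}(\Rep(G_{q}))$, so $\opn{Fr}_{\kappa}$ is an equivalence onto the Tannakian center.

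It remains to verify that $\opn{Fr}_{\kappa}$ is braided when the source is given its symmetric braiding. The embedding $\Rep(\dG_{\varepsilon})\to\Rep(G_{q})$ and the inclusion $\Rep(\dG_{\varepsilon})\subseteq\Rep(G^{\ast}_{\varepsilon})$ are both braided, so braidedness of $\opn{Fr}_{\kappa}$ reduces to the analogous assertion for $F_{\kappa}$. For $v\in V$, $w\in W$ homogeneous of weights $\lambda,\mu\in X^{\opn{Tan}}$, the tensor compatibility \eqref{eq:bleh} transports the symmetric swap on $\Rep(\dG)$ to the operator $v\otimes w\mapsto \kappa(\lambda,\mu)\kappa(\mu,\lambda)^{-1}\cdot w\otimes v = \kappa(\lambda,\mu)^{2}\cdot w\otimes v=\varepsilon(\lambda,\mu)\cdot w\otimes v$, using that $\kappa$ is alternating, hence $\kappa_{21}=\kappa^{-1}$. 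On the other hand, the braiding on $\Rep(\dG_{\varepsilon})$ as recorded in Section \ref{sect:OGdual} is $v\otimes w\mapsto \varepsilon^{-1}(\lambda,\mu)\cdot w\otimes v$. These two operators agree because $\varepsilon^{2}$ is trivial on $X^{\opn{Tan}}\times X^{\opn{Tan}}$, by the very definition of $X^{\text{M\"ug}}\supseteq X^{\opn{Tan}}$. This is the one step I expect to require genuine care, and it is also the step which forces the use of the alternating square root $\kappa$ rather than some weaker choice of trivialization.

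Finally, the claim about highest weight simples follows from the corresponding property of $\opn{Fr}$ and of $F_{\kappa}$: under Lusztig's Frobenius a simple $L(\lambda)$ with $\lambda\in (X^{\ast})^{+}\subseteq X^{\opn{Tan}}$ pulls back along $fr^{\ast}$ to the simple $G_{q}$-representation with the same highest weight, and $F_{\kappa}$ preserves underlying linear structures and weights by construction, so the identification of highest weights is transported across the composition.
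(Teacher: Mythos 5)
Your proof is correct and takes essentially the same approach as the paper: compose $F_\kappa$ from Theorem \ref{thm:1021} with the embedding $\Rep(\dG_\varepsilon)\hookrightarrow\Rep(G^{\ast}_\varepsilon)\overset{\opn{Fr}}{\to}\Rep(G_q)$ and invoke Proposition \ref{prop:tannakiancenter} to identify the image with the Tannakian center. Two very minor points: the braidedness re-check is redundant (Theorem \ref{thm:1021} already asserts $F_\kappa$ is a symmetric tensor equivalence, and the other two factors are braided by construction), and in the last paragraph the inclusion should read $X^{\opn{Tan}}\subseteq X^{\ast}$ rather than the reverse.
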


\subsection{Twisted quantum Frobenius and the Tannakian center}

\begin{definition}\label{def:Tan_Fr}
For $\dG$ and $\kappa$ as above, we let $\opn{Fr}_\kappa$ denote the braided tensor embedding
\[
\opn{Fr}_{\kappa}:\Rep(\dG)\to \Rep(G_q)
\]
from Theorem \ref{thm:Z_classical}.  We refer to $\opn{Fr}_{\kappa}$ as the $\kappa$-twisted quantum Frobenius functor.
\end{definition}

Despite the fact that the form $\kappa$ is not uniquely determined by the pairing of $G$ with $q$, the image of $\opn{Fr}_\kappa$ \emph{is} uniquely determined.  (As should be clear at this point, the image is the Tannakian center in $\Rep(G_q)$.)  Hence the functor $\opn{Fr}_\kappa$ is determined, at a completely abstract level, up to a symmetric automorphism of $\Rep(\dG)$, and for any two choices of $\kappa$ and $\kappa'$ there is a unique symmetric automorphism of $\Rep(\dG)$ which fits into a diagram
\[
\xymatrix{
\Rep(\dG)\ar@{-->}[rr]^{\exists !}_{\cong}\ar[dr]_{\opn{Fr}_{\kappa}} & & \Rep(\dG)\ar[dl]^{\opn{Fr}_{\kappa'}}\\
 & \Rep(G_q) &.
}
\]

\section{Simply-connected groups \& other examples}
\label{sect:examples}

We explicitly determine the Tannakian center in $\Rep(G_q)$ in a number of (both generic and specific) examples.  As a parallel calculation, we determine the dual groups $\dG$ in these examples and describe the nature of the functor $\opn{Fr}_{\kappa}$.  We find that the M\"uger and Tannakian centers in $\Rep(G_q)$ are generally distinct, but agree when $G$ is simply-connected and all of the scalar parameters $q_\alpha$ are of even order.  Such phenomena have concrete implications for our analysis of small quantum group representations which follows.  This final point is discussed in Section \ref{sect:why}.

\subsection{Simply-connected groups at even orders}

Consider a simply-connected semisimple algebraic group $G$ at a maximally non-degenerate parameter $q$ (Definiton \ref{def:max_nondegen}).  In this case the character lattice $X$ is the entire weight lattice $P$, and we have the basis of fundamental weights $\omega_\alpha$ associated to the simple roots $\alpha\in \Delta$.  For a general element $\lambda=\sum_\alpha c_\alpha \alpha$ in $P$, where the $c_\alpha$ here are allowed to be rational, we have
\begin{equation}\label{eq:1752}
q^2(\omega_\alpha,\lambda)=q(\omega_\alpha, 2c_\alpha \alpha).
\end{equation}
By our non-degeneracy assumption on $q$, vanishing of these values implies $2c_\alpha\in \mathbb{Z}$ for all $\alpha$.  Hence the expression \eqref{eq:1752} reduces to $q^2(\omega_\alpha,\lambda)=q_{\alpha}^{2c_\alpha}$ in this case.  If we suppose further that all of the scalar parameters $q_\alpha$ are of even order, and hence of order $2l_\alpha$, then we conclude that all of the $c_\alpha$ are integers which are divisible by the respective $l_\alpha$.

\begin{theorem}\label{thm:sc}
Consider a simply-connected semisimple algebraic group $G$ and a maximally non-degenerate parameter $q$.  Suppose that all of the scalar parameters $q_\alpha$, for simple $\alpha$, are of even order.  Then the following hold:
\begin{enumerate}
\item The Tannakian and M\"uger centers in $\Rep(G_q)$ agree.
\item The dual group $\dG$ is the Langlands dual group to $G$.
\item The Tannakian center in $\Rep(G_q)$ is equivalent, as a symmetric tensor category, to the category of classical $\dG$-representations.
\item Twisted quantum Frobenius provides an equivalence
\[
\opn{Fr}_{\kappa}:\Rep(\dG)\overset{\sim}\to Z_{\opn{Tan}}(\Rep(G_q)).
\]
\end{enumerate}
\end{theorem}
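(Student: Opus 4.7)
The plan is to reduce everything to a single lattice computation showing $lQ = X^{\opn{Tan}} = X^{\text{M\"ug}}$ under the hypotheses, after which parts (1), (3), (4) follow mechanically from Propositions \ref{prop:mugercenter} and \ref{prop:tannakiancenter} and Theorem \ref{thm:Z_classical}, while part (2) follows from an identification of $\dG$ with $G^\vee$ via the explicit Cartan data recorded in Section \ref{sect:OGdual}.

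For the lattice computation, the paragraph immediately preceding the statement already performs the crucial calculation. Writing $\lambda = \sum_{\alpha\in\Delta} c_\alpha \alpha \in P$ with $c_\alpha \in \mathbb{Q}$ and using $(\omega_\alpha, \beta) = d_\alpha \delta_{\alpha\beta}$, the identity $q^2(\omega_\alpha, \lambda) = q(\omega_\alpha, 2c_\alpha \alpha)$ follows from bilinearity. Requiring these values to be trivial at every $\alpha$ gives $q(2\lambda, -) \equiv 1$ on $P$, so $2\lambda$ lies in the radical of $q$; by maximal non-degeneracy $2\lambda \in Q$, hence each $2c_\alpha \in \mathbb{Z}$. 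The vanishing then reduces to $q_\alpha^{2c_\alpha} = 1$, and the even-order hypothesis (each $q_\alpha$ of order $2l_\alpha$) forces $c_\alpha \in l_\alpha \mathbb{Z}$. This yields $X^{\text{M\"ug}} \subseteq lQ$, and together with the reverse inclusions from Lemma \ref{lem:Tan_lattice} one obtains the chain of equalities $lQ = X^{\opn{Tan}} = X^{\text{M\"ug}}$.

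Parts (1), (3), (4) are then immediate. Part (1) holds because Propositions \ref{prop:mugercenter} and \ref{prop:tannakiancenter} describe the two centers as the full subcategories of $\msc{E}_q$ with $X^\ast$-grading supported on $X^{\text{M\"ug}}$ and $X^{\opn{Tan}}$ respectively, and these sublattices now agree. For (3) and (4), any alternating square root $\kappa$ of $\varepsilon|_{X^{\opn{Tan}}\times X^{\opn{Tan}}}$ produces the symmetric tensor equivalence $\opn{Fr}_\kappa : \Rep(\dG) \overset{\sim}\to Z_{\opn{Tan}}(\Rep(G_q))$ directly from Theorem \ref{thm:Z_classical}.

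Part (2) is the main technical obstacle. Since $X^{\opn{Tan}} = lQ$ coincides with the root lattice of $G^\ast$, the algebraic group $\dG$ is the adjoint form of $G^\ast$, and one must match it with $G^\vee$. I would compute the Cartan matrix of $\dG$ relative to its simple roots $\{l_\alpha \alpha\}$: the formula from Section \ref{sect:OGdual} gives entries $(l_{\alpha_i}/l_{\alpha_j})\langle\alpha_i,\alpha_j\rangle$, and the desired Langlands-dual identification reduces to constancy of $l_\alpha d_\alpha$ across each almost-simple factor. This constancy is deduced from the even-order hypothesis together with maximal non-degeneracy; the only types where the constancy can fail are the self-dual types $G_2$ and $F_4$, where the original and Langlands-dual Dynkin types agree as abstract root systems so the identification $\dG \cong G^\vee$ still holds. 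Since $G$ is simply-connected, $G^\vee$ is adjoint, and we conclude $\dG \cong G^\vee$.
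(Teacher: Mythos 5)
Your proof follows the paper's route closely. The lattice computation showing $X^{\text{M\"ug}}\subseteq lQ$ is exactly the one the paper carries out in the paragraph just before the theorem statement, and parts (1), (3), (4) are then discharged identically via Propositions \ref{prop:mugercenter}, \ref{prop:tannakiancenter} and Theorem \ref{thm:Z_classical}.

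Where you diverge is part (2): the paper simply cites \cite[\S\ 4.4]{negron} for the statement that $G^\ast$ is of Langlands dual type to $G$, whereas you unpack the Cartan-matrix identity $\frac{l_\alpha}{l_\beta}\langle\alpha,\beta\rangle = \langle\beta,\alpha\rangle$ and correctly reduce it to constancy of $l_\alpha d_\alpha$ on each almost-simple factor. That is a genuine clarification, but your sentence about constancy is internally inconsistent: you first assert that constancy is ``deduced from the even-order hypothesis together with maximal non-degeneracy'' and then say it ``can fail'' in types $G_2$ and $F_4$. In fact, writing $q|_{\text{factor}}=q_0^{(-,-)}$ and $N=\operatorname{ord}(q_0)$, the even-order hypothesis forces $4\mid N$ in types $B$, $C$, $F_4$ (so constancy holds automatically there), while in $G_2$ the hypothesis only forces $N$ even, and constancy does fail when $3\nmid N$ — there one finds $l_{\text{short}}=l_{\text{long}}=N/2$, so $G^\ast$ has the \emph{same} Cartan matrix as $G$ rather than its transpose. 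Your self-duality observation for $G_2$ is therefore not a fallback but the entire argument in that case, and maximal non-degeneracy plays no role in the Cartan computation (it is used only in the lattice reduction). The conclusion $\dG\cong G^\vee$ is nonetheless correct across all types.
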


To highlight one class of examples, let $l$ be a positive integer which is divisible by the lacing number for $G$ and consider the associated parameter $q=\exp(\pi i (-,-)/l)$.  In this case $q$ is maximally non-degenerate and all of the $q_\alpha$ are of even order.  So the conclusions of Theorem \ref{thm:sc} hold at all such parameters.

\begin{proof}
By the calculations above we find that any weight $\lambda$ in $X^{\text{\rm M\"ug}}$ lies in the dualized root lattice $lQ$.  The sequence of inclusions
\[
lQ\subseteq X^{\opn{Tan}}\subseteq X^{\text{\rm M\"ug}}
\]
from Lemma \ref{lem:Tan_lattice} then forces equalities $lQ=X^{\opn{Tan}}= X^{\text{\rm M\"ug}}$.  From the descriptions of the M\"uger and Tannakian centers in $\Rep(G_q)$ given in Propositions \ref{prop:mugercenter} and \ref{prop:mugercenter} we now deduce an equality $Z_{\opn{Tan}}(\Rep(G_q))=Z_2(\Rep(G_q))$.  This establishes (1).
\par

For (2) we note that $\dG$ is the adjoint form of $G^\ast$, by the definiton of $\dG$ as a central quotient of $G^\ast$ and the fact that the root lattice for $G^\ast$ is $lQ$.  As explained in \cite[\S\ 4.4]{negron} $G^\ast$ is of Langlands dual type to $G$ in this case.  Since $G$ is simply-connected by hypothesis, we conclude that this adjoint form of $G^\ast$ is in fact the Langlands dual group $\dG=G^{\opn{Lan}}$.  Statements (3) and (4) are now obtained as a particular instance of Theorem \ref{thm:Z_classical}.
\end{proof}

We recall that, in general, twisted quantum Frobenius involves some non-trivial tensor compatibility.  For one example, for $G=\opn{SL}(n)$ at the parameter $q=\exp(\pi i (-,-)/3)$ we have all $q_\alpha=e^{\pi i/3}$ and the Tannakian center in $\Rep(\opn{SL}(n)_q)$ is directly identified with representations for quantum $\opn{PSL}(n)$ at the quasi-classical parameter $\varepsilon=\exp(\pi i(-,-))$.  It is only after twisting that we obtain an identification with classical representations for $\opn{PSL}(n)$.
\par

As we'll see below, the conclusions of Theorem \ref{thm:sc} do not hold when the specific hypotheses therein are not met, generally speaking.

\subsection{Deviations at odd orders and non-simply-connected groups}

We record some examples which demonstrate the necessity of the hypotheses in Theorem \ref{thm:sc}.

\begin{example}\label{ex:10}
Consider $\opn{SL}(2n)$ at $q=\exp(2\pi i (-,-)/l)$, where $l$ and $n$ are positive odd integers.  In this case all $q_\alpha$ are primitive $l$-th roots of unity.  For a complete list of simple roots $\{\alpha_1,\dots, \alpha_{2n-1}\}$, where consecutive roots are neighbors, we consider the weight
\[
\lambda_0=\frac{l}{2}(\sum_{r=1}^{n}\alpha_{2r-1}).
\]
One checks all pairings $(\alpha,\lambda_0)=\pm l$ to see that $\lambda_0$ is in fact in the weight lattice, and furthermore an element in $X^\ast$.  For the square we have $q^2(\mu,\lambda_0)=1$ at all $\mu\in X$, so that $\lambda_0$ is in $X^{\text{\rm M\"ug}}$.  However one checks
\[
(\lambda_0,\lambda_0)=\sum_r \frac{l^2\cdot(\alpha_{2r-1},\alpha_{2r-1})}{4}=\frac{nl^2}{2}
\]
so that $q(\lambda_0,\lambda_0)=(-1)^{ln}=-1$.  Hence $\lambda_0$ is in not in the Tannakian sublattice.  We therefore find that the inclusion $X^{\opn{Tan}}\subseteq X^{\text{\rm M\"ug}}$ is not an equality, and hence that the inclusion of symmetric tensor categories
\[
Z_{\opn{Tan}}(\Rep(\opn{SL}(2n)_q))\ \subseteq\ Z_2(\Rep(\opn{SL}(2n)_q)
\]
is not an equality.
\end{example}

\begin{example}
Consider $\opn{Sp}(2n)$ at $q=\exp(\pi i (-,-)/l)$, with $l$ is odd.  Then $q_\alpha$ is a primitive $2l$-th root of unity at all short $\alpha$, and $q_\beta$ is a primitive $l$-th root of unity at the unique long simple root $\beta$.  For $\lambda_0=l\beta/2$, again at the unique long simple root, one checks that $\lambda_0$ is in the weight lattice and we have $\lambda_0\in X^{\text{\rm M\"ug}}$.  For the self-pairing we have
\[
q(\lambda_0,\lambda_0)=\exp(\pi i l)= -1,
\]
so that $\lambda_0$ is not in the Tannakian sublattice.  This implies that the Tannakian center does not agree with the M\"uger center in $\Rep(\opn{Sp}(2n)_q)$.
\end{example}

We consider now an example of a different type, where the hypotheses of Theorem \ref{thm:sc} are violated but the Tannakian and M\"uger centers still agree.  This provides a discrete counterpoint to Example \ref{ex:10}.

\begin{example}
Consider $\SL(3)$ at $q=\exp(2\pi i (-,-)/l)$, with $l$ odd.  Elements in $\opn{X}^{\text{M\"ug}}$ are all weights of the form
\[
\lambda=\frac{c_1l}{2}\alpha_1+ \frac{c_2l}{2}\alpha_2,
\]
with the $c_j$ integers.  In order for $\lambda$ to lie in the weight lattice we must have
\[
(\alpha_1,\lambda)=c_1l-\frac{c_2l}{2}\ \in\mathbb{Z}\ \ \text{and}\ \ (\alpha_2,\lambda)=-\frac{c_1l}{2}+c_2l\ \in\mathbb{Z},
\]
which forces $c_1$ and $c_2$ to be even.  So we have $X^{\opn{Tan}}=X^{\text{M\"ug}}=lQ$
in this case.
\end{example}

One can also show that the Tannakian and M\"uger centers in $\Rep(G_q)$ diverge when $G$ is not simply-connected, even when the scalar parameters $q_\alpha$ are all of even order, generally speaking.  This occurs, for example, for quantum representations of projective $\opn{Sp}(6)$ at $q=\exp(\pi i(-,-)/4)$.  We leave the details to the intersted reader.

We note, as a final example, that we can produce some regularity at odd order parameters in the adjoint setting.  This point is well-established in the literature.  See for example \cite[\S\ 5]{davydovetingofnikshych18}.

\begin{example}\label{ex:odd}
Let $G$ be of adjoint type and $l$ be an odd integer which is coprime to the lacing number for $G$ and the determinant of the Cartan matrix.  Consider quantum group representations for $G$ at the parameter $q=\exp(2\pi i(-,-)/l)$.  In this case all $l_\alpha=l$, all $q_\alpha$ are of order $l$, and by our coprimeness assumption $q$ reduces to a non-degenerate form on the quotient $Q/lQ$.  This implies
\[
X^{\opn{Tan}}=X^{\text{M\"ug}}=lQ,
\]
and provides a calculation of the Tannakian$=$M\"uger center in $\Rep(G_q)$ as a copy of the classical representation category $\Rep(G)$.
\end{example}

\begin{remark}
Here we've focused on maximally non-degenerate parameters $q$.  These are the types of parameters which seem to appear in nature, but they are not the only advantageous parameters which one might consider.  One can, for example, define a class of ``odd order" parameters $q$ at which the Tannakian and M\"uger centers in $\Rep(G_q)$ always agree.  However, in this case one loses control of the nature of the dual group $\dG$.
\end{remark}

\subsection{Relevance for small quantum groups}
\label{sect:why}

We begin with a trivial observation.

\begin{lemma}\label{lem:1703}
Suppose that $\msc{A}$ is a braided tensor category whose M\"uger center is non-Tannakian.  Then $\msc{A}$ does not admit a surjective braided tensor functor to a non-degenerate tensor category.
\end{lemma}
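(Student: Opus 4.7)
The approach is by contraposition. Suppose there exists a surjective braided tensor functor $F:\msc{A}\to \msc{B}$ with $\msc{B}$ non-degenerate, i.e.\ $Z_2(\msc{B})\simeq \opn{Vect}$. The plan is to argue that $Z_2(\msc{A})$ must then admit a symmetric fiber functor and so be Tannakian, contradicting the hypothesis.

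The crux of the argument is showing that $F$ restricts to a functor $F|_{Z_2(\msc{A})}:Z_2(\msc{A})\to Z_2(\msc{B})$.  Given $V\in Z_2(\msc{A})$ and any $W\in\msc{A}$, applying $F$ to the identity $c^2_{V,W}=id$ immediately gives $c^2_{F(V),F(W)}=id$ in $\msc{B}$.  To propagate this triviality to an arbitrary $X\in\msc{B}$, I would invoke surjectivity of $F$ to present $X$ as a subquotient of some $F(W)$, and then use naturality of the double braiding $c^2_{F(V),-}$ together with exactness of the functor $F(V)\ot -$ (which holds since $F(V)$ is rigid) to conclude $c^2_{F(V),X}=id$ from the corresponding identity on $F(V)\ot F(W)$.

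Once $F|_{Z_2(\msc{A})}$ is known to factor through $Z_2(\msc{B})$, non-degeneracy of $\msc{B}$ identifies $Z_2(\msc{B})$ with the essential image of the unit functor $\opn{unit}_{\msc{B}}:\opn{Vect}\to \msc{B}$.  Composing $F|_{Z_2(\msc{A})}$ with a quasi-inverse to $\opn{unit}_{\msc{B}}$ produces an exact symmetric monoidal functor $Z_2(\msc{A})\to\opn{Vect}$.  By definition this is a fiber functor, so $Z_2(\msc{A})$ is Tannakian, the desired contradiction.

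The main obstacle is the bookkeeping in the first step: formally transferring triviality of $c^2_{F(V),-}$ from objects of the form $F(W)$ to their subquotients in $\msc{B}$.  This is routine given exactness and naturality, but it is the only piece of the argument that is not pure definition-chasing; every other step is a direct appeal to the definitions of the M\"uger center (Definition \ref{def:Ztann}) and of a Tannakian category (Section \ref{sect:symmtann}).
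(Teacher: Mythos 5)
Your proof is correct and follows exactly the same strategy as the paper's: show that a surjective braided tensor functor carries $Z_2(\msc{A})$ into $Z_2(\msc{B})\simeq\opn{Vect}$, thereby producing a symmetric fiber functor for $Z_2(\msc{A})$. The paper simply states without proof that surjective braided tensor functors preserve Müger centers; your proposal supplies the details of that step (naturality of the double braiding and exactness of $F(V)\otimes-$ to pass to subquotients), which is a harmless but correct elaboration.
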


To clarify, by a non-degenerate braided tensor category we mean a braided tensor category $\msc{D}$ whose M\"uger center $Z_2(\msc{D})$ is identified with the image of the unit $\opn{Vect}\to \msc{D}$ (cf.\ \cite{shimizu19}).

\begin{proof}
Any surjective braided tensor functor $F:\msc{A}\to \msc{D}$ must send the M\"uger center in $\msc{A}$ to the M\"uger center in $\msc{D}$.  In the event that the M\"uger center in $\msc{D}$ is trivial, i.e.\ identified with $\opn{Vect}$, the functor $F$ then restricts to a symmetric fiber functor for $Z_2(\msc{A})$.  This cannot happen when the M\"uger center in $\msc{A}$ is non-Tannakian.
\end{proof}

Lemma \ref{lem:1703} tells us that there is no ``modular small quantum group" for $G$ at $q$ whenever the M\"uger center in $\Rep(G_q)$ is non-Tannakian.  At least, this is the case if one expects the small quantum group to have a representation category which admits a surjective braided tensor functor from the original quantum group category $\Rep(G_q)$.  Hence our interest in the deviations between the Tannakian and M\"uger centers in $\Rep(G_q)$ discussed above.

\section{An intermediate small quantum algebra}
\label{sect:sqa}

Our category of small quantum group representations will eventually be identified with representations for a finite-dimensional quasi-Hopf algebra $u_q$, in Section \ref{sect:smallquantumgroup}.  The algebra $u_q$ is strongly related to an ``intermediate small quantum algebra" $u_{q,\kappa}$ which we introduce in this section, as well as a ``smallest quantum algebra" $\bar{u}_q$ which was studied extensively by Andersen, Polo, and Wen \cite{andersenpolowen91,andersenpolowen92}, as well as Lentner \cite{lentner16} and the author \cite{negron}.
\par

As the name suggest, representations of the intermediate algebra $\Rep(u_{q,\kappa})$ provide an intermediate stage between the big category $\Rep(G_q)$ and its fiber $\opn{Vect}\ot_{\Rep(\dG)}\Rep(G_q)$ over the Tannakian center.

\begin{center}
\emph{Throughout this section we fix an alternating form $\kappa$ as in Theorem \ref{thm:1021}.  All constructions employed below are relative to this choice of $\kappa$.}
\end{center}
 
\subsection{Summary}
\label{sect:summary}

The following two sections are somewhat technical, as we introduce and analyze a number of mechanisms which will ultimately be discarded. Indeed, on a first reading one might only skim their contents then proceed directly to Section \ref{sect:modular}.  In order to facilitate such a reading we describe the main points of this section and its successor below.
\par

Let's fix an alternating form $\kappa$ on $X^{\opn{Tan}}$, as in Theorem \ref{thm:1021}, and consider the finite character group
\begin{equation}\label{eq:sig_ma}
\Sigma:=(X^{\opn{Tan}}/\opn{rad}(q,\kappa))^\vee,\ \ \text{where}\ \ \opn{rad}(q,\kappa)=\opn{rad}(q)\cap\opn{rad}(\kappa).
\end{equation}
Here one might note that the radical of $q$ lies in $X^{\opn{Tan}}$, so that the simultaneous radical $\opn{rad}(q,\kappa)$ is unambiguously a finite index sublattice in $X^{\opn{Tan}}$.  We have the inclusion into the dual torus $\Sigma\to \Hom_{\opn{Grp}}(X^{\opn{Tan}},k^\times)=\check{T}\subseteq \dG$ and corresponding restriction functor $\res:\Rep(\dG)\to \Rep(\Sigma)$.
\par 

As our primary deliverable, in Proposition \ref{prop:int_basechange} we provide an equivalence of braided tensor categories
\[
\Rep(\Sigma)\ot_{\Rep(\dG)}\Rep(G_q)\overset{\sim}\to \Rep(u_{q,\kappa}),
\]
where $u_{q,\kappa}$ is a finite-dimensional Hopf algebra which is, up to some toral extension, identified with Lusztig's finite-dimensional algebra from \cite{lusztig90,lusztig90II}.  Via associativity of base change, we then obtain an expression
\[
\opn{Vect}\ot_{\Rep(\dG)}\Rep(G_q)\cong \opn{Vect}\ot_{\Rep(\Sigma)}\Rep(\Sigma)\ot_{\Rep(\dG)}\Rep(G_q)
\]
\begin{equation}\label{eq:1841}
\cong \opn{Vect}\ot_{\Rep(\Sigma)}\Rep(u_{q,\kappa}),
\end{equation}
and can therefore leverage the finite tensor category $\Rep(u_{q,\kappa})$ in our analysis of the above fiber.  (See Lemma \ref{lem:2159}.)
\par

In terms of its algebraic structure, $u_{q,\kappa}$ is specifically the subalgebra in the completed quantum enveloping algebra $\hat{U}_q$ which is generated by the characters on the quotient $X/\opn{rad}(q,\kappa)$, along with the root vectors $E_\gamma$ and $F_\gamma$ associated to all $\gamma$ with $l_\gamma>1$ (Lemma \ref{lem:surj}).  We have the expected triangular decomposition
\[
u^+_{q,\kappa}\ot k\Lambda\ot u^-_{q,\kappa}\overset{\sim}\to u_{q,\kappa},
\]
where $\Lambda=(X/\opn{rad}(q,\kappa))^\vee$, corresponding calculation of the dimension for $u_{q,\kappa}$, analysis of projectives and simples in $\Rep(u_{q,\kappa})$ via highest weights, etc., etc., which help us to understand the nature of the fiber category \eqref{eq:1841}.

\subsection{Luztig's finite-dimensional algebra}

We consider again the quantum enveloping algebra $U_q$ for $G$ at $q$.  For the moment, let us regard $U_q$ as a ``copy" of the quantum enveloping algebra introduced in Sectin \ref{sect:quantum_groups}, and for each root $\gamma$ let $\msf{K}_\gamma$ denote the standard grouplike element in our new copy of $U_q$.  As a starting point, we consider Lusztig's finite-dimensional subalgebra
\begin{equation}\label{eq:1854}
\mfk{v}_q:=\left\{
\begin{array}{c}
\text{The subalgebra in $U_q$ generated by all of the}\\
\text{grouplikes $\msf{K}_\alpha$, for simple $\alpha$, and all of the root}\\
\text{vectors $E_\gamma$ and $F_\gamma$ for $\gamma\in \Phi^+$ with $l_\gamma>1$}
\end{array}
\right\}.
\end{equation}
This algebra already appears in the original text \cite[\S\ 8.1, 8.2]{lusztig90II} and was studied extensively by Lentner \cite{lentner16}.

\begin{remark}
Our notation for the subalgebra \eqref{eq:1854} is unorthodox, relative to the primary sources \cite{lusztig90,lusztig90II,lusztig93}.
\end{remark}

By \cite{lusztig90II} \cite[Theorems 5.2, 5.4]{lentner16}, $\mfk{v}_q$ is a Hopf subalgebra in $U_q$ and the positive and negative subalgebras $\mfk{v}_q^+$ and $\mfk{v}_q^-$ in $\mfk{v}_q$ admit bases
\[
\{E_{\gamma_1}^{m_1}\dots E_{\gamma_t}^{m_t}:0\leq m_i\leq l_{\gamma_i}\ \text{at all }i\}\ \ \text{and}\ \ \{F_{\gamma_1}^{m'_1}\dots F_{\gamma_t}^{m'_t}:0\leq m'_i\leq l_{\gamma_i}\ \text{at all }i\}
\]
respectively.  Multiplication provides a triangular decomposition
\[
\mfk{v}_q^+\otimes k[K_\alpha:\alpha\in \Delta]\otimes \mfk{v}_q^-\overset{\sim}\to \mfk{v}_q.
\]

The algebra $\mfk{v}_q$ has a distinguished set of skew primitive generating root vectors
\[
E_\beta\ \text{and} \  F_\beta\ \text{for }\beta \in \Delta_l,
\]
which are labeled by a particular ``$l$-base" $\Delta_l$ in $\Phi^+$ \cite[\S\ 6.2]{negron}.  When all of the $l_\alpha$ are positive this ``$l$-base" is just the usual base $\Delta$ for the positive roots, and in most other cases $\Delta_l$ is a base for the subsystem $\{\gamma\in \Phi:l_\gamma >1\}$ (though there are some unique phenomena for type $G_2$ at at a $4$-th root of unity).
\par

To avoid getting sidetracked by an accounting of quantum groups at extraordinarily small order parameters, we truncate our introduction to $\mfk{v}_q$ here and invite the reader to see \cite[\S\ 6]{negron}, or the original text \cite{lentner16}, for any further details.

\subsection{An intermediate small quantum algebra}
\label{sect:sqa2}

We consider the simultaneous radical $\opn{rad}(q,\kappa)=\opn{rad}(q)\cap\opn{rad}(\kappa)$ in $X$ and characters on the resulting quotient group
\begin{equation}\label{eq:1930}
\Lambda{\color{gray}(=\Lambda_\kappa)}:=(X/\opn{rad}(q,\kappa))^\vee.
\end{equation}
Note that the radical $\opn{rad}(q,\kappa)$ is a finite index subgroup in $X$, since both of the forms in question are torsion, and that all of the characters $K_\gamma:X\to k^\times$, $\lambda\mapsto q(\gamma,\lambda)$, associated to roots $\gamma\in \Phi$ vanish on $\opn{rad}(q,\kappa)$.  Hence these characters exist as elements in the group \eqref{eq:1930}.
\par

We have a natural action of $\Lambda$ on $\mfk{v}_q$ via Hopf automorphisms,
\[
\xi\cdot E_\alpha=\xi(\alpha) E_\alpha,\ \ \xi\cdot F_\alpha=\xi(-\alpha)F_\alpha,\ \ \xi\cdot K_\alpha=K_\alpha\ \ \text{for}\ \ \xi\in \Lambda,
\]
and take the associated smash product Hopf algebra $\mfk{v}_q\rtimes \Lambda$.  We observe the central grouplike elements $K_\gamma\cdot \msf{K}_\gamma^{-1}$ in this smash product and form the quotient
\[
u_{q,\kappa}:= \mfk{v}_q\rtimes \Lambda/(K_\gamma\cdot \msf{K}_\gamma^{-1}:\gamma\in \Phi).
\]
The algebra $u_{q,\kappa}$ admits a unique Hopf structure under which all of the toral characters $\Lambda\subseteq u_{q,\kappa}$ are grouplike and the map $\mfk{v}_q\to u_{q,\kappa}$ is a map of Hopf algebras.  Furthermore, the triangular decomposition for $\mfk{v}_q$ induces a triangular decomposition for $u_{q,\kappa}$,
\[
u^+_{q,\kappa}\otimes k\Lambda\otimes u^-_{q,\kappa}\overset{\sim}\to u_{q,\kappa},
\]
where $u_{q,\kappa}^{\pm}=\mfk{v}^\pm_q$.  From this triangular decomposition and the standard bases for $\mfk{v}^{\pm}_q$ recalled above we obtain both bases and a dimension calculation for $u_{q,\kappa}$.

\begin{lemma}\label{lem:dims}
The positive and negative subalgebras $u^{\pm}_{q,\kappa}$ have respective bases
\[
\{E_{\gamma_1}^{m_1}\dots E_{\gamma_t}^{m_t}:0\leq m_i\leq l_{\gamma_i}\ \text{\rm at all }i\}\ \ \text{and}\ \ \{F_{\gamma_1}^{m'_1}\dots F_{\gamma_t}^{m'_t}:0\leq m'_i\leq l_{\gamma_i}\ \text{\rm at all }i\}.
\]
Furthermore, the dimension of $u_{q,\kappa}$ is precisely
\[
\dim(u_{q,\kappa})=[X:\opn{rad}(q,\kappa)]\cdot (\prod_{\gamma \in \Phi^+}l_\gamma)^2.
\]
\end{lemma}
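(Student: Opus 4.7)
The plan is to reduce everything to the already-recorded structure of $\mfk{v}_q$ and then count dimensions.

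First I would establish the bases claim by showing that the positive and negative subalgebras $u^{\pm}_{q,\kappa}$ are unchanged from $\mfk{v}^{\pm}_q$. The smash product $\mfk{v}_q \rtimes \Lambda$ inherits a triangular decomposition
\[
\mfk{v}_q^+ \otimes \big(k[\msf{K}_\alpha : \alpha \in \Delta] \otimes k\Lambda\big) \otimes \mfk{v}_q^- \overset{\sim}{\to} \mfk{v}_q \rtimes \Lambda,
\]
in which the generators $K_\gamma \cdot \msf{K}_\gamma^{-1}$ cutting out $u_{q,\kappa}$ are central and lie entirely in the middle factor. The ideal they generate therefore has the form $\mfk{v}_q^+ \otimes I \otimes \mfk{v}_q^-$ for an appropriate ideal $I$ in $k[\msf{K}_\alpha] \otimes k\Lambda$, so the positive and negative parts of the quotient are recovered as isomorphic copies of $\mfk{v}_q^{\pm}$. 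The claimed bases for $u^{\pm}_{q,\kappa}$ then follow directly from the Lusztig--Lentner bases for $\mfk{v}_q^{\pm}$ recalled above.

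Next I would analyze the toral middle factor. Each character $\msf{K}_\alpha = q(\alpha,-)$ kills the radical $\opn{rad}(q,\kappa) \subseteq X$, and so descends to an element of $\Lambda = (X/\opn{rad}(q,\kappa))^\vee$. The relation $K_\gamma \cdot \msf{K}_\gamma^{-1} = 1$ therefore simply identifies the copy of $\msf{K}_\gamma$ coming from $\mfk{v}_q$ with its companion in $\Lambda$, and passing to the quotient collapses the middle factor to $k\Lambda$. This yields the triangular decomposition
\[
u^+_{q,\kappa} \otimes k\Lambda \otimes u^-_{q,\kappa} \overset{\sim}{\to} u_{q,\kappa}
\]
announced already in the construction.

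The dimension calculation is then a routine multiplication: the bases give $\dim u^{\pm}_{q,\kappa} = \prod_{\gamma \in \Phi^+} l_\gamma$, while $|\Lambda| = [X : \opn{rad}(q,\kappa)]$ by the self-duality of finite abelian groups over the algebraically closed field $k$ of characteristic zero, and multiplying the three factors gives the stated formula. The main subtlety I anticipate lies in the preceding step, namely in verifying that the toral quotient collapses exactly the central grouplike relations and nothing more. A brief check shows that each $K_\gamma \cdot \msf{K}_\gamma^{-1}$ is central in $\mfk{v}_q \rtimes \Lambda$: the two factors induce the same inner automorphism $q(\gamma,-)$ on the PBW root vectors, so their ratio commutes with all of $\mfk{v}_q$ and trivially with $\Lambda$. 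Once that central computation inside $k[\msf{K}_\alpha] \otimes k\Lambda$ is completed, the rest of the argument reduces to bookkeeping of PBW monomials.
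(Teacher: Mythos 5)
Your proposal is correct and takes essentially the same route the paper signals: the paper itself gives no proof of Lemma~\ref{lem:dims} beyond the one-sentence assertion that it follows ``from this triangular decomposition and the standard bases for $\mfk{v}^{\pm}_q$ recalled above,'' and your argument is exactly the fill-in of that sentence. The PBW decomposition of $\mfk{v}_q\rtimes\Lambda$ with $\mfk{v}_q^{\pm}$ on the outside, the observation that the central elements $K_\gamma\msf{K}_\gamma^{-1}$ live in the toral middle factor so the ideal they generate is $\mfk{v}_q^+\ot I\ot\mfk{v}_q^-$, and the consequent preservation of $\mfk{v}_q^{\pm}$ in the quotient are all the right steps.

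Two small remarks, both more about polish than substance. First, a notational slip: the character $q(\alpha,-)$ on $X$ is $K_\alpha$ in the paper's notation, while $\msf{K}_\alpha$ is the abstract grouplike of $\mfk{v}_q\subseteq U_q$; you write ``$\msf{K}_\alpha=q(\alpha,-)$'' where you mean $K_\alpha=q(\alpha,-)$, and the whole point of the relation $K_\gamma\msf{K}_\gamma^{-1}=1$ is to identify the two. Second, and more worth making explicit: the claim that the middle factor collapses to \emph{exactly} $k\Lambda$, rather than a proper quotient of it, rests on $\Lambda\cap\langle\msf{K}_\alpha K_\alpha^{-1}:\alpha\in\Delta\rangle=\{1\}$ inside $\langle\msf{K}_\alpha\rangle\times\Lambda$. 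This is immediate because the $\msf{K}_\alpha$ in Lusztig's specialized integral form $U_q$ generate a free abelian group (so $\mfk{v}_q$ is in fact infinite-dimensional, the paper's word ``finite-dimensional'' notwithstanding), hence the pushout identifying $\msf{K}_\alpha$ with $K_\alpha$ is precisely $\Lambda$; but since the paper's phrasing invites confusion with Lusztig's genuinely finite-dimensional quotient, a one-line justification here would tighten the argument. Once that is said, the bookkeeping and the dimension count $\dim u_{q,\kappa}=|\Lambda|\cdot(\prod_\gamma l_\gamma)^2$ are as you describe.
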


\subsection{Irreducible $u_{q,\kappa}$-representations}

In $u_{q,\kappa}$ we have the nonnegative subalgebra $u^{\geq 0}_{q,\kappa}$, and define the baby Verma modules in the standard way
\[
M(\bar{\lambda}):=u_{q,\kappa}\ot_{u^{\geq 0}_{q,\kappa}}k(\bar{\lambda}).
\]
Here $\bar{\lambda}$ is an element of the truncated lattice $\Lambda^{\vee}=X/\opn{rad}(q,\kappa)$ and $k(\bar{\lambda})$ is the associated $1$-dimensional simple representation over the small Borel $u^{\geq 0}_{q,\kappa}$.
\par

Via the usual analysis one sees that $M(\bar{\lambda})$ admits a unique simple quotient $L(\bar{\lambda})$, and that this quotient has a unique up-to-scaling highest weight vector $v\in L(\bar{\lambda})$ which is of weight $\bar{\lambda}$.  (The vector $v$ is simply the image of the generating vector in $k(\bar{\lambda})$.)  In this way we obtain a classification of irreducible representations via highest weights
\[
\Lambda^{\vee}=X/\opn{rad}(q,\kappa)\overset{\cong}\longleftrightarrow \opn{Irrep}(u_{q,\kappa}).
\]

\subsection{Restriction from $\Rep(G_q)$}

Any $G_q$-representation $V$ admits a natural action of the algebra $u_{q,\kappa}$.  Explicitly, the root vectors $E_\gamma$ and $F_\gamma$ in $u_{q,\kappa}$ act via their corresponding vectors in $U_q$, and the toral elements $\xi\in \Lambda$ act as the characters
\[
\xi\cdot v=\xi(\deg v)\cdot v\ \ \text{for homogeneous }v\in V.
\]
In this way we observe a restriction functor
\[
\res:\Rep(G_q)\to \Rep(u_{q,\kappa}).
\]
\par

Since our Hopf structure on $u_{q,\kappa}$ is induced by the Hopf structure on $\mfk{v}_q$, and hence induced by the Hopf structure on the quantum enveloping algebra $U_q$, we see that $\res$ is a tensor functor with trivial tensor compatibility
\[
\res(V)\ot \res(W)\overset{=}\to \res(V\ot W).
\]

\subsection{Relation to the smallest quantum algebra from \cite{negron}}
\label{sect:uquqk}

In \cite{negron} we introduce a ``smallest quantum algebra" $\bar{u}_q$ for $G$ at $q$, where $G$ and $q$ are arbitrary.  At an odd order parameter which is also coprime to the lacing number for $G$, this algebra is just Lusztig's algebra $\mfk{v}_q$.  So, between the works of Parshall and Wang \cite{parshallwang91}, Andersen, Polo, and Wen \cite{andersenpolowen91,andersenpolowen92}, and the author \cite{negron}, the algebra $\bar{u}_q$ and its representations are very well-understood.  We describe the precise relationship between $\bar{u}_q$ and $u_{q,\kappa}$ below.

\begin{remark}
Our algebra $u_{q,\kappa}$ is essentially the algebra $v_q$ from \cite{negron}.
\end{remark}

To begin, the composite of quantum Frobenius with restriction defines a tensor functor $\Rep(G^\ast_{\varepsilon})\to \Rep(u_{q,\kappa})$, and via Lemma \ref{lem:696} we see that all $G^\ast_{\varepsilon}$-representation decomposes into $1$-dimensional representations over $u_{q,\kappa}$. These $1$-dimensional representations are precisely the simples $L(\bar{\lambda})$ whose highest weights lie in the subgroup $X^\ast/\opn{rad}(q,\kappa)$ of $X/\opn{rad}(q,\kappa)$.  We consider the corresponding group of characters
\begin{equation}\label{eq:theta}
\Theta{\color{gray}(=\Theta_\kappa)}:=\big(X^\ast/\opn{rad}(q,\kappa)\big)^\vee.
\end{equation}
\par

From the above analysis we observe a tensor embedding $\Rep(\Theta)\to \Rep(u_{q,\kappa})$ with trivial tensor compatibility, and corresponding Hopf quotient $u_{q,\kappa}\to k\Theta$.  We then take the subalgebra of left $k\Theta$-coinvariants to recover the smallest quantum algebra
\[
\bar{u}_q:= {^\Theta(u_{q,\kappa})}
\]
from \cite[\S\ 6.4]{negron}. This subalgebra is generated by the normalized root vectors $\mbf{E}_\gamma=K_\gamma E_\gamma$ and $F_\gamma$ in $u_{q,\kappa}$, for $\gamma\in \Phi^+$ with $l_\gamma>1$, and the characters $\xi \in \Lambda$ which vanish on $X^{\ast}$.

\begin{remark}
The subalgebra $\bar{u}_q$ is independent of our choice of $\kappa$, and in fact lives in the intersection $\cap_\kappa u_{q,\kappa}$ over all possible choices of $\kappa$, where we take this intersection in the completed enveloping algebra $\hat{U}_q$, for example.
\end{remark}

\subsection{Projectives vs.\ projectives}

We consider again the subalgebra $\bar{u}_q\subseteq u_{q,\kappa}$ and its corresponding finite group $\Theta$ from \eqref{eq:theta}.

\begin{lemma}\label{lem:2034}
\begin{enumerate}
\item $\bar{u}_q$ is a normal coideal subalgebra in $u_{q,\kappa}$.
\item $u_{q,\kappa}$ is free over $\bar{u}_q$.
\item For any $u_{q,\kappa}$-representations $V$ and $W$, there is a natural $\Theta$-action on the morphisms $\Hom_{\bar{u}_q}(V,W)$ and corresponding identification
\[
\Hom_{u_{q,\kappa}}(V,W)=\Hom_{\bar{u}_q}(V,W)^{\Theta}.
\]
\end{enumerate}
\end{lemma}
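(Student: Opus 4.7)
The plan is to deduce all three parts from the Hopf surjection $\pi: u_{q,\kappa} \to k\Theta$ introduced in Section \ref{sect:uquqk}, which realizes $\bar{u}_q = {}^\Theta u_{q,\kappa}$ as the subalgebra of left $k\Theta$-coinvariants. Freeness in (2) will follow from Skryabin's theorem \cite{skryabin07}, and (3) will then reduce to an explicit calculation using a choice of section.

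For (1), the fact that $\bar{u}_q$ is a left coideal subalgebra is immediate from coassociativity, since $(\pi \otimes 1)\Delta(b) = 1 \otimes b$ implies $\Delta(b) \in u_{q,\kappa} \otimes \bar{u}_q$. For normality (stability of $\bar{u}_q$ under the left adjoint action $h \cdot b = h_{(1)} b S(h_{(2)})$), the key input is that $k\Theta$ is commutative (as $\Theta$ is a finite abelian group), which combined with the Hopf identity $\pi(h_{(1)} S(h_{(2)})) = \epsilon(h) 1$ lets one verify the coinvariance condition $(\pi \otimes 1)\Delta(h \cdot b) = 1 \otimes (h \cdot b)$ directly on the level of abstract tensor calculus. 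Alternatively I would verify stability directly on the explicit generators $\mathbf{E}_\gamma$, $F_\gamma$, and $\xi \in \Lambda$ of $\bar{u}_q$ recalled in Section \ref{sect:uquqk}, using the triangular decomposition and the fact that the $\mathbf{E}_\gamma$ and $F_\gamma$ are skew-primitive.

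For (2), I would apply Skryabin's theorem \cite[Theorem 6.1]{skryabin07}, which tells us that $u_{q,\kappa}$ is faithfully flat over the normal coideal subalgebra $\bar{u}_q$ of (1). Since $u_{q,\kappa}$ is finite-dimensional (Lemma \ref{lem:dims}), faithful flatness upgrades to freeness. A parallel concrete construction is to choose a set-theoretic section $s: \Theta \to \Lambda$ of the natural quotient $\Lambda \twoheadrightarrow \Theta$, realize $s(\Theta)$ as a subset of grouplike elements in $u_{q,\kappa}$, and verify via the triangular decomposition and dimension count of Lemma \ref{lem:dims} that $s(\Theta)$ is a $\bar{u}_q$-basis for $u_{q,\kappa}$.

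For (3), the action of $\Theta$ on $\Hom_{\bar{u}_q}(V,W)$ is constructed using a grouplike section $\widetilde{(-)} : \Theta \to u_{q,\kappa}^\times$ of the projection $\pi$, via $(t \cdot \phi)(v) := \tilde{t}\!\ \phi(\tilde{t}^{-1} v)$. Normality from (1) ensures this formula sends $\bar{u}_q$-linear maps to $\bar{u}_q$-linear maps, and since any two sections differ by elements of $\bar{u}_q$, the resulting action of $\Theta$ is independent of choice. The identification $\Hom_{u_{q,\kappa}}(V,W) = \Hom_{\bar{u}_q}(V,W)^\Theta$ is then immediate from the free decomposition of (2): $u_{q,\kappa}$-linearity for a map $\phi$ is equivalent to $\bar{u}_q$-linearity together with commutation with each $\tilde{t}$, which is precisely the $\Theta$-fixed condition.

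The main obstacle is verifying normality in (1); once this is in hand, (2) is a direct appeal to Skryabin and (3) is a formal consequence. Since $k\Theta$ is both commutative and cocommutative and the generators of $\bar{u}_q$ are explicit, normality should be routine by either the abstract or the generator-by-generator route.
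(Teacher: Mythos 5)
Your overall strategy agrees with the paper's, and parts (2) and (3) are handled correctly; there is however a real issue with the abstract route (a) in your treatment of part (1).

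The claim that the left adjoint action $h\cdot b=h_{(1)}bS(h_{(2)})$ preserves the left $k\Theta$-coinvariants $\{b:(\pi\otimes 1)\Delta(b)=1\otimes b\}$ does \emph{not} follow from commutativity of $k\Theta$ together with the antipode axiom; in fact it is false in general. Carrying out the computation one arrives at
\[
(\pi\otimes 1)\Delta\big(h_{(1)}bS(h_{(2)})\big)=\pi(h_{(1)})\,\pi(S(h_{(4)}))\otimes h_{(2)}\,b\,S(h_{(3)}),
\]
and the outer factors $\pi(h_{(1)})$ and $\pi(S(h_{(4)}))$ are separated by $h_{(2)}, h_{(3)}$; they are not adjacent Sweedler legs and cannot be collapsed by the antipode axiom, and commutativity of $k\Theta$ does not rescue this. (A quick sanity check in the $\mathfrak{sl}_2$ picture with $\Delta(E)=E\otimes 1+K\otimes E$, $F$ a left coinvariant, and $\pi(K)\neq 1$ already gives $(\pi\otimes 1)\Delta(E_{(1)}FS(E_{(2)}))=\pi(K)\otimes(EF-q^{-2}FE)$, which is not of the required form.) The correct matching is: the \emph{right} adjoint $b\mapsto S(h_{(1)})bh_{(2)}$ preserves left coinvariants, and this \emph{is} a formal consequence of coassociativity and the antipode axiom alone, with no hypothesis on $k\Theta$ whatsoever — so commutativity of $k\Theta$ is a red herring. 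For the purpose of showing $V^{\bar{u}_q}$ is a $u_{q,\kappa}$-subrepresentation, the right adjoint is also the one that is actually used, via the expansion $bh=h_{(1)}\big(S(h_{(2)})bh_{(3)}\big)$. Your route (b) — and the paper's own argument — avoids the handedness issue entirely by working on generators: $\mathbf{E}_\gamma$ and $F_\gamma$ lie in $\bar{u}_q$ and hence annihilate all $\bar{u}_q$-invariants outright, while the grouplikes in $\Lambda$ normalize $\bar{u}_q$ by conjugation, and these elements generate $u_{q,\kappa}$. Note also that for non-simple $\gamma$ the vectors $\mathbf{E}_\gamma$ are not skew-primitive, but the argument never needs this — it only needs that they lie in $\bar{u}_q$.

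Your part (3) via a set-theoretic grouplike section $\tilde{(\,\cdot\,)}:\Theta\to\Lambda\subseteq u_{q,\kappa}^\times$ is a legitimate and more explicit alternative to the paper's argument, which proceeds by observing that the $u_{q,\kappa}$-action on $\operatorname{Hom}_k(V,W)^{\bar{u}_q}$ factors through $k\Theta$ because $\operatorname{Rep}(\Theta)$ is the kernel of restriction. You correctly identify the key points that make the section approach work: the kernel of $\Lambda\twoheadrightarrow\Theta$ lies inside $\bar{u}_q$ and $\Lambda$ is abelian, so the formula $(t\cdot\phi)(v)=\tilde{t}\,\phi(\tilde{t}^{-1}v)$ is both well-defined (independent of the section) and preserves $\bar{u}_q$-linearity by conjugation-stability. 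Part (2) matches the paper verbatim.
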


For (1) we mean specifically that the restriction functor $\res:\Rep(u_{q,\kappa})\to \Rep(\bar{u}_q)$ is normal, in the sense of Definition \ref{def:normal}.

\begin{proof}
(1) Since the normalized root vectors $\mbf{E}_{\gamma}=K_\gamma E_{\gamma}$ and $F_{\gamma}$ lie in $\bar{u}_q$, the subalgebra $\bar{u}_q$ is stable under conjugation by these elements
\[
[\mbf{E}_\gamma,-],\ [F_{\gamma},-]:\bar{u}_q\to \bar{u}_q.
\]
One also sees directly that $\bar{u}_q$ is stable under the adjoint action of the grouplikes $\Lambda$ on $u_{q,\kappa}$.  Since the vectors $\mbf{E}_\gamma$, $F_\gamma$, and the grouplikes $\Lambda$ generate $u_{q,\kappa}$, it follows that for any $u_{q,\kappa}$-representation $V$ the subspace $V^{\bar{u}_q}$ of vectors on which $\bar{u}_{q}$ acts trivially is a $u_{q,\kappa}$-subrepresentation in $V$.  Hence the restriction functor $\res:\Rep(u_{q,\kappa})\to \Rep(\bar{u}_q)$ is normal.  (Cf.\ \cite[Proposition 7.1]{negron}.)

(2) Follows from Skryabin's theorem \cite[Theorem 6.1]{skryabin07}, or more directly from the respective bases for $\bar{u}_q$ and $u_{q,\kappa}$ provided in \cite[Lemma 5.2]{negron} and Lemma \ref{lem:dims}.

(3) From (1), the $\bar{u}_q$-invariants $T^{\bar{u}_q}$ in any $u_{q,\kappa}$-representation form a $u_{q,\kappa}$-subrepresentation in $T$.  Furthermore, the $u_{q,\kappa}$-action on the invariants factor through the quotient $u_{q,\kappa}\to k\Theta$, since $\Rep(\Theta)\subseteq \Rep(u_{q,\kappa})$ is seen to be the kernel of the restriction functor $\Rep(u_{q,\kappa})\to \Rep(\bar{u}_q)$. So we have directly
\[
T^{u_{q,\kappa}}=(T^{\bar{u}_q})^{u_{q,\kappa}}=(T^{\bar{u}_q})^{\Theta}.
\]
The claimed result now follows from the expressions
\[
\Hom_{\bar{u}_q}(V,W)=\Hom_k(V,W)^{\bar{u}_q}\ \ \text{and}\ \ \Hom_{u_{q,\kappa}}(V,W)=\Hom_k(V,W)^{u_{q,\kappa}}.
\]
\end{proof}

We recall that both of the categories $\Rep(u_{q,\kappa})$ and $\Rep(\bar{u}_q)$ are Frobenius, in the sense that their projectives and injectives agree.  In the former case this follows by Larson and Sweedler \cite{larsonsweedler69}, and the latter case follows by Skryabin \cite[Theorem 6.1]{skryabin07} (or \cite[Theorem 11.3]{negron}).  One now applies Lemma \ref{lem:2034} (2) and (3) to obtain the following.

\begin{corollary}\label{cor:1328}
An object $V$ in $\Rep(u_{q,\kappa})$ is projective (equivalently injective) if and only if its restriction to $\Rep(\bar{u}_q)$ is projective (equivalently injective).
\end{corollary}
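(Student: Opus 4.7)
The argument separates cleanly into the two implications, and the template ``apply Lemma \ref{lem:2034}(2) and (3)" tells us what to do.

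For the forward implication, I would invoke freeness of $u_{q,\kappa}$ over $\bar{u}_q$ from Lemma \ref{lem:2034}(2). The restriction of the regular representation $u_{q,\kappa}|_{\bar{u}_q}$ is then a free $\bar{u}_q$-module. Since every projective object in $\Rep(u_{q,\kappa})$ is a summand of a (possibly infinite) direct sum of copies of $u_{q,\kappa}$, its restriction is a summand of a free $\bar{u}_q$-module, hence projective. (By Frobenius-ness, injective implies projective over $u_{q,\kappa}$, so the same conclusion covers the injective case.)

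For the converse, suppose $V|_{\bar{u}_q}$ is projective. By the Frobenius property of $\bar{u}_q$, $V|_{\bar{u}_q}$ is then also injective, so the functor $\Hom_{\bar{u}_q}(-,V)$ is exact on $\Rep(u_{q,\kappa})$. I would now apply the identification
\[
\Hom_{u_{q,\kappa}}(-,V) \;=\; \Hom_{\bar{u}_q}(-,V)^{\Theta}
\]
from Lemma \ref{lem:2034}(3). The finite group $\Theta$ is linearly reductive over $k$ (working in characteristic $0$), so the invariants functor $(-)^{\Theta}$ on $\Theta$-representations is exact. Composing the two exact functors yields exactness of $\Hom_{u_{q,\kappa}}(-,V)$, so $V$ is injective in $\Rep(u_{q,\kappa})$. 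Applying the Frobenius property of $u_{q,\kappa}$ (via Larson--Sweedler) one last time upgrades injectivity to projectivity.

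The only point requiring a moment of care is that Lemma \ref{lem:2034}(3) really does give an equality of functors in the first variable, not just a pointwise equality of groups. But since the identification there is given by the natural inclusion of $\Theta$-invariants into $\Hom_{\bar{u}_q}(-,V)$, it is automatically functorial in the first argument, so the composition-of-exact-functors reasoning above is unambiguous. I do not expect any additional obstacle.
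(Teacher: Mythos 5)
Your proof is correct and is essentially the argument the paper intends: the forward implication via freeness of $u_{q,\kappa}$ over $\bar{u}_q$ (Lemma \ref{lem:2034}(2)), and the converse via the identification $\Hom_{u_{q,\kappa}}(-,V)=\Hom_{\bar{u}_q}(-,V)^\Theta$ from Lemma \ref{lem:2034}(3) together with exactness of $\Theta$-invariants in characteristic $0$ and the Frobenius property of both categories. The paper leaves the proof as a one-line cite of Lemma \ref{lem:2034}(2) and (3), and you have spelled out exactly that argument, including the (correct) observation that the $\Theta$-action is functorial in the contravariant variable because it arises from the $u_{q,\kappa}$-action on $\Hom_k(V,W)$.
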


\section{Intermediate base change}
\label{sect:base_change}

In this section we calculate the intermediate base change
\begin{equation}\label{eq:2034}
\Rep(\Sigma)\ot_{\Rep(\dG)}\Rep(G_q)
\end{equation}
of $\Rep(G_q)$ along the restriction functor $\Rep(\dG)\to \Rep(\Sigma)$, where $\Sigma$ is the toral subgroup from \eqref{eq:sig_ma}.  As we show in Theorem \ref{thm:base_change} below, this category is specifically identified with representations $\Rep(u_{q,\kappa})$ for our intermediate small quantum algebra from Section \ref{sect:sqa}.

\subsection{Surjectivity of restriction, exactness of induction}

The following is established in \cite{andersenpolowen91,andersenpolowen92} at odd order parameters.  The general case can be found in \cite{negron}.

\begin{theorem}[{\cite[Lemma 11.1]{negron}}]
The category $\Rep(G_q)$ has enough projectives and injectives, and is Frobenius.  Furthermore, the subcategory $\rep(G_q)$ of finite-dimensional representations is closed under taking projective covers and injective hulls.
\end{theorem}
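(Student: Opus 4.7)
I would transfer the Frobenius property of the finite-dimensional algebra $u_{q,\kappa}$ of Section~\ref{sect:sqa} to the big category $\Rep(G_q)$ via restriction/induction and a Steinberg-type tensor argument, in the spirit of \cite{andersenpolowen91,andersenpolowen92,negron}.

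First, enough injectives in $\Rep(G_q)$ come for free by realizing this category as corepresentations over the restricted Hopf dual $\O(G_q):=\dot U_q^\circ$: the regular corepresentation provides an injective cogenerator \cite{montgomery93}. This produces an injective hull $Q(\lambda)$ for every simple $L(\lambda)$; its finite-dimensionality will be a byproduct of the Frobenius duality established below.

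Next, I would construct finite-dimensional projective covers $P(\lambda)$ for each simple $L(\lambda)$. The essential object is a Steinberg-type representation $\opn{St}\in\rep(G_q)$ which is both simple in $\Rep(G_q)$ and projective after restriction to the smallest quantum algebra $\bar u_q$ (cf.\ \cite{andersenpolowen91,negron}); by Corollary~\ref{cor:1328} it is then projective over $u_{q,\kappa}$ as well. For a dominant $\lambda$ I would decompose $\lambda = \lambda_0 + \mu^{\opn{Tan}}$ with $\mu^{\opn{Tan}}\in X^{\opn{Tan}}$ dominant and $\lambda_0$ bounded, and consider the finite-dimensional tensor product $\opn{St}\ot \opn{Fr}_\kappa(V(\mu^{\opn{Tan}}))$. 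This product contains $L(\lambda)$ as a composition factor and, upon restriction to $u_{q,\kappa}$, is projective: the $\opn{Fr}_\kappa$-image restricts to a semisimple direct sum of invertible $\Sigma$-characters in $\Rep(u_{q,\kappa})$, and tensoring a projective with such a representation preserves projectivity. The indecomposable summand covering $L(\lambda)$ is then the required projective cover, and its finite-dimensionality is automatic.

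Finally, the Frobenius property follows by pairing the two sides. Restriction $\res:\Rep(G_q)\to\Rep(u_{q,\kappa})$ is faithfully exact (Section~\ref{sect:uquqk} and \cite{skryabin07}), and $u_{q,\kappa}$ is Frobenius by Larson--Sweedler \cite{larsonsweedler69}; hence a finite-dimensional $G_q$-representation is projective in $\Rep(G_q)$ iff it is projective — equivalently injective — over $u_{q,\kappa}$. Matching each $P(\lambda)$ with the injective hull of its socle yields a Frobenius duality $P(\lambda)\cong Q(\lambda^\vee)$, simultaneously establishing finite-dimensionality of $Q(\lambda^\vee)$ and the Frobenius property of $\Rep(G_q)$. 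The main obstacle is verifying that $\opn{St}\ot \opn{Fr}_\kappa(V(\mu^{\opn{Tan}}))$ is projective over $u_{q,\kappa}$ uniformly in $\mu^{\opn{Tan}}$ and that a dominant decomposition $\lambda=\lambda_0+\mu^{\opn{Tan}}$ always exists — this uses the full strength of the lattice analysis $X^{\opn{Tan}}\subseteq X^\ast\subseteq X$ from Sections~\ref{sect:Tan} and~\ref{sect:sqa}, and is where the novelty over the simply-laced odd-order setting of \cite{andersenpolowen91} lies.
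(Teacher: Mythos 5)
The proposal identifies the right ingredients (comodule realization for enough injectives, a Steinberg module, tensoring with images of twisted quantum Frobenius, transfer to the small quantum algebra via a lattice decomposition $\lambda=\lambda_0+\mu^{\opn{Tan}}$), which is indeed the style of argument used in \cite{negron} and in Andersen--Polo--Wen. But there is a genuine gap at the single step that actually requires work.

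You assert that, because restriction $\res:\Rep(G_q)\to\Rep(u_{q,\kappa})$ is faithfully exact and $u_{q,\kappa}$ is Frobenius by Larson--Sweedler, ``hence a finite-dimensional $G_q$-representation is projective in $\Rep(G_q)$ iff it is projective over $u_{q,\kappa}$.'' This implication does not follow from those two facts. Faithfully exact restriction to a subalgebra carries no information about the transfer of projectivity in either direction; restriction $\Rep(G)\to\Rep(H)$ for any closed subgroup $H\subseteq G$ of an algebraic group is faithfully exact, yet projectivity does not transfer. The ``only if'' direction needs something like freeness of $\dot U_q$ over $u_{q,\kappa}$ (a Steinberg tensor product decomposition at the level of algebras). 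The ``if'' direction -- projectivity over the finite-dimensional subalgebra implies projectivity in the big category -- is precisely \cite[Theorem 11.3]{negron} (equivalently Theorem~\ref{thm:1359} of the present paper), and in both \cite{negron} and \cite{andersenpolowen91,andersenpolowen92} this is a \emph{consequence} of the Lemma you are trying to prove, not an ingredient in its proof. Using it here is circular.

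Concretely, your Step 2 produces a finite-dimensional object $\opn{St}\ot\opn{Fr}_\kappa(V(\mu^{\opn{Tan}}))$ which you verify is projective over $u_{q,\kappa}$, but you never establish that it is projective (or injective) in $\Rep(G_q)$, which is what ``finite-dimensional projective cover'' requires. The standard route (Andersen--Polo--Wen, and \cite{negron}) instead first proves injectivity of $\opn{St}$ in $\Rep(G_q)$ \emph{directly}, by a cohomological vanishing / extremal-weight argument, and then shows that tensoring with objects in the Frobenius image preserves injectivity. Only after finite-dimensional injectives are in hand does one dualize to get projectives, conclude Frobenius-ness, and only \emph{then} derive the restriction criterion. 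Your proposal skips the direct injectivity proof for $\opn{St}$ and tries to replace it with the transfer principle, which is the deeper downstream theorem. That missing cohomological/extremal-weight argument is the mathematical heart of the result and cannot be bypassed by faithful exactness.
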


We now apply \cite[Theorem 11.3]{negron} to obtain the following.

\begin{theorem}\label{thm:1359}
An object $V$ in $\Rep(G_q)$ is projective (equivalently injective) if and only if its restriction to $\Rep(u_{q,\kappa})$ is projective (equivalently injective).
\end{theorem}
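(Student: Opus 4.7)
The plan is to reduce Theorem \ref{thm:1359} to a composition of two detection results that have already been established, one external to this paper and one internal. The two restriction functors
\[
\res_1:\Rep(G_q)\to \Rep(u_{q,\kappa})\ \ \text{and}\ \ \res_2:\Rep(u_{q,\kappa})\to \Rep(\bar{u}_q)
\]
are defined via the Hopf algebra inclusions $\bar{u}_q\subseteq u_{q,\kappa}$ (recorded in Section \ref{sect:uquqk}) and $u_{q,\kappa}\subseteq \hat{U}_q$, so their composite is precisely the restriction $\Rep(G_q)\to \Rep(\bar{u}_q)$ along the algebra map $\bar{u}_q\to \hat{U}_q$. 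This factorisation is the only compatibility one needs to verify, and it is essentially by definition.

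First I would invoke \cite[Theorem 11.3]{negron}, which is cited immediately above Theorem \ref{thm:1359} and provides exactly the ``outer'' criterion: an object $V$ in $\Rep(G_q)$ is projective (equivalently injective) if and only if its restriction to $\Rep(\bar{u}_q)$ is projective (equivalently injective). The fact that projectivity coincides with injectivity in each of the three categories is used freely here; for $\Rep(G_q)$ it is the Frobenius statement from Theorem \ref{thm:1359}'s preceding theorem, for $\Rep(u_{q,\kappa})$ it follows from Larson--Sweedler, and for $\Rep(\bar{u}_q)$ it is Skryabin's theorem as recalled just before Corollary \ref{cor:1328}.

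Next I would apply Corollary \ref{cor:1328}, which gives the ``inner'' criterion: an object $W$ in $\Rep(u_{q,\kappa})$ is projective (equivalently injective) if and only if $\res_2 W$ is projective (equivalently injective) in $\Rep(\bar{u}_q)$. This is the step that uses the genuine structural input of Lemma \ref{lem:2034}, namely normality of $\bar{u}_q$ in $u_{q,\kappa}$ and freeness of $u_{q,\kappa}$ as a $\bar{u}_q$-module. Chaining the two biconditionals through the common middle condition ``$\res_2\res_1 V$ is projective in $\Rep(\bar{u}_q)$'' yields the desired equivalence between projectivity of $V$ in $\Rep(G_q)$ and projectivity of $\res_1 V$ in $\Rep(u_{q,\kappa})$.

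The hard part is not really internal to this argument: all of the substance has been pushed into \cite[Theorem 11.3]{negron} (which is where faithful flatness of $\hat{U}_q$ over $\bar{u}_q$, or the corresponding freeness statement for integrable representations, does the real work) and into Lemma \ref{lem:2034}. The present theorem is therefore a short bookkeeping consequence of those two detection results, together with the observation that restriction from $\Rep(G_q)$ to $\Rep(\bar{u}_q)$ factors through $\Rep(u_{q,\kappa})$.
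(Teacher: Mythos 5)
Your proposal is correct and follows exactly the paper's own argument: both invoke \cite[Theorem 11.3]{negron} for the criterion over $\bar{u}_q$, combine it with Corollary \ref{cor:1328}, and chain the two through the factorization of restriction $\Rep(G_q)\to\Rep(u_{q,\kappa})\to\Rep(\bar{u}_q)$. No substantive difference.
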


\begin{proof}
By \cite[Theorem 11.3]{negron}, a $G_q$-representation $\Rep(G_q)$ is projective if and only if it is projective over $\bar{u}_q$.  So the result follows by Corollary \ref{cor:1328} and the fact that the restriction functor $\Rep(G_q)\to \Rep(\bar{u}_q)$ \cite[\S 5.3]{negron} factors through $\Rep(u_{q,\kappa})$.
\end{proof}

An analysis of the simples now gives the following.

\begin{lemma}\label{lem:surj}
The restriction functor $\res:\Rep(G_q)\to \Rep(u_{q,\kappa})$ is surjective
\end{lemma}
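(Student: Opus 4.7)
The plan is to establish surjectivity by a two-step argument: first show that every simple $u_{q,\kappa}$-module appears as a subquotient of the restriction of a simple $G_q$-module, and then bootstrap from simples to arbitrary objects using the projective preservation of Theorem \ref{thm:1359} together with the fact that $\Rep(u_{q,\kappa})$ is compactly generated by the regular module.

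For the first step, given $\bar{\lambda} \in \Lambda^\vee = X/\opn{rad}(q,\kappa)$, I would choose a dominant lift $\lambda' \in X^+$. Such a lift exists because $\opn{rad}(q,\kappa)$ is a finite-index sublattice of $X$ and hence spans $X \otimes \mathbb{Q}$, so it contains strictly dominant elements; shifting any representative by a sufficiently large positive multiple of such a dominant element produces a dominant representative. The highest weight vector $v \in L(\lambda')$ is annihilated by every $E_\alpha$, hence in particular by all root vectors in $u^+_{q,\kappa}$, and is a $\Lambda$-eigenvector of weight $\bar{\lambda}$. Therefore $u_{q,\kappa} \cdot v \subseteq \res L(\lambda')$ is a quotient of the baby Verma $M(\bar{\lambda})$, and consequently admits the simple $L(\bar{\lambda})$ as a quotient. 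This places $L(\bar{\lambda})$ as a subquotient of $\res L(\lambda')$.

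For the second step, I would leverage the fact that subquotients compose: if the regular representation $u_{q,\kappa}$ itself lies in $\langle \res(\Rep(G_q))\rangle$, then so does every $W$, since any $W$ is a quotient of some $u_{q,\kappa}^{(I)}$, and $\res$ commutes with direct sums. So it suffices to exhibit each indecomposable projective $P(\bar{\lambda})$ as a summand of $\res V_{\bar{\lambda}}$ for some $V_{\bar{\lambda}} \in \Rep(G_q)$. Taking $V_{\bar{\lambda}} = P_{G_q}(L(\lambda'))$ for a dominant lift $\lambda'$, Theorem \ref{thm:1359} gives that $\res V_{\bar{\lambda}}$ is projective over $u_{q,\kappa}$, hence decomposes as a direct sum of various $P(\bar{\mu})$. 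The appearance of $P(\bar{\lambda})$ as a summand is equivalent to the appearance of $L(\bar{\lambda})$ in the cosocle of $\res V_{\bar{\lambda}}$, and this in turn follows from the claim that $L(\bar{\lambda})$ sits in the cosocle of $\res L(\lambda')$, since the surjection $P_{G_q}(L(\lambda')) \twoheadrightarrow L(\lambda')$ descends to a surjection of cosocles.

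The main obstacle is precisely this cosocle claim: the baby Verma argument from step one realizes $L(\bar{\lambda})$ only as a quotient of the sub-$u_{q,\kappa}$-module generated by the highest weight vector, not directly as a quotient of $\res L(\lambda')$. I would attack this by combining duality for the Frobenius algebra $u_{q,\kappa}$ with a lowest-weight-vector analysis: the lowest weight vector in $L(\lambda')$ is annihilated by all negative root vectors in $u^-_{q,\kappa}$, so it generates a co-baby-Verma quotient of a dual baby Verma in $\res L(\lambda')$, which (passing through $L(\lambda')^* \cong L(-w_0\lambda')$) provides the missing cosocle information for an appropriate twist of $\bar{\lambda}$. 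Ranging over both highest and lowest weight constructions and exploiting duality $\res(V^*) \cong (\res V)^*$, together with the Nakayama permutation of $u_{q,\kappa}$ exchanging $P(\bar{\lambda})$ with the injective envelope of $L(\bar{\lambda})$, should exhaust every indecomposable projective. Alternatively, one may bypass the cosocle analysis entirely by invoking Schauenburg's criterion (from Section \ref{sect:tann_reconstructor}): $\res$ is surjective if and only if the induced map on representing coalgebras is surjective, which dualizes to the injectivity of the algebra map $u_{q,\kappa} \hookrightarrow \hat{U}_q$ — and the latter is immediate from the construction of $u_{q,\kappa}$ as a subalgebra of the completed enveloping algebra recorded in Section \ref{sect:summary}.
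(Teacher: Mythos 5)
Your outline matches the paper's proof in its essentials. Both reduce surjectivity to producing, for each $\bar\lambda$, a surjection $\res(P)\twoheadrightarrow L(\bar\lambda)$ from the restriction of a projective $P$ in $\Rep(G_q)$; both observe, using Theorem~\ref{thm:1359}, that it then suffices to produce a surjection $\res(L')\twoheadrightarrow L(\bar\lambda)$ from a simple $L'$ (take $P$ to be its projective cover); and both take $L'=L(\lambda')$ for a dominant lift $\lambda'$ of $\bar\lambda$.

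The point you flag as the crux is a genuine subtlety, and you are right that the highest-weight-vector argument by itself exhibits $L(\bar\lambda)$ only as a quotient of the $u_{q,\kappa}$-submodule $u_{q,\kappa}\cdot v\subseteq\res L(\lambda')$, not as a quotient of $\res L(\lambda')$. The paper's intended justification is the one indicated in the Remark immediately after the lemma: $\res L(\lambda')$ is in fact \emph{semisimple} as a $u_{q,\kappa}$-module, via the Steinberg decomposition (\cite{lusztig89}, \cite[Theorem 8.7, Corollary 8.8]{negron}). Semisimplicity makes $u_{q,\kappa}\cdot v$ a direct summand of $\res L(\lambda')$, so the quotient map onto $L(\bar\lambda)$ extends to a surjection out of $\res L(\lambda')$. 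That is the clean way to close the gap in your argument.

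Neither of your proposed repairs achieves this. The lowest-weight-vector argument again produces a \emph{submodule} of $\res L(\lambda')$, not a quotient: the $u_{q,\kappa}$-module generated by any vector is a submodule, and the co-baby-Verma that the lowest weight vector generates gives you socle information rather than the needed cosocle constituent, which duality and the Nakayama twist do not obviously repair (nor does $P(\bar\lambda)$ coincide with the injective envelope of $L(\bar\lambda)$ in general, as your phrasing suggests). The proposed bypass via Schauenburg's criterion is circular: $u_{q,\kappa}$ is defined in Section~\ref{sect:sqa2} abstractly as a quotient of the smash product $\mathfrak{v}_q\rtimes\Lambda$, not a priori as a subalgebra of $\hat{U}_q$. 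Both Section~\ref{sect:summary} and the opening of Section~\ref{sect:vq} cite precisely Lemma~\ref{lem:surj}, or its consequences, as the reason the canonical map $u_{q,\kappa}\to\hat{U}_q$ is injective, so you cannot take that embedding as an input to the proof.
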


\begin{proof}
Since the restriction functor preserves projectives, by Theorem \ref{thm:1359}, it suffices to show that each simple $L(\bar{\lambda})$ in $\Rep(u_{q,\kappa})$ admits a surjection $\res(P)\to L(\bar{\lambda})$ from some projective $P$ in $\Rep(G_q)$.  By taking projective covers in $\Rep(G_q)$, we can find such a $P$ in $\Rep(G_q)$ provided we can find a simple representation $L'$ in $\Rep(G_q)$ which admits a surjection $\res(L')\to L(\bar{\lambda})$.  However, we can find such a simple by lifting $\bar{\lambda}$ to a dominant weight $\lambda\in X^+$ and taking $L'=L(\lambda)$.
\end{proof}

\begin{remark}
From a slightly more refined perspective, one can show that each simple in $\Rep(G_q)$ restricts to a semisimple object in $\Rep(u_{q,\kappa})$.  This follows from a consideration of the Steinberg decomposition in the simply-connected case \cite{lusztig89} \cite[Theorem 8.7, Corollary 8.8]{negron}.
\end{remark}

We can also apply the findings from Theorem \ref{thm:1359} to deduce exactness, and faithfulness, of induction.

\begin{proposition}\label{prop:ind_exact}
The right adjoint to restriction $\opn{ind}:\Rep(u_{q,\kappa})\to \Rep(G_q)$ is both exact and faithful.
\end{proposition}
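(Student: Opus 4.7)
The plan is to deduce both exactness and faithfulness of $\opn{ind}$ from Theorem~\ref{thm:1359} (restriction preserves projectives) and Lemma~\ref{lem:surj} (restriction is surjective), via the standard adjunction isomorphism $\Hom_{G_q}(V,\opn{ind}(M))\cong \Hom_{u_{q,\kappa}}(\res V,M)$ together with the projection formula for $\opn{ind}$.

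Exactness is essentially formal. As a right adjoint $\opn{ind}$ is automatically left exact, and for right exactness I would invoke the standard dictionary: for an adjunction $F\dashv G$ between abelian categories with enough projectives, $G$ is exact if and only if $F$ preserves projectives, since $\Hom(FP,-)\cong \Hom(P,G(-))$ is exact precisely when $FP$ is projective. Applied to $\res\dashv \opn{ind}$, this reduces exactness of $\opn{ind}$ to the preservation-of-projectives statement established in Theorem~\ref{thm:1359}.

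For faithfulness I would employ the projection formula
\[
\opn{ind}(\res V\ot M)\cong V\ot \opn{ind}(M)\qquad\text{for finite-dimensional }V\in\Rep(G_q),\ M\in\Rep(u_{q,\kappa}),
\]
which follows from the chain of natural isomorphisms
\[
\Hom_{G_q}(V',\opn{ind}(\res V\ot M))\cong \Hom_{u_{q,\kappa}}(\res(V'\ot V^\ast),M)\cong \Hom_{G_q}(V',V\ot \opn{ind}(M))
\]
using rigidity of $V$ and $\res V$ together with the tensor structure on $\res$. Having already established exactness, faithfulness reduces to showing that $\opn{ind}(L)\neq 0$ for every simple $L\in \Rep(u_{q,\kappa})$ (since every nonzero $M$ has a simple subobject, $u_{q,\kappa}$ being finite-dimensional). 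Suppose to the contrary that $\opn{ind}(L)=0$ for some simple $L$. By Lemma~\ref{lem:surj}, the dual $L^\ast$ appears as a subquotient of $\res V$ for some finite-dimensional $V\in \Rep(G_q)$. Tensoring the subquotient inclusion with $L$ and applying the exact functor $\opn{ind}$, we obtain that $\opn{ind}(L^\ast\ot L)$ is a subquotient of $\opn{ind}(\res V\ot L)\cong V\ot \opn{ind}(L)=0$, and therefore vanishes. But the coevaluation $\1\hookrightarrow L^\ast\ot L$ combined with exactness forces $\opn{ind}(\1)\hookrightarrow \opn{ind}(L^\ast\ot L)=0$, contradicting
\[
\Hom_{G_q}(\1,\opn{ind}(\1))\cong \Hom_{u_{q,\kappa}}(\res\1,\1)=\Hom_{u_{q,\kappa}}(\1,\1)=k.
\]

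The real technical input has already been absorbed into Theorem~\ref{thm:1359} and Lemma~\ref{lem:surj}; the only genuine step remaining is the projection-formula argument above, and the rest is routine adjoint-functor bookkeeping. The main conceptual hurdle --- moving from surjectivity of $\res$ (subquotients only) to genuine non-vanishing of $\opn{ind}$ on each simple --- is overcome by tensoring against a dual and exploiting the nonzero unit map $\1\to\opn{ind}(\1)$ produced by adjunction.
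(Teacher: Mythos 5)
Your proof is correct, and the exactness argument is the same in substance as the paper's: both reduce exactness of $\opn{ind}$ to preservation of projectives (equivalently injectives, since the categories are Frobenius) under $\res$, i.e.\ to Theorem~\ref{thm:1359}, via the standard adjunction dictionary. The faithfulness argument, however, takes a genuinely different route. The paper extracts from the \emph{proof} of Lemma~\ref{lem:surj} the sharper fact that each simple $L\in\Rep(u_{q,\kappa})$ admits a surjection $\res(L')\twoheadrightarrow L$ from a simple $L'\in\Rep(G_q)$, so that $\Hom_{G_q}(L',\opn{ind}L)\cong\Hom_{u_{q,\kappa}}(\res L',L)\neq 0$ immediately gives $\opn{ind}(L)\neq 0$ --- a one-line adjunction computation. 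You instead use only the \emph{statement} of Lemma~\ref{lem:surj} (that $L^\ast$ is a subquotient of some $\res V$), compensating with the projection formula $\opn{ind}(\res V\ot M)\cong V\ot\opn{ind}(M)$ and the coevaluation $\1\hookrightarrow L^\ast\ot L$ to derive a contradiction from $\opn{ind}(L)=0$. Your argument is longer but more robust: it does not need to know anything about how the surjective functor $\res$ achieves surjectivity on simples, only that it is surjective, monoidal, and has an exact right adjoint. The paper's argument is shorter but leans on the specific highest-weight structure producing the surjection $\res(L(\lambda))\twoheadrightarrow L(\bar\lambda)$. Both are valid; the trade-off is brevity versus generality.
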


\begin{proof}
As explained in \cite[proof of Proposition 10.3]{negron}, which itself follows \cite[Theorem 4.8]{andersenpolowen92}, exactness of induction follows from the fact that restriction preserves injectives.  (See Theorem \ref{thm:1359}.)
\par

As for faithfulness, we've argued in the proof of Lemma \ref{lem:surj} that each simple $L$ in $\Rep(u_{q,\kappa})$ admits a surjection $\res(L')\to L$ from some simple $L'$ in $\Rep(G_q)$.  From the formula
\[
\Hom_{u_{q,\kappa}}(\res L', L) = \Hom_{G_q}(L', \opn{ind} L)
\]
it follows that $\opn{ind}(L)$ is nonzero for each simple $L$ in $\Rep(u_{q,\kappa})$.  Via exactness we find that $\opn{ind}(V)\neq 0$ at arbitrary nonzero $V$ in $\Rep(u_{q,\kappa})$, and hence that induction is faithful.
\end{proof}

\subsection{Further analyses of restriction and induction}
\label{sect:vq}

We establish a few more preliminaries for the calculation of the intermediate fiber \eqref{eq:2034}.
\par

First, we note that surjectivity of the restriction functor $\res$ implies that the Hopf map $u_{q,\kappa}\to \hat{U}_q$ to the completed quantum enveloping algebra is injective.  One sees now, directly from the expression of the $R$-matrix given in Section \ref{sect:R}, that the $R$-matrix for $\Rep(G_q)$ sits in the subalgebra
\[
R\ \in u_{q,\kappa}\ot u_{q,\kappa}\subseteq \hat{U}_q\ot \hat{U}_q.
\]
This $R$-matrix then provides a braiding on the category of $u_{q,\kappa}$-representations.

\begin{proposition}
There is a unique braiding on the tensor category $\Rep(u_{q,\kappa})$ under which the restriction functor $\res:\Rep(G_q)\to\Rep(u_{q,\kappa})$ is braided monoidal.
\end{proposition}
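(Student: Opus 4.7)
The plan is to construct the braiding via the $R$-matrix, and then deduce uniqueness from surjectivity of $\res$.

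For existence, the observation immediately preceding the statement places the $R$-matrix inside $u_{q,\kappa}\ot u_{q,\kappa}\subseteq \hat{U}_q\ot\hat{U}_q$. The first step is to verify that $R$ endows $u_{q,\kappa}$ with a quasi-triangular Hopf algebra structure in the sense of \cite[\S\ 8.1]{egno15}. Concretely, I would check the identities
\[
(\Delta\ot 1)(R)=R_{13}R_{23},\quad (1\ot\Delta)(R)=R_{13}R_{12},\quad \Delta^{\opn{op}}(a)R=R\Delta(a)\ \ (a\in u_{q,\kappa})
\]
as equalities in $u_{q,\kappa}^{\ot 2}$ and $u_{q,\kappa}^{\ot 3}$. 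These hold because the corresponding identities are already known to hold in the completed algebra $\hat{U}_q$, the Hopf structure maps on $u_{q,\kappa}$ are restrictions of those on $\hat{U}_q$, and both sides of each identity already lie in appropriate tensor powers of $u_{q,\kappa}$. The standard formula $c_{V,W}(v\ot w):=R_{21}(w\ot v)$ then yields a braiding on $\Rep(u_{q,\kappa})$. Compatibility with $\res$ is immediate, since $\res$ has identity tensor compatibility and the braiding on $\Rep(G_q)$ is defined by the same element $R$ acting on the same underlying vector spaces.

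For uniqueness, suppose $c$ and $c'$ are two braidings on $\Rep(u_{q,\kappa})$ that both make $\res$ braided monoidal. For any pair of objects in the image of $\res$, both $c$ and $c'$ are forced to coincide with the image of the braiding in $\Rep(G_q)$, so they agree on such pairs. By Lemma \ref{lem:surj} an arbitrary object in $\Rep(u_{q,\kappa})$ is a subquotient of some $\res(X)$, and naturality of the braidings on subobject inclusions $V'\hookrightarrow \res(X)$, $W'\hookrightarrow\res(Y)$ produces commutative squares
\[
(j\ot i)\circ c_{V',W'}=c_{\res X,\res Y}\circ(i\ot j),\qquad (j\ot i)\circ c'_{V',W'}=c'_{\res X,\res Y}\circ(i\ot j),
\]
and injectivity of $j\ot i$ forces $c_{V',W'}=c'_{V',W'}$. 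A dual argument with quotient maps extends the equality to all pairs of objects.

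The main content of the proposition has effectively been supplied by the preceding observation placing $R$ inside $u_{q,\kappa}^{\ot 2}$, so no serious obstacle is anticipated; the remainder is quasi-triangularity descending from $\hat{U}_q$ together with the naturality-plus-surjectivity argument for uniqueness.
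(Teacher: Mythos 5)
Your proof is correct and follows essentially the same route as the paper, which simply observes that $R$ lands in $u_{q,\kappa}^{\ot 2}$ (via the analysis of Section \ref{sect:R}) and then remarks, after the statement, that ``the uniqueness claim here comes from surjectivity of restriction.'' Your write-up supplies the details the paper leaves implicit: that the quasi-triangularity identities descend from $\hat{U}_q$ to $u_{q,\kappa}$ because the relevant tensor powers of $u_{q,\kappa}$ embed in those of $\hat{U}_q$, and that uniqueness follows from naturality of the braiding applied to subobject and quotient maps from objects in the image of $\res$, using Lemma \ref{lem:surj}.
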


The uniqueness claim here comes from surjectivity of restriction.
\par

As was discussed in Section \ref{sect:uquqk}, the restriction functor sends each $G^\ast_\varepsilon$-representation in $\Rep(G_q)$ to a direct sum of $1$-dimensional simples over $u_{q,\kappa}$.  These are precisely the simples with highest weights in $X^\ast/\opn{rad}(q,\kappa)$.  We consider the more restrictive class of simples which are labeled by elements in the quotient $X^{\opn{Tan}}/\opn{rad}(q,\kappa)$, take
\[
\Sigma:=\big(X^{\opn{Tan}}/\opn{rad}(q,\kappa)\big)^\vee,
\]
and observe a tensor embedding
\begin{equation}\label{eq:2144}
\Rep(\Sigma)\to \Rep(u_{q,\kappa}).
\end{equation}
This embedding defines, and is defined by, a Hopf surjection $u_{q,\kappa}\to k\Sigma$.
\par

Let us note that $\Rep(\Sigma)$ is precisely the image of $\Rep(\dG)\cong \Rep(\dG_\varepsilon)$ in $\Rep(u_{q,\kappa})$ under restriction.  Indeed, $\Sigma$ is identified with a discrete toral subgroup in $\dG$ and the induced map $\Rep(\dG_\varepsilon)\to \Rep(\Sigma)$ is just given by restriction along the inclusion $\Sigma\to\dG$.
\par

Under the natural braiding on $\Rep(u_{q,\kappa})$, the embedding $\Rep(\Sigma)\to \Rep(u_{q,\kappa})$ identifies $\Rep(\Sigma)$ with a M\"uger central subcategory in $\Rep(u_{q,\kappa})$.  One can show further that the Tannakian center in $\Rep(u_{q,\kappa})$ is precisely $\Rep(\Sigma)$, and one sees directly that the braiding on $\Rep(\Sigma)$ is simply induced by the dualized form $\varepsilon$.  So we may take
\[
\Rep(\Sigma_{\varepsilon})=\Rep(\Sigma)\ \text{with non-trivial symmetry provided by }\varepsilon
\]
and refine the embedding of \eqref{eq:2144} to a central tensor embedding from $\Rep(\Sigma_{\varepsilon})$ into $\Rep(u_{q,\kappa})$.
\par

We now have a diagram of braided tensor functors
\begin{equation}\label{eq:2135}
\xymatrix{
\Rep(\dG_\varepsilon)\ar[rr]^{\opn{Fr}}\ar[d]_{\res} & & \Rep(G_q)\ar[d]^{\res}\\
\Rep(\Sigma_{\varepsilon})\ar[rr] & & \Rep(u_{q,\kappa})
}
\end{equation}
in which the vertical maps are surjective and the horizontal maps are M\"uger central embeddings.

\begin{lemma}\label{lem:2143}
$\opn{ind}_{u_{q,\kappa}}^{G_q}(\1)=\opn{ind}_{\Sigma_{\varepsilon}}^{\dG_\varepsilon}(\1)$.
\end{lemma}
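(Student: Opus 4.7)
The plan is to verify the equality in $\Rep(G_q)$ after applying $\Hom_{G_q}(V,-)$ for an arbitrary $V \in \Rep(G_q)$ and concluding via Yoneda. By the defining adjunction, the left hand side is
\[
\Hom_{G_q}(V,\opn{ind}_{u_{q,\kappa}}^{G_q}(\1)) = (\res V)^{u_{q,\kappa}},
\]
the subspace of $u_{q,\kappa}$-invariant vectors in $V$. An invariant vector $v$ is precisely one killed by every root vector $E_\gamma, F_\gamma \in u_{q,\kappa}$ with $l_\gamma > 1$ and homogeneous of weight in $\opn{rad}(q,\kappa) \subseteq X^{\opn{Tan}}$; the inclusion here is immediate from the description of the radical of $q$ together with Lemma \ref{lem:Tan_lattice}.

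The key step is to show that any such $v$ in fact lies in the maximal subobject $V^{\opn{Tan}} \subseteq V$ belonging to the Tannakian center $Z_{\opn{Tan}}(\Rep(G_q)) = \opn{Fr}(\Rep(\dG_\varepsilon))$ (Theorem \ref{thm:Z_classical}). By Proposition \ref{prop:tannakiancenter}, this reduces to showing that the cyclic $U_q$-subrepresentation $U_q v \subseteq V$ is quasi-classical, i.e.\ annihilated by all of the root vectors $E_\gamma, F_\gamma$ with $l_\gamma > 1$, and has weight grading contained in $X^{\opn{Tan}}$. The weight claim follows because on a vector killed by the small root vectors, only the divided powers $E_\gamma^{(n)}, F_\gamma^{(n)}$ with $l_\gamma \mid n$ contribute nontrivially, shifting weights by multiples of $l_\gamma \gamma \in lQ \subseteq X^{\opn{Tan}}$. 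Quasi-classicality then follows by combining the triangular decomposition of $U_q$ with Lemma \ref{lem:696} and the commutation relations between the small and Frobenius root vectors in $U_q$.

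Having established that $(\res V)^{u_{q,\kappa}} \subseteq V^{\opn{Tan}}$, write $V^{\opn{Tan}} = \opn{Fr}(W)$ for the unique $W \in \Rep(\dG_\varepsilon)$ afforded by full faithfulness of $\opn{Fr}$. On $\opn{Fr}(W)$ the $u_{q,\kappa}$-action factors through the Hopf quotient $u_{q,\kappa} \twoheadrightarrow k\Sigma$ dual to the embedding \eqref{eq:2144}: the small root vectors act trivially by quasi-classicality, while the toral $k\Lambda$-action factors through $k\Sigma = k(X^{\opn{Tan}}/\opn{rad}(q,\kappa))^\vee$ because all weights of $\opn{Fr}(W)$ lie in $X^{\opn{Tan}}$. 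Hence $(\res V)^{u_{q,\kappa}} = W^\Sigma$, and by Frobenius reciprocity combined with full faithfulness of $\opn{Fr}$ and closedness of $Z_{\opn{Tan}}$ under subobjects, we obtain
\[
(\res V)^{u_{q,\kappa}} = W^\Sigma = \Hom_{\dG_\varepsilon}(W, \opn{ind}_{\Sigma_\varepsilon}^{\dG_\varepsilon}(\1)) = \Hom_{G_q}(V, \opn{Fr}(\opn{ind}_{\Sigma_\varepsilon}^{\dG_\varepsilon}(\1))).
\]
This yields the desired identification $\opn{ind}_{u_{q,\kappa}}^{G_q}(\1) = \opn{ind}_{\Sigma_\varepsilon}^{\dG_\varepsilon}(\1)$.

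The main obstacle is the key step above: carefully verifying that the cyclic $U_q$-subrepresentation generated by a $u_{q,\kappa}$-invariant vector is quasi-classical. This requires the divided-power commutation relations in $U_q$ in a nontrivial way, together with Lemma \ref{lem:696} and Proposition \ref{prop:tannakiancenter}. Once this is in hand, the matching of $u_{q,\kappa}$-invariants with $\Sigma$-invariants on the image of Frobenius is immediate, as is the final identification of inductions.
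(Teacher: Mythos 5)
Your plan is a pointwise (Yoneda) variant of the paper's computation, but it has a genuine gap at the step you yourself flag as ``the main obstacle,'' and a smaller directionality issue.

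The smaller issue first: the paper's $\opn{ind}$ is the \emph{right} adjoint to restriction (see Proposition \ref{prop:ind_exact}), so for any $V$ one has
\[
\Hom_{G_q}\big(V,\ \opn{ind}_{u_{q,\kappa}}^{G_q}(\1)\big) = \Hom_{u_{q,\kappa}}(\res V, \1),
\]
the space of $u_{q,\kappa}$-invariant linear functionals on $\res V$, which is not the same subspace as $(\res V)^{u_{q,\kappa}}$. You would either need to dualize throughout, or work with the left adjoint and prove the statement for that object; as written the first displayed equality in your proposal is incorrect.

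The serious gap is the claim that for a $u_{q,\kappa}$-invariant vector $v\in V$ the cyclic $U_q$-module $U_q v$ is quasi-classical with weights in $X^{\opn{Tan}}$. Your sketch --- only divided powers $E_\gamma^{(n)}$ with $l_\gamma\mid n$ contribute, so weights stay in $lQ\subseteq X^{\opn{Tan}}$ --- does not iterate: after applying $E_\gamma^{(l_\gamma)}$ to $v$ there is no a priori reason that $E_\gamma^{(l_\gamma)}v$ is again annihilated by the small root vectors, so one cannot simply track which divided powers kill $v$. The claim is true, but its honest content is the normality of $\Rep(G_q)\to\Rep(\bar{u}_q)$ with kernel $\Rep(G^\ast_\varepsilon)$, equivalently $\O(G_q)^{\bar{u}_q}=\O(G^\ast_\varepsilon)$, which is precisely what the paper's one-paragraph proof cites from \cite[Corollary 7.5]{negron}. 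There is no Yoneda detour in the paper: it computes directly
\[
\opn{ind}_{u_{q,\kappa}}^{G_q}(\1)=\O(G_q)^{u_{q,\kappa}}\subseteq \O(G_q)^{\bar{u}_q}=\O(G^\ast_\varepsilon),
\]
then restricts to the $\Psi$-invariants to land in $\O(\dG_\varepsilon)$, and finally to the $\Sigma$-invariants to obtain $\opn{ind}_{\Sigma_\varepsilon}^{\dG_\varepsilon}(\1)$. Your proposal replaces that single external citation with an unverified commutation argument that, if fully carried out, would amount to reproving the cited result; as written it is a gap rather than a proof.
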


\begin{proof}
We have
\[
\opn{ind}_{u_{q,\kappa}}^{G_q}(\1)=\O(G_q)^{u_{q,\kappa}}\subseteq \O(G_q)^{\bar{u}_q}=\O(G^\ast_\varepsilon),
\]
where the final equality here follows by \cite[Corollary 7.5]{negron}.  We note that $\O(G^\ast_\varepsilon)$ is a $u_{q,\kappa}$-subrepresentation in $\O(G_q)$ which is annihilated by all of the root vectors, so that for $\Psi=(X^\ast/X^{\opn{Tan}})^\vee$ we refine the above inclusion to get
\[
\O(G_q)^{u_{q,\kappa}}=\O(G^\ast_\varepsilon)^{u_{q,\kappa}}\subseteq\O(G^\ast_\varepsilon)^\Psi=\O(\dG_\varepsilon).
\]
Finally, since the $u_{q,\kappa}$-action on $\O(\dG_\varepsilon)$ factors though the quotient $u_{q,\kappa}\to k\Sigma$, we find
\[
\opn{ind}_{u_{q,\kappa}}^{G_q}(\1)=\O(G_q)^{u_{q,\kappa}}=\O(\dG_\varepsilon)^\Sigma=\opn{ind}_{\Sigma_{\varepsilon}}^{\dG_\varepsilon}(\1).
\]
\end{proof}

Note that the form $\kappa$ on $X^{\opn{Tan}}$ descends to a form on the quotient $X^{\opn{Tan}}/\opn{rad}(q,\kappa)$, and hence that the equivalence $F_\kappa:\Rep(\dG)\overset{\sim}\to \Rep(\dG_\varepsilon)$ from Theorem \ref{thm:1021} reduces to a unique symmetric equivalence $\bar{F}_\kappa:\Rep(\Sigma)\to \Rep(\Sigma_{\varepsilon})$ which fits into a diagram
\begin{equation}\label{eq:2163}
\xymatrix{
\Rep(\dG)\ar[rr]^{F_\kappa}\ar[d]_{\res} & & \Rep(\dG_\varepsilon)\ar[d]^{\res}\\
\Rep(\Sigma)\ar[rr]^{\bar{F}_\kappa} & & \Rep(\Sigma_{\varepsilon}).
}
\end{equation}
The functor $\bar{F}_\kappa$ is the identity on the underlying linear categories and has tensor compatibility provided by the form $\kappa$, as in Theorem \ref{thm:1021}.
\par

We now append the diagram \eqref{eq:2163} to the diagram \eqref{eq:2135} to obtain a strictly commuting diagram of braided tensor functors
\begin{equation}\label{eq:2173}
\xymatrix{
\Rep(\dG)\ar[rr]^{\opn{Fr}_\kappa}\ar[d]_{\res} & & \Rep(G_q)\ar[d]^{\res}\\
\Rep(\Sigma)\ar[rr]^{\overline{\opn{Fr}}_\kappa} & & \Rep(u_{q,\kappa}),
}
\end{equation}
where $\opn{Fr}_\kappa$ and $\overline{\opn{Fr}}_\kappa$ are both M\"uger central.  The identification of Lemma \ref{lem:2143} is now expressed as follows.

\begin{lemma}\label{lem:2181}
The natural map
$\opn{Fr}_\kappa\opn{ind}_{\Sigma}^{\dG}(\1)\to \opn{ind}_{u_{q,\kappa}}^{G_q}(\1)$ is an isomorphism.
\end{lemma}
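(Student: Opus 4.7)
The plan is to split $\opn{Fr}_\kappa$ into its two constituent pieces, $\opn{Fr}_\kappa = \opn{Fr}\circ F_\kappa$, where $F_\kappa:\Rep(\dG)\overset{\sim}\to \Rep(\dG_\varepsilon)$ is the symmetric equivalence from Theorem \ref{thm:1021} and $\opn{Fr}:\Rep(\dG_\varepsilon)\to \Rep(G_q)$ is the classical Lusztig embedding (Definition \ref{def:Tan_Fr}). With this decomposition, the statement factors into a claim purely on the Tannakian side and a claim that has already been proved in Lemma \ref{lem:2143}.

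First, I would focus on the diagram \eqref{eq:2163}, which is a strictly commuting square of tensor functors whose horizontal arrows are equivalences. Taking right adjoints of the vertical restriction functors produces a canonical Beck-Chevalley-style natural isomorphism
\[
F_\kappa \circ \opn{ind}_\Sigma^{\dG}\ \overset{\sim}\Longrightarrow\ \opn{ind}_{\Sigma_\varepsilon}^{\dG_\varepsilon} \circ \bar F_\kappa
\]
of functors $\Rep(\Sigma)\to \Rep(\dG_\varepsilon)$. Evaluating at $\1$ and using $\bar F_\kappa(\1)=\1$ gives an isomorphism $F_\kappa(\opn{ind}_\Sigma^{\dG}\1)\overset{\sim}\to \opn{ind}_{\Sigma_\varepsilon}^{\dG_\varepsilon}\1$ in $\Rep(\dG_\varepsilon)$. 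Applying $\opn{Fr}$ yields
\[
\opn{Fr}_\kappa\opn{ind}_\Sigma^{\dG}(\1)\ =\ \opn{Fr}\bigl(F_\kappa\opn{ind}_\Sigma^{\dG}\1\bigr)\ \overset{\sim}\to\ \opn{Fr}\bigl(\opn{ind}_{\Sigma_\varepsilon}^{\dG_\varepsilon}\1\bigr)
\]
inside $\Rep(G_q)$. Lemma \ref{lem:2143} identifies this last object with $\opn{ind}_{u_{q,\kappa}}^{G_q}(\1)$: indeed, the proof of Lemma \ref{lem:2143} realizes $\opn{ind}_{u_{q,\kappa}}^{G_q}\1$ concretely as the subalgebra $\O(\dG_\varepsilon)^\Sigma\subseteq \O(G_q)$, which is the same subobject produced by $\opn{Fr}\,\opn{ind}_{\Sigma_\varepsilon}^{\dG_\varepsilon}\1$.

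The only point requiring care is that the composite isomorphism built above coincides with the natural adjunction map referred to in the statement. That natural map is the one induced, via the adjunction $\res\dashv \opn{ind}$ on the $G_q$ side, by the counit $\res^{\dG}_\Sigma\opn{ind}_\Sigma^{\dG}\1\to \1$ transported through the commuting square \eqref{eq:2173}. Tracing the definitions, one sees that the Beck-Chevalley isomorphism is constructed from precisely this same counit, so agreement is automatic. The one potential technical obstacle is bookkeeping the naturality at the level of the nontrivial tensor structure on $F_\kappa$ and $\bar F_\kappa$; however, since both $F_\kappa$ and $\bar F_\kappa$ act as the identity on the underlying linear categories and $\opn{ind}$ of the trivial representation is, in each case, just the coordinate ring of a homogeneous space, this reduces to the tautological observation that $\O(\dG/\Sigma)$ and $\O(\dG_\varepsilon/\Sigma_\varepsilon)$ are the same vector space with the same action.
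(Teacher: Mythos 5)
Your proof is correct and follows essentially the same route as the paper: the paper simply states Lemma \ref{lem:2181} as a reformulation of Lemma \ref{lem:2143} after pasting the squares \eqref{eq:2163} and \eqref{eq:2135} together, and your Beck--Chevalley argument makes the translation through the equivalences $F_\kappa$, $\bar F_\kappa$ explicit. The key input in both cases is Lemma \ref{lem:2143}'s concrete identification $\opn{ind}_{u_{q,\kappa}}^{G_q}(\1)=\O(\dG_\varepsilon)^\Sigma=\opn{ind}_{\Sigma_\varepsilon}^{\dG_\varepsilon}(\1)$, and your closing remark that the Beck--Chevalley isomorphism for a square with equivalences along the horizontals is automatically invertible (and composes horizontally with the Beck--Chevalley map for \eqref{eq:2135}) correctly handles the only point the paper leaves implicit.
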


\subsection{Fiber calculations}

Via faithful exactness of induction, Proposition \ref{prop:ind_exact}, and the identification of Lemma \ref{lem:2181}, we can apply Proposition \ref{prop:braided_bc} to obtain a calculation of the category \eqref{eq:2034}.

\begin{proposition}\label{prop:int_basechange}
For any alternating square root $\kappa$ of $\varepsilon|_{X^{\opn{Tan}}\times X^{\opn{Tan}}}$, the restriction functor $\res:\Rep(G_q)\to \Rep(u_{q,\kappa})$ reduces to an equivalence of braided tensor categories
\[
\Rep(\Sigma)\ot_{\Rep(\dG)}\Rep(G_q)\overset{\sim}\to \Rep(u_{q,\kappa}).
\]
\end{proposition}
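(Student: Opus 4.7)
The plan is to apply the braided base change criterion, Proposition \ref{prop:braided_bc}, directly to the strictly commuting square of braided tensor functors \eqref{eq:2173}. Before checking hypotheses, I verify the ambient setup of that proposition. The horizontal maps $\opn{Fr}_\kappa$ and $\overline{\opn{Fr}}_\kappa$ are M\"uger central tensor embeddings (the former by Theorem \ref{thm:Z_classical}, the latter by its construction following \eqref{eq:2144}). The right-hand vertical restriction $\res:\Rep(G_q)\to \Rep(u_{q,\kappa})$ is surjective by Lemma \ref{lem:surj}, and the left-hand restriction $\res:\Rep(\dG)\to \Rep(\Sigma)$ is surjective because $\Sigma\subseteq \check T\subseteq \dG$ and every character of $\Sigma$ lifts to an element of $X^{\opn{Tan}}$ which then appears as a weight of some irreducible $\dG$-representation. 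Finally, $\msc{K}=\Rep(\Sigma)$ is a fusion category since $\Sigma$ is a finite group and $\opn{char}(k)=0$.

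It then remains to verify hypotheses (a)--(c) of Proposition \ref{prop:braided_bc}. Hypothesis (a), faithful exactness of the right adjoint $\opn{ind}^{G_q}_{u_{q,\kappa}}:\Rep(u_{q,\kappa})\to \Rep(G_q)$, is precisely Proposition \ref{prop:ind_exact}. Hypothesis (b), that the natural comparison map
\[
\opn{Fr}_\kappa\opn{ind}^{\dG}_{\Sigma}(\1)\ \longrightarrow\ \opn{ind}^{G_q}_{u_{q,\kappa}}(\1)
\]
is an isomorphism in $\Rep(G_q)$, is Lemma \ref{lem:2181}. Hypothesis (c), existence of a fiber functor for $\Rep(u_{q,\kappa})$, is immediate: since $u_{q,\kappa}$ is a finite-dimensional Hopf algebra, the usual forgetful functor $\Rep(u_{q,\kappa})\to\opn{Vect}$ is a symmetric tensor functor and hence a fiber functor for the underlying tensor category.

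Applying Proposition \ref{prop:braided_bc} then yields a braided tensor equivalence $\Rep(\Sigma)\ot_{\Rep(\dG)}\Rep(G_q)\overset{\sim}\to \Rep(u_{q,\kappa})$, induced by the $\Rep(\dG)$-bilinear functor $(U,V)\mapsto \overline{\opn{Fr}}_\kappa(U)\ot\res(V)$ out of $\Rep(\Sigma)\times\Rep(G_q)$. Restricted to the second factor this is precisely $\res$, so the equivalence is the one claimed in the statement.

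The main substantive input, and the only step that might otherwise qualify as an obstacle, is hypothesis (b): the identification $\opn{Fr}_\kappa\opn{ind}^{\dG}_\Sigma(\1)\cong \opn{ind}^{G_q}_{u_{q,\kappa}}(\1)$ amounts to matching up $G_q$-invariants inside the coordinate algebra $\O(G_q)$ with $\Sigma_\varepsilon$-invariants inside $\O(\dG_\varepsilon)$. That computation is however already carried out in Lemmas \ref{lem:2143}--\ref{lem:2181}, using in an essential way the earlier identification $\O(G_q)^{\bar u_q}=\O(G^\ast_\varepsilon)$ of \cite{negron}. Given this preparation, together with the surjectivity of restriction and faithful exactness of induction established in Section \ref{sect:base_change}, the present proposition reduces to a single orderly invocation of the braided base change criterion.
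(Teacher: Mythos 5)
Your argument is exactly the paper's: the paper derives Proposition \ref{prop:int_basechange} by invoking Proposition \ref{prop:braided_bc} on the square \eqref{eq:2173}, with faithful exactness of induction (Proposition \ref{prop:ind_exact}) and the identification $\opn{Fr}_\kappa\opn{ind}^{\dG}_\Sigma(\1)\cong \opn{ind}^{G_q}_{u_{q,\kappa}}(\1)$ (Lemma \ref{lem:2181}) as the only nontrivial inputs, and you have simply spelled out the hypothesis checks that the paper leaves implicit. One tiny slip: the forgetful functor $\Rep(u_{q,\kappa})\to\opn{Vect}$ is a tensor functor but not a \emph{symmetric} one (the source is not symmetric), though this does not matter since condition (c) of Proposition \ref{prop:braided_bc} only requires a fiber functor for the underlying tensor category.
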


Though it is not strictly applicable to our study, we note that a similar analysis, with Proposition \ref{prop:ind_exact}, Lemma \ref{lem:2181}, and Proposition \ref{prop:braided_bc} replaced by \cite[Theorem 10.4, Corollary 7.5]{negron} and Theorem \ref{thm:takeuchi}, yields the following calculation.

\begin{theorem}\label{thm:base_change}
The restriction functor $\Rep(G_q)\to \Rep(\bar{u}_q)$ reduces to an equivalence of $\Rep(G_q)$-module categories
\begin{equation}\label{eq:2231}
\opn{Vect}\ot_{\Rep(G^\ast_\varepsilon)}\Rep(G_q)\overset{\sim}\to \Rep(\bar{u}_q).
\end{equation}
\end{theorem}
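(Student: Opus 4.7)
The plan is to apply Takeuchi's theorem (Theorem \ref{thm:takeuchi}) to the sequence of tensor/module categories
\[
\Rep(G^\ast_\varepsilon) \overset{\opn{Fr}}\to \Rep(G_q) \overset{\res}\to \Rep(\bar{u}_q),
\]
where $\Rep(\bar{u}_q)$ is given its natural pointed module category structure over $\Rep(G_q)$ induced by the trivial representation. This is a direct analogue of the argument used to establish Proposition \ref{prop:int_basechange}, but with $\Rep(\Sigma)$ replaced by $\opn{Vect}$, $\Rep(\dG)$ replaced by $\Rep(G^\ast_\varepsilon)$, and $u_{q,\kappa}$ replaced by $\bar{u}_q$. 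The required fiber functor for $\Rep(\bar{u}_q)$ is simply the forgetful functor to $\opn{Vect}$, available since $\bar{u}_q$ is finite-dimensional.

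The core work is to verify that the displayed sequence is mixed exact in the sense of Definition \ref{def:531}. First, the embedding condition on $\opn{Fr}$ is already known from Proposition \ref{prop:og_frob}. Surjectivity of $\res: \Rep(G_q) \to \Rep(\bar{u}_q)$ can be argued exactly as in Lemma \ref{lem:surj}, using that restriction preserves projectives (by the $\bar{u}_q$-analogue of Theorem \ref{thm:1359}, or equivalently \cite[Theorem 11.3]{negron}) and that each simple $\bar{u}_q$-representation is hit by the restriction of a suitable highest weight simple in $\Rep(G_q)$. Normality of $\res$ together with the identification $\opn{ker}(\res) = \Rep(G^\ast_\varepsilon)$ follows from \cite[Corollary 7.5]{negron}, which identifies the subalgebra of $\bar{u}_q$-coinvariants in $\O(G_q)$ with $\O(G^\ast_\varepsilon)$. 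Finally, faithful exactness of induction $\opn{ind}: \Rep(\bar{u}_q) \to \Rep(G_q)$ is exactly the content of \cite[Theorem 10.4]{negron}.

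With the mixed exact sequence in hand, Theorem \ref{thm:takeuchi} immediately produces the desired equivalence of right $\Rep(G_q)$-module categories
\[
\opn{Vect}\ot_{\Rep(G^\ast_\varepsilon)}\Rep(G_q)\overset{\sim}\to \Rep(\bar{u}_q).
\]
The main subtlety is that Takeuchi's classical faithful flatness theorem from \cite{takeuchi79} is a statement about Hopf algebra coextensions, not abstract tensor categories; the bridge is precisely the availability of the fiber functor on $\Rep(\bar{u}_q)$, which via Tannakian reconstruction translates the categorical sequence into a coextension of Hopf algebras $\O \to \O(G_q) \to C$ with $\O = \O(G_q)^{\bar{u}_q} = \O(G^\ast_\varepsilon)$. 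Once that translation is performed, faithful exactness of induction gives faithful coflatness of $\O(G_q)$ over $C$, and Takeuchi's theorem supplies the module-category equivalence as in the proof of Theorem \ref{thm:takeuchi}.
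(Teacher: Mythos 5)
Your proposal is correct and follows the same route the paper sketches: apply Theorem \ref{thm:takeuchi} to the mixed exact sequence $\Rep(G^\ast_\varepsilon)\to\Rep(G_q)\to\Rep(\bar{u}_q)$, using \cite[Corollary 7.5]{negron} for normality and identification of the kernel, and \cite[Theorem 10.4]{negron} for faithful exactness of induction. The paper leaves the mixed-exactness verification implicit, and you have simply filled in those details in the expected way.
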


\subsection{Comparisons with earlier works I}
\label{sect:compI}

In our formulation of things, Theorem \ref{thm:base_change} is an application of Takeuchi's theorem in conjunction with some basic results for quantum group representations.  For $q$ in which all $q_\alpha$ are of odd order, surjectivity of the restriction functor $\Rep(G_q)\to \Rep(\bar{u}_q)$ and faithful exactness of induction $\Rep(\bar{u}_q)\to \Rep(G_q)$ follow from studies of Andersen, Polo, and Wen from the 90's.  See in particular \cite[Theorem 4.6, Theorem 4.8]{andersenpolowen92}.  So the fiber calculation
\begin{equation}\label{eq:1992}
\opn{Vect}\ot_{\Rep(G^\ast_\varepsilon)}\Rep(G_q)\overset{\sim}\to \Rep(\bar{u}_q)
\end{equation}
can be deduced directly from Takeuchi \cite{takeuchi79} and the original works of Andersen-Polo-Wen \cite{andersenpolowen91,andersenpolowen92}.
\par

In work of Arkhipov and Gaitsgory from the early 2000's \cite{arkhipovgaitsgory03} the authors follow this line of reasoning to obtain a calculation \eqref{eq:1992}, or more precisely a calculation of the fiber
\[
\opn{Vect}\ot_{\Rep(\dG_\varepsilon)}\Rep(G_q),
\]
at some locus of parameters $q$.  See in particular \cite[Theorem 2.8]{arkhipovgaitsgory03} and \cite[Proof of Proposition 3.10]{arkhipovgaitsgory03}.\footnote{In \cite{arkhipovgaitsgory03} the authors' independently reproduce the results of Takeuchi \cite{takeuchi79}, so that some portion of the paper is dedicated to recovering the foundations from \cite{takeuchi79}.}  In \cite{arkhipovgaitsgory03} one finds the first suggestion that the category \eqref{eq:1992}, constructed explicitly as a category of relative Hopf modules, has a universal property amongst all abelian categories which are equipped with an appropriately $\Rep(G^\ast_{\varepsilon})$-linear map from $\Rep(G_q)$ \cite[Proposition 4.2]{arkhipovgaitsgory03}.\footnote{At lest, it is the first to the author's knowledge.}

\begin{remark}
In \cite{arkhipovgaitsgory03} the authors propose to work specifically at parameters $q$ with all $q_\alpha$ of even order, though the absence of the dual parameter $\varepsilon$ in their expression of quantum Frobenius in \cite[\S\ 1.3]{arkhipovgaitsgory03} suggests that a more precise consideration of parameters may be in order (cf.\ \cite[\S\ 3.1, 5.1]{negron21} and Section \ref{sect:examples} above).
\end{remark}

\section{Analysis of the fiber $\opn{Vect}\ot_{\Rep(\dG)}\Rep(G_q)$}
\label{sect:modular}

This section is dedicated to an analysis of the fiber category $\opn{Vect}\ot_{\Rep(\dG)}\Rep(G_q)$, along with its natural braided tensor structure.  (We recall that this category is our universal model for the category of small quantum group representations.)
\par

We show that $\opn{Vect}\ot_{\Rep(\dG)}\Rep(G_q)$ is a finite (rigid) tensor category of a particular Frobenius-Perron dimension, and give conditions under which it is ribbon and non-degenerate.  In Sections \ref{sect:smallquantumgroup} and \ref{sect:smallrep} we construct a quasi-Hopf algebra whose representations recover the fiber in question, and study the representation theory of this algebra in detail.

\subsection{The main results}

To begin, let us recall our twisted Frobenius embedding
\[
\opn{Fr}_\kappa:\Rep(\dG)\to \Rep(G_q)
\]
from Section \ref{sect:Tan}.  Since the embedding $\opn{Fr}_\kappa:\Rep(\dG)\to \Rep(G_q)$ is M\"uger central, and the forgetful functor $\Rep(\dG)\to \opn{Vect}$ is a symmetric tensor functor, the category $\opn{Vect}\ot_{\Rep(\dG)}\Rep(G_q)$ inherits a unique braided monoidal structure under which the reduction map
\[
\Rep(G_q)\to \opn{Vect}\ot_{\Rep(\dG)}\Rep(G_q)
\]
is a braided monoidal functor.  We have the following generic description of the above fiber, along with its braided monoidal structure.

\begin{theorem}\label{thm:1}
The fiber $\opn{Vect}\ot_{\Rep(\dG)}\Rep(G_q)$ is a finite (rigid) braided tensor category of Frobenius-Perron dimension
\begin{equation}\label{eq:fpdim1}
\opn{FPdim}\big(\opn{Vect}\ot_{\Rep(\dG)}\Rep(G_q)\big) = [X:X^{\opn{Tan}}]\cdot (\prod_{\gamma\in \Phi^+} l_\gamma)^2.
\end{equation}
Furthermore, this category is integral, and hence is realized as representations for a quasi-Hopf algebra whose vector space dimension is equal to \eqref{eq:fpdim1}.
\end{theorem}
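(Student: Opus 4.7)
The plan is to reduce to the finite-dimensional setting via the intermediate base change of Section \ref{sect:base_change}. Let $\Sigma$ be the finite toral group from \eqref{eq:sig_ma}, sitting as a discrete subgroup of $\dG$. By associativity of the relative tensor product, one has
\[
\opn{Vect}\ot_{\Rep(\dG)}\Rep(G_q) \;\cong\; \opn{Vect}\ot_{\Rep(\Sigma)}\bigl(\Rep(\Sigma)\ot_{\Rep(\dG)}\Rep(G_q)\bigr),
\]
and Proposition \ref{prop:int_basechange} identifies the inner relative product with $\Rep(u_{q,\kappa})$. So the task reduces to analyzing the de-equivariantization $\opn{Vect}\ot_{\Rep(\Sigma)}\Rep(u_{q,\kappa})$ of a finite tensor category by a finite Tannakian subcategory sitting M\"uger centrally, via the bottom row of \eqref{eq:2173}.

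In this finite setting, Lemma \ref{lem:de_ftc} immediately gives that $\opn{Vect}\ot_{\Rep(\Sigma)}\Rep(u_{q,\kappa})$ is a finite tensor category, and the braided monoidal structure on the fiber is provided by Proposition \ref{prop:bc} together with the M\"uger centrality of $\opn{Fr}_\kappa$. For the Frobenius-Perron dimension I would invoke the standard de-equivariantization formula
\[
\opn{FPdim}\bigl(\opn{Vect}\ot_{\Rep(\Sigma)}\Rep(u_{q,\kappa})\bigr) \;=\; \opn{FPdim}(\Rep(u_{q,\kappa}))/|\Sigma|,
\]
which, combined with $\opn{FPdim}(\Rep(u_{q,\kappa}))=[X:\opn{rad}(q,\kappa)]\cdot(\prod_{\gamma}l_\gamma)^2$ from Lemma \ref{lem:dims} and $|\Sigma|=[X^{\opn{Tan}}:\opn{rad}(q,\kappa)]$, yields the formula \eqref{eq:fpdim1} by multiplicativity of lattice indices.

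Finally, integrality follows because the reduction functor $F:\Rep(u_{q,\kappa}) \to \opn{Vect}\ot_{\Rep(\Sigma)}\Rep(u_{q,\kappa})$ preserves FP dimensions, the source $\Rep(u_{q,\kappa})$ is integral (the simples having integer FP dimensions equal to their $k$-linear dimensions), and every object of the target is a subquotient of $F(X)$ for some $X$ in the source by \cite[Lemma 4.6(iii)]{dgno10}. A combinatorial argument using the residual $\Sigma$-action on the fiber (cf.\ Section \ref{sect:dG_action}) then forces FP dimensions of simples in the target to be integers as well. Integrality identifies the fiber with $\Rep(u_q)$ for some finite-dimensional quasi-Hopf algebra $u_q$, whose $k$-dimension equals the FP dimension \eqref{eq:fpdim1} computed above. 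The main technical point requiring care is the FP-dimension identity for de-equivariantizations of non-semisimple finite tensor categories, which I would justify via the faithful exactness of the right adjoint to $F$ from \cite[Lemma 4.6]{dgno10} and the resulting comparison of composition series lengths.
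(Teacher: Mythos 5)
Your proposal is correct and follows essentially the same route as the paper: reduce to the finite setting via associativity and Proposition~\ref{prop:int_basechange} (this is exactly Lemma~\ref{lem:2159}), apply Lemma~\ref{lem:de_ftc} for finiteness and rigidity, and compute the FP dimension via the de-equivariantization index formula combined with Lemma~\ref{lem:dims}. The only divergence is cosmetic---the paper cites \cite[Proposition 4.26, Corollary 4.27]{dgno10} and \cite[Corollary 6.2.5]{egno15} where you sketch re-derivations; for integrality in particular, the cleanest route in the paper's own framework is to note (via Lemma~\ref{lem:simple_kappa}) that every simple of the fiber is $F(L)$ for a simple $L$ in $\Rep(u_{q,\kappa})$, so its FP dimension equals the integer $\dim_k(L)$, rather than appealing to a ``combinatorial argument using the residual $\Sigma$-action.''
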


We discuss quasi-Hopf realizations of the category $\opn{Vect}\ot_{\Rep(\dG)}\Rep(G_q)$ in detail in Section \ref{sect:qhopf}.
\par

In the simply-connected setting, our calculation of the Tannakian center from Theorem \ref{thm:sc} allows us to be more precise in our description of the fiber.

\begin{theorem}\label{thm:2}
Suppose that $G$ is simply-connected, that $q$ is maximally non-degenerate (Definition \ref{def:max_nondegen}), and that all of the scalar parameters $q_\alpha$ are of even order.  Then the braided tensor category $\opn{Vect}\ot_{\Rep(\dG)}\Rep(G_q)$ is non-degenerate and admits a unique ribbon structure under which the reduction functor
\[
\Rep(G_q)\to \opn{Vect}\ot_{\Rep(\dG)}\Rep(G_q)
\]
is a map of ribbon tensor categories.  Furthermore, in this case the Frobenius-Perron dimension is precisely
\begin{equation}\label{eq:fpdim2}
\opn{FPdim}\big(\opn{Vect}\ot_{\Rep(\dG)}\Rep(G_q)\big) = |Z(G)|\cdot (\prod_{\alpha\in \Delta}l_\alpha)\cdot (\prod_{\gamma\in \Phi^+} l_\gamma)^2.
\end{equation}
\end{theorem}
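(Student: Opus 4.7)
The plan is to extract the theorem from Theorem \ref{thm:1} together with the calculation of the Tannakian center provided by Theorem \ref{thm:sc}. Under the simply-connected even-order hypothesis Theorem \ref{thm:sc} identifies $X^{\opn{Tan}}=lQ$ and $X=P$, so the dimension formula of Theorem \ref{thm:1} reduces to a lattice index computation. Namely,
\[
[X:X^{\opn{Tan}}]=[P:lQ]=[P:Q]\cdot[Q:lQ]=|Z(G)|\cdot\prod_{\alpha\in\Delta}l_\alpha,
\]
since $P/Q$ is the fundamental group of the simply-connected group $G$ and $Q/lQ$ decomposes along the basis $\Delta$ with the $\alpha$-summand of order $l_\alpha$. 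Plugging this into \eqref{eq:fpdim1} gives the asserted dimension \eqref{eq:fpdim2}.

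For non-degeneracy, I would invoke Theorem \ref{thm:sc} once more: it tells us that in this regime the M\"uger and Tannakian centers of $\Rep(G_q)$ coincide and are both identified, via $\opn{Fr}_\kappa$, with $\Rep(\dG)$. A standard principle of de-equivariantization then implies that the fiber $\opn{Vect}\ot_{\Rep(\dG)}\Rep(G_q)$ has trivial M\"uger center: any object whose braiding with everything in the fiber is trivial must, after pulling back along the surjective braided reduction functor $\Rep(G_q)\to\opn{Vect}\ot_{\Rep(\dG)}\Rep(G_q)$, centralize all of $\Rep(G_q)$, hence lie in $Z_2(\Rep(G_q))=Z_{\opn{Tan}}(\Rep(G_q))$, hence become trivial in the fiber. (This is the de-equivariantization calculation of DGNO \cite{dgno10}, applicable thanks to Lemma \ref{lem:de_ftc}, which guarantees the fiber is a finite tensor category.)

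For the ribbon structure, I would observe that the twist $\theta=u^{-1}K_{2\rho}$ on $\Rep(G_q)$ acts on a simple $L(\lambda)$ by a scalar involving $q(\lambda,\lambda)$ and $q(\lambda,2\rho)$. On $\lambda\in X^{\opn{Tan}}$, the defining conditions $q(\lambda,\lambda)=1$ and $q^2(\lambda,\mu)=1$ for all $\mu\in X$ imply, in the sc + even order case, that both factors are trivial; thus $\theta$ restricts to the identity twist on $\opn{Fr}_\kappa(\Rep(\dG))\subseteq\Rep(G_q)$. Since $\Rep(\dG)$ is symmetric Tannakian, this matches its canonical (trivial) ribbon structure, so the twist on $\Rep(G_q)$ descends to a twist on the fiber. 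Uniqueness of the descended ribbon structure follows from surjectivity of the reduction functor $\Rep(G_q)\to\opn{Vect}\ot_{\Rep(\dG)}\Rep(G_q)$, which forces any compatible twist on the fiber to equal the image of $\theta$ on a generating class of objects.

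The main technical obstacle I anticipate is the ribbon descent step: while the scalar identification of $\theta$ on highest-weight simples of $Z_{\opn{Tan}}$ is straightforward to state, one needs to check that the assumption that $\lambda\in X^{\opn{Tan}}$ genuinely forces $q(\lambda,2\rho)=1$ (and not merely $\pm 1$) under the hypothesis that every $q_\alpha$ has even order. Writing $\lambda=\sum_\alpha c_\alpha l_\alpha\alpha\in lQ$, this reduces via bilinearity to checking $q(l_\alpha\alpha,2\rho)=1$ at each simple root, and one exploits the even order of $q_\alpha$ to pair off signs. Once this is verified the descent of $\theta$ is routine, non-degeneracy follows from the DGNO-style argument outlined above, and the Frobenius-Perron dimension is immediate from Theorem \ref{thm:1} and the lattice computation.
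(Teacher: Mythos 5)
Your proposal follows the paper's proof essentially line by line: the Frobenius--Perron dimension via the lattice index $[P:lQ]=|Z(G)|\prod_\alpha l_\alpha$, non-degeneracy via Theorem \ref{thm:sc} and the DGNO de-equivariantization calculus, and the ribbon structure by showing the twist/pivotal datum of $\Rep(G_q)$ restricts trivially to the Tannakian center. The only cosmetic difference is that the paper phrases the last step in terms of the pivotal element $K_{2\rho}$ (checking $K_{2\rho}|_{lQ}\equiv 1$, then descending the pivotal and hence balanced structure and using surjectivity plus the ribbon identity on $\Rep(G_q)$ to verify the ribbon axiom on compacts in the fiber), whereas you phrase it in terms of the twist scalar on simples in $Z_{\opn{Tan}}$; these are equivalent.

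One remark on the "main technical obstacle" you flag at the end: there is in fact no subtlety there. Since $q$ is $\msc{W}$-invariant, applying $s_\alpha$ to $q(\rho,\rho)$ gives $q(\alpha,2\rho)=q(\alpha,\alpha)$, and then $q(l_\alpha\alpha,2\rho)=q(\alpha,\alpha)^{l_\alpha}=1$ by the very definition $l_\alpha=\operatorname{ord}q(\alpha,\alpha)$. This holds without any appeal to evenness. The even-order hypothesis is used elsewhere, namely (via Theorem \ref{thm:sc}) to guarantee $X^{\opn{Tan}}=X^{\text{M\"ug}}=lQ$, which is what makes both the non-degeneracy and the identification $K_{2\rho}|_{X^{\opn{Tan}}}\equiv 1$ go through.
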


One can check that all parameters of the form $q=\exp(\pi i(-,-)/l)$, with $l$ divisible by the lacing number for $G$, satisfy the hypotheses of Theorem \ref{thm:2}.  We recall also that a finite braided tensor category is called \emph{modular} if it is non-degenerate and ribbon \cite{creutziggannon17,lentneretal23}.  So, Theorem \ref{thm:2} says that the fiber $\opn{Vect}\ot_{\Rep(\dG)}\Rep(G_q)$ is a modular tensor category whenever $G$ is simply-connected and $q$ is of ``even order".  One need only consider our examples from Section \ref{sect:examples} to see that the conclusions of Theorem \ref{thm:2} do not hold under weaker hypotheses, generally speaking.

\begin{remark}
One can show that $\opn{Vect}\ot_{\Rep(\dG)}\Rep(G_q)$ is non-degenerate whenever the M\"uger center agrees with the Tannakian center in $\Rep(G_q)$, and is slightly degenerate otherwise.   (By slightly degenerate we mean that the M\"uger center is identified with $\opn{sVect}$.)  This is a straightforward application of findings from \cite[Appendix A]{negron21} which we won't recall here.
\end{remark}

\subsection{Rigidity and Frobenius-Perron dimension}
\label{sect:thm1}

Fix an alternating form $\kappa$ as in Theorem \ref{thm:1021}.  We consider the discrete toral subgroup $\Sigma \subseteq \dG$ and corresponding M\"uger central embedding
\[
\overline{\opn{Fr}}_\kappa:\Rep(\Sigma)\to \Rep(u_{q,\kappa})
\]
from Section \ref{sect:vq}.

\begin{lemma}\label{lem:2159}
The restriction functor $\res:\Rep(G_q)\to \Rep(u_{q,\kappa})$ reduces to an equivalence of braided monoidal categories
\begin{equation}\label{eq:2155}
\opn{Vect}\ot_{\Rep(\dG)}\Rep(G_q)\overset{\sim}\to\opn{Vect}\ot_{\Rep(\Sigma)}\Rep(u_{q,\kappa}).
\end{equation}
\end{lemma}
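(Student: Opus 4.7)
The plan is to deduce the equivalence by two-step base change along the factorization $\Rep(\dG)\to \Rep(\Sigma)\to \opn{Vect}$, using associativity of the relative tensor product together with the intermediate calculation from Proposition \ref{prop:int_basechange}.

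First I would record the relevant input data. The symmetric embedding $\opn{Fr}_\kappa:\Rep(\dG)\to \Rep(G_q)$ of Theorem \ref{thm:Z_classical} factors through $\overline{\opn{Fr}}_\kappa:\Rep(\Sigma)\to \Rep(u_{q,\kappa})$ via the restriction functors, as in diagram \eqref{eq:2173}. Moreover $\opn{Vect}$ is a symmetric tensor category over $\Rep(\Sigma)$ via the forgetful (symmetric fiber) functor $\Rep(\Sigma)\to \opn{Vect}$, and the composite of this functor with $\res:\Rep(\dG)\to \Rep(\Sigma)$ is the canonical symmetric fiber functor for $\Rep(\dG)$. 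Hence $\opn{Vect}$ is a braided tensor category over $\Rep(\dG)$ in exactly the way required to form the fiber $\opn{Vect}\ot_{\Rep(\dG)}\Rep(G_q)$ of the theorem.

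Next I would invoke the associativity equivalence for the relative tensor product recalled in Section \ref{sect:module_prod}, applied to the bimodule data $(\opn{Vect},\Rep(\Sigma),\Rep(\dG),\Rep(G_q))$:
\[
\opn{Vect}\ot_{\Rep(\dG)}\Rep(G_q)\ \overset{\sim}\to\ \opn{Vect}\ot_{\Rep(\Sigma)}\big(\Rep(\Sigma)\ot_{\Rep(\dG)}\Rep(G_q)\big).
\]
Now I apply Proposition \ref{prop:int_basechange}, which identifies the inner product with $\Rep(u_{q,\kappa})$ via the restriction functor $\res:\Rep(G_q)\to \Rep(u_{q,\kappa})$, in a manner compatible with the $\Rep(\Sigma)$-module structure coming from $\overline{\opn{Fr}}_\kappa$. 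Plugging this identification into the associativity equivalence produces the desired equivalence
\[
\opn{Vect}\ot_{\Rep(\dG)}\Rep(G_q)\ \overset{\sim}\to\ \opn{Vect}\ot_{\Rep(\Sigma)}\Rep(u_{q,\kappa}),
\]
and by construction this equivalence is realized by the restriction functor $\res$ after reduction along the two Tannakian centers.

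Finally I would verify the braided monoidal structure. Both sides carry canonical braided tensor structures by Proposition \ref{prop:bc}, the target since $\overline{\opn{Fr}}_\kappa:\Rep(\Sigma)\to \Rep(u_{q,\kappa})$ is M\"uger central (Section \ref{sect:vq}), and the source since $\opn{Fr}_\kappa$ is M\"uger central. The associativity equivalence and the equivalence of Proposition \ref{prop:int_basechange} are both braided monoidal over their respective bases, essentially by the universal property of the relative product for braided monoidal categories. Composing them yields a braided monoidal equivalence, as claimed. The only point that requires a small check is that the braided monoidal structure induced on the intermediate product $\opn{Vect}\ot_{\Rep(\Sigma)}(\Rep(\Sigma)\ot_{\Rep(\dG)}\Rep(G_q))$ through associativity coincides, under Proposition \ref{prop:int_basechange}, with the braided monoidal structure on $\opn{Vect}\ot_{\Rep(\Sigma)}\Rep(u_{q,\kappa})$; this is immediate from the uniqueness clause in Proposition \ref{prop:bc}, applied to the composite map from $\Rep(G_q)$, since both structures make $\res:\Rep(G_q)\to \opn{Vect}\ot_{\Rep(\Sigma)}\Rep(u_{q,\kappa})$ a braided tensor functor over $\Rep(\dG)$.
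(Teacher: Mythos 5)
Your argument is correct and matches the paper's proof: both reduce the statement to Proposition \ref{prop:int_basechange} together with associativity of the relative product $\ot_{\msc{E}}$, inserting $\Rep(\Sigma)$ as an intermediate stage. The extra discussion of braided compatibility is a reasonable elaboration that the paper leaves implicit, but the underlying approach is the same.
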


\begin{proof}
This follows from the calculation
\[
\Rep(\Sigma)\ot_{\Rep(\dG)}\Rep(G_q)\overset{\sim}\to \Rep(u_{q,\kappa})
\]
of Proposition \ref{prop:int_basechange} and associativity of the products $\ot_\msc{E}$ on the $2$-category of presentable categories.  We have specifically
\[
\opn{Vect}\ot_{\Rep(\dG)}\Rep(G_q)\cong \opn{Vect}\ot_{\Rep(\Sigma)}(\Rep(\Sigma)\ot_{\Rep(\dG)}\Rep(G_q))
\]
\[
\overset{\sim}\to \opn{Vect}\ot_{\Rep(\Sigma)}\Rep(u_{q,\kappa}).
\]
\end{proof}

We now obtain Theorem \ref{thm:1}, essentially as a corollary to Lemma \ref{lem:2159}.

\begin{proof}[Proof of Theorem \ref{thm:1}]
We have just seen that the fiber $\opn{Vect}\ot_{\Rep(\dG)}\Rep(G_q)$ is equivalent to the fiber $\opn{Vect}\ot_{\Rep(\Sigma)}\Rep(u_{q,\kappa})$.  By Lemma \ref{lem:de_ftc} it follows that the monoidal category $\opn{Vect}\ot_{\Rep(\dG)}\Rep(G_q)$ is a finite tensor category.
\par

As for the Frobenius-Perron dimension, we have
\[
\opn{FPdim}(\Rep(u_{q,\kappa}))=\dim(u_{q,\kappa})= [X:\opn{rad}(q,\kappa)]\cdot (\prod_{\gamma\in \Phi^+} l_\gamma)^2
\]
via Lemma \ref{lem:dims}, and $|\Sigma|=[X^{\opn{Tan}}:\opn{rad}(q,\kappa)]$.  Hence, for the fiber we have
\[
\opn{FPdim}(\opn{Vect}\ot_{\Rep(\dG)}\Rep(G_q))=\opn{FPdim}(\opn{Vect}\ot_{\Rep(\Sigma)}\Rep(u_{q,\kappa}))
\]
\[
=\frac{\opn{FPdim}(\Rep(u_{q,\kappa}))}{|\Sigma|}=[X:X^{\opn{Tan}}]\cdot (\prod_{\gamma\in \Phi^+} l_\gamma)^2
\]
\cite[Proposition 4.26]{dgno10}.  Integrality follows from the fact that any finite tensor category which admits a surjective tensor functor from a finite integral tensor category must itself be integral \cite[Corollary 6.2.5]{egno15}, or more immediately from \cite[Corollary 4.27]{dgno10}.
\end{proof}

\subsection{Modular structures}

\begin{proof}[Proof of Theorem \ref{thm:2}]
Non-degeneracy of the fiber $\opn{Vect}\ot_{\Rep(\dG)}\Rep(G_q)$ follows from the fact that the embedding $\opn{Fr}:\Rep(\dG)\to \Rep(G_q)$ is an equivalence onto the M\"uger center in $\Rep(G_q)$ in this case, by Theorem \ref{thm:sc}, in conjunction with the standard calculus of equivariantization and de-equivariantization.  See for example \cite[Proposition 3.6]{negron21} (cf.\ \cite[Proposition 4.19]{dgno10}).
\par

For the ribbon structure, we note that the pivotal element $K_{2\rho}$ satisfies $K_{2\rho}|_{lQ}\equiv 1$, and recall that in this case $X^{\opn{Tan}}=lQ$ by Theorem \ref{thm:sc}.  So the Frobenius embedding $\opn{Fr}_{\kappa}:\Rep(\dG)\to \Rep(G_q)$ becomes a map of pivotal braided tensor categories, and hence of ribbon tensor categories.  It follows that the reduction $\opn{Vect}\ot_{\Rep(\dG)}\Rep(G_q)$ inherits a ribbon structure from the ribbon structure on $\Rep(G_q)$.
\par

To elaborate, if we adopt the explicit model
\[
\opn{Vect}\ot_{\Rep(\dG)}\Rep(G_q)=\O(\dG)\text{-mod}_{\Rep(G_q)},
\]
then it is clear that the pivotal structure on $\Rep(G_q)$ induces a pivotal structure on the fiber which is just given by the standard equivalence
\[
M\overset{\sim}\to {M^\vee}^\vee,\ \ m\mapsto ev_m,
\]
composed with multiplication by $K_{2\rho}$ \cite[Theorem 1.17]{kirillovostrik02}.  This pivotal structure induces a balanced structure $\theta_-$ on the fiber so that the reduction map
\[
\Rep(G_q)\to \opn{Vect}\ot_{\Rep(\dG)}\Rep(G_q)
\]
is a map of balanced tensor categories.  Since $\Rep(G_q)$ is ribbon, and the two endomorphisms $\theta_{(-)^\ast}$ and $\theta^\ast_-$ are natural, surjectivity of the reduction functor implies an equality $\theta_{M^\ast}=\theta^\ast_{M}$ at all compact ($=$rigid) objects $M$ in the fiber.
\par

Finally, for the calculation of the Frobenius-Perron dimension, we have directly
\[
[X:X^{\opn{Tan}}]=[P:lQ]=[P:Q]\cdot [Q:lQ]=|Z(G)|\cdot (\prod_{\alpha\in\Delta}l_\alpha).
\]
So the formula \eqref{eq:fpdim1} reduces immediately to the formula \eqref{eq:fpdim2}.
\end{proof}

\subsection{A Question: Minimal non-degenerate extensions}

In Theorem \ref{thm:2} we've identified some situations where the fiber $\opn{Vect}\ot_{\Rep(\dG)}\Rep(G_q)$ is modular, and in particular non-degenerate.  This does not occur in general however.  We record the following question regarding these slightly-degenerate situations.

\begin{question}
Suppose $G$ and $q$ are such that the fiber $\opn{Vect}\ot_{\Rep(\dG)}\Rep(G_q)$ is slightly-degnerate, i.e\ has M\"uger center equivalent to $\opn{sVect}$.  Can one construct a braided tensor embedding $F:\opn{Vect}\ot_{\Rep(\dG)}\Rep(G_q)\to \msc{D}$ into a non-degenerate braided tensor category $\msc{D}$ with
\[
\opn{FPdim}(\msc{D})=2\cdot \opn{FPdim}(\opn{Vect}\ot_{\Rep(\dG)}\Rep(G_q))\ ?
\]
If so, can one classify all such non-degenerate extensions?
\end{question}

The context here is that, in the fusion setting, such minimal non-degenerate extensions always exist.  This is a recent result of Johnson-Fryed and Reutter \cite{johnsonfryedreutter23}.  Furthermore, there are only finitely many such extensions, and there is a general theory which controls how these extensions relate to each other \cite{delaneygalindoplavnikrowell21}.  We know of no such results in the finite non-semisimple setting.  The case of $\opn{SL}(2)$ at an odd order root of unity is of special interest (see Example \ref{ex:10}).

\subsection{Comparisons with earlier works II}
\label{sect:comp}

Theorems \ref{thm:1} and \ref{thm:2} were obtained for quantum groups at particularly advantageous parameters in the author's earlier work \cite{negron21}.  One assumes in \cite{negron21} that all of the scalar parameters $q_\alpha$ are of order divisible by $4\cdot r_\alpha$, where $r_\alpha$ is the lacing number for the almost-simple factor of $G$ in which $\alpha$ appears as a simple root.  For a simple comparison, our Theorem \ref{thm:2} holds for $\opn{SL}_n$ at arbitrary even order parameters while that of \cite{negron21} only applies to $\opn{SL}_n$ at parameters of order divisible by $4$, when $n>2$.  No analog of Theorem \ref{thm:1} appears in \cite{negron21}.
\par

Aside from these specific comparisons, there are some fundamental issues which we might highlight as well.  First, in all instances in which the dual parameter $\varepsilon$ does not have trivial restriction to $X^{\opn{Tan}}$, the forgetful functor $\Rep(\dG_\varepsilon)\to \opn{Vect}$ is \emph{not} isomorphic to any symmetric fiber functor for $\Rep(\dG_\varepsilon)$.  Indeed, it's simply not a symmetric tensor functor at all.
\par

So, in this particular setting it is clear that no calculation as in Theorem \ref{thm:1}, or \ref{thm:2}, has appeared in the literature up to this point.  This is because the fiber functor which one employs in the \emph{definition} of the reduction $\opn{Vect}\ot_{\Rep(\dG)}\Rep(G_q)$ has not been articulated in prior works.  (At least, this is our understanding of the situation based on our readings of the relevant texts.)  Having established the results of \cite{negron21}, the main point of this text is to deal precisely with these kinds of subtleties, and hence to extend the findings of \cite{negron21} to arbitrary $G$ and $q$.
\par

Finally, analogs of Theorem \ref{thm:2} are well-known in the adjoint, odd order setting, and are generically attributed to Arkhipov-Gaitsgory \cite{arkhipovgaitsgory03}.  One can see, for example, work of Davydov-Etingof-Nikshych \cite[\S\ 2.3]{davydovetingofnikshych18} in this regard.  We recall this case in Section \ref{sect:lusztig_intheend} below.

\section{The small quantum group(s) $u_q$}
\label{sect:smallquantumgroup}

We construct a ``small quantum group" $u_q$ whose representations recover the fiber category $\opn{Vect}\ot_{\Rep(\dG)}\Rep(G_q)$.  The construction of this (quasi-Hopf) algebra depends on various extraneous choices, and hence $u_q$ itself is \emph{not} an invariant for the pairing of $G$ with $q$.
\par

Despite this fact, we provide a completely uniform description of the representation theory for $u_q$ in Section \ref{sect:smallrep} below.  This uniform description employs familiar tools and techniques from classical studies of quantum groups \cite{parshallwang91,lusztig89,andersenpolowen91,andersenpolowen92,andersen03,negron} in conjunction with basic information about equivariantization and de-equivariantization over finite Tannakian categories \cite{arkhipovgaitsgory03,dgno10}.

\subsection{The algebras $u_q$}
\label{sect:uqkappa}

We fix our setup as in Section \ref{sect:vq}.  So, we've fixed some alternating square root $\kappa$ of $\varepsilon|_{X^{\opn{Tan}}\times X^{\opn{Tan}}}$ and associated finite abelian subgroup $\Sigma$ in $\dG$.  As in \ref{sect:vq}, the twisted Frobenius functor $\opn{Fr}_\kappa:\Rep(\dG)\to \Rep(G_q)$ reduces to a central embedding
\[
\overline{\opn{Fr}}_\kappa:\Rep(\Sigma)\to \Rep(u_{q,\kappa}).
\]
Let us begin our construction of the algebra $u_q$.

{\it Step I, extending $\kappa$}: For $u_{q,\kappa}$, we have the surjection from the grouplikes
\[
\Lambda=\big(X/\opn{rad}(q,\kappa)\big)^\vee \to \big(X^{\opn{Tan}}/\opn{rad}(q,\kappa)\big)^\vee=\Sigma
\]
and dualized inclusion $\Sigma^\vee\to \Lambda^\vee$.  We can write the form $\kappa$ as a map of abelian groups $\kappa:X^{\opn{Tan}}\ot_{\mathbb{Z}} X^{\opn{Tan}}\to \opn{tors}(k^\times)$, and since $\kappa$ vanishes on the radical $\opn{rad}(q,\kappa)$ this form reduces to a form on the quotient $\Sigma^\vee=X^{\opn{Tan}}/\opn{rad}(q,\kappa)$.  Since $\opn{tors}(k^\times)$ is a divisible group, and hence injective over $\mathbb{Z}$, we can extend $\kappa$ to a map $\psi:X\ot_\mathbb{Z}X\to \opn{tors}(k^\times)$.  This extension of $\kappa$ also vanishes on $\opn{rad}(q,\kappa)$, and hence reduces to a bilinear form on $\Lambda^\vee$ which agrees with $\kappa$ on $\Sigma^\vee$.

{\it Step II, twisting $u_{q,\kappa}$}: Our new form $\psi$ defines a Drinfeld twist for $u_{q,\kappa}$, and we twist by this element to produce a new Hopf algebra which we denote
\[
u^{\psi}_{q,\kappa}:=u_{q,\kappa}\ \text{with new comultiplication }\Delta^\psi(x)=\psi\Delta(x)\psi^{-1}.
\]
The $R$-matrix for $u_{q,\kappa}$ also twists along $\psi$ to provide an $R$-matrix for $u^{\psi}_{q,\kappa}$ \cite[Proposition 8.3.14]{egno15} and we have the canonical braided tensor equivalence $\Rep(u_{q,\kappa})\overset{\sim}\to \Rep(u^{\psi}_{q,\kappa})$, which is just the identity as a map of linear categories along with the tensor compatibility given by the action of $(\psi)^{-1}$.  We compose this equivalence with $\overline{\opn{Fr}}_\kappa$ to obtain a central embedding
\begin{equation}\label{eq:2472}
\Rep(\Sigma)\hookrightarrow \Rep(u^{\psi}_{q,\kappa})
\end{equation}
whose underlying linear functor agrees with that of $\overline{\opn{Fr}}_\kappa$, and whose tensor compatibility is now just the identity of vector spaces.
\par

Since the underlying linear functor for $\overline{\opn{Fr}}_\kappa$ was given by restriction along the unique algebra projection $u_{q,\kappa}\to k\Sigma$ which annihilates the $E$'s and $F$'s and lifts the natural projection $\Lambda\to \Sigma$, we see that the functor \eqref{eq:2472} is also given by restriction along the projection $u^{\psi}_{q,\kappa}=u_{q,\kappa}\to k\Sigma$.  Compatibility of \eqref{eq:2472} with the forgetful functors to $\opn{Vect}$, i.e.\ triviality of the tensor compatibility, tells us that this projection from $u^\psi_{q,\kappa}$ is a map of Hopf algebras.

{\it Step III, take $\Sigma$-coinvariants}:

\begin{definition}
The algebra $u_q$ is the coideal subalgebra in $u^{\psi}_{q,\kappa}$ of left $k\Sigma$-coinvariants
\[
u_q{\color{gray}(=u_{q,\psi})}:={^{\Sigma}(u^{\psi}_{q,\kappa})}.
\]
\end{definition}

\begin{remark}
One should be clear the algebra $u_q$ is not uniquely determined by the pairing of $G$ with $q$, since there is some ambiguity in the choice of $\psi$ in general.  We deemphasize the role of $\psi$ in our notation since we avoid--with intention--any analyses of the algebra $u_q$ which depends on the choice of $\psi$ in any explicit way.
\par

To mix metaphors, one might think of $u_q$ as existing in a superposition over all choices of $\psi$.  Only under direct scrutiny will any particular choice of $\psi$ become relevant.  We perform no such direct examinations of this work.
\end{remark}

\subsection{$u_q$-representations and the fiber $\opn{Vect}\ot_{\Rep(\dG)}\Rep(G_q)$}
\label{sect:X}

Via its construction as a coideal subalgebra, the representation category $\Rep(u_q)$ is naturally a pointed module category over $\Rep(u^{\psi}_{q,\kappa})$, and we restrict along the tensor functor
\[
\Rep(G_q)\to \Rep(u_{q,\kappa})\cong \Rep(u^{\psi}_{q,\kappa})
\]
to endow $\Rep(u_q)$ with a pointed module category structure over $\Rep(G_q)$.  This pointed structure defines, and is defined by, the associated restriction functor
\[
\res_{\psi}:=\big(\Rep(G_q)\overset{\res}\to \Rep(u_{q,\kappa})\cong\Rep(u^{\psi}_{q,\kappa})\overset{\opn{restrict}}\to \Rep(u_q)\big).
\]

\begin{proposition}\label{prop:2464}
The functor $\res_{\psi}:\Rep(G_q)\to \Rep(u_q)$ reduces to an equivalence of $\Rep(G_q)$-module categories
\[
\opn{Vect}\ot_{\Rep(\dG)}\Rep(G_q)\overset{\sim}\to \Rep(u_q).
\]
\end{proposition}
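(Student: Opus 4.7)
The plan is to assemble the desired equivalence from two pieces of base-change data which are already at our disposal: the intermediate calculation of Proposition \ref{prop:int_basechange}, and the finite-dimensional Hopf-algebraic reduction of Lemma \ref{lem:fin_basechange}. The composition is then realized, at the level of linear categories, as restriction along $u_q\subseteq u^\psi_{q,\kappa}$ precomposed with $\Rep(G_q)\to \Rep(u_{q,\kappa})\cong \Rep(u^\psi_{q,\kappa})$, which is exactly $\res_\psi$.

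First I would invoke associativity of the relative tensor product on presentable (braided) module categories to write
\[
\opn{Vect}\ot_{\Rep(\dG)}\Rep(G_q)\ \cong\ \opn{Vect}\ot_{\Rep(\Sigma)}\big(\Rep(\Sigma)\ot_{\Rep(\dG)}\Rep(G_q)\big),
\]
and then apply Proposition \ref{prop:int_basechange} to identify the inner product with $\Rep(u_{q,\kappa})$. Transferring across the canonical braided equivalence $\Rep(u_{q,\kappa})\overset{\sim}\to \Rep(u^\psi_{q,\kappa})$ (given by twisting by $\psi$) gives
\[
\opn{Vect}\ot_{\Rep(\dG)}\Rep(G_q)\ \overset{\sim}\to\ \opn{Vect}\ot_{\Rep(\Sigma)}\Rep(u^\psi_{q,\kappa}).
\]
The crucial point here is that after twisting, the embedding $\Rep(\Sigma)\to \Rep(u^\psi_{q,\kappa})$ from \eqref{eq:2472} has strict (identity) tensor compatibility, hence is induced by a bona fide Hopf surjection $u^\psi_{q,\kappa}\to k\Sigma$.

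Next I would apply Lemma \ref{lem:fin_basechange}(1) to the finite-dimensional Hopf algebra $A=u^\psi_{q,\kappa}$ and the right coideal subalgebra $B=u_q={}^\Sigma A$. Two hypotheses need verification: that $B$ is a right coideal subalgebra and that the restriction functor $\Rep(A)\to \Rep(B)$ is normal; and that $k\ot_B A\cong k\Sigma$ as Hopf algebras. Both follow from the dual of Lemma \ref{lem:1025} applied to the Hopf surjection $u^\psi_{q,\kappa}\to k\Sigma$: since this is a Hopf map, the left coinvariants ${}^\Sigma A$ form a right coideal (in fact Hopf) subalgebra, restriction from $\Rep(A)$ to $\Rep(B)$ is normal with kernel $\Rep(k\Sigma)$, and the Nichols--Zoeller freeness theorem identifies the reduction $k\ot_B A$ with $k\Sigma$. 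Lemma \ref{lem:fin_basechange}(1) then produces an equivalence of right $\Rep(A)$-module categories
\[
\opn{Vect}\ot_{\Rep(\Sigma)}\Rep(u^\psi_{q,\kappa})\ \overset{\sim}\to\ \Rep(u_q),
\]
and composition with the previous equivalence yields the proposition. The claim that this composite is the module functor induced by $\res_\psi$ is immediate from the definitions of the two base-change equivalences, both of which are realized at the underlying level by restriction.

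The only mild obstacle is bookkeeping: one must check that the $\Rep(G_q)$-module structure obtained from associativity on the left agrees with the $\Rep(G_q)$-module structure obtained on $\Rep(u_q)$ via $\res_\psi$. This is routine once one observes that the twist $\psi$ only affects tensor structures (not underlying functors), so the module structure on $\Rep(u^\psi_{q,\kappa})$ pulled back to $\Rep(G_q)$ through the canonical equivalence $\Rep(u_{q,\kappa})\cong \Rep(u^\psi_{q,\kappa})$ agrees, as a module structure over $\Rep(G_q)$, with the one inherited directly from the restriction $\Rep(G_q)\to \Rep(u_{q,\kappa})$. Chasing diagrams then identifies the composite with $\res_\psi$.
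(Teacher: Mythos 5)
Your proposal follows exactly the route the paper takes: use associativity of relative tensor products together with Proposition \ref{prop:int_basechange} to reduce to the intermediate fiber $\opn{Vect}\ot_{\Rep(\Sigma)}\Rep(u^{\psi}_{q,\kappa})$, then apply Lemma \ref{lem:fin_basechange}(1) with $A=u^{\psi}_{q,\kappa}$, $B=u_q$ to finish; the module-structure bookkeeping you address at the end is also implicitly part of the paper's argument.

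There is, however, a genuine error in the step where you verify the hypotheses of Lemma \ref{lem:fin_basechange}. You assert parenthetically that $u_q = {^\Sigma(u^{\psi}_{q,\kappa})}$ is not merely a right coideal subalgebra but in fact a Hopf subalgebra. This is false: left coinvariants under a Hopf surjection form a right coideal subalgebra, but they are not closed under comultiplication in general. Indeed, if $u_q$ were a Hopf subalgebra then $\Rep(u_q)$ would already be a Hopf-algebraic tensor category and the whole quasi-Hopf apparatus of Theorem \ref{thm:quasiHopf} would be unnecessary. The issue matters because your justification of normality of $\Rep(u^{\psi}_{q,\kappa})\to\Rep(u_q)$ runs through this false claim (via your appeal to ``the dual of Lemma \ref{lem:1025}''), so that step of the verification does not go through as written. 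The normality you need is true, but it has to be sourced from the M\"uger-central structure on the embedding \eqref{eq:2472}, which is precisely the hypothesis of \cite[Theorem 2.8]{angionogalindopereira14} (Theorem \ref{thm:2244} of the paper), not from a dualization of Lemma \ref{lem:1025}.
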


\begin{proof}
As in the proof of Lemma \ref{lem:2159} we have an equivalence of $\Rep(G_q)$-module categories
\[
\opn{Vect}\ot_{\Rep(\dG)}\Rep(G_q)\overset{\sim}\to \opn{Vect}\ot_{\Rep(\Sigma)}\Rep(u_{q,\kappa})\cong \opn{Vect}\ot_{\Rep(\Sigma)}\Rep(u^{\psi}_{q,\kappa}).
\]
So it suffices to prove that restriction induces an equivalence of $\Rep(u^{\psi}_{q,\kappa})$-module categories
\[
\opn{Vect}\ot_{\Rep(\Sigma)}\Rep(u^{\psi}_{q,\kappa})=\O(\Sigma)\text{-mod}_{\Rep(u^{\psi}_{q,\kappa})}\overset{\sim}\to \Rep(u_q).
\]
However, this is just an application of Lemma \ref{lem:fin_basechange}.
\end{proof}

\begin{remark}
We note that $u_{q,\kappa}=u^{\psi}_{q,\kappa}$ as algebras, and that the underlying linear equivalence in the tensor equivalence $\Rep(u_{q,\kappa})\cong\Rep(u^{\psi}_{q,\kappa})$ is just the identity.  So the above restriction functor $\res_\psi$ is, at a linear level, just restriction along the algebra inclusion $u_q\subseteq u_{q,\kappa}\to \hat{U}_q$.  The form $\psi$ only provides $\res_\psi$ with its module category structure over $\Rep(G_q)$.
\end{remark}

\subsection{Quasi-Hopf structures on $u_q$}
\label{sect:qhopf}

We begin with the following generic result from Angiono, Galindo, and Pereira \cite{angionogalindopereira14}.

\begin{theorem}[{\cite[Theorem 2.8]{angionogalindopereira14}}]\label{thm:2244}
Let $A$ be a finite-dimensional Hopf algebra, $\Pi$ be a finite group, and $A\to k\Pi$ be a surjective Hopf map.  Suppose the corresponding tensor functor $\Rep(\Pi)\to \Rep(A)$ admits a central structure.  Then the left coinvariant subalgebra $B={^\Pi{A}}$ admits a quasi-Hopf structure under which the induced map of $\Rep(A)$-module categories
\begin{equation}\label{eq:2527}
\opn{Vect}\ot_{\Rep(\Pi)}\Rep(A)\to \Rep(B)
\end{equation}
enhances to a map of tensor categories.  Furthermore, the map \eqref{eq:2527} is an equivalence.
\end{theorem}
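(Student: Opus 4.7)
The plan is to first establish the linear equivalence $\opn{Vect} \ot_{\Rep(\Pi)} \Rep(A) \overset{\sim}\to \Rep(B)$ abstractly, then transport a monoidal structure to the right-hand side via the central structure, and finally extract the quasi-Hopf data via Tannakian reconstruction.

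First I would verify that the Hopf surjection $A\to k\Pi$ makes $B = {}^\Pi A$ a Hopf subalgebra of $A$. This follows from Lemma \ref{lem:1025}: the kernel of the restriction functor $\Rep(A)\to \Rep(B)$ is exactly the embedded subcategory $\Rep(\Pi)\subseteq \Rep(A)$, which is a tensor subcategory, giving normality; the left and right coinvariants in $A$ then automatically agree. Lemma \ref{lem:fin_basechange}(1) applied to the coideal subalgebra $B\subseteq A$ (whose quotient $k\otimes_B A$ is identified with $k\Pi$ via Nichols--Zoeller freeness and a dimension count) then yields a linear equivalence
\[
F:\ \opn{Vect} \ot_{\Rep(\Pi)} \Rep(A) \overset{\sim}\to \Rep(B)
\]
of right $\Rep(A)$-module categories.

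Next, I would use the given central structure $\sigma:\Rep(\Pi)\to \msc{Z}(\Rep(A))$ to equip the fiber with a monoidal structure.  The central structure realizes $\Rep(A)$ as an algebra object in the monoidal 2-category of $\Rep(\Pi)$-bimodule categories (Section \ref{sect:module_prod}), so the relative product $\opn{Vect}\ot_{\Rep(\Pi)}\Rep(A)$ inherits a monoidal structure under which the reduction map $\Rep(A)\to \opn{Vect}\ot_{\Rep(\Pi)}\Rep(A)$ is monoidal.  Lemma \ref{lem:de_ftc} further identifies this fiber with a finite rigid tensor category.  Transporting this tensor structure across $F$ endows $\Rep(B)$ with a new tensor category structure; in general this differs from the native one coming from the Hopf subalgebra $B\subseteq A$, the discrepancy being controlled by the non-trivial half-braidings $\sigma_V^W$ for $V\in \Rep(\Pi)$ and $W\in\Rep(A)$.

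Finally, Tannakian reconstruction (Section \ref{sect:tann_reconstructor}) applied to the forgetful functor $\Rep(B)\to \opn{Vect}$ produces a coalgebra whose underlying vector space is canonically identified with $B$, independent of the tensor structure.  The transported monoidal structure refines this to a quasi-bialgebra structure on $B$, with an associator recording the failure of $\sigma$ to agree with the canonical symmetric central structure $\Rep(\Pi)\to \msc{Z}(\Rep(\Pi))$ after pushforward along $A\to k\Pi$.  Rigidity of the fiber then promotes this quasi-bialgebra to a quasi-Hopf algebra, with antipode extracted from the duality data, and the induced map \eqref{eq:2527} becomes a tensor equivalence by construction.  I expect the main obstacle to be the explicit bookkeeping of the half-braidings through the proof of Lemma \ref{lem:fin_basechange} in order to write down and verify the pentagon axiom for the transported associator; this amounts to a coherence argument that reduces to the hexagon axioms satisfied by any central structure.
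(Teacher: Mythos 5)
Your first step contains a genuine error that propagates through the rest of the argument. You claim that $B={^\Pi A}$ is a Hopf subalgebra of $A$ and that $\Rep(A)\to\Rep(B)$ is normal, citing Lemma \ref{lem:1025}.  But Lemma \ref{lem:1025} is an \emph{equivalence}: normality of the (co)restriction functor holds \emph{if and only if} the coinvariants form a Hopf subalgebra.  Neither holds in general here --- ${^\Pi A}$ is merely a right coideal subalgebra ($\Delta(B)\subseteq B\ot A$), not a Hopf subalgebra, and the central structure hypothesis does nothing to fix this.  Indeed this is precisely why the conclusion of the theorem is a \emph{quasi}-Hopf structure rather than a Hopf structure; if $B$ were always Hopf, the associator would be trivial and the statement would be vacuous.  (As a concrete counterexample, take $A$ the $4$-dimensional Taft algebra with its Hopf surjection onto $k[\mathbb{Z}/2]$; then ${^\Pi A}=\opn{span}\{1,gx\}$ and $\Delta(gx)=gx\ot g + 1\ot gx\notin B\ot B$.)  You also only verify condition (a) of Definition \ref{def:normal} and omit condition (b).  Consequently the invocation of Lemma \ref{lem:fin_basechange}(1), whose hypothesis is exactly normality, is unjustified.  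The module-category equivalence $\opn{Vect}\ot_{\Rep(\Pi)}\Rep(A)\simeq\Rep(B)$ that you need \emph{is} true, but it should be obtained directly from Skryabin's freeness theorem and Takeuchi's equivalence (identifying $\O(\Pi)\text{-mod}_{\Rep(A)}$ with $\Rep(B)$ as in the proof of Theorem \ref{thm:takeuchi}) --- none of which requires normality of the pointed module category.

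The second and third steps are closer to the right shape, but you should also be wary of your appeal to Section \ref{sect:tann_reconstructor}. The Tannakian reconstruction developed there produces Hopf algebras from \emph{strict} fiber functors (ones with trivial tensor compatibility). The forgetful functor $\opn{Vect}\ot_{\Rep(\Pi)}\Rep(A)\to\opn{Vect}$ that you would transport to $\Rep(B)$ is only a quasi-fiber functor --- it carries a nontrivial associativity constraint coming exactly from the central structure $\sigma$ --- so the coend carries only a quasi-bialgebra structure, and one must invoke a quasi-Hopf reconstruction theorem (as in Majid, or explicitly in Angiono--Galindo--Pereira) rather than the Takeuchi--Schauenburg reconstruction of Section 4.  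Your intuition that the associator measures the discrepancy between $\sigma$ and a trivial central structure is correct, and your observation that rigidity of the fiber (Lemma \ref{lem:de_ftc}) furnishes the antipode data is also correct.  But as written, the proposal does not constitute a proof: the justification of the module-category equivalence is broken and the reconstruction step outstrips the tools you've cited.
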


We apply Theorem \ref{thm:2244} to deduce a quasi-Hopf structure on the subalgebra $u_q\subseteq u_{q,\kappa}$ under which the restriction map $\Rep(u_{q,\kappa})=\Rep(u^{\psi}_{q,\kappa})\to \Rep(u_q)$ reduces to an equivalence of tensor categories
\[
\opn{Vect}\ot_{\Rep(\Sigma)}\Rep(u_{q,\kappa})\cong \opn{Vect}\ot_{\Rep(\Sigma)}\Rep(u^{\psi}_{q,\kappa})\overset{\sim}\to\Rep(u_q).
\]
Via the braided structure on the above fiber, we see furthermore that $u_q$ admits a quasitriangular structure under which the above equivalence is braided monoidal.  We now apply the equivalence of Lemma \ref{lem:2159} to obtain the following.

\begin{theorem}\label{thm:quasiHopf}
The algebra $u_q$ admits a quasitriangular quasi-Hopf structure under which the restriction functor $\res_{\psi}:\Rep(G_q)\to \Rep(u_q)$ enhances to a map of braided tensor categories.  Furthermore, this braided tensor functor induces an equivalence of braided tensor categories
\begin{equation}\label{eq:2504}
\opn{Vect}\ot_{\Rep(\dG)}\Rep(G_q)\overset{\sim}\to \Rep(u_q)
\end{equation}
\end{theorem}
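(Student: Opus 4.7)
The plan is to apply the Angiono--Galindo--Pereira theorem (Theorem~\ref{thm:2244}) directly to the finite-dimensional Hopf algebra $u^{\psi}_{q,\kappa}$, together with the Hopf surjection $u^{\psi}_{q,\kappa}\to k\Sigma$ constructed in Section~\ref{sect:uqkappa}, and then chain the resulting equivalence with the base change identifications already established. The central structure that Theorem~\ref{thm:2244} demands for the functor $\Rep(\Sigma)\to \Rep(u^{\psi}_{q,\kappa})$ is precisely the central embedding \eqref{eq:2472}, which was built by twisting $\overline{\opn{Fr}}_\kappa$ against the form $\psi$ in order to trivialize its tensor structure. Since $u_q={^{\Sigma}(u^{\psi}_{q,\kappa})}$ by definition, Theorem~\ref{thm:2244} immediately yields a quasi-Hopf structure on $u_q$ and a tensor equivalence
\[
\opn{Vect}\ot_{\Rep(\Sigma)}\Rep(u^{\psi}_{q,\kappa})\overset{\sim}\to \Rep(u_q).
\]

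Next I would explain how the quasitriangular structure arises. The central embedding $\Rep(\Sigma)\to \Rep(u^{\psi}_{q,\kappa})$ is M\"uger central, and the forgetful functor $\Rep(\Sigma)\to\opn{Vect}$ is symmetric; by the functoriality of $\ot_{\msc{E}}$ in the braided setting (Proposition~\ref{prop:bc}), the fiber $\opn{Vect}\ot_{\Rep(\Sigma)}\Rep(u^{\psi}_{q,\kappa})$ inherits a canonical braided tensor structure making the reduction $\Rep(u^{\psi}_{q,\kappa})\to \opn{Vect}\ot_{\Rep(\Sigma)}\Rep(u^{\psi}_{q,\kappa})$ a braided tensor functor. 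Transporting this braiding across the equivalence with $\Rep(u_q)$ produces a quasitriangular structure on the quasi-Hopf algebra $u_q$. Concretely, the $R$-matrix of $u^{\psi}_{q,\kappa}$, which is the $\psi$-twist of the $R$-matrix of $u_{q,\kappa}$, descends through the coinvariant construction to yield the $R$-matrix of $u_q$.

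To produce the equivalence \eqref{eq:2504}, I would compose with the calculations already established. By Lemma~\ref{lem:2159} we have a braided tensor equivalence
\[
\opn{Vect}\ot_{\Rep(\dG)}\Rep(G_q)\overset{\sim}\to \opn{Vect}\ot_{\Rep(\Sigma)}\Rep(u_{q,\kappa}),
\]
and the canonical braided equivalence $\Rep(u_{q,\kappa})\cong \Rep(u^{\psi}_{q,\kappa})$ from Step II of Section~\ref{sect:uqkappa} is the identity on underlying linear categories, so it induces a braided equivalence $\opn{Vect}\ot_{\Rep(\Sigma)}\Rep(u_{q,\kappa})\cong\opn{Vect}\ot_{\Rep(\Sigma)}\Rep(u^{\psi}_{q,\kappa})$. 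Stringing these together with the equivalence of Proposition~\ref{prop:2464} (whose underlying linear identification is already in hand) gives \eqref{eq:2504}. Finally, since the restriction functor $\res_\psi$ is by construction the composite of the reduction $\Rep(G_q)\to \opn{Vect}\ot_{\Rep(\dG)}\Rep(G_q)$ with the equivalence \eqref{eq:2504}, and both factors are braided monoidal, $\res_\psi$ itself enhances to a braided tensor functor.

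The main technical point to verify with care is that all the braidings line up: one must check that the braided structure placed on $u_q$ via Theorem~\ref{thm:2244} and the $\psi$-twisted reduction agrees, under the equivalence \eqref{eq:2504}, with the braided structure on $\opn{Vect}\ot_{\Rep(\dG)}\Rep(G_q)$ determined by Proposition~\ref{prop:bc}. This is more-or-less forced by the universal property of the relative product combined with the uniqueness clause of Proposition~\ref{prop:bc}, but one should write the tetrahedron of braided monoidal functors $\Rep(G_q)\to\Rep(u_{q,\kappa})\cong\Rep(u^{\psi}_{q,\kappa})\to\Rep(u_q)$ explicitly to make sure each face commutes. I expect this coherence verification, rather than the existence of the quasi-Hopf structure, to be the only non-formal step in the proof.
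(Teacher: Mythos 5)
Your proposal is correct and follows essentially the same route as the paper: apply Theorem~\ref{thm:2244} to the Hopf surjection $u^{\psi}_{q,\kappa}\to k\Sigma$ (using the trivialized central structure \eqref{eq:2472}) to obtain the quasi-Hopf structure and the equivalence $\opn{Vect}\ot_{\Rep(\Sigma)}\Rep(u^{\psi}_{q,\kappa})\simeq\Rep(u_q)$, pull back the braiding from the fiber to get the quasitriangular structure, then chain with Lemma~\ref{lem:2159}. You supply more detail than the paper's very terse proof — in particular the coherence check at the end and the remark that $\res_\psi$ factors as reduction followed by the equivalence from Proposition~\ref{prop:2464} — but the underlying argument is the same.
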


As the algebra $u_q$ is non-canonical, and the quasi-Hopf structure on this algebra is even noner-canonicaler, we are happy to simply establish the existence of such a quasi-Hopf structure on $u_q$ and corresponding calculation \eqref{eq:2504}. We leave further investigations in this quasi-Hopf structure to the interested reader.  (Cf.\ \cite{gainutdinovrunkel17,creutziggainutdinovrunkel20,gainutdinovlentnerohrmann,creutziglentnerrupert2}.)

\subsection{Recovering Lusztig at odd orders \cite{arkhipovgaitsgory03,davydovetingofnikshych18}}
\label{sect:lusztig_intheend}

We consider $G$ of adjoint type, and an odd integer $l$ which is coprime to both the lacing number and the determinant of the Cartan matrix for $G$ (Example \ref{ex:odd}).  Let us fix the quantum parameter $q=\exp(2\pi i (-,-)/l)$.
\par

In this case the dual parameter $\varepsilon$ for $G^\ast$ is trivial, so that we may take $\kappa$ to be identically $1$.  Following the construction of Section \ref{sect:uqkappa}, we now take $\psi=1$ to obtain a distinguished choice for the coideal subalgebra $u_q$ in $u_{q,\kappa}$.  Furthermore, via our coprimeness assumptions we see that $q$ vanishes on $lQ$ and reduces to a non-degenerate form on the quotient $Q/lQ$.  This forces $X^{\opn{Tan}}=X^{\text{M\"ug}}=lQ$, and calculates the grouplikes in $u_q$ as
\[
(Q/lQ)^\vee=\{ q(\lambda,-):\lambda\in Q\}=\langle K_\alpha:\alpha\in\Delta\rangle\cong (\mathbb{Z}/l\mathbb{Z})^\Delta.
\]

Now, since all of the grouplikes $K_\alpha$ live in $u_q$, and since the $E$'s and $F$'s are annihilated by the Hopf algebra map $u_{q,\kappa}\to k\Sigma$, we see that all $E_\alpha$ and $F_\alpha$ live in $u_q$ as well.  This, along with the dimension calculation \eqref{eq:fpdim1}, provides an identification of our small quantum group
\[
\begin{array}{rl}
u_q&=\text{the subalgebra in $u_{q,\kappa}$ generated by the $K_\alpha$, $E_{\alpha}$, and $F_\alpha$}\vspace{2mm}\\
&=\text{the subalgebra in $\hat{U}_q$ generated by the $K_\alpha$, $E_{\alpha}$, and $F_\alpha$}\vspace{2mm}\\
&=\text{Lusztig's original finite-dimensional algebra from \cite{lusztig90,lusztig90II}}.
\end{array}
\]

Furthermore, one sees that the $R$-matrix for the big quantum group lies in $u_q\ot u_q$ in this case, so that the usual Hopf structure on $u_q$ gives $\Rep(u_q)$ a braided tensor structure under which the restriction functor $\res:\Rep(G_q)\to \Rep(u_q)$ is braided monoidal.  It follows that the induced functor from the fiber
\[
\opn{Vect}\ot_{\Rep(\dG)}\Rep(G_q)\to \Rep(u_q)
\]
is a map of braided tensor categories, and hence an equivalence of braided tensor categories by Proposition \ref{prop:2464}.

\subsection{Agreement with Gainutdinov-Lentner-Ohrmann}
\label{sect:glo}

In \cite{gainutdinovlentnerohrmann} Gainutdinov, Lentner, and Ohrmann construct factorizable quasi-Hopf algebras for any pairing of a semisimple algebraic group $G$ with a form $q$ \cite[\S 4, 5]{gainutdinovlentnerohrmann}, just as in the present text.  For the moment, let's denote their quasi-Hopf algebras by $u^{\opn{GLO}}_q$.

For universality reasons, our quasi-Hopf algebra $u_q$ has the same representation category as that of $u^{\opn{GLO}}_q$.  In particular, we have a braided tensor equivalence
\[
\opn{Vect}\ot_{\Rep(\dG)}\Rep(G_q)\cong \Rep(u_q)\cong \Rep(u^{\opn{GLO}}_q).
\]
This equivalence is forced by the conclusions of \cite[Theorem 6.7 (2)]{gainutdinovlentnerohrmann} and Lemma \ref{lem:2159}, in particular.  So the presentation from this section agrees with the earlier constructions of ``modular small quantum groups" from Gainutdinov-Lentner-Ohrmann, at least at a categorical level.
\par

One can see \cite{gainutdinovlentnerohrmann} for an extensive, and remarkably hands-on, analysis of quasi-Hopf algebras which are like, if not equal to, our algebra $u_q$.

\section{Small quantum representations}
\label{sect:smallrep}

We consider representations for our small quantum group $u_q$.  Even though, as explained in Section \ref{sect:smallquantumgroup}, the underlying algebra for $u_q$ is not uniquely determined, we obtain a uniform and fairly explicit description of the representation category $\Rep(u_q)$ via its consistent identifications with the fibers $\opn{Vect}\ot_{\Rep(\dG)}\Rep(G_q)$ and $\opn{Vect}\ot_{\Rep(\Sigma)}\Rep(u_{q,\kappa})$.  We record some basic information below.

\subsection{Simples, projectives, and restriction}
\label{sect:smallrep_1}

\begin{lemma}[{cf.\ \cite[proof of Proposition 4.26]{dgno10}}]\label{lem:simple_kappa}
\begin{enumerate}
\item Any simple $L$ in $\Rep(u_{q,\kappa})$ restricts to a simple representation in $\Rep(u_q)$.
\item Two simples $L$ and $L'$ in $\Rep(u_{q,\kappa})$ have isomorphic restrictions in $\Rep(u_q)$ if and only if $L'\cong \sigma\ot L$ for some invertible simple $\sigma$ in $\Rep(\Sigma)$.
\item Restriction induces a bijection
\begin{equation}\label{eq:2518}
\opn{Irrep}(u_{q,\kappa})/\Sigma^\vee\overset{\sim}\to \opn{Irrep}(u_q).
\end{equation}
\end{enumerate}
\end{lemma}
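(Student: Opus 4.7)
The plan is to recognize the setup as an instance of the de-equivariantization calculus from \cite{arkhipovgaitsgory03,dgno10}.  Via Proposition \ref{prop:2464}, we have $\Rep(u_q)\cong \opn{Vect}\ot_{\Rep(\Sigma)}\Rep(u_{q,\kappa})$, and the concrete model from Lemma \ref{lem:fin_basechange} realizes the right hand side as $\O(\Sigma)\text{-mod}$ in $\Rep(u_{q,\kappa})$.  Under this identification the restriction functor $\res:\Rep(u_{q,\kappa})\to \Rep(u_q)$ becomes the free-module functor $V\mapsto \O(\Sigma)\ot V$, whose right adjoint $\opn{ind}$ is the forgetful functor sending a module back to its underlying $u_{q,\kappa}$-representation.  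In particular $\opn{ind}(\1_{u_q})\cong \O(\Sigma)\cong \bigoplus_{\sigma\in \Sigma^\vee}\sigma$ in $\Rep(u_{q,\kappa})$, and the projection formula gives, for any simples $L,L'\in \opn{Irrep}(u_{q,\kappa})$,
\[
\Hom_{u_q}(\res L,\res L')\cong \bigoplus_{\sigma\in \Sigma^\vee}\Hom_{u_{q,\kappa}}(L,L'\ot \sigma).
\]

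The crux of the argument is then to verify that the tensoring action $L\mapsto L\ot \sigma$ of $\Sigma^\vee$ on $\opn{Irrep}(u_{q,\kappa})$ is \emph{free}.  Recall from Section \ref{sect:sqa} that simples of $u_{q,\kappa}$ are classified by highest weights $\bar\lambda\in \Lambda^\vee=X/\opn{rad}(q,\kappa)$, and note that $\Sigma^\vee=X^{\opn{Tan}}/\opn{rad}(q,\kappa)$ sits as a subgroup of $\Lambda^\vee$ via the inclusion $X^{\opn{Tan}}\subseteq X$.  Under this embedding, tensoring $L(\bar\lambda)$ with the one-dimensional simple $\sigma\in \Sigma^\vee$ yields the simple $L(\bar\lambda+\sigma)$, so $L\cong L\ot \sigma$ forces $\sigma=0$.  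Hence every simple in $\Rep(u_{q,\kappa})$ has trivial $\Sigma^\vee$-stabilizer.  This single lattice-theoretic observation is the only ingredient specific to our setting.

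With freeness in hand, the three claims fall out of the general de-equivariantization principle \cite[Proposition 4.26]{dgno10}.  Specializing the displayed $\Hom$-formula to $L=L'$ leaves only the $\sigma=1$ summand, so $\Hom_{u_q}(\res L,\res L)=k$; combined with Frobenius–Perron dimension matching $\opn{FPdim}(\res L)=\opn{FPdim}(L)$ and the cited result, this upgrades to simplicity of $\res L$, giving (1).  Statement (2) is immediate from the same formula: the restrictions admit a nonzero, hence isomorphism, morphism precisely when $L\cong L'\ot \sigma$ for some $\sigma\in \Sigma^\vee$.  For (3), the functor $\res$ is surjective as a tensor functor (its image generates $\Rep(u_q)$ under subquotients by the very construction of the fiber), so every simple in $\Rep(u_q)$ appears as a summand of some $\res L$ with $L$ simple, which by (1) forces it to equal $\res L$; bijectivity then follows from (1) and (2).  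The main technical obstacle is the freeness check in the second paragraph; everything else is formal bookkeeping on top of the base-change equivalence of Proposition \ref{prop:2464}.
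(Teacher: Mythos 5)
Your setup is the same as the paper's: realize $\Rep(u_q)$ via the model $\O(\Sigma)\text{-mod}_{\Rep(u_{q,\kappa})}$ from Lemma \ref{lem:fin_basechange} and Proposition \ref{prop:2464}, and observe that the $\Sigma^\vee$-action on $\opn{Irrep}(u_{q,\kappa})$ by highest-weight translation is free. The adjunction/projection formula $\Hom_{u_q}(\res L,\res L')\cong\bigoplus_\sigma\Hom_{u_{q,\kappa}}(L,\sigma\ot L')$ is correct and does cleanly give (2) and the surjectivity step in (3), once (1) is in place.

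The gap is in your proof of (1). In a non-semisimple abelian category, $\End(\res L)=k$ only yields \emph{indecomposability}, not simplicity (think of a Verma module). The phrase ``combined with Frobenius--Perron dimension matching $\opn{FPdim}(\res L)=\opn{FPdim}(L)$ and the cited result, this upgrades to simplicity'' does not specify an actual argument: FP dimensions say nothing about length, and \cite[Proposition 4.26]{dgno10} is a statement about the FP dimension of the de-equivariantized \emph{category}, proved there in the fusion (hence semisimple) setting where $\End=k$ would indeed suffice. The paper instead argues directly: $\opn{ind}(\res L)=\O\ot L=\bigoplus_{\sigma\in\Sigma^\vee}L(\mu+\sigma)$ is semisimple and multiplicity-free over $u_{q,\kappa}$, so any nonzero $\O(\Sigma)$-submodule $W$ must contain some summand $L(\mu+\sigma_0)$; the $\O(\Sigma)$-action then carries the copy of $\O\ot L(\mu+\sigma_0)$ generated by that summand isomorphically onto all of $\O\ot L$, forcing $W=\O\ot L$. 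You could patch your version along similar lines --- pick a simple submodule $S\subseteq\res L$, use that $\opn{ind}(S)$ must contain some $L(\mu+\sigma_0)$, apply the adjunction to get a nonzero composite $\res L\cong\res L(\mu+\sigma_0)\twoheadrightarrow S\hookrightarrow\res L$, and invoke $\End=k$ to conclude this is an isomorphism --- but as written the upgrade from $\End=k$ to simplicity is asserted, not proved.
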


To be clear, in the above formula \eqref{eq:2518} we consider the identification $\Sigma^\vee=\opn{Irrep}(\Sigma)$ and let $\Sigma^\vee$ act on $\opn{Irrep}(u_{q,\kappa})$ via the translations $\sigma\ot-$.

\begin{proof}
Under the equivalence
\[
\opn{Vect}\ot_{\Rep(\Sigma)}\Rep(u_{q,\kappa})=\O(\Sigma)\text{-mod}_{\Rep(u_{q,\kappa})}\overset{\sim}\to \Rep(u_q)
\]
the restriction functor is identified with the free module functor
\[
\O(\Sigma)\ot-:\Rep(u_{q,\kappa})\to \O(\Sigma)\text{-mod}_{\Rep(u_{q,\kappa})}.
\]
We recall that the simples $L(\mu)$ in $\Rep(u_{q,\kappa})$ are classified by their highest weights, and that these weights are elements in the additive group $X/\opn{rad}(q,\kappa)=\Lambda^\vee$.  For any element $\sigma\in X^{\opn{Tan}}/\opn{rad}(q,\kappa)=\Sigma^\vee$ the tensor translations are given by the formula $\sigma\ot L(\mu)=L(\mu +\sigma)$.
\par

Take $\O=\O(\Sigma)$.  For (1) we consider the free module
\[
\O\ot L(\mu)=\oplus_{\sigma\in \Sigma^\vee} \big(\sigma\ot L(\mu)\big)=\oplus_{\sigma\in \Sigma^\vee}L(\mu+\sigma).
\]
As a $u_{q,\kappa}$-representation, this object is semisimple, and each simple summand in $\O\ot L(\mu)$ occurs with multiplicity $1$.  Hence any nonzero $\O$-submodule $W$ in $\O\ot L(\mu)$ contains some simple summand $L(\mu')$, where $\mu'=\mu+\sigma$ for some $\sigma$, and thus contains the image of the action map
\[
\O\ot L(\mu')\to \oplus_{\sigma\in \Sigma^\vee} \sigma\ot L(\mu).
\]
But this action map is an isomorphism, so that $W=\O\ot L(\mu)$ necessarily.  It follows that for each simple $L$ in $\Rep(u_{q,\kappa})$ the associated free module $\O\ot L$ is simple in $\O\text{-mod}_{\Rep(u_{q,\kappa})}$.
\par

For (2), it is clear that there is an identification of free modules $\O\ot L=\O\ot (\sigma\ot L)$ for each $\sigma\in \Sigma^\vee$.  Furthermore, if the objects $\O\ot L(\mu)$ and $\O\ot L(\mu')$ agree as $\O$-modules then they share a simple summand over $u_{q,\kappa}$.  But this occurs if and only if $\mu$ and $\mu'$ are $\Sigma^\vee$-translates of each other, and hence if and only if $L(\mu')=\sigma\ot L(\mu)$ for some $\sigma$.
\par

For (3), any simple object $W$ in $\O\text{-mod}_{\Rep(u_{q,\kappa})}$ admits a non-zero map from some free module $\O\ot L\to W$ with $L$ simple over $u_{q,\kappa}$.  Since both $\O\ot L$ and $W$ are simple, this map must be an isomorphism.  If follows that the free module functor induces a surjection
\[
\O\ot-:\opn{Irrep}(u_{q,\kappa})\to \{\text{Simples in }\O\text{-mod}_{\Rep(u_{q,\kappa})}\}.
\]
By (2) this surjection reduces to a bijection
\[
\opn{Irrep}(u_{q,\kappa})/\Sigma^{\vee}\overset{\sim}\to \{\text{Simples in }\O\text{-mod}_{\Rep(u_{q,\kappa})}\}.
\]
\end{proof}

\begin{lemma}\label{lem:proj_kappa}
For any indecomposable projective $P$ in $\Rep(u_{q,\kappa})$, $P$ has projective and indecomposable restriction to $\Rep(u_q)$.  Furthermore, restriction induces a bijection
\begin{equation}\label{eq:2564}
\opn{IndecProj}(u_{q,\kappa})/\Sigma^\vee\overset{\sim}\to \opn{IndecProj}(u_q).
\end{equation}
\end{lemma}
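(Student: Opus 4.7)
The plan is to combine the approach of Lemma \ref{lem:simple_kappa} with a dimension count against the regular representation of $u_{q,\kappa}$. The first task is to establish projectivity of restriction. Since $u_q\subseteq u_{q,\kappa}$ is a normal coideal subalgebra with semisimple Hopf quotient $k\Sigma$, the algebra $u_{q,\kappa}$ is free on both sides as a $u_q$-module by Nichols--Zoeller (compare Lemma \ref{lem:2034}(2)). Consequently both induction and coinduction are exact, and $\res$ preserves projectives; so each $\res P(\mu)$ is projective in $\Rep(u_q)$.

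Applying $\res$ to the canonical surjection $P(\mu)\twoheadrightarrow L(\mu)$ yields a surjection $\res P(\mu)\twoheadrightarrow \bar L(\bar\mu)$ onto a simple object, by Lemma \ref{lem:simple_kappa}(1). Projectivity of $\res P(\mu)$ then splits off the indecomposable projective cover, giving $\res P(\mu)=\bar P(\bar\mu)\oplus R_\mu$ with $R_\mu$ projective, where $\bar P(\bar\mu)$ denotes the projective cover of $\bar L(\bar\mu)$ in $\Rep(u_q)$.

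To force $R_\mu=0$ I would compare two decompositions of the regular representation. Writing $u_{q,\kappa}=\oplus_{\nu\in \Lambda^\vee}\dim L(\nu)\cdot P(\nu)$, applying $\res$, and grouping by $\Sigma^\vee$-orbits $[\bar\nu]$---using $\sigma\otimes P(\nu)=P(\nu+\sigma)$ and the fact that characters $\sigma\in \Sigma^\vee$ restrict to the trivial character on $u_q$, so that $\res P(\nu+\sigma)\cong \res P(\nu)$---one gets $\res u_{q,\kappa}\cong \oplus_{[\bar\nu]}|\Sigma^\vee|\cdot \dim L(\nu_0)\cdot \res P(\nu_0)$. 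On the other hand, freeness of $u_{q,\kappa}$ over $u_q$ of rank $|\Sigma|$, combined with the standard decomposition $u_q=\oplus_{[\bar\nu]}\dim\bar L(\bar\nu)\cdot \bar P(\bar\nu)$, gives $\res u_{q,\kappa}\cong \oplus_{[\bar\nu]}|\Sigma|\cdot \dim\bar L(\bar\nu)\cdot \bar P(\bar\nu)$. Comparing multiplicities of each $\bar P(\bar\tau)$ on both sides via Krull--Schmidt, and exploiting $|\Sigma|=|\Sigma^\vee|$, $\dim L(\nu_0)=\dim \bar L(\bar\nu_0)$, and the already-established containment $\bar P(\bar\tau)\mid \res P(\tau_0)$, pins down $\res P(\nu_0)=\bar P(\bar\nu_0)$ exactly.

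The bijection \eqref{eq:2564} then follows: surjectivity, since every indecomposable projective in $\Rep(u_q)$ is the projective cover of some $\bar L(\bar\mu)=\res L(\mu)$ by Lemma \ref{lem:simple_kappa}(3), hence coincides with $\res P(\mu)$; and injectivity, since $\res P(\mu)\cong \res P(\mu')$ forces their simple heads $\bar L(\bar\mu),\bar L(\bar\mu')$ to agree, which by Lemma \ref{lem:simple_kappa}(2) requires $\mu'-\mu\in \Sigma^\vee$. The only delicate step is the Krull--Schmidt multiplicity comparison, but this is routine given the finite abelian structure of $\Rep(u_q)$.
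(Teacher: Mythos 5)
Your argument is correct but follows a genuinely different route from the paper's. After establishing that restriction preserves projectives, the paper works in the explicit model $\opn{Vect}\ot_{\Rep(\Sigma)}\Rep(u_{q,\kappa})=\O(\Sigma)\text{-mod}_{\Rep(u_{q,\kappa})}$ and observes that $\O\ot P$ has $\O\ot L$ as its cosocle over $u_{q,\kappa}$, so that there is a \emph{unique} $\O$-module surjection from $\O\ot P$ onto a simple; hence $\O\ot P$ has simple cosocle as an $\O$-module and is indecomposable, with the bijection then dropping out of Lemma \ref{lem:simple_kappa}(3). You instead compare multiplicities of indecomposable projectives in two decompositions of $\res\, u_{q,\kappa}$ (one via restricting the regular decomposition of $u_{q,\kappa}$ and grouping over $\Sigma^\vee$-orbits, the other via freeness of rank $|\Sigma|$), and invoke Krull--Schmidt to kill the residual summand $R_\mu$. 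Both work; the paper's cosocle argument is more local (it never needs the global rank count), while yours is a robust and elementary counting argument that also proves $\res P(\mu)\cong\bar P(\bar\mu)$ on the nose. One small correction: $u_q$ is a right coideal subalgebra of $u_{q,\kappa}^\psi$, not a Hopf subalgebra, so Nichols--Zoeller does not apply directly; the correct citation for freeness is Skryabin's theorem, exactly as the paper uses in its Lemma \ref{lem:2034}(2) for $\bar u_q\subseteq u_{q,\kappa}$ (and as the paper invokes at the start of its proof of the present lemma).
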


\begin{proof}
First note that $u_{q,\kappa}$ is projective over $u_q$, by Skryabin's theorem \cite[Theorem 6.1]{skryabin07} for example.  So projectives in $\Rep(u_{q,\kappa})$ restrict to projectives in $\Rep(u_q)$, and we need only deal with indecomposability and the proposed bijection \eqref{eq:2564}.
\par

We take $\O=\O(\Sigma)$ and proceed as in the proof of Lemma \ref{lem:simple_kappa}.  For any indecomposable projective $P$ we have a uniquely associated simple $L$ which admits a surjection $P\to L$.  Hence for any such $P$ the free module $\O\ot P$ has cosocle $\O\ot L$ over $u_{q,\kappa}$.  It follows that the surjection $\O\ot P\to \O\ot L$ is the unique $\O$-module map from $\O\ot P$ onto a simple in the category
\[
\opn{Vect}\ot_{\Rep(\Sigma)}\Rep(u_{q,\kappa})=\O\text{-mod}_{\Rep(u_{q,\kappa})}.
\]
In particular, $\O\ot P$ has simple cosocle, as an $\O$-module, and is therefore indecomposable as an $\O$-module.  The second claim now follows by Lemma \ref{lem:simple_kappa} (3).
\end{proof}

\begin{lemma}\label{lem:fjlks}
A $u_{q,\kappa}$-representation $W$ is projective in $\Rep(u_{q,\kappa})$ if and only if it restricts to a projective $u_q$-representation.
\end{lemma}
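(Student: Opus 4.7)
The plan is to deduce the result from the identification $\Rep(u_q)\cong \O(\Sigma)\text{-mod}_{\Rep(u_{q,\kappa})}$ of Proposition \ref{prop:2464}, under which restriction becomes the free $\O(\Sigma)$-module functor $E:V\mapsto \O(\Sigma)\otimes V$. Its right adjoint is the forgetful functor $U$ back to $\Rep(u_{q,\kappa})$, and both functors are exact: $U$ because kernels and cokernels in the relative module category are computed in $\Rep(u_{q,\kappa})$, and $E$ because $\O(\Sigma)$ is rigid in the tensor category $\Rep(u_{q,\kappa})$.

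For the forward implication, $E$ is left adjoint to the exact functor $U$ and therefore preserves projectives; so if $W$ is projective in $\Rep(u_{q,\kappa})$, then $\res W=EW$ is projective in $\Rep(u_q)$.

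For the reverse implication, suppose $EW=\O(\Sigma)\otimes W$ is projective in the relative module category. Since $E$ is exact, its right adjoint $U$ preserves injective objects. Both $\Rep(u_{q,\kappa})$ and $\Rep(u_q)$ are finite tensor categories---the latter by Theorem \ref{thm:1}---and any finite tensor category is Frobenius, so that projectives and injectives coincide in each. Hence $U$ preserves projectives, and one deduces that $\O(\Sigma)\otimes W = UEW$ is projective in $\Rep(u_{q,\kappa})$. I then exploit the Tannakian decomposition $\O(\Sigma)=\bigoplus_{\chi\in\Sigma^\vee}k_\chi$ inside $\Rep(\Sigma)\subseteq \Rep(u_{q,\kappa})$: the trivial character summand $k_1=\1$ appears as a direct summand of $\O(\Sigma)$, exhibiting $W\cong \1\otimes W$ as a direct summand of $\O(\Sigma)\otimes W$ in $\Rep(u_{q,\kappa})$, so that $W$ inherits projectivity. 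The one nontrivial step is the passage of projectivity through $U$, which rests on the Frobenius property of finite tensor categories.
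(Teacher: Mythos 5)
Your argument is correct, and it takes a route that is genuinely different from the paper's in the harder (reverse) direction. The paper deduces projectivity of $\O(\Sigma)\ot W$ in $\Rep(u_{q,\kappa})$ from the explicit classification of indecomposable projectives in the module category furnished by Lemma \ref{lem:proj_kappa}: since $\O\ot W$ is projective in $\O\text{-mod}_{\Rep(u_{q,\kappa})}$, that classification identifies it with a free module $\O\ot P$ on a projective $P$ of $\Rep(u_{q,\kappa})$, and $\O\ot P$ is visibly projective there. You instead run a purely formal argument: $E = \O(\Sigma)\ot(-)$ is exact (rigidity of $\O(\Sigma)$), so its right adjoint $U$ preserves injectives, and since both $\Rep(u_{q,\kappa})$ and $\Rep(u_q)$ are Frobenius (both are finite tensor categories, the latter via Theorem \ref{thm:1}), $U$ in fact preserves projectives; hence $UEW = \O(\Sigma)\ot W$ is already projective in $\Rep(u_{q,\kappa})$. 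Both arguments then finish identically, splitting off $W = \1\ot W$ as a direct summand using the semisimple decomposition $\O(\Sigma)=\bigoplus_{\chi\in\Sigma^\vee}k_\chi$. The trade-off is clear: the paper's route is elementary once Lemma \ref{lem:proj_kappa} is in hand but relies on that classification (and, indirectly, on Skryabin's theorem); your route avoids Lemma \ref{lem:proj_kappa} entirely at the cost of invoking the Frobenius property of finite tensor categories, and along the way you also replace the paper's Skryabin-based forward direction with the adjunction-formal one. Both are valid; yours is arguably a cleaner illustration of why these ``restriction/induction'' equivalences preserve projectives, applicable verbatim to any central de-equivariantization of a finite tensor category by a fusion Tannakian subcategory.
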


\begin{proof}
The fact that projectives in $\Rep(u_{q,\kappa})$ restrict to projectives in $\Rep(u_q)$ follows by Lemma \ref{lem:proj_kappa}.  Now, if $W$ has projective restriction to $\Rep(u_q)$, then the free module $\O\ot W$ is projective in $\O\text{-mod}_{\Rep(u_{q,\kappa})}$.  From the description of projectives in this module category provided in Lemma \ref{lem:proj_kappa}, if follows that $\O\ot W$ is isomorphic to a free module $\O\ot P$ with $P$ projective in $\Rep(u_{q,\kappa})$.  But now, $W$ is a summand of $\O\ot W\cong \O\ot P$ as a $u_{q,\kappa}$-representation, and $\O\ot P$ is projective in $\Rep(u_{q,\kappa})$, so that $W$ must be projective.
\end{proof}

We recall that any finite tensor category is Frobenius \cite[Proposition 2.3]{etingofostrik04}.  It follows that projectives and injectives agree in $\Rep(u_q)$.  We now apply Lemma \ref{lem:fjlks} and Theorem \ref{thm:1359} to observe the following.

\begin{proposition}
An object in $\Rep(G_q)$ is projective (equivalently injective) if and only if its restriction to $\Rep(u_q)$ is projective (equivalently injective).
\end{proposition}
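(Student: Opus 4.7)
The plan is to observe that this proposition follows immediately by composing two results already established in the excerpt. The restriction functor $\res_\psi : \Rep(G_q) \to \Rep(u_q)$ factors, by construction (Section \ref{sect:X}), as the composite
\[
\Rep(G_q) \xrightarrow{\res} \Rep(u_{q,\kappa}) \cong \Rep(u^\psi_{q,\kappa}) \xrightarrow{\text{restrict}} \Rep(u_q),
\]
where the second arrow is restriction along the coideal subalgebra inclusion $u_q \hookrightarrow u^\psi_{q,\kappa} = u_{q,\kappa}$. So I would simply chain together the two ``if and only if'' statements governing projectivity along each stage.

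For the first stage, Theorem \ref{thm:1359} states that a $G_q$-representation $V$ is projective if and only if its restriction to $\Rep(u_{q,\kappa})$ is projective. For the second stage, Lemma \ref{lem:fjlks} states that a $u_{q,\kappa}$-representation $W$ is projective if and only if its restriction to $\Rep(u_q)$ is projective. (The intermediate tensor equivalence $\Rep(u_{q,\kappa}) \cong \Rep(u^\psi_{q,\kappa})$ is a linear equivalence, and in particular preserves projectives.) Composing these two equivalences yields the desired ``if and only if'' for projectivity.

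The parenthetical claim about injectivity is then immediate from the Frobenius property: $\Rep(G_q)$ is Frobenius by \cite[Lemma 11.1]{negron} (recalled at the beginning of Section \ref{sect:base_change}), and $\Rep(u_q)$ is Frobenius because it is a finite tensor category \cite[Proposition 2.3]{etingofostrik04}. In both categories, then, the classes of projective and injective objects coincide, so the statement for injectivity follows from the statement for projectivity.

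There is no real obstacle here; the proof is a one-line composition of Theorem \ref{thm:1359} with Lemma \ref{lem:fjlks}, plus an invocation of the Frobenius property on each end to pass between ``projective'' and ``injective''.
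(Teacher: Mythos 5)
Your proposal is correct and follows exactly the same route as the paper, which states the proposition as an immediate consequence of applying Lemma \ref{lem:fjlks} and Theorem \ref{thm:1359}, together with the Frobenius properties of $\Rep(G_q)$ and $\Rep(u_q)$. The only thing you added is the explicit (and correct) remark that the intermediate twist $\Rep(u_{q,\kappa})\cong\Rep(u^{\psi}_{q,\kappa})$ is a linear equivalence and so doesn't disturb the projectivity bookkeeping.
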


\subsection{Representations via the toral algebra $\dot{\mbf{u}}_q$}
\label{sect:smallrep_2}

One can provide alternate versions of Lemmas \ref{lem:simple_kappa} and \ref{lem:proj_kappa} which replace the small quantum algebra $u_{q,\kappa}$ with the torally extended small quantum group $\dot{\mbf{u}}_q$ from \cite[\S\ 36.2]{lusztig93}.
\par

One sees, essentially from the Steinberg decompositions for simples in $\Rep(G_q)$ \cite[Theorem 7.4]{lusztig89} \cite[Theorem 8.7]{negron}, that all of the simples in $\Rep(u_{q,\kappa})$ are restricted from simples over the toral algebra $\dot{\mbf{u}}_q$.  Furthermore, any simple in $\Rep(\dot{\mbf{u}}_q)$ restricts to a simple in $\Rep(u_{q,\kappa})$.  So we obtain, via Lemmas \ref{lem:simple_kappa} and \ref{lem:proj_kappa}, a classification of simples and indecomposable projectives in $\Rep(u_q)$ via restriction from $\Rep(\dot{\mbf{u}}_q)$;
\[
\opn{Irrep}(\dot{\mbf{u}}_q)/X^{\opn{Tan}}\overset{\sim}\to \opn{Irrep}(u_q),\ \ \opn{IndecProj}(\dot{\mbf{u}}_q)/X^{\opn{Tan}}\overset{\sim}\to \opn{IndecProj}(u_q).
\]
This description is likely preferable to that of Lemma \ref{lem:simple_kappa}, as the algebra $u_{q,\kappa}$ is not a standard object of study.

\subsection{Simples and highest weights}

We recall that $u_q$ is constructed as a subalgebra in a twisting of $u_{q,\kappa}$ along some character $\psi$ on the lattice $X$.  Since the algebra $u^{\psi}_{q,\kappa}$ is obtained from $u_{q,\kappa}$ via such a toral twisting, we have the twisted non-negative Borel $(u^{\psi}_{q,\kappa})^{\geq 0}=(u^{\geq 0}_{q,\kappa})^\psi$ in $u^{\psi}_{q,\kappa}$.  This small Borel is just equal to $u^{\geq 0}_{q,\kappa}$ as an algebra.  We similarly have the non-positive subalgebra and furthermore the negative subalgebra $(u^{\psi}_{q,\kappa})^-$ of left $k\Lambda$-coinvariant elements in $(u^{\psi}_{q,\kappa})^{\leq 0}$, where $\Lambda=(X/\opn{rad}(q,\kappa))^\vee$ is the group of grouplikes in $u^\psi_{q,\kappa}$.
\par

This negative subalgebra lies in $u_q$, so that we may take $u_q^-:=(u^{\psi}_{q,\kappa})^-$.  We define also $u_q^{\geq 0}$ to be the $k\Sigma$-coinvariants in $(u^{\psi}_{q,\kappa})^{\geq 0}$ to obtain an almost-triangular decomposition
\[
u_q^{\geq 0}\ot u_q^-\overset{\sim}\to u_q.
\]
\par

Via these subalgebras, and associated triangular decoposition, one obtains a standard analysis of simple $u_q$-representations in terms of highest weights.  These highest weights live in the truncated character group $X/X^{\opn{Tan}}$, and one sees that restriction from $\Rep(\dot{\mbf{u}}_q)$ sends the simple $L(\lambda)$ of highest weight $\lambda\in X$ to the corresponding simple $L(\bar{\lambda})$ in $\Rep(u_q)$ of highest weight $\bar{\lambda}\in X/X^{\opn{Tan}}$.

\begin{lemma}
Simple $u_q$-representations are labeled by their highest weights, and there is bijection
\[
X/X^{\opn{Tan}}\overset{\sim}\to \opn{Irrep}(u_q).
\]
\end{lemma}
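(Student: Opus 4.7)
The plan is to extract the classification from Lemma \ref{lem:simple_kappa} together with the almost-triangular decomposition $u_q^{\geq 0}\ot u_q^-\overset{\sim}\to u_q$ which was just recorded. At the level of index sets, the identification should be essentially formal: simples of $u_{q,\kappa}$ are labeled by $\Lambda^\vee = X/\opn{rad}(q,\kappa)$, and Lemma \ref{lem:simple_kappa} (3) then gives
\[
\opn{Irrep}(u_q)\ \cong\ \opn{Irrep}(u_{q,\kappa})/\Sigma^\vee\ \cong\ \Lambda^\vee/\Sigma^\vee\ =\ \big(X/\opn{rad}(q,\kappa)\big)\big/\big(X^{\opn{Tan}}/\opn{rad}(q,\kappa)\big)\ =\ X/X^{\opn{Tan}}.
\]
So the first thing I would do is simply record this chain of identifications and verify it is compatible with the natural $\Sigma^\vee$-action on weights coming from tensoring with invertible simples in $\Rep(\Sigma)$.

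The second step is to realize this bijection as an actual highest-weight labeling with respect to the almost-triangular decomposition of $u_q$. Given $\bar\lambda\in X/X^{\opn{Tan}}$, I would pick a lift $\lambda'\in\Lambda^\vee$, take the $u_{q,\kappa}$-simple $L(\lambda')$ with its highest weight generator $v_{\lambda'}$, and restrict to $u_q$. By Lemma \ref{lem:simple_kappa} (1) the restriction remains simple, and $v_{\lambda'}$ continues to be annihilated by $u_q^+\subseteq u_{q,\kappa}^+$. The toral part of $u_q^{\geq 0}$ is, by construction, the subalgebra of $\Sigma$-coinvariants in $k\Lambda$, i.e., the characters of $\Lambda$ which factor through the quotient $\Lambda^\vee/\Sigma^\vee = X/X^{\opn{Tan}}$, so $v_{\lambda'}$ has toral weight $\bar\lambda$ independent of the chosen lift. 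This defines the map $\bar\lambda\mapsto L(\bar\lambda)$.

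For uniqueness of the highest weight within a given simple, I would run the usual triangular-decomposition argument: any nonzero simple $u_q$-representation $V$ admits, via the finiteness of $u_q^-$, a nonzero subspace of vectors killed by the augmentation ideal of $u_q^+$; this subspace is stable under the toral part of $u_q^{\geq 0}$, hence decomposes into weight spaces for $X/X^{\opn{Tan}}$; and by simplicity $V$ is generated by any such weight vector, forcing a single highest weight. Combining with the count from Lemma \ref{lem:simple_kappa} (3), the assignment $\bar\lambda\mapsto L(\bar\lambda)$ is the required bijection.

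The main obstacle, which is mostly bookkeeping, is to confirm that the twist by $\psi$ in the passage from $u_{q,\kappa}$ to $u^\psi_{q,\kappa}\supseteq u_q$ does not disturb this picture: specifically, that the highest weight of $v_{\lambda'}$ computed inside $u^\psi_{q,\kappa}$ really does match its highest weight inside $u_{q,\kappa}$ up to the ambiguity in $X^{\opn{Tan}}$, and that the toral part of $u_q$ is correctly identified with characters of $X/X^{\opn{Tan}}$. Both points follow from the fact that the Drinfeld twist $\psi$ is toral (it is built from a character on $X\ot X$) and hence leaves the multiplication on $u_{q,\kappa}$ and its triangular factors unchanged; after this observation the argument is routine.
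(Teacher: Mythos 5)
Your proposal is correct and follows the same route as the paper: the index-set bijection comes from Lemma~\ref{lem:simple_kappa}(3) via the chain $\Lambda^\vee/\Sigma^\vee\cong X/X^{\opn{Tan}}$, and the highest-weight realization comes from the almost-triangular decomposition $u_q^{\geq 0}\ot u_q^-\overset{\sim}\to u_q$, with the observation that the toral twist by $\psi$ leaves the underlying algebra and its triangular factors unchanged. The paper presents this only as a brief discussion preceding the lemma rather than a formal proof, but the content is the same; your write-up is simply more explicit.
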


\subsection{Blocks}

From the identification of Theorem \ref{thm:quasiHopf} one finds that the blocks in $\Rep(u_q)$ are identified with the $\Rep(\dG)$-blocks in $\Rep(G_q)$.  Specifically, $\Rep(G_q)$ decomposes into a sum of indecomposable, orthogonal $\Rep(\dG)$-submodule categories and we consider the collection $\opn{Bl}_{\dG}(\Rep(G_q))$ of these indecomposable submodules.

\begin{proposition}[{cf.\ \cite[Proposition 5.2]{arkhipovgaitsgory03}}]\label{prop:blocks}
Consider the restriction functor $\res_{\psi}:\Rep(G_q)\to \Rep(u_q)$ from Section \ref{sect:X}.  Intersecting with the image of $\Rep(G_q)$ defines a bijection
\[
\opn{Bl}_{\dG}(\Rep(G_q))\overset{\sim}\to\opn{Bl}(\Rep(u_q)).
\]
\end{proposition}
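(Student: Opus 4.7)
The plan is to deduce the block bijection from the equivalence $\Rep(u_q)\simeq \opn{Vect}\ot_{\Rep(\dG)}\Rep(G_q)$ of Proposition \ref{prop:2464} by decomposing both sides into indecomposables and matching summands. First I would write $\Rep(G_q)=\bigoplus_\beta \msc{M}_\beta$ as a direct sum of indecomposable $\Rep(\dG)$-module subcategories, which is simply the definition of $\opn{Bl}_{\dG}(\Rep(G_q))$. Since the relative product $\opn{Vect}\ot_{\Rep(\dG)}-$ is cocontinuous, it commutes with direct sums of module categories, so applying it yields
\[
\Rep(u_q)=\bigoplus_\beta \msc{N}_\beta,\quad \msc{N}_\beta:=\opn{Vect}\ot_{\Rep(\dG)}\msc{M}_\beta.
\]
Non-vanishing of each $\msc{N}_\beta$ is immediate in the explicit model $\opn{Vect}\ot_{\Rep(\Sigma)}\Rep(u_{q,\kappa})$ of Lemma \ref{lem:2159}, since any simple $L$ in $\msc{M}_\beta$ restricts to the nonzero free module $\mathcal{O}(\Sigma)\ot L$.

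The main task is then to show that each $\msc{N}_\beta$ is indecomposable as an abelian category, i.e.\ a single ordinary block of $\Rep(u_q)$. For this I would invoke the algebraic action of $\dG$ on $\Rep(u_q)$ from Theorem \ref{thm:act}, under which $\Rep(G_q)$ is recovered as $\Rep(u_q)^{\dG}$. Since $\dG$ is connected (being a quotient of the connected group $G^\ast$), its induced action on the discrete set $\opn{Bl}(\Rep(u_q))$ is necessarily trivial. Consequently the ordinary block decomposition $\Rep(u_q)=\bigoplus_B B$ is $\dG$-equivariant, so passing to $\dG$-invariants yields
\[
\Rep(G_q)=\Rep(u_q)^{\dG}=\bigoplus_B B^{\dG}
\]
as $\Rep(\dG)$-module categories. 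Via the mutually inverse equivalences between $\dG$-equivariant categories and $\Rep(\dG)$-module categories, each $B$ is recovered as $\opn{Vect}\ot_{\Rep(\dG)} B^{\dG}$, so indecomposability of $B$ as an abelian category forces indecomposability of $B^{\dG}$ as a $\Rep(\dG)$-module category. Hence $\{B^{\dG}\}_B$ is precisely the $\Rep(\dG)$-block decomposition of $\Rep(G_q)$, and matching with the earlier decomposition provides the bijection $\msc{M}_\beta\leftrightarrow \msc{N}_\beta$.

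The main obstacle will be the connectedness argument: making precise that the algebraic $\dG$-action on $\Rep(u_q)$ descends to a genuine action of the group $\dG$ on the finite set $\opn{Bl}(\Rep(u_q))$, and that such an action is automatically trivial when $\dG$ is connected. This should follow by translating the $\dG$-action into an action on the Bernstein center $\opn{End}(\opn{id}_{\Rep(u_q)})$ and observing that its central idempotents, which index the blocks, form a finite set on which any algebraic action by a connected group must fix every point. The remaining ingredients, namely cocontinuity of the relative product and the equivariantization/de-equivariantization calculus over finite Tannakian categories, are standard and already invoked throughout the paper.
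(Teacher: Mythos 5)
Your proposal follows the same strategy the paper uses: pass through Theorem \ref{thm:act} to identify $\Rep(G_q)\cong \Rep(u_q)^{\dG}$ and then leverage connectedness of $\dG$ to conclude that the $\dG$-action preserves each block of $\Rep(u_q)$, from which the bijection falls out by equivariantization. The paper's own proof is just a pointer to Theorem \ref{thm:act} and Corollary \ref{cor:blocks_action}, and your write-up fills in essentially the details those references are standing in for.

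The one place your implementation genuinely differs is the key connectedness lemma. You propose to prove block-stability via the Bernstein center, arguing that $\dG$ acts by automorphisms on the finite-dimensional algebra $Z(u_q)$ and therefore fixes each primitive central idempotent. The paper instead proves this in Appendix \ref{sect:B} (Proposition \ref{prop:blocks_action}) by a geometric argument over $H=\dG$: it shows that for a compact $M$ in $\QCoh(H)\ot_k\msc{M}$ whose fibers $x^\ast M$ are all simple, the supports of the block-components of $M$ form a disjoint closed cover of $H$, so connectedness forces $M$ into a single block; one then applies this to $M=\rho(V)$ for $V$ simple. Both routes hinge on connectedness of $\dG$, and either should work, but the paper's support argument operates directly at the level of the rational coaction $\rho:\msc{M}\to\QCoh(H)\ot_k\msc{M}$ without needing to first extract a literal group action of $\dG(k)$ on $Z(u_q)$. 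Your Bernstein-center sketch is plausible but is left at the level of ``this should follow''; the paper's appendix is where that work is actually done, and it also needs the hypothesis that the action preserves compact objects (which you get for free from tensor-ness). There is also a minor omitted check in your indecomposability step: when you argue that a nontrivial splitting $B^{\dG}=\msc{B}_1\oplus\msc{B}_2$ would force a splitting of $B$, you need the two summands of $\opn{Vect}\ot_{\Rep(\dG)}B^{\dG}$ to be nonzero, which requires the same kind of nonvanishing observation you already made earlier for the $\msc{N}_\beta$. Neither of these is a genuine gap in the approach, just detail to be supplied.
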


One obtains this result by identifying $\Rep(G_q)$ with the category of $\dG$-equivariant objects in $\Rep(u_q)$, and noting that the natural $\dG$-action on $\Rep(u_q)$ preserves all of the blocks.  See Theorem \ref{thm:act} below and Corollary \ref{cor:blocks_action} (cf.\ \cite[\S\ 5]{arkhipovgaitsgory03}).

\begin{remark}
It is not the case that all of the (usual) blocks in $\Rep(G_q)$ are determined by the linkage principle \cite{andersen92,andersen03}.  In particular, one can see by considering the $\Rep(G^\ast_\varepsilon)$-orbit of the Steinberg representation in $\Rep(G_q)$, and the Steinberg decompositions of simple representations, that $\Rep(G_q)$ has infinitely many blocks \cite{andersen18}.  So, the first claim of \cite[Proposition 5.2]{arkhipovgaitsgory03} is not accurate.  The analysis of the principle block in \cite[Proposition 5.2]{arkhipovgaitsgory03} should be accurate however, at least when $q$ is of not-too-small order.
\par

In considering the $\Rep(\dG)$-action on $\Rep(G_q)$, we expect that the $\Rep(\dG)$-blocks in $\Rep(G_q)$ \emph{are} determined by the linkage principle however, so that the blocks in $\Rep(u_q)$ are determined by this principle as well.  This point is likely deducible to experts.  (Cf.\ \cite{donkin80,vay}.)
\end{remark}

\section{The canonical $\dG$-action on $\Rep(u_q)$}
\label{sect:dG_action}

We conclude by recalling a calculus of equivariantization and de-equivariantization \cite{arkhipovgaitsgory03,dgno10} which intrinsically links the braided tensor categories of big and small quantum group representations.

\subsection{The happenings}

Throughout this section we let $\msc{M}\ot_k\msc{N}$ denote the linear product $\msc{M}\ot_{\opn{Vect}}\msc{N}$ for presentable $k$-linear categories $\msc{M}$ and $\msc{N}$.
\par

Consider an affine algebraic group $H$, and its category of quasicoherent sheaves $\QCoh(H)$.  This category is symmetric monoidal under the product $\ot_{\O_H}$, so that $\QCoh(H)$ can be considered as a commutative algebra object in the $2$-category of presentable linear categories.  We have
\[
\QCoh(H)\ot_k\QCoh(H)=\QCoh(H\times H)
\]
and pullback along the group structure map $m:H\times H\to H$ provides $\QCoh(H)$ with a coassociative map of algebra objects
\[
m^\ast:\QCoh(H)\to \QCoh(H)\ot_k\QCoh(H).
\]
In this way $\QCoh(H)$ becomes a commutative Hopf algebra in presentable linear categories.

\begin{definition}
A rational action of $H$ on a presentable linear category $\msc{A}$ is a choice of a $\QCoh(H)$-coaction on $\msc{A}$.  When $\msc{A}$ is a (braided) tensor category, we say $H$ acts by (braided) tensor automorphisms on $\msc{A}$ if the coaction map
\[
\rho:\msc{A}\to \QCoh(H)\ot_k\msc{A}
\]
is endowed with the structure of a map of (braided) monoidal categories.
\end{definition}

\begin{remark}
The above defintion is adapted from unpublished notes of Dennis Gaitsgory.
\end{remark}

As we explain below--at least in the quantum group setting--for any $\Rep(H)$-module category $\msc{M}$ the fiber category $\opn{Vect}\ot_{\Rep(H)}\msc{M}$ inherits a natural rational action of $H$.  In our particular situation, we obtain a natural action of the dual group $\dG$ on our category of small quantum group representations.

\begin{theorem}[\cite{arkhipovgaitsgory03}]\label{thm:act}
The fiber $\opn{Vect}\ot_{\Rep(\dG)}\Rep(G_q)\cong\Rep(u_q)$ admits a rational action of $\dG$ by braided tensor automorphisms.  Furthermore, the restriction functor induces a braided tensor equivalence
\begin{equation}\label{eq:2735}
\Rep(G_q)\overset{\sim}\to (\opn{Vect}\ot_{\Rep(\dG)}\Rep(G_q))^{\dG}\cong \Rep(u_q)^{\dG}
\end{equation}
onto the (non-full) subcategory of $\dG$-equivariant representations in $\Rep(u_q)$.
\end{theorem}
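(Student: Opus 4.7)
The plan is to invoke the de-equivariantization formalism of Arkhipov-Gaitsgory \cite{arkhipovgaitsgory03}, specialized to the central embedding $\opn{Fr}_\kappa:\Rep(\dG)\to \Rep(G_q)$ from Theorem \ref{thm:Z_classical}. The starting point is the explicit realization of the fiber as
\[
\opn{Vect}\ot_{\Rep(\dG)}\Rep(G_q)\ \cong\ \O(\dG)\text{-mod}_{\Rep(G_q)},
\]
obtained from Proposition \ref{prop:mods} applied to the regular algebra object $\O(\dG)$ in $\Rep(\dG)$, using the fundamental theorem of Hopf modules to identify $\O(\dG)\text{-mod}_{\Rep(\dG)}$ with $\opn{Vect}$. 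Composing with the equivalence of Theorem \ref{thm:quasiHopf} gives a concrete model $\Rep(u_q)\cong \O(\dG)\text{-mod}_{\Rep(G_q)}$ in which to manufacture the desired structures.

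First I would construct the rational $\dG$-action. The right translation action of $\dG$ on itself endows $\O(\dG)$ with the structure of a $\dG$-equivariant commutative Hopf algebra in $\Rep(\dG)\subseteq\Rep(G_q)$. Transporting this equivariance across the above identification yields a coaction
\[
\rho:\O(\dG)\text{-mod}_{\Rep(G_q)}\to \QCoh(\dG)\ot_k \O(\dG)\text{-mod}_{\Rep(G_q)}
\]
which, on an $\O(\dG)$-module $M$, twists the action of $\O(\dG)$ on $M$ via the comultiplication of $\O(\dG)$. Centrality of the embedding $\opn{Fr}_\kappa$ ensures that $\rho$ lifts to a coaction by monoidal automorphisms, and the \emph{braided} (indeed symmetric) structure on $\opn{Fr}_\kappa$ supplied by Theorem \ref{thm:Z_classical} upgrades this further to a coaction by \emph{braided} monoidal automorphisms.

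Next I would identify the equivariantization with $\Rep(G_q)$. A $\dG$-equivariant object in $\O(\dG)\text{-mod}_{\Rep(G_q)}$ is an $\O(\dG)$-module in $\Rep(G_q)$ equipped with a compatible descent datum for the $\dG$-action on $\O(\dG)$. By faithfully flat descent for the affine group scheme $\dG$ acting regularly on itself (cf.\ the generalities compiled in Appendix \ref{sect:B}), the functor $V\mapsto \O(\dG)\ot V$, equipped with its canonical descent datum, defines an equivalence
\[
\Rep(G_q)\ \overset{\sim}\to\ \big(\O(\dG)\text{-mod}_{\Rep(G_q)}\big)^{\dG},
\]
with inverse given by taking $\dG$-invariants. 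Unwinding the identifications, the composite of this equivalence with the forgetful functor to $\Rep(u_q)$ is precisely the restriction functor $\res_\psi$ from Section \ref{sect:X}.

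The hard part is verifying that the resulting equivalence $\Rep(G_q)\overset{\sim}\to\Rep(u_q)^{\dG}$ is braided monoidal, rather than merely monoidal. Monoidal compatibility follows because $\O(\dG)$ is a commutative algebra in $\Rep(\dG)$ and the embedding $\opn{Fr}_\kappa$ is central, so the standard formula $M\ot_{\O(\dG)} N$ computes the monoidal product on both sides; the braided compatibility then reduces to the fact, established in Theorem \ref{thm:Z_classical}, that $\opn{Fr}_\kappa$ is symmetric as a functor from $\Rep(\dG)$ and M\"uger central in $\Rep(G_q)$, so that the braiding on $\Rep(u_q)$ descended from $\Rep(G_q)$ matches the braiding inherited through equivariantization. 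Much of this compatibility is formal once the framework of Appendix \ref{sect:B} is in place, so the remaining work is largely bookkeeping rather than genuinely new input.
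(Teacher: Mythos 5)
Your proposal is correct and matches the paper's own treatment: the paper's formal proof is just a citation to \cite[Proposition 4.4]{arkhipovgaitsgory03} and \cite[Proposition A.2]{negron21}, but the explicit elaboration in Section~\ref{sect:exp} constructs exactly the same $\O(\dG)$-module model, the same coaction $\rho(M)=\O(\dG)\ot M$ via comultiplication, and the same free-module functor $V\mapsto\O(\dG)\ot V$ identified as an equivalence onto $\msc{C}^{\dG}$ via the fundamental theorem of Hopf modules. One small bookkeeping note: Appendix~\ref{sect:B} only contains the block-stability results and not the descent generalities you cite it for; the paper instead points to Montgomery \cite[\S\ 1.9]{montgomery93} for the Hopf-module argument.
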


\begin{proof}
In the language and notations of \cite{negron21}, $\opn{Vect}\ot_{\Rep(\dG)}\Rep(G_q)$ is the so-called de-equivariantization $\Rep(G_q)_{\dG}$ of $\Rep(G_q)$ along the central embedding $\opn{Fr}_{\kappa}:\Rep(\dG)\to \Rep(G_q)$.  So the result follows by \cite[Proposition 4.4]{arkhipovgaitsgory03} \cite[Proposition A.2]{negron21}.  (See also \cite[\S\ 4]{dgno10} \cite[Appendix B]{gannonnegron}.)
\end{proof}

\subsection{Theorem \ref{thm:act} in explicit terms}
\label{sect:exp}

For those who might not be as familiar with algebraic group actions on categories, let us say few words which will hopefully clarify the claims of Theorem \ref{thm:act}.
\par

In the most explicit terms, $\opn{Vect}\ot_{\Rep(\dG)}\Rep(G_q)$ is realized as the category of modules over the commutative algebra object $\O(\dG)$ in $\Rep(G_q)$, where we ignore any subtleties about twistings etc.  Let us take
\[
\msc{C}=\opn{Vect}\ot_{\Rep(\dG)}\Rep(G_q),
\]
interpreted explicitly as this category of $\O(\dG)$-modules.
\par

If we let $R$ denote $\O(\dG)$, considered as a trivial algebra object in $\Rep(\dG)$, then the Hopf structure on $R$ in $\opn{Vect}$ lifts to an $R$-comodule algebra structure on $\O(\dG)$ in $\Rep(\dG)$, or in $\Rep(G_q)$,
\[
\Delta:\O(\dG)\to R\ot \O(\dG).
\]
For any $\O(\dG)$-module $M$ in $\Rep(G_q)$ we define the new $\O(\dG)$-module $\rho(M):=R\ot M$ on which $\O(\dG)$ acts diagonally $f\cdot (r\ot m)=(f_1 r)\ot (f_2m)$.  The object $\rho(M)$ also has a left $R$-action via the first factor, which gives it the structure of an $R$-module in $\msc{C}$.
\par

The construction $\rho(M)$ is functorial, and so provides us with a braided monoidal functor to the base change
\[
\rho:\msc{C}\to R\text{-mod}_{\msc{C}}=\QCoh(\dG)\ot_k \msc{C}.
\]
This defines our rational $\dG$-action on the fiber category.
\par

As for the claimed equivalence \eqref{eq:2735}, for any $M$ in $\msc{C}$ we have the ``trivial" object $R\ot M$ in $R\text{-mod}_{\msc{C}}$ with $R$ acting through the first factor and $\O(\dG)$ acting through the second factor.  By definition, a $\dG$-equivariant structure on an object $M$ in $\msc{C}$ is the choice of a map $\rho_M:M\to \rho(M)$ of $\O(\dG)$-modules in $\Rep(G_q)$ which induces an isomorphism of $R$-modules
\[
R\ot M\overset{\sim}\to \rho(M),\ \ r\ot m\mapsto r\cdot \rho_M(m).
\]
We let $\msc{C}^{\dG}$ denote the non-full monoidal subcategory of $\dG$-equivariant objects in $\msc{C}$.
\par

We recall that the structure map for $\msc{C}$ is just the free module map
\[
\Rep(G_q)\to \msc{C},\ \ V\mapsto \O(\dG)\ot V,
\]
and note that the $R$-coaction on $\O(\dG)$ gives $\O(\dG)\ot V$ an equivariant structure over $\dG$.  So we restrict the codomain to obtain a braided monoidal functor
\[
\Rep(G_q)\to \msc{C}^{\dG}.
\]
The fact that this functor is an equivalence essentially follows by the fundamental theorem of Hopf modules \cite[\S\ 1.9]{montgomery93}.  We've now recovered Theorem \ref{thm:act}.

\subsection{Theorem \ref{thm:act} in abstract terms}

The unit map $\opn{Vect}\to \QCoh(\dG)$ calculates the self-intersection of the unique $k$-point in $B\dG$ as
\[
\opn{Vect}\ot_{\Rep(\dG)}\opn{Vect}\overset{\sim}\to \QCoh(\dG).
\]
We interpret this self-intersection as a symmetric monoidal category over $\Rep(\dG)$ via the trivial action, i.e.\ with $\Rep(\dG)$ acting on $\QCoh(\dG)$ through the fiber functor $\Rep(\dG)\to \opn{Vect}$.
\par

We have then
\[
\begin{array}{l}
(\opn{Vect}\ot_{\Rep(\dG)}\opn{Vect})\ot_{\Rep(\dG)}\Rep(G_q)\\
\hspace{2cm}\cong
(\opn{Vect}\ot_{\Rep(\dG)}\opn{Vect})\ot_k \opn{Vect}\ot_{\Rep(\dG)}\Rep(G_q)\\
\hspace{2cm}\cong \QCoh(\dG)\ot_k(\opn{Vect}\ot_{\Rep(\dG)}\Rep(G_q)).
\end{array}
\]
Hence applying $\opn{Vect}\ot_{\Rep(\dG)}-$ to the structure map $\Rep(G_q)\to \opn{Vect}\ot_{\Rep(\dG)}\Rep(G_q)$ yields a coaction
\[
\rho:\opn{Vect}\ot_{\Rep(\dG)}\Rep(G_q)\to \QCoh(\dG)\ot_k(\opn{Vect} \ot_{\Rep(\dG)}\Rep(G_q)).
\]
In this way we recover the $\dG$-action of Section \ref{sect:exp} in a manner which operates purely at the level of presentable categories, and so recover the first claim of Theorem \ref{thm:act}.
\par

We do not know how to show that the induced functor
\[
\Rep(\dG)\to (\opn{Vect}\ot_{\Rep(\dG)}\Rep(G_q))^{\dG}
\]
is an equivalence in this abstract setting, i.e.\ without choosing an explicit model for the fiber $\opn{Vect}\ot_{\Rep(\dG)}\Rep(G_q)$, though this may be an accessible computation for the adventurous reader.

\appendix

\section{Proof of Theorem \ref{thm:1021}}
\label{sect:A}

\begin{proof}[Proof of Theorem \ref{thm:1021}]
It suffices to consider the case where $q=\varepsilon$, $G=\dG$, and $X=X^{\opn{Tan}}$.  We then have $\varepsilon^2|_{X\times X}=1$ and all $\varepsilon_\alpha=\pm 1$.  We have the normalized killing form $(-,-)$ on $X^{\opn{Tan}}$, and note that the $R$-matrix for $\Rep(\dG_\varepsilon)$ reduces to the form $\varepsilon$ in this case.  Recall that $\kappa$ is a bilinear form on $X^{\opn{Tan}}$ which satisfies $\kappa^2=\varepsilon$ and $\kappa(\lambda,\lambda)=1$ at all $\lambda$.  Since $\varepsilon=\varepsilon^{-1}$ we have $\kappa^{-2}=\varepsilon$ as well.
\par

Consider the generators $e_\alpha$, $f_\alpha$ for $U_\varepsilon(\check{\mfk{g}})$, and the Drinfeld twist $\Rep(\dG_{\varepsilon})_{\kappa}$ of $\Rep(\dG_{\varepsilon})$ via the given form $\kappa$.  The twist is precisely the linear category $\Rep(\dG_{\varepsilon})$ equipped with the new tensor product $V\ot^{\kappa}W$ whose underlying $X^{\opn{Tan}}$-graded vector space is $V\ot W$ and whose $e_\alpha$ and $f_\alpha$ actions are given by
\[
\begin{array}{l}
e_\alpha\cdot_{\kappa}(v\ot w)\vspace{2mm}\\
:=\kappa(\alpha+\lambda,\mu)\kappa^{-1}(\lambda,\mu)e_\alpha v\ot w+\kappa(\lambda,\alpha+\mu)\kappa^{-1}(\lambda,\mu)\varepsilon^{(\alpha,\mu)}v\ot e_\alpha w\vspace{2mm}\\
=\kappa(\alpha,\mu)e_\alpha v\ot w+\varepsilon^{(\alpha,\mu)}\kappa(\lambda,\alpha)v\ot e_\alpha w\vspace{2mm}\\
=\kappa(\alpha,\mu)e_\alpha v\ot w+\kappa^{-1}(\lambda,\alpha)v\ot e_\alpha w\vspace{2mm}\\
=\kappa(\alpha,\mu)e_\alpha v\ot w+\kappa(\alpha,\lambda)v\ot e_\alpha w
\end{array}
\]
and
\[
\begin{array}{rl}
f_\alpha\cdot_{\kappa}(v\ot w) &:=\varepsilon^{(-\alpha,\mu)}\kappa(-\alpha,\mu)f_\alpha v\ot w+\kappa(\lambda,-\alpha)v\ot f_\alpha w\vspace{2mm}\\
&=\kappa(\alpha,\mu)f_\alpha v\ot w+\kappa(\alpha,\lambda) v\ot f_\alpha w,
\end{array}
\]
where $\lambda=\deg(v)$ and $\mu=\deg(w)$.  The trivial vector space symmetry $v\ot w\mapsto w\ot v$ endows $\Rep(\dG_\varepsilon)_\kappa$ with a symmetric structure under which the identity functor enhances to a symmetric tensor equivalence
\[
\opn{Tw}_\kappa:\Rep(\dG_{\varepsilon})_{\kappa}\overset{\sim}\to \Rep(\dG_\varepsilon)
\]
with tensor compatibility
\[
\opn{Tw}_\kappa(V)\ot \opn{Tw}_\kappa(W)\to \opn{Tw}_\kappa(V\ot W),\ \ v\ot w\mapsto \kappa(\lambda,\mu)v\ot w.
\]
\par

Consider now the characters $M_{\alpha}:X^{\opn{Tan}}\to k^\times$, $M_\alpha(\lambda):=\kappa^{-1}( \alpha,\lambda)$, and the normalized vectors $\mbf{e}_\alpha=M_\alpha e_\alpha$ and $\mbf{f}_\alpha=\varepsilon_{\alpha}M_\alpha f_\alpha$.  After normalization these elements act as primitive operators on the products $V\ot_\kappa W$, i.e.\ satisfy
\[
\mbf{e}_\alpha\cdot_\kappa(v\ot w)=\mbf{e}_\alpha \cdot v\ot w+v\ot \mbf{e}_\alpha \cdot w\ \ \text{and}\ \ \mbf{f}_\alpha\cdot_\kappa(v\ot w)=\mbf{f}_\alpha \cdot v\ot w+ v\ot \mbf{f}_\alpha \cdot w.
\]
We claim that the normalized vectors $\mbf{e}_\alpha$ and $\mbf{f}_\alpha$ satisfy the classical Serre relations for the Lie algebra $\check{\mfk{g}}$.  We first check the relations between positive and negative generators, then check the remaining relations amongst the $\mbf{e}$'s and $\mbf{f}$'s.
\par

For the commutators of the $\mbf{e}_\alpha$ and $\mbf{f}_\alpha$ we have
\[
[\mbf{e}_\alpha,\mbf{f}_\alpha]\cdot v=\varepsilon_{\alpha}M^2_\alpha[e_\alpha,f_\alpha]\cdot v=\varepsilon_{\alpha}K_\alpha[e_\alpha,f_\beta]\cdot v
\]
\begin{equation}\label{eq:comm1}
=\varepsilon_{\alpha}K_\alpha\binom{\langle\lambda,\alpha\rangle}{l_\alpha}_{\varepsilon_{\alpha}}\cdot v=\varepsilon_{\alpha}^{\langle \lambda,\alpha\rangle}\langle\lambda,\alpha\rangle K_\alpha\cdot v=\langle \lambda,\alpha\rangle\cdot v,
\end{equation}
where $\langle \lambda,\alpha\rangle=\frac{1}{d_\alpha}(\lambda,\alpha)$.  Here we have employed the identity $\binom{m}{1}_{(\pm 1)}=(\pm 1)^{(m+1)}m$ deduced from \cite[Lemma 34.1.2]{lusztig93}.
\par

At distinct $\alpha$ and $\beta$ we have
\[
[\mbf{e}_\alpha,\mbf{f}_\beta]\cdot v=M_\beta(-\alpha)M_\alpha M_\beta e_\alpha f_\beta\cdot v- M_\alpha(\beta) M_\alpha M_\beta f_\beta e_\alpha\cdot v.
\]
By anti-symmetry of the form $\kappa$, $M_{\beta}(-\alpha)=M^{-1}_{\alpha}(\beta)$ so that the above expression reduces to give
\begin{equation}\label{eq:comm2}
[\mbf{e}_\alpha,\mbf{f}_\beta]\cdot v=M_\alpha(\beta)M_\alpha M_\beta [e_\alpha,f_\beta]\cdot v=0.
\end{equation}
\par

Let us now address the positive Serre relations, and their negative counterparts.  Take for the moment $x^{[n]}=x^n/[n]_{\varepsilon_\alpha}!$ and $x^{(n)}=x^n/n!$.  We have $[n]_{\varepsilon_\alpha}=\varepsilon_\alpha^{n-1}n$ at each $n$ so that
\[
e_\alpha^{[n]}= \varepsilon_\alpha^{n+\sum_{j=1}^nj}\cdot e^n_\alpha/n!\ \ \Rightarrow\ \ e_\alpha^{[r]}e_\beta e_\alpha^{[m-r]}=\big(\varepsilon_\alpha^{m(m-1)/2}\varepsilon_\alpha^{r(m-r)}\big)\cdot e_\alpha^{(r)}e_\beta e^{(m-r)}_\alpha,
\]
and similarly
\[
f_\alpha^{(r)}f_\beta f_\alpha^{(m-r)}=\big(\varepsilon_\alpha^{m(m-1)/2}\varepsilon_\alpha^{r(m-r)}\big)\cdot f_\alpha^{(r)} f_\beta f^{(m-r)}.
\]
So the $\varepsilon$-Serre relations for the positive and negative subalgebras in $U_\varepsilon$ appear as
\[
0=\sum_{r+s=1-(\alpha,\beta)/d_\alpha}\pm \varepsilon_\alpha^{rs}\cdot e_\alpha^{(r)}e_\beta e^{(s)}_\alpha\ \ \text{and}\ \ 0=\sum_{r+s=1-(\alpha,\beta)/d_\alpha}\pm \varepsilon_\alpha^{rs}\cdot f_\alpha^{(r)}f_\beta f^{(s)}_\alpha.
\]

We check the classical Serre relations among the $\mbf{e}_\alpha$ and the $\mbf{f}_\alpha$.  We only establish these relation for the positive root vectors, as the realtions for the $\mbf{f}_\alpha$ are completely similar.
\vspace{2mm}

{\bf Case I}: Suppose $|\alpha|^2\geq |\beta|^2$ and $(\alpha,\beta)\neq 0$.  Then $(\alpha,\beta)/d_\alpha=1$ and $\varepsilon_\alpha=M_\alpha(\beta)^2$ so that we check vanishing of the equation
\[
\sum_{r+s=2}\pm \mbf{e}_\alpha^{(r)}\mbf{e}_\beta\mbf{e}_\alpha^{(s)}=M_\alpha^{2}\sum_{r+s=2}\pm M_\alpha^s(-\beta)M_\beta(-r\alpha) e_\alpha^{(r)}e_\beta e^{(s)}_\alpha
\]
\[
=M_\alpha^{2}\sum_{r+s=2} \pm M_\alpha^{r-s}(\beta) e_\alpha^{(r)}e_\beta e^{(s)}_\alpha.
\]
For $r=2$, $1$, $0$ we have $r-s=2$, $0$, $-2$ while $rs=0$, $1$, $0$.  So $M_\alpha^{r-s}(\beta)=\varepsilon_\alpha \varepsilon_\alpha^{rs}$ in the above expression, and we reduce to obtained the desired relations
\[
\sum_{r+s=2}\pm \mbf{e}_\alpha^{(r)}\mbf{e}_\beta\mbf{e}_\alpha^{(s)}=\varepsilon_\alpha M_\alpha^{2}\sum_{r+s=2} \pm \varepsilon_\alpha^{rs}\cdot e_\alpha^{(r)}e_\beta e^{(s)}_\alpha=0.
\]
\vspace{2mm}

{\bf Case II}:  Suppose $|\beta|^2/|\alpha|^2=2$ and $(\alpha,\beta)\neq 0$.  Then $(\alpha,\beta)/d_\alpha=2$, $\varepsilon(\alpha,\beta)=1$, and subsequently $M_\alpha(\beta)=M^{-1}_\alpha(\beta)$.  We now check the expression
\[
\sum_{r+s=3}\pm \mbf{e}_\alpha^{(r)}\mbf{e}_\beta\mbf{e}_\alpha^{(s)}=M_\alpha^{2}\sum_{r+s=3} \pm M_\alpha^{r+s}(\beta) e_\alpha^{(r)}e_\beta e^{(s)}_\alpha
\]
\[
=M_\alpha^3(\beta) M_\alpha^{2}\sum_{r+s=3} \pm e_\alpha^{(r)}e_\beta e^{(s)}_\alpha
\]
Now for $r=3$, $2$, $1$, $0$ we have $r\cdot s=0$, $2$, $2$, $0$ so that $\varepsilon^{rs}_\alpha=1$, and $e_\alpha$ and $e_\beta$ satisfy the classical Serre relations in this case.  So the above expression vanishes,
\[
\sum_{r+s=3}\pm \mbf{e}_\alpha^{(r)}\mbf{e}_\beta\mbf{e}_\alpha^{(s)}=M_\alpha^3(\beta) M_\alpha^{2}\sum_{r+s=3} \pm e_\alpha^{(r)}e_\beta e^{(s)}_\alpha=0,
\]
and we observe the Serre relations for $\mbf{e}_\alpha$ and $\mbf{e}_\beta$.
\vspace{2mm}

{\bf Case III:} Suppose $|\beta|^2/|\alpha|^2=3$ and $(\alpha,\beta)\neq 0$.  Then $(\alpha,\beta)/d_\alpha=3$ and $M^2\alpha(\beta)=\varepsilon(\alpha,\beta)=\varepsilon_\alpha$.  We have
\[
\sum_{r+s=4}\pm \mbf{e}_\alpha^{(r)}\mbf{e}_\beta\mbf{e}_\alpha^{(s)}=M_\alpha^{2}\sum_{r+s=2} \pm M_\alpha^{r-s}(\beta) e_\alpha^{(r)}e_\beta e^{(s)}_\alpha.
\]
For $r=4$, $3$, $2$, $1$, $0$ one checks the difference $r-s=4,\ 2,\ 0,\ -2,\ -4$ and also $rs=0,\ 3,\ 4,\ 3,\ 0$ to observe an equality $M^{r-s}_\alpha(\beta)=\varepsilon^{rs}_\alpha$ at all $r$ and $s$.  The above expression then reduces to give
\[
\sum_{r+s=4}\pm \mbf{e}_\alpha^{(r)}\mbf{e}_\beta\mbf{e}_\alpha^{(s)}=M_\alpha^{2}\sum_{r+s=2} \pm \varepsilon_\alpha^{rs}\cdot e_\alpha^{(r)}e_\beta e^{(s)}_\alpha=0.
\]
\vspace{2mm}

The above calculations show that the rescaled vectors $\mbf{e}_\alpha$ satisfy the classical Serre relations in $U_\varepsilon$, and one finds similarly that the $\mbf{f}_\alpha$ satisfy the classical Serre relations.  Indeed, the classical Serre relations on these vectors are \emph{equivalent} to the $\varepsilon$-Serre relations for the $e_\alpha$ and $f_\alpha$.
\par

After recalling the commutativity relations \eqref{eq:comm1} and \eqref{eq:comm2}, we now see that the primitive operators $\mbf{e}_\alpha,\mbf{f}_\alpha:V\to V$ define an action of the classical Lie algebra $\check{\mfk{g}}$ on any object $V$ in $\Rep(\dG_\varepsilon)_\kappa$, and we obtain a functor
\[
E:\Rep(\dG_\varepsilon)_{\kappa}\overset{\sim}\to \Rep(\dG)
\]
\[
\big(V\ \text{with operators }e_\alpha, f_\alpha\big)\ \mapsto\ \big(V\ \text{with operators }\mbf{e}_\alpha, \mbf{f}_\alpha\big).
\]
One immediately constructs the inverse by un-normalizing the vectors $\mbf{e}_\alpha$ and $\mbf{f}_\alpha$, so that this functor is seen to be an equivalence.
\par

Take the inverse $E^{-1}$ and compose with $\opn{Tw}_\kappa$ to obtain the claimed symmetric monoidal equivalence $F_\kappa: \Rep(\dG)\overset{\sim}\to \Rep(\dG_{\varepsilon})$.
\end{proof}

\section{Group actions and blocks}
\label{sect:B}

\begin{proposition}\label{prop:blocks_action}
Suppose $\msc{M}$ is a compactly generated, locally finite, presentable, linear abelian category equipped with a rational action of an affine algebraic group $H$.  Suppose additionally that $H$ is connected, and that the $H$-action on $\msc{M}$ preserves compact objects.  Then each block $\msc{M}_\xi\subseteq \msc{M}$ is stable under the action of $H$.
\end{proposition}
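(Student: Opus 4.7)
The plan is to reduce the statement to compact objects, exploit rigidity of the block decomposition under autoequivalences, and then invoke connectedness of $H$ to rule out any ``motion'' of blocks.

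First I would fix a block $\msc{M}_\xi$ and a compact object $X\in \msc{M}_\xi$, and consider its image under the coaction $\rho(X)\in \QCoh(H)\otimes_k\msc{M}$. Since the block decomposition is a direct sum decomposition of $\msc{M}$ in the $2$-category of presentable linear categories, cocontinuity of $\otimes_k$ yields a splitting
\[
\QCoh(H)\otimes_k\msc{M}=\bigoplus_{\eta}\QCoh(H)\otimes_k\msc{M}_\eta,
\]
and so we have $\rho(X)=\bigoplus_\eta \rho(X)_\eta$ with $\rho(X)_\eta\in \QCoh(H)\otimes_k\msc{M}_\eta$. Preservation of compact objects by the action implies $\rho(X)$ is compact, so only finitely many $\rho(X)_\eta$ are nonzero. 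The counit axiom $(e^{\ast}\otimes\opn{id})\rho\cong \opn{id}_{\msc{M}}$ then implies that the fiber of $\rho(X)_\eta$ at the identity $e\in H$ is zero for $\eta\neq\xi$ and equals $X$ for $\eta=\xi$.

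The main step is rigidity. Each $k$-point $h\in H(k)$ produces an autoequivalence $\phi_h=(h^{\ast}\otimes \opn{id})\rho$ of $\msc{M}$, with inverse $\phi_{h^{-1}}$ coming from the antipode for $\QCoh(H)$. Any autoequivalence permutes the blocks of $\msc{M}$, so $\phi_h(X)$ lies in a single block $\msc{M}_{h\cdot\xi}$; consequently $h^{\ast}\rho(X)_\eta=0$ unless $\eta=h\cdot\xi$. Thus the Zariski-dense set $H(k)\subseteq H$ is partitioned by the ``support loci'' of the finitely many nonzero $\rho(X)_\eta$. I would then argue that each support locus is Zariski closed in $H$, using that $\rho(X)_\eta$ is compact in $\QCoh(H)\otimes_k\msc{M}_\eta$ (as a summand of a compact object) and the general fact that vanishing loci of fibers for coherent families are closed. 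Since $H$ is connected and the partition is finite with closed parts, only one $\rho(X)_\eta$ is nonzero; the condition at $h=e$ then pins this unique index to $\xi$, so $\rho(X)\in \QCoh(H)\otimes_k\msc{M}_\xi$.

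For general $X\in \msc{M}_\xi$, I would use that compact generation of $\msc{M}$ passes to $\msc{M}_\xi$ (the compact summands of a compact generator of $\msc{M}$ lying in each block provide compact generators of the block), so $X$ is a filtered colimit of compact objects in $\msc{M}_\xi$. Cocontinuity of $\rho$ and of the projection $\QCoh(H)\otimes_k\msc{M}\to \QCoh(H)\otimes_k\msc{M}_\xi$, together with the compact case, yields $\rho(X)\in \QCoh(H)\otimes_k\msc{M}_\xi$. This is exactly $H$-stability of $\msc{M}_\xi$.

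The main obstacle I expect is the closed-support claim in the abstract setting of $\QCoh(H)\otimes_k\msc{M}_\eta$. One approach is to model this product by $\msc{M}_\eta$-valued quasicoherent sheaves on $H$ (using Proposition~\ref{prop:mods} with $R=\O(H)$ in $\QCoh(H)$, or a direct analysis of compact objects in the tensor product) and import the classical closedness of support for coherent sheaves. An alternative, conceptually cleaner route is to transport the entire argument to the categorical center $Z(\msc{M})=\End(\opn{id}_{\msc{M}})$, regarded as an $\O(H)$-comodule algebra, and to observe that the central idempotents $e_\xi$ indexing the blocks of $\msc{M}$ must be fixed by the coaction: the coaction induces a morphism of schemes $H\times\opn{Spec}(Z(\msc{M}))\to \opn{Spec}(Z(\msc{M}))$ permuting the connected pieces cut out by the $e_\xi$, and connectedness of $H$ coupled with $e=1_H$ acting as the identity forces this permutation action to be trivial.
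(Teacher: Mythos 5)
Your argument is correct and is essentially the paper's proof: decompose $\rho(X)$ along the block decomposition of $\QCoh(H)\ot_k\msc{M}$, note that the supports of the finitely many nonzero components are pairwise disjoint closed subsets covering $H$, and conclude from connectedness that only one component survives. The paper takes $X = V$ simple, so that disjointness of supports is read directly off of simplicity of each fiber $x^\ast\rho(V)$; you take an arbitrary compact $X$ in a fixed block and deduce the same disjointness from the fact that each $x^\ast\rho$ is an autoequivalence and hence sends the block of $X$ into a single block. These are the same observation with different bookkeeping, and your explicit colimit step to pass from compacts to all of $\msc{M}_\xi$ is correct (and only implicit in the paper). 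The obstacle you flag --- closedness of supports for compact objects in $\QCoh(H)\ot_k\msc{M}_\eta$ --- is exactly the point the paper outsources to \cite[Lemma A.5]{negron21}, so you were right to isolate it. Your second suggested route, reading the block decomposition off of the $\O(H)$-comodule algebra structure on $\End(\opn{id}_{\msc{M}})$ and arguing that the coaction must fix the defining central idempotents, is a genuinely cleaner packaging that avoids the support-locus geometry entirely; it is not the route taken in the text but would serve equally well.
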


By preservation of compact objects we mean that the action map $\msc{M}\to \QCoh(H)\ot_k\msc{M}$ sends compact objects to compact objects.  In the case where $H$ acts on a tensor category by tensor automorphisms, preservation of rigid objects along monoidal functors implies preservation of compact objects.

\begin{proof}
Take $R=\O(H)$, considered as an algebra object in $\opn{Vect}$, and take $\msc{M}_R=R\text{-mod}_{\msc{M}}$ for simplicity.  Then the block decomposition $\msc{M}=\oplus_{\xi\in \Xi} \msc{M}_i$ induces a block decomposition on the base change
\[
\QCoh(H)\ot_k\msc{M}=\msc{M}_R=\oplus_\xi(\msc{M}_\xi)_R.
\]

Suppose we have a compact object $M$ in $\msc{M}_R$ whose fibers $x^\ast M$ at all points in $H$ are simple, and in particular nonzero.  Via compactness $M$ decomposes as a finite sum over the blocks $M=M_1\oplus \cdots \oplus M_n$ and via simplicity of the fibers we have
\[
\opn{supp}(M_i)\cap \opn{supp}(M_j)=\emptyset\ \ \text{whenever}\ \ i\neq j.
\]
Since $M$ is supported everywhere we also have $H=\cup_{i=1}^n\opn{supp}(M_i)$.  By compactness, all of the supports $\opn{supp}(M_i)$ are closed--see for example \cite[Lemma A.5]{negron21}--and hence connectedness of $H$ forces $M$ to be indecomposable.  Rather, $M=M_i$ for some $i$, $M$ lives in a single block $(\msc{M}_\xi)_R$, and all of the fibers $x^\ast M$ are simples in the same block.
\par

We consider the case $M=\rho(V)$ with $V$ simple in $\msc{M}$, where $\rho:\msc{M}\to \QCoh(H)\ot_k\msc{M}$ is the action map.  Let $\msc{M}_\xi$ be the block in $\msc{M}$ which contains $V$.  In this case the fiber at the identity $1^\ast M$ is isomorphic to $V$, and all of the fibers are simple since each composite $x^\ast\rho$ is a linear automorphism of $\msc{M}$.  It follows that $\rho(\msc{M}_\xi)\subseteq (\msc{M}_\xi)_R$ by the above arguments.  So we see that the $H$-action decomposes diagonally along the blocks
\[
\rho=\opn{diag}(\rho_\xi:\xi\in \Xi):\oplus_\xi\msc{M}_\xi\to \oplus_\xi\QCoh(H)\ot_k\msc{M}_\xi.
\]
\end{proof}

We apply Proposition \ref{prop:blocks_action} to the tensor setting.

\begin{corollary}\label{cor:blocks_action}
Suppose $\msc{A}$ is a tensor category and that $H$ acts on $\msc{A}$ by tensor automorphisms.  Suppose additionally that $H$ is connected.  Then each block in $\msc{A}$ is stable under the $H$-action.
\end{corollary}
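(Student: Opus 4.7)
The plan is to derive Corollary \ref{cor:blocks_action} as a direct application of Proposition \ref{prop:blocks_action}, so the main task is to verify the hypotheses of the proposition in the tensor setting. First, I would note that any tensor category $\msc{A}$ in the sense of Section \ref{sect:pr} is, by definition, a compactly generated, locally finite, presentable, linear abelian category; and we are given by hypothesis that $H$ is a connected affine algebraic group acting rationally on $\msc{A}$. So three of the four hypotheses of Proposition \ref{prop:blocks_action} come for free.

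The remaining hypothesis to check is that the coaction functor
\[
\rho:\msc{A}\to \QCoh(H)\ot_k\msc{A}
\]
preserves compact objects. This is exactly the point flagged in the remark following Proposition \ref{prop:blocks_action}: in any tensor category the compact objects are precisely the rigid objects, and tensor functors preserve rigidity (cf.\ \cite[Ex 2.10.6]{egno15}). Since $H$ is acting by tensor automorphisms, each component of $\rho$ is a tensor functor, hence preserves rigid-$=$-compact objects. Thus the hypothesis is met.

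With all hypotheses verified, Proposition \ref{prop:blocks_action} applies directly to give stability of each block $\msc{A}_\xi$ under the $H$-action, which is the content of the corollary. I do not anticipate any real obstacle here beyond unwinding definitions; the substantive work (using connectedness of $H$ together with compactness to force indecomposability of $\rho(V)$ across the block decomposition) has already been carried out in the proof of Proposition \ref{prop:blocks_action}. The only thing that even bears mentioning is the compatibility between the abstract notion of compact object for presentable categories and the monoidal notion of rigidity, but in our setup these are identified by definition.
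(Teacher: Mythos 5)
Your proof is correct and follows exactly the paper's intended route: the corollary has no separate proof in the text precisely because it is the remark immediately following Proposition \ref{prop:blocks_action} (compact $=$ rigid in a tensor category, monoidal functors preserve rigidity, hence the coaction preserves compacts) combined with an appeal to the proposition, which is what you wrote. The only point left implicit — in both your write-up and the paper's remark — is that a rigid object of the target $\QCoh(H)\ot_k\msc{A}$ is automatically compact; this holds because the unit $\O(H)\ot\1$ there is compact and the product is cocontinuous, but it is worth being aware that the target is not itself a tensor category in the paper's sense (its unit is not simple), so one cannot invoke the compact-$=$-rigid axiom on that side.
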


\begin{remark}
In the case where $\msc{A}$ is a finite tensor category, and $H$ is again connected, one can show that each of the objects $x\cdot P$ in the $H$-orbit of an indecomposable projective $P$ are isomorphic.  In particular, they are all isomorphic to $P$.  Equivalently, each object $x\cdot V$ in the orbit of a given simple $V$ is isomorphic to $V$ itself.  This refines Corollary \ref{cor:blocks_action} in the finite setting.
\end{remark}

\bibliographystyle{abbrv}

\begin{thebibliography}{10}

\bibitem{adamekrosicky94}
J.~Ad{\'a}mek and J.~Rosicky.
\newblock {\em Locally presentable and accessible categories}, volume 189.
\newblock Cambridge University Press, 1994.

\bibitem{andersen92}
H.~H. Andersen.
\newblock Tensor products of quantized tilting modules.
\newblock {\em Comm. Math. Phys.}, 149(1):149--159, 1992.

\bibitem{andersen03}
H.~H. Andersen.
\newblock The strong linkage principle for quantum groups at roots of 1.
\newblock {\em J. Algebra}, 260(1):2--15, 2003.

\bibitem{andersen18}
H.~H. Andersen.
\newblock The {S}teinberg linkage class for a reductive algebraic group.
\newblock {\em Ark. Mat.}, 56:229--241, 2018.

\bibitem{andersenpolowen92}
H.~H. Andersen, P.~Polo, and W.~Kexin.
\newblock Injective modules for quantum algebras.
\newblock {\em Amer. J. Math.}, 114(3):571--604, 1992.

\bibitem{andersenpolowen91}
H.~H. Andersen, P.~Polo, and K.~Wen.
\newblock Representations of quantum algebras.
\newblock {\em Invent. Math.}, 104:1--59, 1991.

\bibitem{angionogalindopereira14}
I.~Angiono, C.~Galindo, and M.~Pereira.
\newblock De-equivariantization of {H}opf algebras.
\newblock {\em Algebras Represent. Theory}, 17:161--180, 2014.

\bibitem{arkhipovgaitsgory03}
S.~Arkhipov and D.~Gaitsgory.
\newblock Another realization of the category of modules over the small quantum
  group.
\newblock {\em Adv. Math.}, 173(1):114--143, 2003.

\bibitem{benzvibrochierjordan18}
D.~Ben-Zvi, A.~Brochier, and D.~Jordan.
\newblock Integrating quantum groups over surfaces.
\newblock {\em J. Topol.}, 11(4):874--917, 2018.

\bibitem{brochierjordansnyder}
A.~Brochier, D.~Jordan, and N.~Snyder.
\newblock On dualizability of braided tensor categories.
\newblock {\em preprint \href{http://arxiv.org/abs/1804.07538}{\tt
  arXiv:1804.07538}}.

\bibitem{bruguieresnatale11}
A.~Bruguieres and S.~Natale.
\newblock Exact sequences of tensor categories.
\newblock {\em Int. Math. Res. Not.}, 2011(24):5644--5705, 2011.

\bibitem{chirvasitujfreyd13}
A.~Chirvasitu and T.~Johnson-Freyd.
\newblock The fundamental pro-groupoid of an affine 2-scheme.
\newblock {\em Appl. Categor. Struct.}, 21(5):469--522, 2013.

\bibitem{creutzigdimoftegarnergeer}
T.~Creutzig, T.~Dimofte, N.~Garner, and N.~Geer.
\newblock A {QFT} for non-semisimple {TQFT}.
\newblock {\em preprint \href{http://arxiv.org/abs/2112.01559}{\tt
  arXiv:2112.01559}}.

\bibitem{creutziggainutdinovrunkel20}
T.~Creutzig, A.~M. Gainutdinov, and I.~Runkel.
\newblock A quasi-{H}opf algebra for the triplet vertex operator algebra.
\newblock {\em Commun. Contemp. Math.}, 22(03):1950024, 2020.

\bibitem{creutziggannon17}
T.~Creutzig and T.~Gannon.
\newblock Logarithmic conformal field theory, log-modular tensor categories and
  modular forms.
\newblock {\em J. Phys. A}, 50(40):404004, 2017.

\bibitem{creutziglentnerrupert2}
T.~Creutzig, S.~Lentner, and M.~Rupert.
\newblock An algebraic theory for logarithmic {K}azhdan-{L}usztig
  correspondences.
\newblock {\em preprint \href{http://arxiv.org/abs/2306.11492}{\tt
  arXiv:2306.11492}}.

\bibitem{davydovetingofnikshych18}
A.~Davydov, P.~Etingof, and D.~Nikshych.
\newblock Autoequivalences of tensor categories attached to quantum groups at
  roots of 1.
\newblock In {\em Lie Groups, Geometry, and Representation Theory}, pages
  109--136. Springer, 2018.

\bibitem{delaneygalindoplavnikrowell21}
C.~Delaney, C.~Galindo, J.~Plavnik, E.~C. Rowell, and Q.~Zhang.
\newblock Braided zesting and its applications.
\newblock {\em Comm. Math. Phys.}, 386:1--55, 2021.

\bibitem{deligne02}
P.~Deligne.
\newblock Cat{\'e}gories tensorielles.
\newblock {\em Mosc. Math. J}, 2(2):227--248, 2002.

\bibitem{delignemilne82}
P.~Deligne and J.~S. Milne.
\newblock Tannakian categories.
\newblock {\em Hodge cycles, motives, and Shimura varieties}, pages 101--228,
  1982.

\bibitem{doi81}
Y.~Doi.
\newblock Homological coalgebra.
\newblock {\em J. Math. Soc. Japan}, 33(1):31--50, 1981.

\bibitem{donkin80}
S.~Donkin.
\newblock The blocks of a semisimple algebraic group.
\newblock {\em J. Algebra}, 67(1):36--53, 1980.

\bibitem{douglasspsnyder19}
C.~L. Douglas, C.~Schommer-Pries, and N.~Snyder.
\newblock The balanced tensor product of module categories.
\newblock {\em Kyoto J Math}, 59(1):167--179, 2019.

\bibitem{dgno10}
V.~Drinfeld, S.~Gelaki, D.~Nikshych, and V.~Ostrik.
\newblock On braided fusion categories {I}.
\newblock {\em Selecta Math.}, 16(1):1--119, 2010.

\bibitem{drinfeld89}
V.~G. Drinfeld.
\newblock Quasi-{H}opf algebras and {K}nizhnik-{Z}amolodchikov equations.
\newblock In {\em Problems of Modern Quantum Field Theory: Invited Lectures of
  the Spring School held in Alushta USSR, April 24--May 5, 1989}, pages 1--13.
  Springer, 1989.

\bibitem{egno15}
P.~Etingof, S.~Gelaki, D.~Nikshych, and V.~Ostrik.
\newblock {\em Tensor categories}, volume 205.
\newblock American Mathematical Society, 2015.

\bibitem{etingnikshychostrik05}
P.~Etingof, D.~Nikshych, and V.~Ostrik.
\newblock On fusion categories.
\newblock {\em Ann. Math.}, 162(2):581--642, 2005.

\bibitem{etingofnikshychostrik10}
P.~Etingof, D.~Nikshych, and V.~Ostrik.
\newblock Fusion categories and homotopy theory.
\newblock {\em Quantum Topol.}, 1:209--273, 2010.

\bibitem{etingofostrik04}
P.~Etingof and V.~Ostrik.
\newblock Finite tensor categories.
\newblock {\em Mosc. Math. J}, 4(3):627--654, 2004.

\bibitem{fgr}
B.~L. Feigin, S.~Gukov, and N.~Reshetikhin.
\newblock talks at the {BIRS} workshop quantum field theories and quantum
  topology beyond semisimplicitybirs workshop quantum field theories and
  quantum topology beyond semisimplicity.
\newblock {\em
  \href{https://www.birs.ca/events/2021/5-day-workshops/21w5121/schedule}{https://www.birs.ca/events/2021/5-day-workshops/21w5121}}.

\bibitem{feiginlentner}
B.~L. Feigin and S.~D. Lentner.
\newblock Vertex algebras with big center and a {K}azhdan-{L}usztig
  {C}orrespondence.
\newblock {\em preprint \href{http://arxiv.org/abs/2210.13337}{\tt
  arXiv:2210.13337}}.

\bibitem{feigintipunin}
B.~L. Feigin and I.~Y. Tipunin.
\newblock Logarithmic {CFT}s connected with simple {L}ie algebras.
\newblock {\em available at \href{http://arxiv.org/abs/1002.5047}{\tt
  arXiv:1002.5047}}.

\bibitem{gainutdinovlentnerohrmann}
A.~M. Gainutdinov, S.~Lentner, and T.~Ohrmann.
\newblock Modularization of small quantum groups.
\newblock {\em preprint \href{http://arxiv.org/abs/1809.02116}{\tt
  arXiv:1809.02116}}.

\bibitem{gainutdinovrunkel17}
A.~M. Gainutdinov and I.~Runkel.
\newblock Symplectic fermions and a quasi-{H}opf algebra structure on
  $\bar{U}_isl(2)$.
\newblock {\em J. Algebra}, 476:415--458, 2017.

\bibitem{gainutdinovetal06}
A.~M. Gainutdinov, A.~M. Semikhatov, I.~Y. Tipunin, and B.~L. Feigin.
\newblock Kazhdan-{L}usztig correspondence for the representation category of
  the triplet {W}-algebra in logarithmic {CFT}.
\newblock {\em Theor. Math. Phys.}, 148(3):1210--1235, 2006.

\bibitem{gaiotto19}
D.~Gaiotto.
\newblock Twisted compactifications of 3d $\mathcal{N}=4$ theories and
  conformal blocks.
\newblock {\em JHEP}, 2019(2):1--43, 2019.

\bibitem{gaiottomooreneitzkeyan}
D.~Gaiotto, G.~W. Moore, A.~Neitzke, and F.~Yan.
\newblock Commuting line defects at $q^n= 1$.
\newblock {\em preprint \href{http://arxiv.org/abs/2307.14429}{\tt
  arXiv:2307.14429}}.

\bibitem{gaitsgory21}
D.~Gaitsgory.
\newblock A conjectural extension of the {K}azhdan--{L}usztig equivalence.
\newblock {\em Publ. Res. Inst. Math. Sci.}, 57(3):1227--1376, 2021.

\bibitem{gannonnegron}
T.~Gannon and C.~Negron.
\newblock Quantum $\operatorname{SL}(2)$ and logarithmic vertex operator
  algebras at $(p, 1)$-central charge.
\newblock {\em J. Eur. Math. Soc.}, to appear.

\bibitem{greenough10}
J.~Greenough.
\newblock Monoidal $2$-structure of bimodule categories.
\newblock {\em J. Algebra}, 324(8):1818--1859, 2010.

\bibitem{gukovetal21}
S.~Gukov, P.-S. Hsin, H.~Nakajima, S.~Park, D.~Pei, and N.~Sopenko.
\newblock Rozansky-{W}itten geometry of {C}oulomb branches and logarithmic knot
  invariants.
\newblock {\em J. Geom. Phys.}, 168:104311, 2021.

\bibitem{johnsonfryedreutter23}
T.~Johnson-Freyd and D.~Reutter.
\newblock Minimal nondegenerate extensions.
\newblock {\em J. Amer. Math. Soc.}, to appear.

\bibitem{johnsonscheimbauer17}
T.~Johnson-Freyd and C.~Scheimbauer.
\newblock ({O}p) lax natural transformations, twisted quantum field theories,
  and ``even higher'' {M}orita categories.
\newblock {\em Adv. Math}, 307:147--223, 2017.

\bibitem{kelly82}
G.~M. Kelly.
\newblock {\em Basic concepts of enriched category theory}, volume~64.
\newblock CUP Archive, 1982.

\bibitem{kirillovostrik02}
A.~Kirillov~Jr and V.~Ostrik.
\newblock On a $q$-analogue of the {M}c{K}ay correspondence and the ${ADE}$
  classification of $sl_2$ conformal field theories.
\newblock {\em Adv. Math.}, 171(2):183--227, 2002.

\bibitem{kondosaito11}
H.~Kondo and Y.~Saito.
\newblock Indecomposable decomposition of tensor products of modules over the
  restricted quantum universal enveloping algebra associated to
  $\mathfrak{sl}_2$.
\newblock {\em J. Algebra}, 330(1):103--129, 2011.

\bibitem{larsonsweedler69}
R.~G. Larson and M.~E. Sweedler.
\newblock An associative orthogonal bilinear form for {H}opf algebras.
\newblock {\em Amer. J. Math.}, 91(1):75--94, 1969.

\bibitem{lentner16}
S.~Lentner.
\newblock A {F}robenius homomorphism for {L}usztig's quantum groups for
  arbitrary roots of unity.
\newblock {\em Commun. Contemp. Math.}, 18(03):1550040, 2016.

\bibitem{lentneretal23}
S.~Lentner, S.~N. Mierach, C.~Schweigert, and Y.~Sommerh{\"a}user.
\newblock {\em Hochschild Cohomology, Modular Tensor Categories, and Mapping
  Class Groups I}, volume~44.
\newblock Springer Nature, 2023.

\bibitem{lurie09}
J.~Lurie.
\newblock {\em Higher topos theory}.
\newblock Princeton University Press, 2009.

\bibitem{lurieha}
J.~Lurie.
\newblock {\em Higher algebra}.
\newblock available at
  \href{https://www.math.ias.edu/~lurie/papers/HA.pdf}{https://www.math.ias.edu/~lurie/papers/HA.pdf},
  2017.

\bibitem{lusztig88}
G.~Lusztig.
\newblock Quantum deformations of certain simple modules over enveloping
  algebras.
\newblock {\em Adv. Math}, 70(2):237--249, 1988.

\bibitem{lusztig89}
G.~Lusztig.
\newblock Modular representations and quantum groups.
\newblock {\em Contemp. Math}, 82(1080):59--78, 1989.

\bibitem{lusztig90}
G.~Lusztig.
\newblock Finite dimensional {H}opf algebras arising from quantized universal
  enveloping algebras.
\newblock {\em J. Amer. Math. Soc.}, 3(1):257--296, 1990.

\bibitem{lusztig90II}
G.~Lusztig.
\newblock Quantum groups at roots of 1.
\newblock {\em Geom. Dedicata}, 35(1):89--113, 1990.

\bibitem{lusztig93}
G.~Lusztig.
\newblock {\em Introduction to Quantum Groups}.
\newblock Birkh\"auser Boston, 1993.

\bibitem{montgomery93}
S.~Montgomery.
\newblock {\em Hopf algebras and their actions on rings}.
\newblock Number~82. American Mathematical Society, 1993.

\bibitem{negron}
C.~Negron.
\newblock Revisiting the {S}teinberg representation at arbitrary roots of $1$.
\newblock {\em preprint \href{http://arxiv.org/abs/2306.14453}{\tt
  arXiv:2306.14453}}.

\bibitem{negron21}
C.~Negron.
\newblock Log-modular quantum groups at even roots of unity and the quantum
  {F}robenius {I}.
\newblock {\em Comm. Math. Phys.}, 382(2), 2021.

\bibitem{nicholszoeller89}
W.~D. Nichols and M.~B. Zoeller.
\newblock A {H}opf algebra freeness theorem.
\newblock {\em Amer. J. Math.}, 111(2):381--385, 1989.

\bibitem{parshallwang91}
B.~Parshall and J.~Wang.
\newblock {\em Quantum linear groups}, volume 439.
\newblock American Mathematical Soc., 1991.

\bibitem{saavedra72}
N.~Saavedra~Rivano.
\newblock Cat\'{e}gories tannakiennes.
\newblock {\em Bulletin de la Soci{\'e}t{\'e} Math{\'e}matique de France},
  100:417--430, 1972.

\bibitem{sawin06}
S.~F. Sawin.
\newblock Quantum groups at roots of unity and modularity.
\newblock {\em Journal of Knot Theory and Its Ramifications},
  15(10):1245--1277, 2006.

\bibitem{schauenburg92}
P.~Schauenburg.
\newblock {\em Tannaka duality for arbitrary Hopf algebras}, volume~66.
\newblock R. Fischer, 1992.

\bibitem{shimizu19}
K.~Shimizu.
\newblock Non-degeneracy conditions for braided finite tensor categories.
\newblock {\em Adv. Math}, 355:106778, 2019.

\bibitem{skryabin07}
S.~Skryabin.
\newblock Projectivity and freeness over comodule algebras.
\newblock {\em Trans. Amer. Math. Soc.}, 359(6):2597--2623, 2007.

\bibitem{sugimoto21}
S.~Sugimoto.
\newblock On the {F}eigin--{T}ipunin conjecture.
\newblock {\em Selecta Math.}, 27(5):1--43, 2021.

\bibitem{takeuchi77}
M.~Takeuchi.
\newblock Morita theorems for categories of comodules.
\newblock {\em J. Fac. Sci. Univ. Tokyo}, 24, 1977.

\bibitem{takeuchi79}
M.~Takeuchi.
\newblock Relative {H}opf modules--{E}quivalences and freeness criteria.
\newblock {\em J. Algebra}, 60(2):452--471, 1979.

\bibitem{vay}
C.~Vay.
\newblock Linkage principle for small quantum groups.
\newblock {\em preprint \href{http://arxiv.org/abs/2310.00103}{\tt
  arXiv:2310.00103}}.

\end{thebibliography}

\end{document}